\newtheorem{theorem}{Theorem}[section]
\newtheorem{lemma}[theorem]{Lemma}
\newtheorem{corollary}[theorem]{Corollary}
\newtheorem{proposition}[theorem]{Proposition}
\newtheorem*{kaplansky}{Kaplansky's Conjecture}
\newtheorem*{WIFP}{Weyl's Inequality for Positive Operators}
\newtheorem*{linnell}{Linnell's $L^2$ Conjecture}
\newtheorem{definition}[theorem]{Definition}
\theoremstyle{definition}
\newtheorem{example}{Example}[section]
\newtheorem{remark}[theorem]{Remark}
\begin{document}

\title[The Rank Theorem and $L^2$-invariants in Free Entropy: Global Upper Bounds]{The Rank Theorem and $L^2$-invariants in Free Entropy: \\ Global Upper Bounds}

\author{Kenley Jung}
\begin{abstract} Using an analogy with the rank theorem in differential geometry, it is shown that for a finite $n$-tuple $X$ in a tracial von Neumann algebra and any finite $m$-tuple $F$ of $*$-polynomials in $n$ noncommuting indeterminates, 
\begin{eqnarray*} 
\delta_0(X) & \leq & \text{Nullity}(D^sF(X)) + \delta_0(F(X):X)
\end{eqnarray*}
where $\delta_0$ is the (modified) microstates free entropy dimension and $D^sF(X)$ is a kind of derivative of $F$ evaluated at $X$.   When $F(X) =0$ and $|D^sF(X)|$ has nonzero Fuglede-Kadison-L{\"u}ck determinant, then $X$ is $\alpha$-bounded in the sense of \cite{j3} where $\alpha = \text{Nullity}(D^sF(X))$.  Using Linnell's $L^2$ integral domain results in \cite{l} as well as Elek and Szab{\'o}'s work on L{\"u}ck's determinant conjecture for sofic groups in \cite{es} the following result is proven.  Suppose $\Gamma$ is a sofic, left-orderable, discrete group with 2 generators and $\Gamma \neq \{0\}$.  The following conditions are equivalent:
\vspace{.01in}
\begin{enumerate}[(1)]
\item $\Gamma \not\simeq \mathbb F_2$. 
\item $L(\Gamma) \not\simeq L(\mathbb F_2)$.
\item $L(\Gamma)$ is strongly $1$-bounded.
\item $\delta_0(X) = 1$ for any finite set of generators $X$ for $L(\Gamma)$.
\end{enumerate}
\vspace{.1in}
From Brodski{\u i} and Howie's results on local indicability (\cite{b}, \cite{h}), it follows that a sofic, torsion-free, one-relator group von Neumann algebra on two generators with nontrivial relator is strongly $1$-bounded.   It also follows from the residual solvability of the positive one relator groups (\cite{baum}) that a one-relator group von Neumann algebra on two generators whose relator is a nontrivial, positive, non-proper word in the generators is strongly $1$-bounded.
\end{abstract}
\maketitle
\section{Introduction}

Consider the space of solutions, $M$, to the equation $F(x)=0$ where $F:\mathbb R^n \rightarrow \mathbb R^m$ is a smooth function.  To understand the geometry of $M$ one could start by computing the derivative $DF(x)$.  Under suitable regularity conditions (e.g., $DF(x)$ is surjective for all solutions) $M$ is a smooth manifold of dimension $\alpha=\text{Nullity}(DF(x))$.  This result is one of several closely related observations in differential geometry that I'll collectively refer to as the rank theorem (the standard version states that $F$ is equivalent to a projection of rank equal to $\text{Rank}(DF(x))$).  If one further assumes that $M$ is compact, then the smoothness of $F$ implies that the $\alpha$-Hausdorff measure of $M$ is finite.

These observations have a direct bearing on von Neumann algebras and free probability.  I'll spell out this analogy by translating each of the quantities above into an appropriate operator algebra counterpart.  $F$ will now be an $m$-tuple of $*$-polynomials in $n$ indeterminates and $X$ will denote an $n$-tuple of generators for a tracial von Neumann algebra $(M,\varphi)$ such that $F(X)=0$.  Associated to $X$ is a free probability quantity defined in \cite{v2} called the (modified) microstates free entropy dimension $\delta_0(X)$.  $\delta_0(X)$ is an analogue of Minkowski dimension and is defined through an asymptotic process involving $\epsilon$-entropy, i.e., the minimum number $\epsilon$-balls required to cover a metric space.  It  will replace topological dimension in this analogy.  Differentiation in operator algebras can be expressed through derivations and tensor products and these devices can used to construct a derivative $D^sF(X)$ which will be a matrix with entries in $M \otimes M^{op}$ - another tracial von Neumann algebra.  One can speak of the nullity and rank of $D^sF(X)$ by taking (unnormalized) traces of the projections onto the kernel or cokernel of $D^sF(X)$.  Thus, setting $\alpha = \text{Nullity}(D^sF(X))$ one might expect 
\begin{eqnarray*}
\delta_0(X) \leq \alpha.
\end{eqnarray*}
Notice here that this is an inequality instead of an equality, as $X$ is not defined solely through the relation, but only required to satisfy it.  To complete the analogy, just as there is a suitable replacement for dimension, so too is there a kind of Hausdorff entropy $\mathbb H^{\alpha}$ for $X$ and if one believes that $X$ is 'compact' in some suitable way, then 
\begin{eqnarray*}
\mathbb H^{\alpha}(X) < \infty.
\end{eqnarray*}

This paper is concerned with proving the above von Neumann algebra counterparts.  The analogy is simple.  The proofs are not.  Before outlining some of the difficulties involved, I'll discuss the free probability results, their applications to group von Neumann algebras, and the connections to $L^2$-invariants and combinatorial group theory.

\subsection{Group von Neumann Algebras Results, $L^2$-invariants, and Combinatorial Group Theory}

The dimension inequality in terms of the nullity has a slightly more general expression than discussed above.  Suppose $F$, $X$ are as before, but that the condition $F(X)=0$ is removed.  It will be shown (Section 4) that for a suitable notion of the derivative, denoted by $D^s$,
\begin{eqnarray*}
\delta_0(X) & \leq & \text{Nullity}(D^sF(X)) + \delta_0(F(X):X)
\end{eqnarray*}
where here $\delta_0(:)$ is a kind of relative free entropy dimension (\cite{v2}).  If $F(X)=0$, then $\delta_0(F(X):X)=0$ and the above reduces to the free entropy dimension inequality stated above.  The inequality is connected to a number of von Neumann algebra applications of free entropy dimension involving normalizers/commutators (\cite{v2}, \cite{gs}), eigenvalue results for maximal free entropy dimension tuples (\cite{msm}, \cite{ss}), and $L^2$-Betti number computations (\cite{cs}, \cite{dl}).  The connections are discussed at length in the examples of Section 4.

The argument for the Hausdorff entropy bound requires additional assumptions on $F$ and $X$ including the condition $F(X)=0$ (see Remark 6.10 about dropping this condition). It will also require that $D^sF(X) \in M_{2n}(M \otimes M^{op})$ has\textbf{ geometric decay}, a property equivalent to $|D^sF(X)|$ having nonzero Fuglede-Kadison-L{\"u}ck determinant.  \cite{j2} introduced a notion of Hausdorff $\alpha$-entropy $\mathbb H^{\alpha}$ and covering $\alpha$-entropy $\mathbb K^{\alpha}$.  In general $\mathbb H^{\alpha} \leq \mathbb K^{\alpha}$, although it is not known whether the inequality is strict.  Using the covering entropy I will prove a slightly stronger form of the entropy inequality (Section 6), namely that 
\begin{eqnarray*}
\mathbb H^{\alpha}(X) \leq \mathbb K^{\alpha}(X) <\infty,
\end{eqnarray*}
for $\alpha = \text{Nullity}(D^sF(X))$.   When $X$ satisfies such an inequality $X$ is said to be $\alpha$-bounded.

When $\alpha=1$, $\mathbb K^{\alpha}$ has particular significance.  Motivated by free probability applications to von Neumann algebras in \cite{v1}, \cite{g}, and \cite{gs}, and the fundamental work of Besicovitch (\cite{besicovitch1}, \cite{besicovitch2}, and reference \cite{falconer}) on regular fractal sets of finite Hausdorff $1$-measure, I showed in \cite{j3} that if $X$ is a finite tuple of self-adjoint elements such that there exists an element $x \in X$ with finite free entropy and such that $\mathbb K^1(X) < \infty$, then $\delta_0$ is an invariant for the von Neumann algebra generated by $X$.  To be clear, this means that any other finite tuple $Y$ of self-adjoint generators for the von Neumann algebra generated by $X$ satisfies the condition that $\mathbb K^1(Y) < \infty$ (and thus $\delta_0(Y) \leq 1$).  A von Neumann algebra with such a generating set was called strongly $1$-bounded.  Thus, by computations made in \cite{v1}, the free group factors cannot be generated by a strongly $1$-bounded von Neumann algebra.  Applications of this included showing that a union of two strongly $1$-bounded von Neumann algebras with a diffuse intersection generates a strongly $1$-bounded von Neumann algebra; also any finite generating set for the von Neumann algebra generated by a strongly $1$-bounded von Neumann algebra along with any subset of its normalizers generates a strongly $1$-bounded von Neumann algebra.  This work also provided a way to unify and generalize the already established results of \cite{v2} and \cite{g}, which demonstrated that the free group factors have no Cartan subalgebra, and are prime, respectively.  Note that the results on normalizers and commutants can be obtained and significantly strengthened under much more general conditions through a variety of solidity/rigidity techniques introduced in \cite{o1}, \cite{o2}, \cite{op}, \cite{ipp}, among others.  \cite{va} is one place where the interested reader can read a more detailed account of this area.

The geometric decay condition and nullity computation are unwieldy to verify/compute in a general von Neumann algebra context without further assumptions (aside from commutators, skew-commutators,  normalizers, and skew-normalizers).  In the discrete group case, however, one can access $L^2$-theory (L{\"u}ck's determinant property, Linnell's $L^2$-property) as well as combinatorial group theory to guarantee these spectral derivative conditions.  Indeed, if $X$ is an $n$-tuple consisting of the canonical unitaries associated to a generating set of elements for a discrete group $\Gamma$, then $D^sF(X)$ will have geometric decay at $0$ whenever $\Gamma$ is sofic by a result of \cite{es}.  Moreover, if $\Gamma$ is also left orderable (i.e., has a left invariant linear ordering) and $F$ has a nontrivial (nonidentity) $*$-monomial, then by a result of \cite{l}, $D^sF(X)$ will have nullity no greater than $n-1$.  In particular when $n=2$ one has:

\begin{proposition} Suppose $\Gamma$ is a left orderable, sofic group with $2$ generators and $\Gamma \neq \{0\}$.  The following conditions are equivalent:
\begin{enumerate}[(1)]
\item $\Gamma \not\simeq \mathbb F_2$. 
\item $L(\Gamma) \not\simeq L(\mathbb F_2)$.
\item $L(\Gamma)$ is strongly $1$-bounded.
\item $\delta_0(X) = 1$ for any finite set of generators $X$ for $L(\Gamma)$.
\end{enumerate}
\end{proposition}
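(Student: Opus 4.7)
The plan is to prove the cycle $(1) \Rightarrow (3) \Rightarrow (2) \Rightarrow (1)$ together with the equivalence $(3) \Leftrightarrow (4)$, with the bulk of the work in $(1) \Rightarrow (3)$. For $(1) \Rightarrow (3)$, the starting point is that a two-generated group not isomorphic to $\mathbb{F}_2$ must admit a nontrivial relation: there exist generators $g_1, g_2$ of $\Gamma$ and a nontrivial reduced word $w(g_1,g_2)$ equal to the identity in $\Gamma$. I would take $u_j = \lambda(g_j) \in L(\Gamma)$, set $X = (u_1,u_2)$, and let $F = w-1$ be the single $*$-polynomial obtained from $w$ (using $u_j^* = u_j^{-1}$), so that $F(X)=0$. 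Two $L^2$-inputs then drive the argument through the main theorem of the paper. First, since $\Gamma$ is sofic, the Elek-Szab\'{o} theorem ensures $|D^sF(X)|$ has nonzero Fuglede-Kadison-L\"{u}ck determinant, i.e., $D^sF(X)$ has geometric decay. Second, since $\Gamma$ is left-orderable and $w$ is a nontrivial $*$-monomial, Linnell's $L^2$-integral domain result yields $\text{Nullity}(D^sF(X)) \leq n-1 = 1$. The entropy theorem of Section 6 then gives that $X$ is $\alpha$-bounded for $\alpha = \text{Nullity}(D^sF(X)) \leq 1$, so $\mathbb{K}^1(X) < \infty$. Finally, since $\Gamma$ is left-orderable and nontrivial it is torsion-free and infinite, so some $g_j$ has infinite order, making $u_j$ a Haar unitary with $\chi(\mathrm{Re}\,u_j) > -\infty$ (the arcsine law having finite free entropy); transporting the $\mathbb{K}^1$ bound from the unitary tuple $X$ to the self-adjoint generating tuple $(\mathrm{Re}\,u_1, \mathrm{Im}\,u_1, \mathrm{Re}\,u_2, \mathrm{Im}\,u_2)$ is routine and supplies the hypotheses of strong $1$-boundedness.

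The remaining implications are short. $(3) \Rightarrow (2)$ follows because $L(\mathbb{F}_2)$ is not strongly $1$-bounded, a consequence of Voiculescu's computation $\delta_0 = 2$ on its standard free unitary generators. $(2) \Rightarrow (1)$ is trivial. For $(3) \Leftrightarrow (4)$, the direction $(3) \Rightarrow (4)$ combines the upper bound $\delta_0(X) \leq 1$ coming from strong $1$-boundedness with the standard lower bound $\delta_0(X) \geq 1$, valid for any finite generating tuple of a diffuse tracial von Neumann algebra (as $L(\Gamma)$ is, since $\Gamma$ is infinite), while $(4) \Rightarrow (1)$ is immediate because $\Gamma \simeq \mathbb{F}_2$ would force $\delta_0 = 2$ on the free generators.

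The main obstacle is verifying that the two spectral-derivative inputs apply in the form required. Elek-Szab\'{o}'s determinant theorem is originally about group-ring elements, while $D^sF(X)$ takes values in $M_n(L(\Gamma) \otimes L(\Gamma)^{\mathrm{op}})$; similarly, Linnell's left-orderable $L^2$-integral domain theorem is originally stated for matrices over $\mathbb{C}\Gamma$. One must confirm that $D^s$ applied to the single relation $F = w-1$ produces exactly the kind of matrix to which these theorems apply, with nullity and Fuglede-Kadison-L\"{u}ck determinant computed in the appropriate tracial sense. Much of this translation is the content of the discussion preceding the proposition statement, so the proof of the proposition itself is largely a matter of assembling these black boxes and then supplying the self-adjoint element of finite free entropy.
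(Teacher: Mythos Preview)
Your approach is essentially the paper's own: the cycle $(1)\Rightarrow(3)\Rightarrow(4)\Rightarrow(2)\Rightarrow(1)$ (you split off $(3)\Leftrightarrow(4)$, but the content is the same), with $(1)\Rightarrow(3)$ carried by a nontrivial relation, Elek--Szab\'o for determinant class, Linnell for injectivity of the derivative, the Section~6 entropy bound, and a Haar unitary providing the finite-entropy element. One imprecision worth correcting: the lower bound $\delta_0(X)\geq 1$ is \emph{not} a general fact about diffuse tracial von Neumann algebras---it requires embeddability into $R^\omega$, which here comes from soficity (the paper cites \cite{gs} for this and \cite{j0} for the bound itself); without microstates $\delta_0$ can be $-\infty$. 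A second point you implicitly defer to the black boxes but should be aware of: the paper does not use the given relator $w$ directly for the nullity computation---Proposition~7.5 passes to a \emph{minimal-length} nontrivial relation to guarantee that the summands in $\partial_i^u w(X)$ are pairwise orthogonal (hence the derivative is nonzero, hence injective by Linnell), since for an arbitrary relator the Fox-derivative terms could in principle cancel.
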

\noindent One can actually show this if left orderability in the above is replaced with a weaker condition (Corollary 7.8).

In essence, for such sofic group von Neumann algebras on two generators, the existence of a nontrivial relation collapses the microstate spaces of the generators into a kind of rectifiable curve of amenable von Neumann algebras (i.e., it's strongly $1$-bounded).  Putting this together with results of \cite{h} (\cite{b}) on the local indicability (residual solvability) of torsion-free (positive) one-relator groups yields:

\begin{corollary} Suppose $\Gamma$ is a torsion free, discrete group with a one relator presentation on $2$ generators.   If $\Gamma$ is sofic and the relator is nontrivial, then $L(\Gamma)$ is strongly $1$-bounded.  In particular, if the relator is nontrivial and positive, then $L(\Gamma)$ is strongly $1$-bounded.
\end{corollary}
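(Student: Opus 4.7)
The strategy is to reduce the Corollary to the preceding Proposition (or its weak-left-orderable strengthening, Corollary 7.8) by checking its hypotheses one by one. Once $\Gamma$ is known to be sofic, left-orderable, two-generated, nontrivial, and not isomorphic to $\mathbb F_2$, the implication $(1)\Rightarrow(3)$ of the Proposition gives that $L(\Gamma)$ is strongly $1$-bounded.

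For the first statement, I would proceed as follows. Soficity and the two-generator hypothesis are given. Nontriviality is immediate: a nontrivial one-relator quotient of $\mathbb F_2$ is still nontrivial, since a single relator cannot simultaneously kill both generators (mapping to the abelianization $\mathbb Z^2$ modulo one element shows this). To rule out $\Gamma\simeq \mathbb F_2$, I would use that $\mathbb F_2$ is Hopfian (by residual finiteness, due to Magnus): a quotient by a nontrivial normal subgroup cannot be isomorphic to $\mathbb F_2$, otherwise composing with the presentation map would produce a non-injective surjective endomorphism of $\mathbb F_2$. The main step is left-orderability. Here I invoke the theorem of Brodski\u\i--Howie that every torsion-free one-relator group is locally indicable, together with the classical fact (Brodski\u\i, and earlier Conrad for bi-orderings) that every locally indicable group is left-orderable. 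This verifies left-orderability, and the Proposition now applies.

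For the ``in particular'' statement, I would derive torsion-freeness and soficity from the positive-relator assumption. Any one-relator group whose relator is a nontrivial positive non-proper-power word is torsion-free by the B.\ B.\ Newman/Magnus torsion theorem for one-relator groups, so the first part applies once soficity is verified; this follows by quoting Baumslag's theorem from \cite{baum} that positive one-relator groups are residually solvable, hence residually amenable, hence sofic (since residually sofic groups are sofic and amenable groups are sofic). Combining these, the ``in particular'' case reduces to the first case.

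The main obstacle I anticipate is not in the operator-algebraic argument -- all of that has already been concentrated in the Proposition -- but in assembling and correctly citing the combinatorial group theory inputs and making sure the positive case includes the right non-proper-power hypothesis (as in the abstract) so that Magnus-type torsion results apply. A secondary subtlety is verifying $\Gamma\not\simeq\mathbb F_2$ cleanly; the Hopf property of $\mathbb F_2$ handles it, but if one wants a hypothesis-free proof one can instead note that the first $L^2$-Betti number of $\Gamma$ is strictly less than $1$ by the presence of a nontrivial relator (while $\beta_1^{(2)}(\mathbb F_2)=1$), which also directly recovers the dimension inequality $\delta_0(X)\le 1$ consistent with the conclusion.
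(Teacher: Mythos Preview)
Your proposal is correct and follows essentially the same route as the paper: the paper's proof (via Corollaries 7.9 and 7.10) also uses Brodski\u\i--Howie to get local indicability and hence left-orderability (thus Linnell's $L^2$-property), the Hopfian property of $\mathbb F_2$ to rule out $\Gamma\simeq\mathbb F_2$, and Baumslag's residual solvability for the positive case to secure soficity. Your only additional care---flagging the non-proper-power hypothesis needed for torsion-freeness in the positive case---matches what the paper does in the more precise statements of Corollaries 7.9 and 7.10 (citing \cite{kms}).
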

The torsion-free condition can be replaced with an equivalent condition on the form of the relator, as stated in the abstract (see also Corollary 7.10).  Note that the torsion-free condition is necessary as a free product of two finite cyclic groups demonstrates.  Further von Neumann algebra results are presented in Sections 6 and 7.

\subsection{Technical Overview}
Recalling the proof of the Rank Theorem will give an idea of some of the challenges in establishing the analogy.  

From the outset there is a slight problem with comparing a microstates dimension with nullities in a von Neumann algebra context.  Dimension and the derivative in the geometric context (and thus the microstates dimension) take place in a real setting, but when one takes traces in von Neumann algebras, these occur in a complex setting.  Thus, a little care must be taken to extract the appropriate real dimension from the complex traces.   One can break the derivative up into its self-adjoint and skew-adjoint parts, keep track of its action on these spaces, and define the corresponding derivation.   A simple example will clarify the issues.

Consider the tracial von Neumann algebra of $k \times k$ complex matrices $M=M_k(\mathbb C)$, the map $F:M \rightarrow M$ defined by $F(x) = x^*x$, and the level set $L = \{ x \in M: F(x) = I\}$.  $L$ is just the Lie group of unitaries and its dimension is $k^2$, which is the same dimension as the space of the self-adjoint complex matrices.  Normalizing by $k^2$, the dimension of $L$ 'is' $1$.   This can easily be recovered in terms of the derivative and the von Neumann algebra.  The derivative of $F$ at $I$ is $DF(I) = I_M + J$ where $I_M$ is the identity operator on $M$ and $J$ is the conjugation map on $M$.  The kernel of $DF(I)$ should have dimension agreeing with the dimension of $L$, i.e., $1$.  Unfortunately the appearance of the conjugation operator $J$ (which is real linear and not complex linear) makes this situation slightly annoying if one wants to use it in a complex setting.  However, if one breaks up the action of $I_M$ and $J$ onto the self-adjoint and skew-adjoint portions of $M$, $M^{sa}$ and $M^{sk}$ (regarded as real subspaces), then in a matricially loose sense:
\begin{eqnarray*} DF(I) & = &   I_M +J \\
                                      & = &  \begin{bmatrix} I & 0 \\ 
                                                                          0 & I \\ 
                                                 \end{bmatrix} + 
                                                 \begin{bmatrix} I & 0 \\ 
                                                                          0 & -I \\ 
                                                 \end{bmatrix} \\
                                       & = & \begin{bmatrix} 2I & 0 \\ 
                                                                          0 & 0 \\ 
                                                 \end{bmatrix}.\\                               
\end{eqnarray*}
The nullity of $DF(I)$ in its form above as a $2 \times 2$ matrix with entries in $M_k(\mathbb C)$ should be the trace of the projection onto the orthogonal complement of the range, namely
\begin{eqnarray*}                     \begin{bmatrix} 0 & 0 \\ 
                                                                          0 & I \\
                                                \end{bmatrix}. 
\end{eqnarray*}
The trace of the matrix above 'is' $1$ as expected (by the normalization $1$ is equivalent to $k^2$ which is exactly the dimension of $L$) and its range consists of the set of skew-adjoints i.e., the tangent space of the unitaries.

The decomposition described above can be generalized to the derivative of any $*$-polynomial.  Notice that one could avoid these algebraic preliminaries by working exclusively in the self-adjoint context (as done initially in \cite{v1}, \cite{v2}).  However, I will need to work in the unitary case as well and creating one differential calculus which encompasses both cases seemed preferable.  The differential calculus rules for the self-adjoint and unitary cases follow from the general setup.  In particular, after a change of bases, the unitary calculus for $*$-monomials can be explicitly derived and this will have computational consequences in the group von Neumann algebra setting.   These arguments take place in Section 3. 

Assuming the linear algebra notions of rank and nullity for derivatives are in order, the manifolds argument proceeds to show that the solution space $F(x)=0$ is locally diffeomorphic to $\ker DF(x)$.  Here, one claims (again under some regularity conditions), that after a change of bases, $DF(x)$ is a projection.  Although microstates are finite dimensional matrices like $DF(x)$, they are asymptotic approximants to operators with singular values that can be more complex than a finite list of eigenvalues.  For example the spectrum may the entire unit interval.  This spectral complexity makes the differential geometry argument difficult to carry over.   It lead to the idea of \textbf{splitting the spectrum} by a continuous parameter $\alpha$.

Spectral splitting occurs in the setting of microstates and since the microstates are simply elements in Euclidean space, for simplicity I will describe spectral splitting in that context.  Suppose $0 \in E \subset B \subset \mathbb R^d$ with $B$ a ball of small radius and $f: B \rightarrow \mathbb R^m$ is a $C^1$-function.  For small $\beta >0$ define $Q_{\beta} = 1_{[0,\beta]}(|Df(0)|)$; this is the analogue of the kernel of $DF(x)$ in the differential geometry argument.  $Q_{\beta}^{\bot}$ is the analogue of the nondegenerate projection which represents $DF(x)$ after a change of bases.  Very roughly, by projecting onto $Q_{\beta}$ and $Q_{\beta}^{\bot}$, for any $\epsilon >0$
\begin{eqnarray*}
K_{\epsilon}(E) & \leq & K_{\epsilon}(Q_{\beta}(E)) \cdot K_{\epsilon}(f(E))
\end{eqnarray*}
where $K_{\epsilon}$ is the minimum number of elements in an $\epsilon$-cover of $E$ (the inequality is not true as stated; Lemma 4.2 is the exact statement).  If one takes an appropriate limit as $\epsilon \rightarrow 0$, then 
\begin{eqnarray*}
\dim(E) & \leq & \dim(Q_{\beta}(E)) + \dim(f(E))
\end{eqnarray*}
which is very close to the initial free entropy dimension inequality ($\dim$ is a heuristic term that isn't rigorously defined here).   The one slight problem is that $Q_{\beta}$ is a projection onto a subspace which might strictly contain the kernel.  But since the above holds for any $\beta$ one can take a limit as $\beta \rightarrow 0$.  Then $Q_{\beta}$ converges to $Q_0$, the projection onto the kernel of $Df(0)$ and one arrives at the free entropy dimension inequality stated in the previous section.  Notice that these estimates will only work in a small enough neighborhood $B$ because one needs uniform control of the derivative.  Using these inequalities, some general covering estimates, and putting this into the microstates machinery yields the stated free entropy dimension inequality.  The rigorous arguments take place in Section 4.

While a single spectral split yields a free entropy dimension bound in terms of the nullity of the derivative, it alone cannot establish the entropy estimate because of the lingering $\beta$ dimension.  To see why, it's helpful to think of Hausdorff/covering entropy as a kind of natural measure associated to the dimension of the set.  What happens when one takes the natural measure of a set and uses it to evaluate a set with a slightly larger dimension?  For example, the Hausdorff 2-measure of a 3-dimensional solid is infinite regardless of the size of the 3-dimensional solid.  This mismatch will occur for any $n$-dimensional manifold and $(n-1)$-dimensional Hausdorff measure.  So too will it occur with the lingering $\beta$-dimension - trying to show that $\mathbb K^{\alpha}(X)$ is finite for $\alpha = \text{Nullity}(D^sF(X))$ on a space which has dimension $\alpha$ plus a subspace of dimension dependent on $\beta$ and $X$ looks hopeless.   

The solution I'll present quantifies the relationship between the $\epsilon$-entropy and the excess dimension (Theorem 6.8).  Basically, any $\epsilon$-neighborhood of the microstate space looks like a subspace of dimension slightly larger than the right dimension.  The difference is quantified by $\beta$ and $D^sF(X)$.   The smaller one makes the $\epsilon$-neighborhood, the closer its entropy is to that of an $\epsilon$-ball in a subspace of normalized dimension $\alpha =\text{Nullity}(D^sF(X))$.   Thus, as $\epsilon \rightarrow 0$ the exponential growth rate of the local entropy decreases to $\alpha$, the 'right' rate.  On the other hand as $\epsilon$ shrinks, the number of balls required to cover the microstate space increases; thus, as $\epsilon \rightarrow 0$ the global entropy increases.  Quantifying the rate of the local entropy decrease and global entropy increase in terms of $\epsilon$ is crucial.  It is here that the geometric decay in the assumption, or determinant class (in the sense of L{\"u}ck), becomes indispensable.  Once this relationship is established, one can perform \textbf{iterative spectral splits} on a geometric scale of $\epsilon$ where $\epsilon$ remains fixed.  The approach is different from the one-time split of the spectrum in that for each split, one accounts for the entropy of both the degenerate and nondegenerate portions, pairing the gain with the loss.  Iterating the spectral splitting procedure and keeping tally of the cumulative entropy resulting from each iteration yields a finite upper bound.  This kind of argument will require significantly more complicated covering estimates than those presented in the dimension argument. It will include Chebyshev estimates, Rogers's asymptotic bounds for covers of balls by balls \cite{rogers}, coverings by sets of negligible dimension (bindings and fringes), and estimates using the quasi-norm spaces $L^p(M)$ for $0 < p <1$.  A more detailed description of the difficulties and intuition can be found in Section 5.  There, additional Euclidean estimates, von Neumann projection lemmas, and their relation to matricial calculus are presented.  Section 6 puts this all together to arrive at the entropy estimate.

There are three other sections aside from these core sections (3-6) and the introduction.  Section 2 will set up notation, review background material, and prove some technical lemmas, some of which are interesting in their own right (e.g., Lemma 2.7 which proves continuity of the covering number as a function of $\epsilon$).  Section 7 will present the group von Neumann algebra corollaries.  The appendix will review St. Raymond's volume estimates \cite{sr} and establish the necessary volume/metric entropy inequalities for the quasinorm balls in the $k\times k$ complex matrices.

\section{Preliminaries}

\subsection{Notation} A tracial von Neumann algebra consists of a tuple $(M, \varphi)$ where $M$ is a von Neumann algebra and $\varphi$ is a tracial state on $M$.  $\varphi$ will always be normal and faithful.  For any $0 <p < \infty$ $\|\cdot\|_p$ denotes the $p-$ (quasi)norm on $M$ given by $\|x\|_p = \varphi(|x|^p)^{1/p}$, $x \in M$.  When $p =\infty$ I will sometimes write $\|x\|_{\infty}$ for the operator norm of $x$.  Unless otherwise stated, $(M,\varphi)$ will denote a tracial von Neumann algebra.  $L^2(M)$ is the completion of $M$ under the $\|\cdot\|_2$-norm.  

Suppose $x$ is a normal element in $(M, \varphi)$. $\varphi$ induces a Borel measure $\mu$ on the spectrum of $x$, $sp(x)$, obtained by restricting $\varphi$ to $C(sp(x))$ via the spectral theorem and using the Riesz representation theorem to extend $\varphi$'s restriction to a Borel measure supported on $sp(x)$.  This measure will be called the spectral distribution of $x$.   If $f$ is a bounded, complex-valued Borel function on $sp(x)$, then $f(x)$ denotes the unique element obtained by extending the continuous functional calculus to the bounded, Borel functional calculus.  For a subset $S \subset \Omega$ I will write $1_S$ for the indicator function on $S$, i.e., $1_S(\omega) = 1$ if $\omega \in S$ and $1_S(\omega)=0$ if $\omega \in \Omega - S$.  With this notation, if $E \subset sp(x)$ is a Borel set, then $1_E(x)$ is the spectral projection associated to $E$ and $x$.  

Notice that these notions make sense for a symmetric, real linear operator $T$ from on a finite dimensional vector space (since it's diagonalizable).  The spectral calculus notation in the complex case will also be used in the real case.  For example, $|T|$ denotes $(T^*T)^{1/2}$ where $T^*$ is the transpose of $T$.  As in the complex case, in this situation one has a polar decomposition $T = U|T|$ where $U$ is an orthogonal matrix and $|T|$ is a symmetric, positive semidefinite operator.  These also make sense (as in the complex case) when the domain and range of $T$ are distinct real vector spaces (i.e., of different dimension).

For any $k \in \mathbb N$, $M_k(\mathbb C)$ denotes the space of $k \times k$ complex matrices and $tr_k$ denotes the unique tracial state on $M_k(\mathbb C)$.  $M^{sa}_k(\mathbb C)$ denotes the space of $k \times k$ self-adjoint complex matrices.   If $R>0$ then $(M_k(\mathbb C))_R$ and $(M^{sa}_k(\mathbb C))_R$ denote the sets of elements $x$ in $M_k(\mathbb C)$ or $M^{sa}(\mathbb C)$, respectively, whose operator norm is no greater than $R$.  $(M_k(\mathbb C))^n$ will denote the space of $n$-tuples of elements in $M_k(\mathbb C)$ and the $\|\cdot\|_2$-norm on this space is defined by $\|(\xi_1,\ldots, \xi_n)\|_2 = (\sum_{i=1}^n tr_k(\xi_i^*\xi_i))^{1/2}$.  This notation agrees with the 2-norm for a von Neumann algebra when $n=1$.  The $\|\cdot\|_{\infty}$-norm (sometimes written simply as $\|\cdot\|$) is defined by $\|(\xi_1,\ldots, \xi_n)\|_{\infty} = \max_{1\leq i \leq n} \|\xi\|_{\infty}$.  $(M^{sa}_k(\mathbb C))^n$, $(M_k(\mathbb C))_R)^n$, and $(M^{sa}_k(\mathbb C)_R)^n$ have analogous meanings - they are direct sums of $n$ copies of the inner terms.  If $\xi, \eta \in (M_k(\mathbb C))^n$, then $\xi \cdot \eta, \xi+\eta \in (M_k(\mathbb C))^n$ have the obvious coordinatewise meaning.  Given $\xi \in (M_k(\mathbb C))^n$ and $\epsilon, p \in (0,\infty)$, $B_p(\xi, \epsilon) \subset (M_k(\mathbb C))^n$ denotes the ball of $\|\cdot\|_p$-radius $\epsilon$ with center $\xi$.

Given a discrete group $\Gamma$, $L(\Gamma) \subset B(\ell^2(\Gamma))$ denotes the group von Neumann algebra generated by the left regular representation of $\Gamma$.  $e_{\Gamma}$ denotes the identity element of the group.  $\Gamma^{op}$ denotes the opposite group of $\Gamma$. 

$L(\Gamma)$ has a canonical, faithful, tracial state given by $\varphi(x) = \langle x 1_{e_{\Gamma}}, 1_{e_\Gamma} \rangle$.  Unless otherwise stated, $L(\Gamma)$ will be regarded as a tracial von Neumann algebra w.r.t. this canonical trace.  Recall that there exists a unique tracial state on $L(\Gamma) \iff L(\Gamma) \text{ is a factor} \iff \Gamma \text{ is i.c.c.}$.

For $n \in \mathbb N$, $\mathbb F_n$ denotes the free group on $n$ elements.  Suppose $a_1,\ldots, a_n$ are the canonical generators for $\mathbb F_n$.  Recall that there is a bijective correspondence between $\mathbb F_n$ and the reduced words $w$ in $a_1,\dots, a_n$.  If $g_1,\ldots, g_n$ are elements in a group $\Gamma$, then $w(g_1,\ldots,g_n)$ denotes the obvious element obtained by associating to $w$ the corresponding unique element $g \in \mathbb F_n$ and then setting $w(g_1,\ldots, g_n) = \pi(g)$ where $\pi:\mathbb F_n \rightarrow \Gamma$ is the unique group homomorphism such that $\pi(a_i) = g_i$, $1\leq i\leq n$.  I will blur the distinction between reduced words $w$ in $a_1,\ldots, a_n$, and the unique group element they represent in $\mathbb F_n$. 

Suppose $X=\{x_1,\ldots, x_n\}$ is a finite $n$-tuple of elements in a complex $*$-algebra and $F$ is a $p$-tuple of noncommutative $*$-polynomials in $n$ indeterminates.  Write $F = \{f_1,\ldots,f_p\}$.  $F(X)$ denotes the $p$-tuple of elements $\{f_1(X),\ldots, f_p(X)\}$.  Given another finite  $Y=\{y_1,\ldots, y_p\}$ in the same complex $*$-algebra, $X \cup Y$ denotes the concatenated finite tuple $\{x_1,\ldots, x_n, y_1,\ldots, y_p\}$.   When the elements of a tuple $X$ are identically $0$ or the identity this will written as $X=0$ or $X=I$, respectively.

Given a complex algebra $A$, $A^{op}$ denotes the opposite algebra of $A$.  Recall that $A^{op}=A$ as a vector space is $A$.  Given an element $a \in A$, I will also write $a \in A^{op}$.  Sometimes to reinforce that this element is regarded in $A^{op}$ I will write $a^{op} \in A^{op}$, however at times (only when multiplication is not an issue and in order to reduce notation) I'll simply use the shorthand $a \in A^{op}$.   $I$ will denote the identity of a given complex algebra $A$.

\subsection{Metric Notation, Packing/Covering, Sumsets}

Suppose $(M,d)$ is a metric space.  Given $\epsilon >0$ and $x \in M$, $B(x,\epsilon)$ denotes the open ball with radius $\epsilon$ and center $x$.  Given $X \subset M$, an $\epsilon$-cover for $X$ is a subset $\Lambda \subset M$ such that $\cup_{i \in \Lambda} B(x_i, \epsilon) \supset X$.   For any $\epsilon >0$ $K_{\epsilon}(X)$ denotes the minimum number of elements in an $\epsilon$-cover for $X$.  A subset $F$ of $M$ is $\epsilon$-separated if for any distinct elements $x, y \in F$, $d(x,y) \geq \epsilon$.  $S_{\epsilon}(X)$ denotes the maximum numbers of elements in an $\epsilon$-separated subset $F \subset X$.  Denote by $\mathcal N_{\epsilon}(X)$ the $\epsilon$-neighborhood of $X$ in $M$.  These $\epsilon$-parametrized metric concepts are closely related to one another.  Below are a number of their simple but very useful properties which are stated for convenience and without proof (they are all easy to verify):

\begin{proposition} Suppose $X$ is a subset of the metric space $(M,d)$.  The following hold:
\begin{enumerate} [(i)]
\item If $Y \subset X$, then for any $\epsilon >0$, $S_{\epsilon}(Y) \leq S_{\epsilon}(X)$, $K_{\epsilon}(Y) \leq K_{\epsilon}(X)$, and $\mathcal N_{\epsilon}(Y) \subset \mathcal N_{\epsilon}(X)$.
\item $S_{\epsilon}(X \cup Y) \leq S_{\epsilon}(X) + S_{\epsilon}(Y)$.
\item If $r > \epsilon >0$, then  $S_r(X) \leq S_{\epsilon}(X)$, $K_r(X) \leq K_{\epsilon}(X)$, and $\mathcal N_{\epsilon}(X) \subset \mathcal N_r(X)$.
\item If $\epsilon > 2\delta >0$, then $S_{\epsilon}(\mathcal N_{\delta}(X)) \leq S_{\epsilon - 2\delta}(X)$ and if $\epsilon > \delta >0$, then $K_{\epsilon}(\mathcal N_{\delta}(X)) \leq K_{\epsilon -\delta}(X)$.
\item $K_{\epsilon}(X) \leq S_{\epsilon}(X) \leq K_{\frac{\epsilon}{2}}(X)$.
\item If $(M, d)$ is Euclidean space of dimension $k$, $\mu$ is Lebesgue measure, and $c_k$ is the volume of the unit ball, then
\begin{eqnarray*}
\frac{\mu(\mathcal N_{\epsilon}(X))}{c_k \epsilon^k} & \leq & K_{\epsilon}(X) \\
                                                                                   & \leq & S_{\epsilon}(X) \\
                                                                                   & \leq & \frac{\mu(\mathcal N_{\epsilon/2}(X))}{c_k (\epsilon/2)^k}.
\end{eqnarray*}
\end{enumerate}
\end{proposition}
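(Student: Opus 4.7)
The plan is to verify each of the six items directly from the definitions; all are short exercises involving the triangle inequality and, for (vi), a comparison with Lebesgue measure.

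First, for (i) and (iii), the monotonicity statements are immediate. An $\epsilon$-separated subset of $Y$ remains $\epsilon$-separated in $X$, an $\epsilon$-cover of $X$ automatically covers any $Y \subset X$, and $\mathcal N_\epsilon(Y) \subset \mathcal N_\epsilon(X)$ by definition. For the parameter $\epsilon$, whenever $r > \epsilon$ an $r$-separated set is $\epsilon$-separated, an $\epsilon$-cover is an $r$-cover since balls only enlarge, and $\mathcal N_\epsilon \subset \mathcal N_r$ for the same reason. For (ii), I would decompose an $\epsilon$-separated subset $F \subset X \cup Y$ as $(F \cap X) \sqcup (F \setminus X)$, the two pieces being $\epsilon$-separated inside $X$ and $Y$ respectively.

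For (iv), given a maximal $\epsilon$-separated set $F \subset \mathcal N_\delta(X)$, I would choose for each $f \in F$ an approximant $x_f \in X$ with $d(f, x_f) < \delta$; since $\epsilon > 2\delta$ the map $f \mapsto x_f$ is injective, and its image is an $(\epsilon - 2\delta)$-separated subset of $X$ by the triangle inequality. The covering half is similar: any $(\epsilon - \delta)$-cover of $X$ enlarges to an $\epsilon$-cover of $\mathcal N_\delta(X)$. For (v), a maximal $\epsilon$-separated set $F \subset X$ is automatically an $\epsilon$-cover of $X$ (any point of $X$ at distance $\geq \epsilon$ from all of $F$ could be adjoined, violating maximality), giving $K_\epsilon(X) \leq S_\epsilon(X)$; and because any $\epsilon$-separated set contains at most one point in each ball of an $\epsilon/2$-cover (by triangle inequality), one obtains $S_\epsilon(X) \leq K_{\epsilon/2}(X)$.

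For (vi), the middle inequality is exactly (v). The right-hand bound follows from the observation that the balls $B(x, \epsilon/2)$ for $x$ in an $\epsilon$-separated subset of $X$ are pairwise disjoint and contained in $\mathcal N_{\epsilon/2}(X)$, so summing volumes yields $S_\epsilon(X) \cdot c_k (\epsilon/2)^k \leq \mu(\mathcal N_{\epsilon/2}(X))$. The left-hand bound is the volume counterpart: given an $\epsilon$-cover $\Lambda$ of $X$, the triangle inequality forces $\mathcal N_\epsilon(X)$ into a union of enlarged balls centered at $\Lambda$, yielding the desired lower bound on $K_\epsilon(X)$. I anticipate no serious obstacle; the whole proposition is routine bookkeeping with the triangle inequality, carried out separately on the three metric quantities $S_\epsilon$, $K_\epsilon$, and $\mathcal N_\epsilon$, with the mildest care needed in (vi) to keep track of the constants produced by the enlargements.
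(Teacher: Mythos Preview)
The paper itself offers no proof of this proposition (it is ``stated for convenience and without proof''), so there is nothing to compare against; your outline for (i)--(v) and for the right-hand inequality of (vi) is correct and exactly the standard argument.

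There is, however, a genuine slip in your treatment of the \emph{left} inequality of (vi), and it is worth flagging because the inequality as printed in the paper is in fact false. Your phrase ``the triangle inequality forces $\mathcal N_\epsilon(X)$ into a union of enlarged balls centered at $\Lambda$'' is exactly right, but ``enlarged'' here means radius $2\epsilon$, not $\epsilon$: if $\Lambda$ is an $\epsilon$-cover of $X$ then $\mathcal N_\epsilon(X)\subset\bigcup_{\lambda\in\Lambda}B(\lambda,2\epsilon)$, which only gives
\[
\frac{\mu(\mathcal N_\epsilon(X))}{c_k(2\epsilon)^k}\;\le\;K_\epsilon(X),
\]
off by a factor of $2^k$ from the stated bound. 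The stated bound cannot be recovered: take $X=\{0,\,3/2\}\subset\mathbb R$ and $\epsilon=1$. Then $\mathcal N_1(X)=(-1,5/2)$ has measure $7/2$, so $\mu(\mathcal N_1(X))/(c_1\cdot 1)=7/4$, whereas $K_1(X)=1$ since the single ball $B(3/4,1)$ already covers $X$. (The paper's definition of an $\epsilon$-cover allows centers anywhere in $M$, not just in $X$, which is what makes this possible.) So your argument is the correct one; it simply proves the correct inequality rather than the one written. For the paper's purposes this is harmless --- only the upper bound $S_\epsilon(X)\le\mu(\mathcal N_{\epsilon/2}(X))/(c_k(\epsilon/2)^k)$ is actually invoked later (e.g.\ in the appendix), and in any case a stray $2^k$ is invisible in the asymptotic regime $k\to\infty$ used throughout.
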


The (upper) covering dimension (also known as the box-counting, entropy, or Minkowski dimension) of $X$ is defined by 
\begin{eqnarray*}
\dim(X) = \limsup_{\epsilon \rightarrow 0} \frac{K_{\epsilon}(X)}{|\log \epsilon|}.
\end{eqnarray*}  
It is straightforward to check from the properties listed above that $\dim(X) = \limsup_{\epsilon \rightarrow 0} \frac{S_{\epsilon}(X)}{|\log \epsilon|}$ and when $X$ is a subset of Euclidean space of dimension $k$ and $\mu$ is Lebesgue measure, then
\begin{eqnarray*} 
\dim(X) = k + \limsup_{\epsilon \rightarrow 0} \frac{\log(\mu(\mathcal N_{\epsilon}(X)))}{|\log \epsilon|}.
\end{eqnarray*}

Suppose now that $E$ is a normed linear space and $A$ and $B$ are subsets of $E$.  The sumset of $A$ and $B$ is the subset of $E$ whose elements are of the form $a+b$ where $a \in A$ and $b \in B$.  This construction is often denoted by $A+B$, but because of the prolific use of the addition sign as an algebraic operation throughout this paper, I will write the sumset of $A$ and $B$ as $A \boxplus B = \{a+b: a \in A, b \in B\}$ to emphasize its set theoretic meaning.  If $A_1, \ldots, A_n$ is a sequence of subsets then their sumset is $\boxplus_{i=1}^n A_i = \{a_1 + \cdots + a_n : a_i \in A_i, 1\leq i \leq n\}$.  The sumset operations will be used primarily in the context of metric properties and covering estimates.  Note that the usage of $\boxplus$ conflicts with its standard meaning in free probability for the free additive convolution.  Since I will not be using the free additive convolution in this paper there should be no confusion.

By the triangle inequality if $\Lambda_i$ are $\epsilon$-covers for subsets $A_i$, then $\boxplus_{i=1}^n \Lambda_i$ is an $n\epsilon$-cover for $\boxplus_{i=1}^n A_i$.  Thus, there is the following simple and very coarse estimate:

\begin{lemma} If $E$ is a Banach space and $\langle A_i \rangle_{i=1}^n$ is a sequence of subsets of $E$, then for any $\epsilon >0$, 
\begin{eqnarray*} S_{2n\epsilon}(\boxplus_{i=1}^n A_i) & \leq & K_{n\epsilon}(\boxplus_{i=1}^n A_i) \\
                                                                                          & \leq & \Pi_{i=1}^n K_{\epsilon}(A_i) \\
                                                                                           & \leq & \Pi_{i=1}^n S_{\epsilon}(A_i).\\
\end{eqnarray*}
\end{lemma}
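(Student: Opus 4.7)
The lemma really packages three separate inequalities, only one of which has any content; the others are immediate from the preceding Proposition 2.1. The plan is to dispatch the bookend inequalities first, then give a short constructive argument for the middle one, based on the triangle-inequality remark that directly precedes the lemma.

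For the outer inequalities, Proposition 2.1(v) asserts $K_r(Y)\le S_r(Y)\le K_{r/2}(Y)$ for every bounded set $Y$ and every $r>0$. Applying the upper bound at radius $r=n\epsilon$ to $Y=\boxplus_{i=1}^n A_i$ gives $S_{2n\epsilon}(\boxplus_{i=1}^n A_i)\le K_{n\epsilon}(\boxplus_{i=1}^n A_i)$, which is the first inequality. Applying the lower bound at radius $\epsilon$ to each $A_i$ yields $K_\epsilon(A_i)\le S_\epsilon(A_i)$, and taking the product over $i$ produces the third inequality.

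For the middle inequality I would argue constructively. For each $i$, pick an $\epsilon$-cover $\Lambda_i\subset E$ of $A_i$ of cardinality $K_\epsilon(A_i)$ (allowing the trivial case $K_\epsilon(A_i)=\infty$, which makes the bound vacuous). Given any point $a_1+\cdots+a_n\in\boxplus_{i=1}^n A_i$, choose $\lambda_i\in\Lambda_i$ with $\|a_i-\lambda_i\|<\epsilon$; the triangle inequality then gives
\begin{equation*}
\Bigl\|\sum_{i=1}^n a_i-\sum_{i=1}^n\lambda_i\Bigr\|\le\sum_{i=1}^n\|a_i-\lambda_i\|<n\epsilon,
\end{equation*}
so the sumset $\boxplus_{i=1}^n\Lambda_i$ is an $n\epsilon$-cover of $\boxplus_{i=1}^n A_i$. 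Since $|\boxplus_{i=1}^n\Lambda_i|\le\prod_{i=1}^n|\Lambda_i|=\prod_{i=1}^n K_\epsilon(A_i)$, the middle inequality follows.

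There is no real obstacle here; the only mild subtlety is making sure to allow the vacuous infinite case so that no measurability or boundedness hypothesis on the $A_i$ is needed, and to note that the Banach-space hypothesis is used only through the triangle inequality, so the proof goes through verbatim for any normed space.
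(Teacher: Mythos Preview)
Your proof is correct and matches the paper's approach exactly: the paper states the lemma immediately after the triangle-inequality remark that $\boxplus_{i=1}^n \Lambda_i$ is an $n\epsilon$-cover of $\boxplus_{i=1}^n A_i$, and leaves the outer inequalities to Proposition~2.1(v), which is precisely what you do.
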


Finally, when $B = B(x,r) \subset E$, then for $s>0$, $sB = B(x, rs)$, the balls obtained from dilating $B$ by $s$ w.r.t. the center $x$.

\subsection{Microstates} 
Suppose $(M,\varphi)$ is tracial von Neumann algebra and $X =\{x_1,\ldots, x_n\}$ is an $n$-tuple of elements in $M$.  Given $m,k \in \mathbb C$ and $\gamma >0$ the $(m,k,\gamma)$ $*$-microstates $\Gamma(X;m,k,\gamma)$ consists of all elements $\xi = (\xi_1,\ldots, \xi_n) \in (M_k(\mathbb C))^n$ such that for any $1 \leq p \leq m$, $1 \leq i_1,\ldots, i_p \leq n$, and $j_1,\ldots, j_p \in \{1, *\}$,
\begin{eqnarray*}
|\varphi(x_{i_1}^{j_1} \cdots x_{i_p}^{j_p}) - tr_k(\xi_{i_1}^{j_1} \cdots \xi_{i_p}^{j_p})| < \gamma.
\end{eqnarray*}
There are several useful variants of the above.  For $R>0$, $\Gamma_R(X;m,k,\gamma)$ is the set consisting of all elements $\xi \in \Gamma(X;m,k,\gamma)$ such that $\| \xi \|_{\infty} \leq R$.  When $X$ consists of self-adjoint or unitary elements, one can impose the condition that each microstate's entries are self-adjoint or unitary complex matrices.  These sets will be denoted by $\Gamma^{sa}_{\cdot}(\cdot)$ and $\Gamma^u_{\cdot}(\cdot)$, respectively.  The original definition of matricial microstates was made in \cite{v1}.  While originally defined for finite tuples of self-adjoint elements, the unitary or general definition above is straightforward.

One can also consider the microstates of $X$ in the presence of $Y$ (introduced in \cite{v2}).  For this suppose $Y = \{y_1,\ldots, y_d\} \subset M$.  $\Gamma(X:Y;m,k,\gamma)$ consists all $\xi \in \Gamma(X;m,k,\gamma)$ for which there exists an $\eta \in (M_k(\mathbb C))^d$ such that $(\xi, \eta) \in \Gamma(X \cup Y:m,k,\gamma)$.  Another way of putting it is that $\Gamma(X:Y;m,k,\gamma)$ is the projection of $\Gamma(X \cup Y;m,k,\gamma)$ onto the first $n$ coordinates.

Denote by $\text{vol}$ Lebesgue measure w.r.t. the real inner product metric which $\|\cdot\|_2$ induces on $(M_k(\mathbb C))^n$ ($\|\xi \|_2^2 = \sum_{i=1}^n tr_k(\xi_i^*\xi)^{1/2}$).  Observe that under this identification, $(M_k(\mathbb C))^n$ is isomorphic as a real vector space to $\mathbb R^{2nk^2}$.  Define successively
\[ \chi(X;m,\gamma) = \limsup_{k \rightarrow \infty} k^{-2} \cdot \log(\text{vol}(\Gamma(X;m,k,\gamma))) + 2n \cdot \log k,
\]
\[ \chi(X) = \inf \{\chi(X;m,\gamma): m \in \mathbb N, \gamma >0\}.
\]
Note the scaling factor $2n \cdot \log k$ is different from that in \cite{v1} due to both the nonself-adjoint context as well as the normalization of the traces used here.  $\chi(X)$ is called the free entropy of $X$.  Replacing the microstates of $X$ with the microstates in the presence of $Y$ in the above yields a quantity $\chi(X:Y)$ called the free entropy of $X$ in the presence of $Y$.  One can impose operator norm cutoff conditions and obtain free entropy quantities $\chi_R(X)$ as well as $\chi_R(X:Y)$.   When $X$ consists of self-adjoint quantities one can consider the self-adjoint microstates and using Lebesgue measure on $(M^{sa}_k(\mathbb C))^n$ inherited from $\|\cdot\|_2$.   Replacing the scaling factor $2n \cdot \log k$ in the definition above with $n \log k$, one arrives at the free entropy of the self-adjoint tuple, denoted by $\chi^{sa}(X)$.

Other functions can be applied to the microstate spaces with a more geometric measure theoretic bent.  Using covering numbers w.r.t. the metric $\|\cdot\|_2$, define successively for any $\epsilon>0$
\[ \mathbb K_{\epsilon}(X;m,\gamma) = \limsup_{k \rightarrow \infty} k^{-2} \cdot \log(K_{\epsilon}(\Gamma(X;m,k,\gamma))),
\]
\[ \mathbb K_{\epsilon}(X) = \inf \{\mathbb K_{\epsilon}(X;m,\gamma): m \in \mathbb N, \gamma >0\}.
\]
One similarly defines $\mathbb S_{\epsilon}$ by replacing $K_{\epsilon}$ above with $S_{\epsilon}$.  Again, an operator norm cutoff can be introduced giving rise to notation such as $\mathbb K_{\epsilon, R}(\cdot)$ and $\mathbb S_{\epsilon, R}(\cdot)$.   One can also define corresponding covering and separated $\epsilon$-quantities for self-adjoint and unitary microstates spaces when $X$ consists of self-adjoint or unitary elements.  They will be notated by $\mathbb K_{\epsilon}^{sa}(X)$, $\mathbb S_{\epsilon}^{sa}(X)$, $\mathbb K_{\epsilon}^u(X)$, $\mathbb S_{\epsilon}^u(X)$.

The (modified) free entropy dimension of $X$, $\delta_0(X)$ is the common quantity
\begin{eqnarray*}
\delta_0(X) & = & \limsup_{\epsilon \rightarrow 0} \frac{\mathbb K_{\epsilon}(X)}{|\log \epsilon|} \\
                  & = & \limsup_{\epsilon \rightarrow 0} \frac{\mathbb S_{\epsilon}(X)}{|\log \epsilon|}. \\
\end{eqnarray*}
The equation above was not the original formulation of $\delta_0$ introduced in \cite{v2} involving semicircular (or what would in this context be circular) free perturbations, but it was show in \cite{j1} to be equivalent.  Again one can consider self-adjoint or unitary quantities, when $X$ consists of self-adjoint or unitary elements and they will be denoted by $\delta_0^{sa}$ and $\delta_0^u$, respectively.

The following says that the use of operator norm cutoffs or self-adjoint/unitary restrictions have no effect on the entropy quantities.  The free entropy claim of (i) is essentially contained in \cite{bb}.  The covering equalities (ii)-(iv) follow from \cite{j4}, Rogers's asymptotic estimates \cite{rogers}, and Remark 2.6.

\begin{proposition} If $R > \max_{x \in X} \|x\|$, and $\epsilon >0$ then the following are true:
\begin{enumerate}[(i)]
\item $\chi_R(X) = \chi(X)$.  If $X$ consists of self-adjoint elements, then $\chi_R^{sa}(X) = \chi^{sa}(X)$.
\item  $\mathbb K_{\epsilon}(X) = \mathbb K_{\epsilon, R}(X)$.   
\item If $X$ consists of self-adjoint elements, then $\mathbb K_{\epsilon}(X) = \mathbb K^{sa}_{\epsilon}(X) = \mathbb K^{sa}_{\epsilon, R}(X)$.
\item If $X$ consists of unitary elements, then $\mathbb K_{\epsilon}(X) = \mathbb K^u_{\epsilon}(X)$.
\end{enumerate}
\end{proposition}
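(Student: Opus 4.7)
The plan is to derive each equality from two ingredients: (a) a truncation or projection sending microstates to operator-norm-bounded, self-adjoint, or unitary microstates with controlled $\|\cdot\|_2$-displacement, and (b) Rogers's asymptotic covering estimates \cite{rogers} to absorb the resulting metric shift without changing the $k^{-2}\log$ growth rate.

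For (i), the inequality $\chi_R(X) \leq \chi(X)$ is immediate from $\Gamma_R \subset \Gamma$. For the reverse, moment closeness $|tr_k(|\xi_i|^{2p}) - \varphi(|x_i|^{2p})| < \gamma$ for $2p \leq m$ gives a Chebyshev-type control on the number of singular values of $\xi_i$ exceeding $R$; it implies that $\text{vol}(\Gamma(X;m,k,\gamma) \setminus \Gamma_R(X;m,k,\gamma))$ is dominated, up to a factor subexponential in $k^2$, by $\text{vol}(\Gamma_{R'}(X;m,k,\gamma))$ for any $R' > R$. This is essentially the argument of \cite{bb}, and monotone convergence as $R' \downarrow R$ yields $\chi(X) \leq \chi_R(X)$. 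The self-adjoint claim follows verbatim since the truncation preserves self-adjointness.

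For (ii), the nontrivial direction is $\mathbb K_\epsilon(X) \leq \mathbb K_{\epsilon,R}(X)$, and the key reduction I would prove is that for every $\delta > 0$ and every $m_0, \gamma_0$, there exist $m \geq m_0$, $\gamma \leq \gamma_0$ with
\[
\Gamma(X;m,k,\gamma) \subset \mathcal N_\delta(\Gamma_R(X;m_0,k,\gamma_0))
\]
for all sufficiently large $k$. For $\xi \in \Gamma(X;m,k,\gamma)$, set $\xi'_i = u_i \min(|\xi_i|, R)$ from the polar decomposition $\xi_i = u_i |\xi_i|$. Moment bounds give $\|\xi_i - \xi'_i\|_2^2 = tr_k((|\xi_i| - R)_+^2) < \delta^2$ (using that $\|x_i\| < R$ forces $(|x_i| - R)_+ = 0$), and continuity of $*$-polynomial evaluation in $\|\cdot\|_2$ on operator-norm-bounded sets places $\xi' \in \Gamma_R(X;m_0,k,\gamma_0)$. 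Proposition 2.1(iv) then yields
\[
K_\epsilon(\Gamma(X;m,k,\gamma)) \leq K_{\epsilon - \delta}(\Gamma_R(X;m_0,k,\gamma_0)),
\]
so $\mathbb K_\epsilon(X) \leq \mathbb K_{\epsilon-\delta, R}(X)$. Letting $\delta \downarrow 0$ and invoking Lemma 2.7 (or directly Rogers's estimate that $K_{\epsilon-\delta}/K_\epsilon$ is subexponential in the ambient dimension $2nk^2$) closes the gap at the stated $\epsilon$.

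For (iii) and (iv), the same scheme applies after an additional projection onto self-adjoints or unitaries. In the self-adjoint case, the hermitization $\xi_i \mapsto (\xi_i + \xi_i^*)/2$ has displacement $\|\xi_i - \xi_i^*\|_2^2 = 2 tr_k(\xi_i^*\xi_i) - 2\,\text{Re}\,tr_k(\xi_i^2)$, which tends to zero as the parameters tighten since $x_i = x_i^*$ gives $\varphi(x_i^*x_i) = \varphi(x_i^2) \in \mathbb R$. In the unitary case, the moment constraints force $\||\xi_i| - I\|_2 \to 0$, so the polar-decomposition projection $\xi_i \mapsto u_i$ has vanishing $\|\cdot\|_2$-displacement. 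Combined with the operator-norm cutoff of (ii) and Rogers's subexponential covering loss, this yields (iii) and (iv). The main obstacle throughout is this last subexponential-to-zero step: the naive truncation only produces $\mathbb K_\epsilon(X) \leq \mathbb K_{\epsilon - \delta, \cdot}(X)$, and closing to equality at the same parameter $\epsilon$ relies essentially on Rogers's asymptotic bound together with the metric-covering comparisons of Remark 2.6.
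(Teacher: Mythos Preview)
Your proposal is correct and follows essentially the same route as the paper: it defers (i) to \cite{bb}, and for (ii)--(iv) it uses a truncation/projection (operator-norm cutoff via polar decomposition, hermitization, or polar part) to place general microstates in a small $\|\cdot\|_2$-neighborhood of the restricted microstate space, then applies Proposition~2.1(iv) together with Rogers's asymptotic estimate (Remark~2.6 / Lemma~2.7) to remove the resulting $\delta$-shift---exactly the argument the paper sketches in Remark~2.8 for part (iii) and declares analogous for (ii) and (iv). The only slight imprecision is your appeal to ``continuity of $*$-polynomial evaluation in $\|\cdot\|_2$ on operator-norm-bounded sets'' to verify $\xi' \in \Gamma_R(X;m_0,k,\gamma_0)$, since $\xi$ itself is not a priori operator-norm bounded; the fix is to use H\"older with the higher $L^{2m_0}$-moments of $\xi$ (which are controlled once $m \geq 2m_0$), and this is the content of the cited Lemma~2.1 of \cite{j3}.
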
 

It was asked in \cite{v1} whether $\delta_0$ or some variant of it is a von Neumann algebra invariant.  More precisely, the question is this: if $X$ and $Y$ are finite tuples in a tracial von Neumann algebra which generate the same von Neumann algebra, then is $\delta_0(X)=\delta(Y)$?  An affirmative answer to this question would show (again by \cite{v1}) the nonisomorphism of the free group factors and settle a longstanding problem in operator algebras.  

It is known from \cite{v3} that if $X$ and $Y$ generate the same $*$-algebra, then $\delta_0(X)=\delta_0(Y)$ (Proposition 2.5 below will basically demonstrate this as well).  Thus, one can make sense of the free entropy dimension of a finitely generated complex $*$-algebra as the free entropy dimension of any of its finite generating sets.  In particular, for a finitely generated discrete group $G$ one can rigorously define $\delta_0(G)$ to be the free entropy dimension of the tuple consisting of the unitaries associated to any finite tuple of group generators.  

\cite{v2} applied the microstates theory to show that the free group factors have no Cartan subalgebras, and \cite{g} used it to show that they were prime, answering several old operator algebra questions.   These results were subsequently strengthened and generalized using different methods which were discussed in the introduction.

Motivated by geometric measure theoretic considerations, \cite{j3} effectively showed that $\delta_0$ is an invariant when a tuple has finite covering $1$-entropy.  Recall there that for $\alpha >0$, a finite tuple of self-adjoints $X$ in a tracial von Neumann algebra is said to be $\alpha$-bounded if there exist $K, \epsilon_0 >0$ such that for all $0 < \epsilon < \epsilon_0$,
\begin{eqnarray*}
\mathbb K^{sa}_{\epsilon}(X) \leq \alpha \cdot |\log \epsilon| + K.
\end{eqnarray*} 
If $X$ is $1$-bounded and contains an entry $x$ such that $\chi^{sa}(x) > -\infty$, then $X$ is said to be \textbf{strongly 1-bounded}.  It turns out that if $X$ is strongly $1$-bounded, then any other finite generating tuple $Y$ for the von Neumann algebra generated by $X$ satisfies the same inequality above, possibly with a different $\epsilon_0$ and $K$.  In particular, $\delta_0(Y) \leq 1$.   This allows one to distinguish strongly $1$-bounded von Neumann algebras (which are closed under diffuse intersections, normalizers, and pairwise commutation relations) from von Neumann algebras with (microstates) free entropy dimension strictly greater than $1$ (e.g. the free group factors as demonstrated in \cite{v1}).  

The key feature in the definition of strongly $1$-bounded is that the defining inequality propagates to any representative of the associated von Neumann algebra.  It reduces a von Neumann algebra nonexistence result to computing a numeric of one propitious generating set.  As a result of this, a tracial von Neumann algebra is said to be strongly $1$-bounded if it has a finite set of self-adjoint elements which is strongly $1$-bounded.

While $\alpha$-boundedness was phrased for finite tuples of self-adjoints, it makes sense for a finite tuple of general (possibly non-self-adjoint) elements.   Formally,

\begin{definition} A finite tuple $X$ in $(M,\varphi)$ is $\alpha$-bounded if there exist $K, \epsilon_0 >0$ such that for all $0 < \epsilon < \epsilon_0$,
\begin{eqnarray*}
\mathbb K_{\epsilon}(X) & \leq & \alpha \cdot |\log \epsilon| +K.
\end{eqnarray*}
\end{definition}

This definition coincides with the original one made when the finite tuple consists of self-adjoint elements (by Proposition 2.3).  Notice that if $X$ is $\alpha$-bounded, then $\delta_0(X) \leq \alpha$.  

Whether one works with self-adjoint, general, or unitary tuples is immaterial.  Indeed, one can move from one type of generating set to another with $*$-algebraic operations and invoke the following:

\begin{proposition} If $X$ and $Y$ are finite tuples of elements in $(M,\varphi)$ which generate the same complex $*$-algebra, then for any $R>0$ there exist $L, R_1 >0$ such that for any $\epsilon >0$,
\begin{eqnarray*}
\mathbb K_{\epsilon, R}(X) \leq \mathbb K_{\frac{\epsilon}{L}, R_1}(Y).
\end{eqnarray*}
It follows that there exists a $C >0$ such that for any $\epsilon >0$, $\mathbb K_{\epsilon}(X) \leq \mathbb K_{\epsilon/C}(Y)$.  
In particular, if $X$ is a general finite tuple and $X^{sa}$ is the tuple obtained by taking the real and imaginary portions of $X$, then $X$ is $\alpha$-bounded (as a finite tuple of general elements) iff $X^{sa}$ is $\alpha$-bounded (as a finite tuple of self-adjoint elements).  Also, $\delta_0(X) = \delta_0(X^{sa}) = \delta_0^{sa}(X^{sa})$.
\end{proposition}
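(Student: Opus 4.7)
The plan is to use that $X$ and $Y$ generate the same $*$-algebra to obtain $*$-polynomials $P=(P_1,\dots,P_n)$ and $Q=(Q_1,\dots,Q_m)$ with $x_i = P_i(y_1,\dots,y_m)$ and $y_j = Q_j(x_1,\dots,x_n)$, and then transport covers of microstates of $Y$ to covers of microstates of $X$ via the Lipschitz map $P$. The key algebraic identity driving everything is that the polynomials $R_i(x_1,\dots,x_n) := P_i(Q_1(x),\dots,Q_m(x)) - x_i$ satisfy $R_i(X)=0$ in $M$, so $P\circ Q$ is an ``approximate identity'' on microstates of $X$ in $\|\cdot\|_2$.

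First I would fix $R>\max_{x\in X}\|x\|$ and choose $R_1\geq \max_{y\in Y}\|y\|$ large enough that $\|Q(\xi)\|_\infty\leq R_1$ whenever $\xi\in ((M_k(\mathbb C))_R)^n$; the required bound depends only on $R$ and the coefficients/degrees of the $Q_j$. A product-rule estimate shows that $P\colon ((M_k(\mathbb C))_{R_1})^m \to (M_k(\mathbb C))^n$ is Lipschitz with respect to $\|\cdot\|_2$ with some constant $L=L(R_1,P)$. Then for any prescribed $m_X\in\mathbb N$ and $\gamma_X>0$, I would choose $m_Y$ large and $\gamma_Y$ small (and auxiliary tolerances $m_X'\geq m_X$, $\gamma_X'\leq \gamma_X$) so that three conditions hold: (a) $\eta\in\Gamma_{R_1}(Y;m_Y,k,\gamma_Y)$ implies $P(\eta)\in\Gamma_R(X;m_X,k,\gamma_X)$, since a mixed $*$-moment of $P(\eta)$ is a higher-degree mixed $*$-moment of $\eta$ whose value at $Y$ equals the corresponding moment of $X$; (b) symmetrically, $\xi\in\Gamma_R(X;m_X',k,\gamma_X')$ implies $Q(\xi)\in\Gamma_{R_1}(Y;m_Y,k,\gamma_Y)$; (c) $\|P(Q(\xi))-\xi\|_2 \leq \epsilon/2$ for every $\xi\in\Gamma_R(X;m_X',k,\gamma_X')$, which follows because $\sum_i tr_k(R_i(\xi)^*R_i(\xi))$ is a single mixed moment in $\xi$ whose limit at $X$ is $\sum_i\varphi(R_i(X)^*R_i(X))=0$.

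The covering inequality is then a direct chase. Given an $\epsilon/(2L)$-cover $\Lambda\subset (M_k(\mathbb C))^m$ of $\Gamma_{R_1}(Y;m_Y,k,\gamma_Y)$, I claim $P(\Lambda)$ is an $\epsilon$-cover of $\Gamma_R(X;m_X',k,\gamma_X')$: for $\xi$ in the latter, $\eta':=Q(\xi)$ lies in $\Gamma_{R_1}(Y;m_Y,k,\gamma_Y)$ by (b), so some $\eta\in\Lambda$ satisfies $\|\eta-\eta'\|_2<\epsilon/(2L)$, giving $\|P(\eta)-P(\eta')\|_2<\epsilon/2$ by the Lipschitz bound and $\|P(\eta')-\xi\|_2\leq\epsilon/2$ by (c). Taking $\limsup_k k^{-2}\log$ and then infima over $m_Y,\gamma_Y$ on the right and $m_X,\gamma_X$ on the left yields $\mathbb K_{\epsilon,R}(X)\leq \mathbb K_{\epsilon/(2L),R_1}(Y)$, which is the stated inequality with $L$ replaced by $2L$. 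The second assertion $\mathbb K_{\epsilon}(X)\leq \mathbb K_{\epsilon/C}(Y)$ follows by removing the operator-norm cutoffs via Proposition 2.3(ii). The $\alpha$-boundedness equivalence between $X$ and $X^{sa}$ follows by running the argument both ways (they generate the same $*$-algebra) and noting that $|\log(\epsilon/C)|/|\log\epsilon|\to 1$ as $\epsilon\to 0$; the identity $\delta_0(X^{sa})=\delta_0^{sa}(X^{sa})$ is then Proposition 2.3(iii).

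The main obstacle is the bookkeeping in step (c): one must choose $m_X',\gamma_X'$ (depending on the degrees and coefficients of the $R_i$'s as well as on $\epsilon$) consistently with the simultaneous requirements (a) and (b), and ensure that $P(\eta')\in\Gamma_R(X;m_X,k,\gamma_X)$ (not just close to it) so that the chase lands inside the cutoff space. Once the moment tolerances are arranged in the correct order, everything else is the Lipschitz chase and an invocation of Proposition 2.3.
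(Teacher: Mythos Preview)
Your proposal is correct and follows essentially the same approach as the paper: both arguments use the $*$-polynomial maps between $X$ and $Y$, the Lipschitz property of polynomials on operator-norm balls, and the approximate identity $P\circ Q\approx\mathrm{id}$ on microstates (your condition (c), the paper's bound $\|G(F(\xi))-\xi\|_2<\epsilon/4$). The only cosmetic difference is that the paper pushes $X$-microstates forward via $F=Q$ and bounds \emph{separation} numbers (showing $F$ is coarsely injective via the Lipschitz property of $G=P$), whereas you pull a cover of $Y$-microstates back via $P$ and bound \emph{covering} numbers directly; these are dual versions of the same Lipschitz chase. One small point to tidy: when you apply the Lipschitz bound $\|P(\eta)-P(\eta')\|_2<\epsilon/2$ you implicitly need $\eta\in\Lambda$ to lie in the operator-norm ball of radius $R_1$, so take the minimal cover with centers in $\Gamma_{R_1}(Y;m_Y,k,\gamma_Y)$ (at the cost of a harmless factor of $2$ in $L$); also, your condition (a) is never used in the chase and can be dropped.
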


\begin{proof}  By hypothesis there exist finite tuples of $*$-polynomials, $F$ and $G$, such that $F(X)=Y$ and $G(F(X))=Y$.  Given $R>0$, there exists and $L_1, R_1$ dependent on $R$ such that for any $k \in \mathbb N$ and $\xi, \eta \in ((M_k(\mathbb C))_R)^n$
\begin{enumerate}[(i)]
\item $L_1 \cdot \|F(\xi)-F(\eta)\|_2  \geq \|G(F(\xi)) - G(F(\eta))\|_2$.  
\item $\|F(\xi)\|_{\infty} < R_1$.
\end{enumerate}

Because $G(F(X))=X$ there exist $m_0 \in \mathbb N$, $\gamma_0>0$ such that for any $\xi \in \Gamma_R(X,m_0,k,\gamma_0)$, $\|G(F(\xi)) - \xi\|_2 < \epsilon/4$.  Using condition (i) above, for any  $\xi, \eta \in \Gamma_R(X,m_0,k,\gamma_0)$,
\begin{eqnarray*}
L_1 \cdot\|F(\xi) - F(\eta)\|_2 & \geq & \|G(F(\xi) - G(F(\eta))\|_2 \\
                                               & \geq & \|\xi - \eta\|_2 - \epsilon/2.\\
\end{eqnarray*}
It follows that for any $m_1 > m_0$ and $0 < \gamma_1 < \gamma_0$
\begin{eqnarray*}
S_{\epsilon}(\Gamma_R(X;m_1,k,\gamma_1)) & \leq & S_{\frac{\epsilon}{2L_1}}(F(\Gamma_R(X;m_1,k,\gamma_1))).\\
\end{eqnarray*}  

Given $m \in \mathbb N$ and $\gamma >0$, then there exist by condition (ii) corresponding $m_1 > m_0$ and $0 < \gamma_1 < \gamma_0$ such that $F(\Gamma_R(X;m_1,k,\gamma_1)) \subset \Gamma_{R_1}(Y;m,k,\gamma)$.  Combining this with the above and Proposition 2.1,
\begin{eqnarray*}
K_{\epsilon}(\Gamma_R(X;m_1,k,\gamma_1)) & \leq & S_{\epsilon}(\Gamma_R(X;m_1,k,\gamma_1)) \\ & \leq & S_{\frac{\epsilon}{2L_1}}(F(\Gamma_R(X;m_1,k,\gamma_1))) \\
                                                                   & \leq & S_{\frac{\epsilon}{2L_1}}(\Gamma_{R_1}(Y;m,k,\gamma)) \\
                                                                   & \leq & K_{\frac{\epsilon}{4L_1}}(\Gamma_{R_1}(Y;m,k,\gamma)).\\
\end{eqnarray*}
This being true for any $m \in \mathbb N$, $\gamma >0$, it follows that with $L = (4L_1)^{-1}$, 
\begin{eqnarray*}
\mathbb K_{\epsilon,R}(X) \leq \mathbb K_{\frac{\epsilon}{L}, R_1}(Y).  
\end{eqnarray*}
The first claim is established.  

The second claim follows from the first claim and Proposition 2.3.

For the third part, notice that the second claim implies that if $X$ and $Y$ generate the same $*$-algebra, then $X$ is $\alpha$-bounded iff $Y$ is $\alpha$-bounded.  Thus, if $X$ is a general finite tuple and $X^{sa}$ denotes the tuple consisting of the real and imaginary parts of $X$, then $X$ is $\alpha$-bounded iff $X^{sa}$ is $\alpha$-bounded (as a general tuple).  Proposition 2.3 shows that the $\epsilon$-covering numbers of a tuple of self-adjoints computed w.r.t. self-adjoint or general microstates coincide.  So $X^{sa}$ is $\alpha$-bounded as a general tuple iff $X^{sa}$ is $\alpha$-bounded as a tuple of selfadjoint elements.  This completes the third claim.

The fourth and final claim concerning $\delta_0$ is trivial from the first claim and Proposition 2.3.
\end{proof}

\subsection{Rogers's asymptotic bound}
 \cite{rogers} investigated $\epsilon$-covering estimates for the unit ball $B_d$ in $\mathbb R^d$ for large $d$.  One might expect that $K_{\epsilon}(B_d)$ should be the ratio of $1$ over $\epsilon$ raised to the ambient dimension $d$, i.e., $K_{\epsilon}(B_d) \sim (\frac{1}{\epsilon})^d$.  From simple volume comparison arguments one has a coarser estimate involving an additional exponential constant of $2$:

\begin{eqnarray*} K_{\epsilon}(B_d) \leq \left(\frac{2}{\epsilon}\right)^d = \frac{2^d}{\epsilon^d}.
\end{eqnarray*}

\noindent While this estimate (or a better one with a numerator of $1+\epsilon$) often suffices to get appropriate dimension bounds in the microstate setting, I'll need a sharper estimate where the numerator $2^d$ is replaced with a term with polynomial growth. Rogers proved in \cite{rogers} that there exists a universal constant $C_r$ such that for $d \geq \max\{1/\epsilon, 9\}$,

\begin{eqnarray*}  K_{\epsilon}(B_d) \leq \frac{C_r \cdot d^{5/2}}{\epsilon^d}.
\end{eqnarray*}

\noindent By dilating, it follows that if $B_d(\alpha)$ denotes the ball of radius $\alpha$ in $\mathbb R^n$, then for $d \geq \max\{\frac{\alpha}{\epsilon}, 9\}$,

\begin{eqnarray*}  K_{\epsilon}(B_d(\alpha)) \leq C_r \cdot d^{5/2} \cdot \left (\frac{\alpha}{\epsilon}\right)^d.
\end{eqnarray*}

\begin{remark} Suppose $\Omega \subset \mathbb R^d$ and $s, t >0$.  Observe that for $d > \max\{\frac{s+t}{s}, 9\}$ the result above implies 
\begin{eqnarray*}
K_{s}(\Omega) & \leq & C_r \cdot d^{5/2} \cdot \left (\frac{s+t}{s}\right)^d \cdot K_{s+t}(\Omega).
\end{eqnarray*}
To see this pick an $(s+t)$-cover $\langle x_i \rangle_{i \in I}$ for $\Omega$ such that $\#I = K_{s+t}(\Omega)$.  From the discussion above, for each $i$ the ball $B(x_i, s+t)$ has an $s$-cover $\langle y_{(i,j)} \rangle_{j \in J}$ where $J$ is an indexing set such that 
\begin{eqnarray*}
\#J & \leq & C_r \cdot d^{5/2} \cdot \left (\frac{s+t}{s}\right)^d.\\
\end{eqnarray*}
Clearly $\langle y_{(i,j)} \rangle_{(i,j) \in I \times J}$ is an $s$-cover for $\Omega$ and it has cardinality no greater than
\begin{eqnarray*}
 \#I \cdot \#J & \leq & C_r \cdot d^{5/2} \cdot \left (\frac{s+t}{s}\right)^d \cdot K_{s+t}(\Omega). \\
\end{eqnarray*}
\end{remark}

When Remark 2.6 is used for the $\epsilon$-coverings in the microstate setting, the polynomial term vanishes under the asymptotic logarithmic process and one recovers the kind of nested scaling property enjoyed by dyadic cubes.  More specifically one has the following which are interesting to compare with the corresponding properties proved in \cite{v4} for the free Fisher information of a semicircular perturbation:

\begin{lemma} Suppose $X$ is an $n$-tuple of operators in a tracial von Neumann algebra $M$.  Define $f:(0,\infty) \rightarrow \mathbb [0,\infty)$ by $f(t) = \mathbb K_t(X)$.  The following hold for $f$:
\begin{itemize}
\item $f$ is monotonically decreasing.
\item For any $s,t \in (0,\infty)$, $f(s) \leq f(s+t) + 2n \log \left(\frac{s+t}{s} \right)$.  
\item $f$ is continuous.
\end{itemize}
The same results hold if $X$ consists of self-adjoint elements and $f(t) = \mathbb K^{sa}_t(X)$.
\end{lemma}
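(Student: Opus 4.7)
The plan is to establish the three properties in order, since monotonicity and the scaling inequality together force continuity. Throughout I would argue at the level of the microstate quantities $\mathbb{K}_\epsilon(X;m,\gamma)$ and pass to the infimum over $m,\gamma$ only at the end.

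\textbf{Monotonicity.} This is essentially free from Proposition 2.1(iii): if $r > \epsilon > 0$ then $K_r(\Gamma(X;m,k,\gamma)) \leq K_\epsilon(\Gamma(X;m,k,\gamma))$ for every $m,k,\gamma$, so after applying $\limsup_{k\to\infty} k^{-2}\log(\cdot)$ and then $\inf_{m,\gamma}$ one obtains $f(r) \leq f(\epsilon)$.

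\textbf{The scaling inequality.} This is where Rogers's bound as formulated in Remark 2.6 is essential. Fix $s,t > 0$ and $m,\gamma$. Since $\Gamma(X;m,k,\gamma) \subset (M_k(\mathbb{C}))^n$, which is a real $d$-dimensional inner product space with $d = 2nk^2$, Remark 2.6 (applied to this Euclidean space and to $\Omega = \Gamma(X;m,k,\gamma)$) gives, once $k$ is large enough that $2nk^2 \geq \max\{(s+t)/s,9\}$,
\begin{equation*}
K_s(\Gamma(X;m,k,\gamma)) \;\leq\; C_r \cdot (2nk^2)^{5/2} \cdot \left(\tfrac{s+t}{s}\right)^{2nk^2} \cdot K_{s+t}(\Gamma(X;m,k,\gamma)).
\end{equation*}
Taking logarithms, dividing by $k^2$, and letting $k\to\infty$, the polynomial factor $C_r(2nk^2)^{5/2}$ contributes $o(1)$ while the exponential factor contributes $2n\log\!\bigl((s+t)/s\bigr)$. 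Therefore
\begin{equation*}
\mathbb{K}_s(X;m,\gamma) \;\leq\; \mathbb{K}_{s+t}(X;m,\gamma) + 2n\log\!\left(\tfrac{s+t}{s}\right).
\end{equation*}
Taking the infimum over $m \in \mathbb{N}$ and $\gamma > 0$ on both sides yields the claimed inequality $f(s) \leq f(s+t) + 2n\log((s+t)/s)$.

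\textbf{Continuity.} Combine the two properties. Fix $s > 0$. Monotonicity gives that the one-sided limits $f(s^-) := \lim_{t \to 0^+} f(s-t)$ (for $0 < t < s$) and $f(s^+) := \lim_{t \to 0^+} f(s+t)$ exist with $f(s^+) \leq f(s) \leq f(s^-)$. For right-continuity, the scaling inequality applied at $s$ gives $f(s) \leq f(s+t) + 2n\log((s+t)/s)$; letting $t \to 0^+$ shows $f(s) \leq f(s^+)$, hence equality. For left-continuity, apply the scaling inequality at $s-t$ (with $0 < t < s$) to obtain $f(s-t) \leq f(s) + 2n\log(s/(s-t))$; letting $t \to 0^+$ gives $f(s^-) \leq f(s)$, hence equality. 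Thus $f$ is continuous at every $s > 0$. The self-adjoint case is identical, using $d = nk^2$ in place of $2nk^2$ (which only changes the constant in the scaling inequality from $2n$ to $n$, and does not affect continuity).

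No step is genuinely difficult; the only point that requires care is making sure the polynomial prefactor $d^{5/2}$ in Rogers's bound is killed by the $k^{-2}$ normalization, which is exactly why Remark 2.6 was recorded in the form given. Without the Rogers improvement over the naive volume estimate $(2/\epsilon)^d$, one would instead pick up a constant $2n\log 2$ that does not vanish as $t \to 0^+$, and the continuity argument would collapse.
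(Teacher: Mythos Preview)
Your proof is correct and follows essentially the same route as the paper: monotonicity from Proposition 2.1(iii), the scaling inequality from Rogers's bound via Remark 2.6, and continuity as a formal consequence of the two. The paper's proof simply asserts that continuity follows from the first two properties, whereas you spell out the one-sided limit argument explicitly; your closing remark on why the naive $(2/\epsilon)^d$ estimate would not suffice is a correct and useful observation not made in the paper.
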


\begin{proof} For the first property for $s, t \in (0,\infty)$ such that $s<t$, and for any $m,k \in \mathbb N$ and $\gamma >0$ 
\begin{eqnarray*}
K_s(\Gamma(X;m,k,\gamma)) \geq K_t(\Gamma(X;m,k,\gamma)).
\end{eqnarray*}  
Passing this through the limiting process, one has $f(s) = \mathbb K_s(X) \geq \mathbb K_t(X) = f(t)$.

For the second property, suppose $s,t \in (0,\infty)$ and again, $m,k \in \mathbb N$ and $\gamma >0$.  By Remark 2.6,

\begin{eqnarray*} \mathbb K_s(X;m,\gamma) & = & \limsup_{k \rightarrow \infty} k^{-2} \cdot \log \left(K_s(\Gamma(X;m,k,\gamma)) \right) \\& \leq & \limsup_{k \rightarrow \infty} k^{-2} \cdot \log \left(\frac{C_r (2nk^2)^{5/2}(s+t)^{2nk^2}}{s^{2nk^2}} \cdot K_{s+t}(\Gamma(X;m,k,\gamma))\right) \\ & \leq & \mathbb K_{s+t}(X;m,\gamma) + 2n \log \left(\frac{s+t}{s} \right).\\
\end{eqnarray*}

\noindent As this holds for any $m, \gamma$, $f(s) = \mathbb K_s(X) \leq \mathbb K_{s+t}(X) + 2n \log \left ( \frac{s+t}{s} \right) = f(s+t) + 2n \log \left(\frac{s+t}{s} \right)$.

The third property follows from the first and second ones.

The self-adjoint situation is completely analogous to the nonself-adjoint arguments presented above.
\end{proof}

\begin{remark}  Rogers's result can be used to prove statements (ii)-(iv) of Proposition 2.3.  I'll show this for the Proposition 2.3(iii) and leave the other two to the reader, as they are completely analogous.  The third statement says that for $R > \max_{x \in X} \|x\|$ with $X$ a finite tuple consisting of self-adjoint elements, that for any $\epsilon >0$, $\mathbb K_{\epsilon}(X) = \mathbb K^{sa}_{\epsilon}(X) = \mathbb K^{sa}_{\epsilon, R}(X)$.   From Proposition 2.1(i) and the inclusions, $\Gamma(X;m,k,\gamma) \supset \Gamma^{sa}(X;m,k,\gamma) \supset \Gamma_R^{sa}(X;m,k,\gamma)$ it follows that $\mathbb K_{\epsilon}(X) \geq \mathbb K^{sa}_{\epsilon}(X) \geq \mathbb K^{sa}_{\epsilon, R}(X)$.  It remains then to prove the reverse inequalities of this chain.

Fix $t >0$.  For sufficiently large $m \in \mathbb N$ and small $\gamma >0$, if $\xi \in \Gamma(X:m,k,\gamma)$, then $\|\xi - (\xi+\xi^*)/2\|_2 < t$.  Thus, $\Gamma(X;m,k,\gamma) \subset \mathcal N_t(\Gamma^{sa}(X;m,k,\gamma))$.  By Proposition 2.1(iv), 
\begin{eqnarray*}
K_{\epsilon}(\Gamma(X;m,k,\gamma)) & \leq & K_{\epsilon}(\mathcal N_t(\Gamma^{sa}(X;m,k,\gamma))) \\
                                                              & \leq & K_{\epsilon - t}(\Gamma^{sa}(X;m,k,\gamma)).\\
\end{eqnarray*}
Passing this through the limiting process, $\mathbb K_{\epsilon}(X) \leq \mathbb K_{\epsilon-t}^{sa}(X)$.   The continuity property for the self-adjoint case in Lemma 2.6 shows that $\mathbb K_{\epsilon}(X) \leq \mathbb K_{\epsilon}^{sa}(X)$ which is the first of the two remaining inequalities.

Turning to the inequality $\mathbb K^{sa}_{\epsilon}(X) \leq \mathbb K^{sa}_{\epsilon, R}(X)$, when $R > \max_{x \in X} \|x\|$, Lemma 2.1 of \cite{j3} shows that for a given $t >0$ and any $m_0 \in \mathbb N$ and $\gamma_0>0$, there exist an $m \in \mathbb N$ and $\gamma >0$ such that 
\begin{eqnarray*}
\Gamma^{sa}(X;m,k,\gamma) & \subset & \mathcal N_t(\Gamma^{sa}_R(X;m_0, k, \gamma_0)).\\
\end{eqnarray*}
By Proposition 2.1, (i) and (iv), and Remark 2.6
\begin{eqnarray*}
K_{\epsilon}(\Gamma^{sa}(X;m,k,\gamma)) & \leq & K_{\epsilon}(\mathcal N_t(\Gamma^{sa}_R(X;m_0,k,\gamma_0))) \\
                                                                      & \leq & K_{\epsilon-t}(\Gamma^{sa}_R(X;m_0,k,\gamma_0))\\
                                                                      & \leq & C_r \cdot (2nk^2)^{5/2} \cdot \left (\frac{\epsilon}{\epsilon-t}\right)^{2nk^2} \cdot K_{\epsilon}(\Gamma^{sa}_R(X;m_0,k,\gamma_0)) \\
\end{eqnarray*}
Applying $\limsup_{k\rightarrow \infty} k^{-2} \log$ on both sides, 
\begin{eqnarray*} 
\mathbb K^{sa}_{\epsilon}(X) & \leq & \mathbb K^{sa}_{\epsilon}(X;m,\gamma) \\
                                               & \leq & \mathbb K^{sa}_{\epsilon, R}(X;m_0,\gamma_0) + 2n \cdot [\log(\epsilon) - \log(\epsilon-t)].
\end{eqnarray*}  
This is true for any $m_0 \in \mathbb N$, $\gamma_0 >0$.  Thus, $\mathbb K^{sa}_{\epsilon}(X) \leq \mathbb K^{sa}_{\epsilon, R}(X)+  2n \cdot [\log(\epsilon) - \log(\epsilon-t)]$.  $t >0$ was arbitrary, so it follows that $\mathbb K^{sa}_{\epsilon}(X) \leq \mathbb K^{sa}_{\epsilon, R}(X)$ as promised.
\end{remark}

\subsection{Derivatives} 

I'll set forth here some basic notation for derivatives and recall a few fundamental facts.  Details can be found in many places, e.g., \cite{lang}.  It will be convenient to speak about derivatives in the general context of real Banach spaces.   Suppose $A_1, \ldots, A_n, B_1,\ldots, B_p$ are real Banach spaces and $A = \oplus_{i=1}^n A_i$ and $B = \oplus_{j=1}^p B_j$ are the direct sum Banach spaces.  If $U \subset A$ is open and $F:U \rightarrow B$, one can write $F(a) = (F_1(a), \ldots, F_p(a))$ where the $F_j: A \rightarrow B_j$.  Assume $F$ is differentiable at $a$ with derivative denoted by $DF(a)$.  Just as in multivariable calculus, $DF(a)$ can be canonically represented as a matrix:

\begin{eqnarray*}                            
DF(a)  & = & \begin{bmatrix} \partial_1F_1(a) & \cdots & \partial_nF_1(a) \\ 
                                              \vdots &  & \vdots \\
                                              \partial_1F_p(a) & \cdots & \partial_nF_p(a) \\ 
                      \end{bmatrix}\\                               
\end{eqnarray*}
where the $\partial_iF_j(a): E_i \rightarrow F_j$ are defined exactly as in the Euclidean case.  To be clear the $i$th partial derivative of $F_j$ exists at $a$ if there exists a bounded (real) linear map $D_iF_j(a):E_j \rightarrow F_j$ such that

\begin{eqnarray*}
\lim_{h \rightarrow 0} \frac{\|F_j(a_1,\ldots, a_i+h, \ldots, a_n) - F_j(a) - (D_iF_j(a))(h)\|} {\|h\|} = 0
\end{eqnarray*}
As in the Euclidean case, $F$ is smooth iff the partial derivatives are smooth.

There is an integral version of the mean value theorem here as well, namely that if $a_1, a_2 \in U$ and the line segment joining $a_1$ and $a_2$ lies in $U$, then 
\begin{eqnarray*}
F(a_2) - F(a_1) & = & \left[ \int_0^1 DF(a_1 + t(a_2 -a_1)) \, dt \right] (a_2-a_1).
\end{eqnarray*}

Note that if the $A_i$ and $B_j$ are finite dimensional, then all norms are equivalent, and differentiability in one norm is equivalent to differentiability in any other norm and the derivatives (as linear maps) are one and the same.  In particular, if $A_i = B_j = M_k(\mathbb C)$ for some fixed $k$, and $F$ is differentiable at $a$ w.r.t. the operator norm, then $F$ is differentiable at $a$ w.r.t. 
any Schatten norm, and in particular the real Hilbert space direct sum of $L^2$-norms (normalized or unnormalized) induced on $A$ and $B$.  Thus, if $F$ is a $p$-tuple of (noncommutative) $*$-polynomials, then $F$ is differentiable w.r.t. the real Hilbert space direct sum norms induced on $A$ and $B$, with a derivative equal to the derivative computed w.r.t. the direct sum operator norms induced on $A$ and $B$.

\subsection{Moment Convergence and Spectral Projections}

Given a positive operator in a tracial von Neumann algebra and a microstate, I will need to know how the associated spectral projections are related.   It will be convenient to state the results in a context slightly more general than that of microstates and towards this end I'll introduce some convenient notation.  If $X=\{x_1,\ldots, x_n\}$ and $Y=\{y_1,\ldots, y_n\}$ are $n$-tuples in tracial von Neumann algebras $(M,\varphi)$ and $(N, \psi)$, respectively, and $R, \gamma >0$ and $m \in \mathbb N$, then I will write $X \approx^{R,m,\gamma} Y$ provided that the $x_i$ and $y_i$ have operator norms no greater than $R$ and that their $*$-moments are close by an order of $m$ and $\gamma$, i.e., for any $1 \leq p \leq m$ and $1 \leq i_1,\ldots, i_p \leq n$ and $j_1, \ldots, j_p \in \{1,*\}$,
\begin{eqnarray*}
|\varphi(x_{i_1}^{j_1} \cdots x_{i_p}^{j_p}) - \psi(y_{i_1}^{j_1} \cdots y_{i_p}^{j_p})| < \gamma.
\end{eqnarray*}
If $X=\{x\}$ and $Y=\{y\}$, I will simply write this as $x \approx^{R,m,\gamma} y$.  Note that this notion makes sense when $X=\{x\}$, $Y=\{y\}$, and $x$ happens to be a real linear operator on a finite dimensional (real) vector space.  In this case the $*$-moments are taken w.r.t. the real normalized trace on the operators acting on the finite dimensional real vector space and the adjoint is replaced with the transpose.

In the following lemma $(M,\varphi)$ and $(N, \psi)$ denote tracial von Neumann algebras.

\begin{lemma}  Suppose $a \in M$ is positive with $\|a\| \leq R$.  If $c, s >0$, then there exist an $m \in \mathbb N$, $\gamma >0$, dependent only on $c,s, R$ such that for any positive operator $b \in N$ satisfying $b \approx^{R,m,\gamma} a$ 

\begin{eqnarray*} \varphi(1_{[0,c)}(a)) -s & < & \tau(1_{[0,c)}(b)) \\
                                                                          & \leq & \tau(1_{[0,c]}(b)) \\
                                                                           & \leq & \varphi(1_{[0, c]}(a)) + s.\\
\end{eqnarray*}
The same conclusion holds if $b$ is a positive semidefinite real linear operator on a finite dimensional, real vector space.
\end{lemma}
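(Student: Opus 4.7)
The plan is to approximate the indicators $1_{[0,c)}$ and $1_{[0,c]}$ on $[0,R]$ by continuous functions sandwiched between slightly wider indicators, then approximate those uniformly by polynomials via Stone--Weierstrass. Once everything is polynomial, the hypothesis $b \approx^{R,m,\gamma} a$ converts the trace estimates directly into a bounded number of $*$-moment inequalities.

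First I would use bounded convergence applied to the spectral distribution of $a$. Since $1_{[0,c+\delta]} \downarrow 1_{[0,c]}$ and $1_{[0,c-\delta]} \uparrow 1_{[0,c)}$ pointwise as $\delta \downarrow 0$, I can choose $\delta > 0$ small (say $\delta < \min\{c,1\}$) with
$$\varphi(1_{[0,c+\delta]}(a)) \leq \varphi(1_{[0,c]}(a)) + s/4 \quad \text{and} \quad \varphi(1_{[0,c-\delta]}(a)) \geq \varphi(1_{[0,c)}(a)) - s/4.$$
Then define piecewise-linear $g_+, g_-\colon [0,R]\to[0,1]$ by $g_+ = 1$ on $[0,c]$, $g_+ = 0$ on $[c+\delta,R]$, linear in between, and $g_- = 1$ on $[0,c-\delta]$, $g_- = 0$ on $[c,R]$, linear in between. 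By construction $1_{[0,c]} \leq g_+ \leq 1_{[0,c+\delta]}$ and $1_{[0,c-\delta]} \leq g_- \leq 1_{[0,c)}$ pointwise on $[0,R]$. By Stone--Weierstrass, pick polynomials $p_\pm$ with $\sup_{[0,R]}|p_\pm - g_\pm| < s/4$, and let $m_0$ be the largest degree of $p_+, p_-$.

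Now choose $m \geq m_0$ and $\gamma > 0$ so small that $b \approx^{R,m,\gamma} a$ forces $|\tau(p_\pm(b)) - \varphi(p_\pm(a))| < s/4$; this is possible because $\tau(p_\pm(b))$ and $\varphi(p_\pm(a))$ are polynomial expressions in the $*$-moments of $b$ and $a$ of degree at most $m_0$, with fixed coefficients coming from $p_\pm$. Using $\|b\|\leq R$ so that $|\tau(f(b))| \leq \sup_{[0,R]}|f|$ for continuous $f$, the upper bound chains as
\begin{align*}
\tau(1_{[0,c]}(b)) &\leq \tau(g_+(b)) \leq \tau(p_+(b)) + s/4 \leq \varphi(p_+(a)) + s/2 \\
&\leq \varphi(g_+(a)) + 3s/4 \leq \varphi(1_{[0,c+\delta]}(a)) + 3s/4 \leq \varphi(1_{[0,c]}(a)) + s,
\end{align*}
and the analogous computation with $g_-, p_-$ gives $\tau(1_{[0,c)}(b)) \geq \varphi(1_{[0,c)}(a)) - s$; strictness is recovered by running the argument with $s/2$ in place of $s$ at the start. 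The middle inequality $\tau(1_{[0,c)}(b)) \leq \tau(1_{[0,c]}(b))$ is immediate from $1_{[0,c)} \leq 1_{[0,c]}$.

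There is no conceptual obstacle, only bookkeeping: one must choose $\delta$, then $g_\pm$, then $p_\pm$, then $(m,\gamma)$ in this order so that each parameter is fixed before the next is selected, and note that $m,\gamma$ end up depending on $a$ through $\delta$ (not merely on $c,s,R$, since the spectral measure of $a$ may concentrate near $c$). For the real case, any positive semidefinite symmetric operator on a finite-dimensional real inner product space is orthogonally diagonalizable, so it admits a continuous and polynomial functional calculus satisfying $|\tau(f(b))| \leq \sup_{[0,R]}|f|$ for continuous $f$, and the identical argument applies verbatim.
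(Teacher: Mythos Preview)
Your proof is correct and follows essentially the same approach as the paper's: approximate the indicator functions by continuous functions sandwiching them, pass to polynomials via Stone--Weierstrass, and then use the moment hypothesis to transfer trace estimates from $a$ to $b$. Your parenthetical observation that $m,\gamma$ in fact depend on $a$ (through $\delta$, or in the paper's version through the choice of $N$) and not merely on $c,s,R$ is accurate---the paper's proof has the same dependence, so the clause ``dependent only on $c,s,R$'' in the statement is slightly overstated in both cases.
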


\begin{proof}  There exists a monotonic decreasing sequence of uniformly bounded continuous functions which converges pointwise to $1_{[0,\alpha]}$ on $[0,R]$ such that for any $f$ in the sequence, $\inf_{t \in [0,R]} (f(t) - 1_{[0,\alpha]}(t)) > 0$.  Similarly there exists a monotonic increasing sequence of uniformly bounded continuous functions converging pointwise to $1_{[0,\alpha)}$ on $[0,R]$ such that each for any such function $f$ in the sequence, $\inf_{t \in [0,R]}1_{[0,\alpha]}(t) - f(t) > 0$.  By Stone-Weierstrass there exist uniformly bounded sequences of real polynomial functions $f_n$ and $g_n$ on $[0,R]$ such that $f_n \rightarrow 1_{[0,c]}$ and $g_n \rightarrow 1_{[0,c)}$ pointwise and  $f_n(t) \geq 1_{[0,c]}(t)$ and $1_{[0,c)}(t) \geq g_n(t)$ for all $t \in [0,R]$.  By Lebesgue's dominated convergence theorem and the Borel spectral theorem, $\lim_{n\rightarrow \infty} \varphi(f_n(a)) = \varphi(1_{[0,c]}(a))$ and $\lim_{n\rightarrow \infty} \varphi(g_n(a)) = \varphi(1_{[0,c)}(a))$.  Fix an $N$ such that $\varphi(f_N(a)) \leq \varphi(1_{[0,c]}(a)) +s/2$ and $\varphi(1_{[0,c)}(a)) - s/2 < \varphi(g_N(a))$.  Pick $m$ and $\gamma$ (dependent only on $f_N, s, c$) such that for any positive $b \in N$ satisfying $b\approx^{R, m, \gamma}a$, $|\tau(f_N(b)) - \varphi(f_N(a))| < s/2$ and $|\tau(g_N(b)) - \varphi(g_N(a))| < s/2$.  

\begin{eqnarray*} \varphi(1_{[0,c)}(a)) -s & \leq  & \varphi(g_N(a)) - s/2 \\ & \leq & \tau(g_N(b)) \\ & \leq & \tau(1_{[0,c)}(b)) \\  & \leq & \tau(1_{[0,c]}(b)) \\
                                        & \leq & \tau(f_N(b)) \\
                                       & \leq & \varphi(f_N(a)) + s/2) \\
                                       & \leq & \varphi(1_{[0,c]}(a)) + s.\\ 
\end{eqnarray*}
This completes the first claim.  The second claim for $b$ a positive semidefinite real linear operator follows by the same argument or alternatively, by realizing $b$ as a real matrix embedded in the space of complex matrices and applying the result above.
\end{proof}

\subsection{Fuglede-Kadison-L{\"u}ck Determinant, Spectral Projections, Finiteness Properties, Rank, Nullity}

Geometric decay is a condition on the traces of the spectral projections of a positive operator.  It turns out to be equivalent to the condition that the Fuglede-Kadison-L{\"u}ck determinant is nonzero (Section 6, Lemma 6.2), also known as being of \textbf{determinant class} \cite{luckbook} (see the references therein, including \cite{burg}, as well as \cite{pdh}).  

Suppose $x$ is an element in the tracial von Neumann algebra $(M,\varphi)$.  Denote by $\mu$ the spectral distribution of $|x|$ induced by $\varphi$ and by $E_t$ the spectral measure for $|x|$.  To be clear, $E_t$ is the projection-valued measure obtained from extending the continuous functional calculus on $|x|$.  Recall the Fuglede-Kadison-L{\"u}ck Determinant of $x$, referred to as the 'generalized Fuglede-Kadison determinant' in \cite{luckbook}.  Borrowing the terminology and implied name in the  exposition \cite{pdh} this is the common quality
\begin{eqnarray*} \text{det}_{FKL}(x) & = & \exp \left ( \lim_{\epsilon \rightarrow 0^+} \int_{\epsilon}^{\infty} \log t \, d\varphi(E_t) \right) \\ & = &  \exp \left (\int_{(0,\infty)} \log(\lambda) \, d\mu(\lambda) \right)\\
\end{eqnarray*}
when the integral is finite, and $0$ otherwise.  Notice that $\det_{FKL}(x) \in [0,\infty)$.  $x$ is said to be of determinant class (\cite{luckbook}) when $\det_{FKL}(x) >0$.  

I want to review here three properties of determinant class/geometric decay (expressed in terms of traces of spectral projections): monotonicity under operator ordering, invariance under row reduction operations, and upper-triangular formulas.  Choosing to phrase it in terms of determinant class or spectral projections is a matter of taste/convenience.  Both formulations are useful.

The operator ordering result will be expressed in terms of traces of spectral projections.  It relates the ordering of positive elements to their spectral distributions.  
\begin{WIFP} Suppose $0 \leq a \leq b$ are elements in a tracial von Neumann algebra $(M, \varphi)$.  For any $t >0$, 
\begin{eqnarray*}
\varphi(1_{[0,t]}(a)) & \geq & \varphi(1_{[0,t]}(b)).
\end{eqnarray*}
\end{WIFP}
In the matrix case (and thus by routine approximation for operators embeddable into an ultraproduct of the hyperfinite $\mathrm{II}_1$-factor), the above inequality follows from Weyl's inequality.  It holds in the general context of a tracial von Neumann algebra (e.g., Lemma 2.5 (iii) in \cite{fk}).  

The second property will show that when performing finitely many elementary row operations on derivatives one can retain control of spectral projections and rank.   This is obvious in the finite dimensional case and in the tracial case it's just a matter of writing out the analogous notions and drawing the natural connections.

Denote by $\pi:M \rightarrow B(L^2(M))$ the left regular representation of $M$ on $L^2(M)$.  For any $m,n \in \mathbb N$ denote by $M_{m \times n}(M)$ the set of bounded, complex linear operators $T: \oplus_{j=1}^n L^2(M) \rightarrow \oplus_{k=1}^m L^2(M)$ such that the canonical matrix representation of $T$ is of the form
\begin{eqnarray*} 
\begin{bmatrix}
T_{11} & \cdots & T_{1n} \\ \vdots &  & \vdots \\
T_{m1} & \cdots  & T_{mn} \\
\end{bmatrix}
\end{eqnarray*}
with $T_{ij} \in \pi(M)$.  $M_n(M)$ will be shorthand for $M_{n \times n}(M)$.   $T^*T \in M_n(A)$.  The \textbf{Nullity} and \textbf{Rank} of $T$ are $\text{Nullity}(T) = n \cdot (tr_n \otimes \varphi)(1_{\{0\}}(T^*T))$ and $\text{Rank}(T) = n \cdot (tr_n \otimes \varphi)(1_{(0,\infty)}(T^*T))$ where $tr_n$ is the normalized trace on the $n \times n$ complex matrices.   

Basic linear algebra facts carry over to the tracial von Neumann algebra context.  For example, $\text{Rank}(T) + \text{Nullity}(T) = n$ (rank-nullity equation), $\text{Rank}(T) = m \cdot (tr_m \otimes \varphi)(1_{(0,\infty)}(TT^*))$ (rank of an operator equals the rank of its adjoint), and if $S \in M_n(M)$, then $\text{Rank}(TS) \leq \text{Rank}(T)$.  Also, as $T^*T $ is an element in the tracial von Neumann algebra $M_n(M)$, $\det_{FKL}(T)$ is well-defined.  One can do all of this in a more algebraic, bimodular setting as in \cite{luckbook}) but I'll use the above approach to maintain the analogy with the rank theorem.

Given $T \in M_{m \times n}(M)$, one can perform elementary row column operations such as multiplying a row by an invertible element of $M$, permuting rows, and adding an $M$-left-multiple of one row to another.  As in linear algebra, each of these operations can be uniquely expressed by multiplying $T$ from the left by an invertible, "elementary" matrix $E \in M_{m \times m}(M)$.  Two matrices $S, T \in M_{m \times n}(M)$ are said to be \textbf{$M$-row equivalent} if there exists a finite product $M$ of elementary matrices $E$ in $M_{m \times m}(M)$ such that $S = ET$.  Note here that $E$ is invertible.  The following slightly more general terminology will be convenient.  If $S \in M_{p \times n}(M)$ and $T \in M_{m \times n}(M)$ with $p <m$, then $S$ and $T$ are $M$-row equivalent iff $S_0, T \in M_{m \times n}(M)$ are $M$-row equivalent where $S_0$ is the element of $M_{m \times n}(M)$ obtained by taking $S$ and turning it into an $M_{m\times n}(M)$ by stacking from below, $m-p$ rows of zeros of length $n$.  Notice that with this terminology, $|S| = |S_0|$.

Here is the elementary lemma which I'll need:

\begin{lemma} If $x,y,z \in M$, then for any $t >0$, $\varphi(1_{[0, t\|x\|\|z\|]})(|xyz|) \geq \varphi(1_{[0,t]}(|y|))$.
\end{lemma}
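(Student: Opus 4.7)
The plan is to apply Weyl's Inequality for Positive Operators twice, reducing everything to the basic operator inequality $|ab| \leq \|a\| \, |b|$ for elements of $M$.

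First I would establish this operator inequality. Since $a^*a \leq \|a\|^2 \cdot I$, sandwiching by $b$ gives $b^*a^*ab \leq \|a\|^2 b^*b$, i.e., $|ab|^2 \leq \|a\|^2 |b|^2$. Operator monotonicity of $t \mapsto t^{1/2}$ on $[0,\infty)$ then yields $|ab| \leq \|a\| \, |b|$. Applying this with $a = x$, $b = yz$ gives $|xyz| \leq \|x\| \, |yz|$. Invoking Weyl's Inequality for Positive Operators (applied to $|xyz|$ and $\|x\|\,|yz|$) together with a scalar rescaling produces
\[
\varphi(1_{[0,\, t\|x\|\|z\|]}(|xyz|)) \;\geq\; \varphi(1_{[0,\, t\|x\|\|z\|]}(\|x\| \, |yz|)) \;=\; \varphi(1_{[0,\, t\|z\|]}(|yz|)),
\]
assuming $\|x\| \neq 0$; the degenerate case $\|x\|\|z\| = 0$ is immediate because then $xyz = 0$ and $\varphi(1_{\{0\}}(0)) = 1$.

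Next I would use the standard consequence of the trace property that $|c|$ and $|c^*|$ have the same spectral distribution: for every $n \in \mathbb{N}$, $\varphi((c^*c)^n) = \varphi((cc^*)^n)$ by cyclicity, so by Stone--Weierstrass and the bounded Borel functional calculus, $\varphi(1_E(|c|)) = \varphi(1_E(|c^*|))$ for every Borel $E \subset [0,\infty)$. Taking $c = yz$, we get $\varphi(1_{[0,\, t\|z\|]}(|yz|)) = \varphi(1_{[0,\, t\|z\|]}(|z^*y^*|))$. A second application of the operator inequality, now with $a = z^*$ and $b = y^*$, gives $|z^*y^*| \leq \|z\| \, |y^*|$, and Weyl's Inequality yields
\[
\varphi(1_{[0,\, t\|z\|]}(|z^*y^*|)) \;\geq\; \varphi(1_{[0,\, t]}(|y^*|)) \;=\; \varphi(1_{[0,\, t]}(|y|)).
\]
Chaining the three inequalities finishes the proof.

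There is no real obstacle here; once the pointwise operator bound $|ab| \leq \|a\|\,|b|$ is recorded, the result follows from two applications of Weyl's Inequality, glued together using the trace-invariance of the spectral distribution of $|yz|$ versus $|(yz)^*|$.
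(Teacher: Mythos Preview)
Your proof is correct and follows essentially the same approach as the paper: both establish the operator inequality $|ab| \leq \|a\|\,|b|$, apply Weyl's Inequality for Positive Operators twice, and bridge the two applications using the fact that $|c|$ and $|c^*|$ share the same spectral distribution in a tracial von Neumann algebra. The only cosmetic differences are that you handle the degenerate case $\|x\|\|z\|=0$ explicitly and work directly with the target threshold $t\|x\|\|z\|$, whereas the paper rescales $t$ at the end.
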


\begin{proof} Notice first that for any $a \in M$, if $a = u|a|$ is the polar decomposition, then $a^* = |a|u^* = u^*(u|a|u^*)$ is the polar decomposition of $a^*$.  Traciality of $\varphi$ then implies that the moments of $|a^*|$ equal the corresponding moments of $|a|$.  Thus, $|a|$ and $|a^*|$ have the same spectral distribution.  In particular for any $t >0$, $1_{[0,t]}(|a|) = 1_{[0,t]}(|a^*|)$.  Secondly, observe that

\begin{eqnarray*} |ab|^2 & = & b^*a^*ab \\
                                         & \leq & \|a^*a\| \cdot b^*b \\ 
                                         & = & \|a^*a\| \cdot |b|^2.\\
\end{eqnarray*}

\noindent Taking square roots, $|ab| \leq \|a\| \cdot |b|$ and applying Weyl's Inequality for positive operators with the observation above shows 

\begin{eqnarray*}   \varphi(1_{[0,t]}(|ab|)) & \geq & \varphi(1_{[0,t]}(\|a\| |b|) \\
                                                              & = & \varphi(1_{[0,t\|a\|^{-1}]}(|b|).\\
\end{eqnarray*}

Now, to prove the inequality, the second observation, followed by the first observation, and then recycled once more, yields

\begin{eqnarray*} \varphi(1_{[0,t]}(|xyz|) & \geq & \varphi(1_{[0,t\|x\|^{-1}]}(|yz|) \\
                                                                & = & \varphi(1_{[0,t\|x\|^{-1}]}(|(yz)^*|) \\
                                                                & = & \varphi(1_{[0,t\|x\|^{-1}]}(|z^*y^*|) \\
                                                                & \geq & \varphi(1_{[0,t\|x\|^{-1}\|z^*\|^{-1}]}(|y^*|) \\
                                                                & = & \varphi(1_{[0,t\|x\|^{-1}\|z\|^{-1}]}(|y|)) \\
                                                                \end{eqnarray*}
\noindent Rescaling $t$ finishes the proof.
\end{proof}

By Lemma 2.10, one has the following elementary observation:

\begin{corollary} If $S, T \in M_{m \times n}(M)$ are $M$-row equivalent, then $\text{Rank}(S) = \text{Rank}(T)$ and $\text{Nullity}(S) = \text{Nullity}(T)$.   Moreover, there exists a $r>0$ depending only on the matrix implementing the row equivalence of $S$ and $T$ such that $\varphi(1_{[0,rt]}(|S|)) \geq \varphi(1_{[0,t]}(|T|)$.
\end{corollary}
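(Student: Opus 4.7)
The plan is to reduce to the identity $S = ET$ with $E \in M_m(M)$ invertible (guaranteed by the row equivalence definition and, when the row counts of $S$ and $T$ differ, by the zero-padding convention together with the built-in observation $|S|=|S_0|$), and then to invoke two ingredients already in place: an easy kernel calculation for the rank/nullity assertion, and the operator inequality $|S|\leq\|E\|\cdot|T|$ combined with Weyl's Inequality for Positive Operators for the spectral projection bound. All traces below are taken in the tracial von Neumann algebra $(M_n(M),\, tr_n\otimes\varphi)$, and the constant will be $r=\|E\|$, depending only on the implementing matrix.

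For the rank and nullity assertion, invertibility of $E$ as a bounded operator on $\bigoplus_{j=1}^m L^2(M)$ gives $\ker S=\ker(ET)=\ker T$ as closed subspaces of $\bigoplus_{j=1}^n L^2(M)$. Since $\ker(A^*A)=\ker A$ for any bounded $A$, the kernel projections coincide: $1_{\{0\}}(S^*S)=1_{\{0\}}(T^*T)$. Taking the $(tr_n\otimes\varphi)$-trace and multiplying by $n$ yields $\text{Nullity}(S)=\text{Nullity}(T)$, and the rank-nullity identity then forces $\text{Rank}(S)=\text{Rank}(T)$.

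For the spectral projection inequality, expand $S^*S=T^*E^*ET$ and use $E^*E\leq\|E\|^2 I_m$ in $M_m(M)$ to obtain $S^*S\leq\|E\|^2\, T^*T$; operator monotonicity of the square root then gives $|S|\leq\|E\|\cdot|T|$ in $M_n(M)$. Applying Weyl's Inequality for Positive Operators to this pair yields
\[
\varphi\bigl(1_{[0,t]}(|S|)\bigr)\;\geq\;\varphi\bigl(1_{[0,t]}(\|E\|\cdot|T|)\bigr)\;=\;\varphi\bigl(1_{[0,t/\|E\|]}(|T|)\bigr),
\]
and substituting $\|E\|\,t$ for $t$ produces the stated inequality with $r=\|E\|$. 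Equivalently, one could embed $E$ and $T$ into a single square matrix algebra over $M$ by zero-padding and invoke Lemma 2.10 directly with $x=E$, $y=T$, $z=I$. No step poses a real obstacle; both halves are immediate consequences of the material just set up, and the minor zero-padding bookkeeping in the definition of row equivalence is already absorbed by the observation $|S|=|S_0|$ recorded there.
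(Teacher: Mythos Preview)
Your proof is correct and follows essentially the same approach as the paper. The paper simply cites Lemma 2.10 (applied with $x=E$, $y=T$, $z=I$ after a suitable zero-padding embedding), and your argument unwinds precisely the relevant half of that lemma's proof---the inequality $|ET|\leq\|E\|\,|T|$ followed by Weyl's inequality---while handling the rank/nullity part directly via the kernel identity $\ker(ET)=\ker T$; you even note the equivalence with Lemma 2.10 explicitly at the end.
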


Alternatively and equivalently, one can use basic properties of $\det_{FKL}$ to show in the above that $S$ is of determinant class iff $T$ is of determinant class.

The third and final property concerning upper triangularity is succinctly phrased in terms of determinants.   The following is a property that one would expect, given that $\det_{FKL}$ is a natural extension/analogue of the usual determinant.  The proof can essentially be found in Theorem 3.14 (2) of \cite{luckbook}: 

\begin{proposition} Suppose $S \in M_{m \times n}(M)$ is upper triangular, i.e., $x_{ij} =0$ for all $1 \leq j < i \leq n$.  If the $x_{ii}$ are injective for $1 \leq i \leq p$ and $x_{ij}=0$ for $p < i \leq m$, then $\text{Rank}(S)= p$ and
\begin{eqnarray*}
\text{det}_{FKL}(x) = \Pi_{i=1}^{p} \text{det}_{FKL}(x_i).
\end{eqnarray*}
\end{proposition}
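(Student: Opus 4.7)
The plan is to induct on $p$, exploiting the block upper-triangular multiplicativity of $\text{det}_{FKL}$ (essentially Theorem 3.14(2) of \cite{luckbook}) together with the row-equivalence lemma (Corollary 2.11) to handle the rank claim.

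First I would reduce to the square case by noting that the zero rows $p+1,\ldots,m$ contribute nothing to $S^*S \in M_n(M)$, so $|S|$ and hence both $\text{Rank}(S)$ and $\text{det}_{FKL}(S)$ are unchanged upon deletion. Excess columns beyond position $p$ (when $n>p$) appear only in the first $p$ rows and can be absorbed in the inductive block decomposition below, so I may assume $S$ is $p\times p$ upper-triangular with injective diagonal entries $x_{11},\ldots,x_{pp}$. The base case $p=1$ is immediate: $S=(x_{11})$ is injective, hence $1_{\{0\}}(S^*S)=0$, giving $\text{Rank}(S)=1$ and $\text{det}_{FKL}(S)=\text{det}_{FKL}(x_{11})$ by definition.

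For the inductive step with $p\geq 2$, write
\begin{equation*}
S \;=\; \begin{pmatrix} x_{11} & B \\ 0 & D \end{pmatrix} \;=\; \begin{pmatrix} x_{11} & 0 \\ 0 & D \end{pmatrix} \begin{pmatrix} I & x_{11}^{-1}B \\ 0 & I \end{pmatrix},
\end{equation*}
where $D$ is $(p-1)\times(p-1)$ upper-triangular with injective diagonal, so the inductive hypothesis applies and gives $\text{Rank}(D)=p-1$ and $\text{det}_{FKL}(D)=\prod_{i=2}^{p}\text{det}_{FKL}(x_{ii})$. The unipotent right factor has $\text{det}_{FKL}=1$, and the block-diagonal factor has $\text{det}_{FKL}=\text{det}_{FKL}(x_{11})\text{det}_{FKL}(D)$, so multiplicativity of $\text{det}_{FKL}$ yields the product formula. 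For the rank statement, the unipotent right factor is bounded-invertible, so by Corollary 2.11 one gets $\text{Rank}(S)=\text{Rank}(\operatorname{diag}(x_{11},D))=\text{Rank}(x_{11})+\text{Rank}(D)=1+(p-1)=p$.

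The main obstacle I expect is that $x_{11}^{-1}$ is in general only an affiliated unbounded operator, so the factorization displayed above is formal and must be justified by approximation. I would replace $x_{11}$ by $x_{11,\epsilon} := x_{11} + \epsilon q_\epsilon$ where $q_\epsilon = 1_{[0,\epsilon)}(|x_{11}|)$, carry out the factorization with the now-bounded $x_{11,\epsilon}^{-1}$, and let $\epsilon \to 0$. Convergence of the determinants follows from Weyl's Inequality for Positive Operators comparing $|S|$ with $|S_\epsilon|$, together with monotone convergence on the spectral distribution of $|x_{11}|$ in the defining integral of $\text{det}_{FKL}$. When some $x_{ii}$ fails to be of determinant class, both sides vanish; the left side being zero follows from the lower bound $S^*S \geq \bigoplus_i |x_{ii}|^2$ obtained after clearing off-diagonal blocks by row equivalence, again via Weyl's Inequality.
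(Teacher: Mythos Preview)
The paper does not prove this proposition at all; it simply cites Theorem 3.14(2) of L\"uck's book \cite{luckbook} and moves on. So there is no in-paper argument to compare against, and your inductive block-decomposition approach is in fact the standard route (and essentially what L\"uck does).

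That said, your proposal has two genuine gaps. First, the factorization
\[
S=\begin{pmatrix} x_{11} & 0\\ 0 & D\end{pmatrix}\begin{pmatrix} I & x_{11}^{-1}B\\ 0 & I\end{pmatrix}
\]
requires $x_{11}^{-1}$ bounded, and your approximation fix is not complete: you assert that Weyl's inequality yields convergence of $\det_{FKL}(S_\epsilon)\to\det_{FKL}(S)$, but $S_\epsilon^*S_\epsilon$ and $S^*S$ are not ordered in any obvious way (the cross terms $\epsilon(E^*S+S^*E)$ have no sign), so monotone convergence of the spectral integrals is not available. One must either work with affiliated operators throughout (as L\"uck does, via weak exact sequences), or argue more carefully---e.g.\ reduce first via the polar decomposition $x_{11}=u|x_{11}|$ (with $u$ unitary since $x_{11}$ is injective in a finite factor), so that the diagonal entry becomes positive, and then use L\"uck's multiplicativity for injective morphisms directly rather than a limiting argument.

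Second, your final sentence claims ``$S^*S\geq\bigoplus_i|x_{ii}|^2$ obtained after clearing off-diagonal blocks by row equivalence.'' Row equivalence does not produce an operator inequality of this form; Corollary~2.11 only gives a spectral-distribution comparison $\varphi(1_{[0,rt]}(|S|))\geq\varphi(1_{[0,t]}(|T|))$ for some constant $r$, which is weaker. That weaker statement is still sufficient to transfer determinant class (and is all the paper actually uses, in Lemma~6.16), but it is not the inequality you wrote.
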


\section{Noncommutative $*$-polynomials, derivatives, rank and nullity}

As discussed in the introduction, in order to understand the geometry of tracial von Neumann algebra level sets I want to use the rank/nullity of the derivative as a bound on its microstates dimension.   On the von Neumann algebra level these quantities are expressed as traces of spectral projections of a certain derivation.  However, this operator algebraic context is predominantly a complex-valued environment, whereas microstates dimension, at least expressed in a manifold fashion, coincides with a real valued notion of dimension.   This section deals with formalizing an algebraic framework which allows passage from the real to the complex settings.
  
There are three parts to this section.  The first deals with the embedding the real linear bounded operators on $L^2(M)$ into the $2 \times 2$ matrices of complex linear bounded operators on $L^2(M)$; an analogous result on the real linear operators on the free complex $*$-algebra on $n$ unitaries is also discussed.  The second applies this to a generalized derivation definition to arrive at an appropriate notion of rank and nullity, and proves some technical results on microstate approximation with rank and nullity.  The last subsection applies this to the case where the domain spaces are the self-adjoint or unitary elements.

\subsection{$2 \times 2$ Real Representations} 
Fix a tracial von Neumann algebra $(M, \varphi)$.  The trace implements a real inner product on $M$ given by $\langle x,y \rangle_r = \text{Re } \varphi(y^*x)$ as well as the usual complex inner product on $M$ given by $\langle x, y \rangle = \varphi(y^*x)$.  Denote by $M_1$ and $M_2$ the real subspaces of self-adjoint and skew-adjoint elements of $M$ and by $H_j$, $j=1,2$ the closures of $M_j$ w.r.t. this real inner product norm ($\subset L^2(M)$).  Note that if $\xi, \eta \in H_j$, then $\langle \xi, \eta \rangle_r = \langle \xi, \eta \rangle$.  There are natural real projections $e_j: L^2(M)\rightarrow H_j$ given by $e_1 = (I+J)/2$ and $e_2 = (I-J)/2$ where $J$ is the extension of the conjugation map to all of $L^2(M)$.  Denote by $B_{\mathbb R}(L^2(M))$ the set of all real linear operators on $L^2(M)$ which are bounded w.r.t. the real norm generated by the real inner product.  Lastly, define $\rho$ to be the bijection on $\{1,2\}$ given by $\rho(1)=2$ and $\rho(2)=1$.

\begin{lemma} If $x: H_j \rightarrow H_k$ is a bounded, real linear map, then there exists a unique bounded, complex linear map $\tilde{x}:L^2(M) \rightarrow L^2(M)$ which extends $x$.  Moreover, $e_k \tilde{x} e_j = \tilde{x} e_j$,  $e_j \tilde{x}^* e_k = \tilde{x}^* e_k$, $e_{\rho(k)} \tilde{x} e_{\rho(j)} = \tilde{x} e_{\rho(j)}$, and $e_{\rho(j)} \tilde{x}^* e_{\rho(k)} = \tilde{x}^* e_{\rho(k)}$.
\end{lemma}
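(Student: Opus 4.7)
The plan is to exploit the real orthogonal decomposition $L^2(M) = H_j \oplus iH_j$. This decomposition is valid because multiplication by $i$ swaps self-adjoint and skew-adjoint elements, so $iH_j = H_{\rho(j)}$, and $H_1 \perp H_2$ with respect to $\langle \cdot, \cdot\rangle_r$. To check the orthogonality, for $a \in M_1$ and $b \in M_2$, traciality gives $\varphi(b^*a) = -\varphi(ba) = -\varphi(ab)$, so $2\,\mathrm{Re}\,\varphi(b^*a) = -\mathrm{Re}\,\varphi(ab+ba)$; since $ab+ba$ is skew-adjoint, $\varphi(ab+ba)$ is purely imaginary, which forces $\langle a, b\rangle_r = 0$. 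Passing to the $L^2$-closures gives $H_1 \perp H_2$.

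With this decomposition in hand, uniqueness is immediate: any complex linear $\tilde{x}$ extending $x$ must satisfy $\tilde{x}(\xi + i\eta) = x(\xi) + ix(\eta)$ for $\xi, \eta \in H_j$. For existence, I would define $\tilde{x}$ by this formula. It is $\mathbb{R}$-linear by construction, and complex linearity reduces to checking $\tilde{x}(i(\xi + i\eta)) = i\tilde{x}(\xi + i\eta)$, which follows from $i(\xi + i\eta) = -\eta + i\xi$ together with $-\eta, \xi \in H_j$ and the $\mathbb{R}$-linearity of $x$. Boundedness $\|\tilde{x}\| = \|x\|$ follows from the orthogonality of the two summands and the fact that multiplication by $i$ is an isometry of $L^2(M)$.

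For the four range relations, the content is that $\tilde{x}(H_j) = x(H_j) \subseteq H_k$ by definition and $\tilde{x}(H_{\rho(j)}) = \tilde{x}(iH_j) = i\,x(H_j) \subseteq iH_k = H_{\rho(k)}$, which are exactly the first and third identities. For the adjoint identities, I would use that $\tilde{x}$ is complex linear, so its complex Hilbert space adjoint coincides with its real transpose with respect to $\langle \cdot, \cdot\rangle_r$. In block form against the orthogonal decompositions $L^2(M) = H_j \oplus H_{\rho(j)}$ (domain) and $L^2(M) = H_k \oplus H_{\rho(k)}$ (codomain), the first and third identities just proved say that $\tilde{x}$ is block diagonal; transposing yields that $\tilde{x}^*$ is block diagonal with the roles of domain and codomain reversed, giving the second and fourth identities.

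There is no real obstacle here: the lemma is essentially the bookkeeping identity that a complex linear operator extending a real linear map between self-adjoint/skew-adjoint subspaces is determined and controlled by a single block-diagonal real matrix. The only subtle point is establishing the real orthogonality $H_1 \perp H_2$, for which the trace argument above is the key step.
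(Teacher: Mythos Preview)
Your proof is correct and follows essentially the same approach as the paper: both define $\tilde{x}(\xi+i\eta)=x(\xi)+ix(\eta)$ on the decomposition $L^2(M)=H_j\oplus iH_j$, verify complex linearity, and read off the range relations. The only notable difference is in handling the adjoint: the paper explicitly builds $y(\xi+i\eta)=x^*\xi+ix^*\eta$ and checks $\langle \tilde{x}\xi,\eta\rangle=\langle\xi,y\eta\rangle$ on $H_1$ before extending by complex linearity, whereas you invoke the general fact that for a complex linear operator the complex adjoint agrees with the real transpose and then use block-diagonality---your route is slightly slicker but equivalent in content.
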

\begin{proof}  Uniqueness is obvious since the complex span of $H_1$ or $H_2$ is $L^2(M)$.  It remains to establish existence and verify that the complex linear extensions satisfies the relations.  By multiplying the domain and range by $i$ when necessary, this reduces to establishing existence and the relations for such an extension when $j=k=1$.  In this case $x:H_1 \rightarrow H_1$ is real linear.  Define $\tilde{x}: L^2(M) \rightarrow L^2(M)$ by $\tilde{x}(\xi + i \eta) = x(\xi) + i x(\eta)$ for $\xi, \eta \in H_1$.  It is easy to check that $\tilde{x}$ is complex linear.  Clearly $e_1 \tilde{x} e_1 = \tilde{x} e_1$ and $e_2 \tilde{x} e_2 = \tilde{x} e_2$.  It remains to check the relations for $\tilde{x}^*$.   Consider the complex linear map $y: L^2(M) \rightarrow L^2(M)$ defined by $y(\xi + i \eta) = x^*\xi + i x^*\eta$ where here, $x^*$ is the real adjoint of $x$.  For any $\xi, \eta \in H_1$,
\begin{eqnarray*}
\langle \tilde{x} \xi, \eta \rangle & = & \langle x \xi, \eta \rangle \\
                                                 & = & \text{Re} \langle x \xi, \eta \rangle \\
                                                 & = & \text{Re} \langle \xi, x^*\eta \rangle \\
                                                 & = & \langle \xi, y \eta \rangle \\
\end{eqnarray*}
Using the complex linearity of $\tilde{x}$ and $y$, it follows that the above holds for any $\xi, \eta \in L^2(M)$, i.e., $\tilde{x}^* = y$.  But now it is clear from the definition of $y$, that $e_1 y e_1 = y e_1$ and $e_2 y e_2 = ye_2$ and thus $\tilde{x}^*$ satisfies the same relations.  
\end{proof}

For $x \in B_{\mathbb R}(L^2(M))$ the matrix decomposition of $x$ w.r.t. $L^2(M) = H_1 \oplus H_2$ is of the form
\begin{eqnarray*} 
x =  \begin{bmatrix}
x_{11}  & x_{12}  \\ 
x_{21}  & x_{22}  \\
\end{bmatrix} \\
\end{eqnarray*}
where $x_{jk}: H_k\rightarrow H_j$ are bounded, real linear operators.  By Lemma 3.1, for each $1\leq j,k \leq 2$ there exists a unique complex linear operator $\tilde{x}_{jk}: L^2(M) \rightarrow L^2(M)$ extending $x_{jk}$.  Moreover, these extensions and their adjoints automatically satisfy the relations with the $e_j$ described in Lemma 3.1.   Define $\Phi: B_{\mathbb R}(L^2(M)) \rightarrow M_2(B(L^2(M)))$ by 
\begin{eqnarray*}
\Phi(x) =  \begin{bmatrix}
\tilde{x}_{11}  & \tilde{x}_{12}  \\ 
\tilde{x}_{21}  & \tilde{x}_{22}  \\
\end{bmatrix}. \\
\end{eqnarray*}
 
\begin{proposition} $\Phi: B_{\mathbb R}(L^2(M)) \rightarrow M_2(B(L^2(M)))$ is a real linear, $*$-preserving, multiplicative map which send the identity to the identity.
\end{proposition}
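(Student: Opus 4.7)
The plan is to verify the four claimed properties of $\Phi$ separately, each reducing fairly mechanically to Lemma 3.1 applied blockwise together with the uniqueness of complex linear extensions. Real linearity and unitality come almost for free: for $x,y \in B_{\mathbb{R}}(L^2(M))$ and $\alpha \in \mathbb{R}$, one has $(\alpha x + y)_{jk} = \alpha x_{jk} + y_{jk}$ as real linear maps $H_k \to H_j$, and the uniqueness of complex linear extension from Lemma 3.1 forces the extensions to add correspondingly, giving $\Phi(\alpha x + y) = \alpha \Phi(x) + \Phi(y)$. For the identity on $L^2(M)$, the block $I_{jk}$ equals $\delta_{jk}\operatorname{id}_{H_j}$, whose complex extension is $\delta_{jk}\operatorname{id}_{L^2(M)}$, producing the identity matrix in $M_2(B(L^2(M)))$.

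For multiplicativity, I would first use the block decomposition $\operatorname{id}_{L^2(M)} = e_1 + e_2$ to expand
\begin{equation*}
(xy)_{jk} \;=\; e_j\,xy\,e_k\big|_{H_k} \;=\; \sum_{l=1}^{2} (e_j x e_l)(e_l y e_k)\big|_{H_k} \;=\; \sum_{l=1}^{2} x_{jl}\, y_{lk}
\end{equation*}
as real linear maps $H_k \to H_j$. Next I would match this with the $(j,k)$ entry of $\Phi(x)\Phi(y)$, which is $\sum_l \tilde{x}_{jl}\tilde{y}_{lk}$. For $\xi \in H_k$, Lemma 3.1 gives $\tilde{y}_{lk}(\xi) = y_{lk}(\xi) \in H_l$, and then $\tilde{x}_{jl}(y_{lk}\xi) = x_{jl}y_{lk}(\xi) \in H_j$, so $\sum_l \tilde{x}_{jl}\tilde{y}_{lk}$ is a complex linear operator on $L^2(M)$ whose restriction to $H_k$ is precisely $(xy)_{jk}$. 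By uniqueness of complex linear extension it must equal $\widetilde{(xy)_{jk}}$, which is the $(j,k)$ block of $\Phi(xy)$.

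For $*$-preservation, the first step is the block identity $(x^*)_{jk} = (x_{kj})^*$ (the adjoint on the right being the real adjoint with respect to the real inner product), which follows from $\langle (x^*)_{jk}\xi,\eta\rangle_r = \langle x^*\xi,\eta\rangle_r = \langle \xi, x\eta\rangle_r = \langle \xi, x_{kj}\eta\rangle_r$ for $\xi \in H_k$ and $\eta \in H_j$. The second step is the observation (already contained in the proof of Lemma 3.1, but now to be applied with $y:H_p \to H_q$ possibly $p\neq q$) that the complex adjoint $\tilde{y}^*$ of a complex extension is itself the complex extension of the real adjoint $y^*:H_q \to H_p$; this is verified by the same adjoint computation, using that the real and complex inner products agree on each $H_j$. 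Taking $y = x_{kj}$ yields $\widetilde{(x_{kj})^*} = \tilde{x}_{kj}^*$, and combining with the first step gives $\widetilde{(x^*)_{jk}} = \tilde{x}_{kj}^*$, which is exactly the $(j,k)$ block of the matrix adjoint $\Phi(x)^*$. The main potential obstacle is purely index bookkeeping: tracking on which $H_\ell$ each operator is naturally defined and distinguishing the real adjoint on $B_{\mathbb R}(L^2(M))$ from the complex matrix adjoint on $M_2(B(L^2(M)))$; no deeper machinery is required beyond Lemma 3.1 and the orthogonal decomposition $L^2(M) = H_1 \oplus H_2$.
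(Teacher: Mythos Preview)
Your proof is correct and follows essentially the same approach as the paper: both rely on the block decomposition $L^2(M)=H_1\oplus H_2$ together with the uniqueness of complex linear extensions from Lemma 3.1. The only difference is presentational---the paper carries out the multiplicativity and $*$-preservation checks by manipulating the projection identities $e_j\tilde{x}_{jk}e_k=\tilde{x}_{jk}e_k$ and $e_k\tilde{x}_{jk}^*e_j=\tilde{x}_{jk}^*e_j$ explicitly inside the $2\times 2$ matrix product, whereas you argue more abstractly by restricting the candidate complex linear operator to $H_k$ and invoking uniqueness.
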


\begin{proof} Suppose $x, y \in B_{\mathbb R}(L^2(M))$ with matrix decompositions $\langle x_{ij} \rangle_{1\leq i,j \leq 2}, \langle y_{ij} \rangle_{1 \leq i,j \leq 2}$ w.r.t. $H_1 \oplus H_2$.  The matrix decomposition of $rx+y$ w.r.t. $H_1 \oplus H_2$ is clearly $\langle rx_{ij} + y_{ij} \rangle_{1 \leq i,j \leq 2}$ and $r \tilde{x}_{ij} + \tilde{y}_{ij}$ is the unique complex linear extension of $r x_{ij} + y_{ij}$.  By definition then, $\Phi(rx+y) = r \Phi(x) + \Phi(y)$.  To show that $\Phi$ preserves the $*$-operation, using the identities which $\tilde{x}_{ij}$ and $\tilde{x}_{ij}^*$ must satisfy by Lemma 3.1, $x^*$ has a matrix decomposition of the form
\begin{eqnarray*} 
 \begin{bmatrix}
x_{11}  & x_{12}  \\ 
x_{21}  & x_{22}  \end{bmatrix}^*  & = & 
\begin{bmatrix}
e_1 \tilde{x}_{11} e_1 & e_1 \tilde{x}_{12} e_2 \\ 
e_2 \tilde{x}_{21} e_1 & e_2 \tilde{x}_{22} e_2 \\
\end{bmatrix}^* \\ \\ & = &  \begin{bmatrix}
e_1 \tilde{x}_{11}^* e_1  & e_1 \tilde{x}_{21}^* e_2  \\ 
e_2 \tilde{x}_{12}^* e_1  & e_2 \tilde{x}_{22}^* e_2 \end{bmatrix} \\ \\
& = & \begin{bmatrix}
\tilde{x}_{11}^* e_1  & \tilde{x}_{21}^* e_2  \\ 
 \tilde{x}_{12}^* e_1  & \tilde{x}_{22}^* e_2 \end{bmatrix}. \\
\end{eqnarray*}
By the uniqueness of the complex linear extension (Lemma 3.1),
\begin{eqnarray*}
\Phi(x^*) & = & \begin{bmatrix}
\tilde{x}_{11}^*  & \tilde{x}_{21}^*  \\ 
 \tilde{x}_{12}^* & \tilde{x}_{22}^* \end{bmatrix} \\
 & = & \Phi(x)^*.\\
\end{eqnarray*}
For multiplicativity, the matrix decomposition of $xy$ is the product of the two matrix representations of $x$ and $y$:
\begin{eqnarray*} 
\begin{bmatrix}
x_{11}  & x_{12}  \\ 
x_{21}  & x_{22}  \end{bmatrix}  \begin{bmatrix}
y_{11}  & y_{12}  \\ 
y_{21}  & y_{22}  \end{bmatrix} & = & \begin{bmatrix}
\tilde{x}_{11} e_1 & \tilde{x}_{12}  e_2 \\ 
\tilde{x}_{21} e_1  & \tilde{x}_{22} e_2 \end{bmatrix}  \begin{bmatrix}
e_1 \tilde{y}_{11} e_1  & e_1 \tilde{y}_{12} e_2  \\ 
e_2 \tilde{y}_{21} e_1 & e_2 \tilde{y}_{22} e_2 \end{bmatrix} \\ 
& = & 
\begin{bmatrix}
\tilde{x}_{11} e_1 \tilde{y}_{11} e_1 + \tilde{x}_{12}e_2 \tilde{y}_{21} e_1 & \tilde{x}_{11}e_1 \tilde{y}_{12} e_2 + \tilde{x}_{12} e_2 y_{22} e_2 \\ 
\tilde{x}_{21} e_1 \tilde{y}_{11} e_1 + \tilde{x}_{22}e_2\tilde{y}_{21}e_1 & \tilde{x}_{21}e_1\tilde{y}_{12}e_2+ \tilde{x}_{22} e_2 y_{22} e_2 \\
\end{bmatrix} \\ 
& = & \begin{bmatrix}
(\tilde{x}_{11} \tilde{y}_{11} + \tilde{x}_{12}\tilde{y}_{21}) e_1 & (\tilde{x}_{11}\tilde{y}_{12} + \tilde{x}_{12} \tilde{y}_{22}) e_2 \\ 
(\tilde{x}_{21}  \tilde{y}_{11}  + \tilde{x}_{22}\tilde{y}_{21})e_1 & (\tilde{x}_{21}\tilde{y}_{12}+ \tilde{x}_{22}  \tilde{y}_{22}) e_2 \\
\end{bmatrix}. \\
\end{eqnarray*}
The parenthetical terms of the last matrix are complex linear maps and thus by the uniqueness of the complex linear extension,
\begin{eqnarray*}
\Phi(xy) & = & \begin{bmatrix}
(\tilde{x}_{11} \tilde{y_{11}} + \tilde{x}_{12}\tilde{y}_{21})  & (\tilde{x}_{11}\tilde{y}_{12} + \tilde{x}_{12} \tilde{y}_{22})  \\ 
(\tilde{x}_{21}  \tilde{y}_{11}  + \tilde{x}_{22}\tilde{y}_{21}) & (\tilde{x}_{21}\tilde{y}_{12}+ \tilde{x}_{22}  \tilde{y}_{22})  \\
\end{bmatrix} \\
& = & \begin{bmatrix}
\tilde{x}_{11}  & \tilde{x}_{12}  \\ 
\tilde{x}_{21}  & \tilde{x}_{22}  \end{bmatrix}  \begin{bmatrix}
\tilde{y}_{11}  & \tilde{y}_{12}  \\ 
\tilde{y}_{21}  & \tilde{y}_{22}  \end{bmatrix} \\
& =&  \Phi(x) \Phi(y).\\
\end{eqnarray*}
The claim concerning the identity is trivial.
\end{proof}

\begin{example}  Suppose $(M,\varphi)$ is a tracial von Neumann algebra and consider its left and right actions, $\pi_r$ and $\pi_l$ on $L^2(M)$.  For $x \in M$, $\pi_r(x) \in B(L^2(M)) \subset B_{\mathbb R}(L^2(M))$.  What is $\Phi(\pi_l(x))$?  Restricting $\pi_l(x)$ to the real linear subspaces $M^{sa}$ and $M^{sk}$, it is straightforward to show that
\begin{eqnarray*}
\Phi(\pi_l(x)) & = & \frac{1}{2} \cdot\begin{bmatrix}
\pi_l(x) + \pi_r(x^*) & \pi_l(x) - \pi_r(x^*)  \\ 
\pi_l(x) - \pi_r(x^*)  & \pi_l(x) + \pi_r(x^*)  \end{bmatrix} \\
& \in & M_2(B(L^2(M))).\\
\end{eqnarray*}
In particular, when $M=\mathbb C$ and $x = \lambda = a + ib \in \mathbb C$ with $a, b\in \mathbb R$, then the above equation becomes
\begin{eqnarray*}
\Phi(\pi_l(x)) & = & \begin{bmatrix}
a & ib  \\ 
ib  & a  \end{bmatrix} \\
& \in & M_2(\mathbb C).\\
\end{eqnarray*}
Compare this to the canonical real matrix representation of $x=\lambda$ \emph{relative to the real basis} $\{1, i\} \subset \mathbb C$:
\begin{eqnarray*}
\begin{bmatrix}
a  & -b  \\ 
b  & a  \\
\end{bmatrix} \\
\end{eqnarray*}
\end{example}

Proposition 3.2 can be applied to $M = M_k(\mathbb C)$ in which case one can also account for the action of the traces.  Denote by $Tr_{B_{\mathbb R}(M_k(\mathbb C))}$ the unnormalized real trace on $B_{\mathbb R}(M_k(\mathbb C))$ and by $Tr_{M_2(B(M_k(\mathbb C)))}$ the unnormalized, complex trace on $M_2(B(M_k(\mathbb C)))$, i.e., the trace on the $2\times 2$ matrices with entries regarded as complex linear operators on the complex vector space $M_k(\mathbb C)$.  It is transparent from the definition of $\Phi$ that $Tr_{B_{\mathbb R}(M_k(\mathbb C))} = Tr_{M_2(B(M_k(\mathbb C)))} \circ \Phi$.  Hence the following holds:

\begin{lemma} $\Phi : (B_{\mathbb R}(M_k(\mathbb C)), Tr_{B_{\mathbb R}(M_k(\mathbb C))}) \rightarrow (M_2(B(M_k(\mathbb C))), Tr_{M_2(B(M_k(\mathbb C)))})$ is a real linear, multiplicative, $*$-preserving, injection which preserves the unnormalized traces.
\end{lemma}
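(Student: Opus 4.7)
The algebraic content of the claim—real linearity, multiplicativity, $*$-preservation, and identity-preservation—is already delivered by Proposition 3.2, so the only new ingredients are injectivity and the trace identity $Tr_{B_{\mathbb R}(M_k(\mathbb C))} = Tr_{M_2(B(M_k(\mathbb C)))} \circ \Phi$. I would dispatch each separately with a short, direct argument.

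Injectivity is essentially formal. If $\Phi(x) = 0$, then every entry $\tilde{x}_{ij}$ of the $2\times 2$ matrix vanishes; the uniqueness clause of Lemma 3.1 forces $x_{ij} = \tilde{x}_{ij}\bigl|_{H_j} = 0$ for $1\le i,j\le 2$, and since $L^2(M_k(\mathbb C)) = H_1 \oplus H_2$ as a real vector space this yields $x = 0$ in $B_{\mathbb R}(M_k(\mathbb C))$.

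For the trace identity I would fix a real orthonormal basis $\{u_\alpha\}$ of $H_1$ with respect to the real inner product $\langle \cdot,\cdot\rangle_r$. Because $M_k(\mathbb C) = H_1 + iH_1$ and the $H_1$-factor has real dimension $k^2$, the set $\{u_\alpha\}$ is simultaneously a complex orthonormal basis of $M_k(\mathbb C)$, and $\{iu_\alpha\}$ is a real orthonormal basis of $H_2$. Unpacking the definition of the unnormalized real trace and using that the off-diagonal blocks map $H_1$ into $H_2\perp H_1$ (and vice-versa), I get
\begin{align*}
Tr_{B_{\mathbb R}(M_k(\mathbb C))}(x) &= \sum_\alpha \langle x u_\alpha, u_\alpha\rangle_r + \sum_\alpha \langle x(iu_\alpha), iu_\alpha\rangle_r \\
&= \sum_\alpha \langle x_{11} u_\alpha, u_\alpha\rangle_r + \sum_\alpha \langle x_{22}(iu_\alpha), iu_\alpha\rangle_r.
\end{align*}
The key reality check is that when $\xi,\eta$ both lie in $H_1$ (self-adjoints) or both lie in $H_2$ (skew-adjoints), the complex inner product $\langle \xi,\eta\rangle = tr_k(\eta^*\xi)$ is already real and therefore coincides with $\langle \xi,\eta\rangle_r$. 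Combined with the defining property $\tilde{x}_{jj}|_{H_j} = x_{jj}$, this converts the displayed sums into $\sum_\alpha \langle \tilde{x}_{11} u_\alpha, u_\alpha\rangle + \sum_\alpha \langle \tilde{x}_{22}(iu_\alpha), iu_\alpha\rangle$, which is exactly the sum of the two complex traces $Tr_{\mathbb C}(\tilde{x}_{11}) + Tr_{\mathbb C}(\tilde{x}_{22}) = Tr_{M_2(B(M_k(\mathbb C)))}(\Phi(x))$.

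I do not expect a genuine obstacle here; the entire lemma is bookkeeping once one has set up the $H_1 \oplus H_2$ decomposition and the uniqueness of complex extensions. The only point that requires care—and the one the author flags as ``transparent''—is the reality of the complex inner product on the two real subspaces $H_j$, which is what lets the real trace on $B_{\mathbb R}(M_k(\mathbb C))$ be computed as an honest sum of complex Hilbert-space traces on $M_k(\mathbb C)$.
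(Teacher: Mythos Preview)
Your proposal is correct and matches the paper's approach: the paper states the lemma without proof, simply citing Proposition 3.2 for the algebraic properties and asserting that the trace identity is ``transparent from the definition of $\Phi$''. You have filled in exactly those transparent details (and added the easy injectivity argument, which the paper does not spell out).
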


An analogous derivation can be performed for the universal complex, unital $*$-algebra $B_n$, on $n$ unitary generators.  Denote by $\mathcal A_1$ and $\mathcal A_2$ the real $*$-subalgebras of $B_n$ consisting of self-adjoint and skew-adjoint elements of $B_n$.  Notice that $\mathcal A_2 = i \mathcal A_1$.   Define $e_1: B_n \rightarrow \mathcal A_1$ by $e_1(x) = (x+x^*)/2$ and $e_2 = I-e_1$.  $e_1$ and $e_2$ are the real idempotent projection maps onto $\mathcal A_1$ and $\mathcal A_2$, respectively. Arguing as in Lemma 3.1 yields: 

\begin{lemma} If $x: \mathcal A_i \rightarrow \mathcal A_j$ is real linear, then there exists a unique complex linear map $\tilde{x}: B_n \rightarrow B_n$ which extends $x$.  Moreover, $e_j \tilde{x} e_i = \tilde{x} e_i$, $e_{\rho(j)} \tilde{x} e_{\rho(i)} = \tilde{x} e_{\rho(i)}$.
\end{lemma}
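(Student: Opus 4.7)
The plan is to adapt the proof of Lemma 3.1 step by step to the algebraic setting, replacing $L^2(M)$ by $B_n$, the real Hilbert spaces $H_1, H_2$ by the real subspaces $\mathcal{A}_1, \mathcal{A}_2$, and the orthogonal projections by the algebraic idempotents $e_1, e_2$. The three structural facts I will use at the outset are: $B_n = \mathcal{A}_1 \oplus \mathcal{A}_2$ as a real vector space, so every $v \in B_n$ decomposes uniquely as $v = e_1(v) + e_2(v)$; $\mathcal{A}_2 = i\mathcal{A}_1$ and, equivalently, $\mathcal{A}_1 = i\mathcal{A}_2$; and $B_n = \mathrm{span}_{\mathbb{C}}\,\mathcal{A}_i$ for either value of $i$. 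Uniqueness of the extension is immediate from the last fact, since any complex linear map on $B_n$ is determined by its values on $\mathcal{A}_i$.

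For existence, the cleanest route is to reduce to the case $i=j=1$: given $x : \mathcal{A}_i \to \mathcal{A}_j$, pre- and post-composing with multiplication by $\pm i$ (which swaps $\mathcal{A}_1 \leftrightarrow \mathcal{A}_2$ and preserves real linearity) produces a real linear map $y : \mathcal{A}_1 \to \mathcal{A}_1$, and any complex linear extension of $y$ transports back to one of $x$. With $i=j=1$, every $v \in B_n$ has a unique real decomposition $v = \xi + i\eta$ with $\xi, \eta \in \mathcal{A}_1$, so I define $\tilde{x}(\xi + i\eta) := x(\xi) + i\,x(\eta)$. Complex linearity is a direct check and the restriction to $\mathcal{A}_1$ recovers $x$.

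The two commutation relations then follow directly from the construction. The range of $\tilde{x} e_i$ lies in $\mathcal{A}_j$, because $\tilde{x}(\mathcal{A}_i) = x(\mathcal{A}_i) \subseteq \mathcal{A}_j$; this gives $e_j \tilde{x} e_i = \tilde{x} e_i$. For $\tilde{x} e_{\rho(i)}$, any $v \in \mathcal{A}_{\rho(i)}$ has the form $v = iw$ with $w \in \mathcal{A}_i$, so by complex linearity $\tilde{x}(v) = i\,x(w) \in i\mathcal{A}_j = \mathcal{A}_{\rho(j)}$, yielding $e_{\rho(j)} \tilde{x} e_{\rho(i)} = \tilde{x} e_{\rho(i)}$. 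There is no genuine obstacle here; the entire content is the algebraic bookkeeping required to port the $L^2$-level argument of Lemma 3.1 into the universal $*$-algebra setting, and the only point needing care is tracking how multiplication by $i$ interchanges $\mathcal{A}_1$ and $\mathcal{A}_2$.
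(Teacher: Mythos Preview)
Your proposal is correct and follows essentially the same approach as the paper, which simply says ``Arguing as in Lemma 3.1 yields'' without spelling out the details. You have faithfully carried out that adaptation: uniqueness from $B_n = \mathrm{span}_{\mathbb C}\,\mathcal A_i$, reduction to $i=j=1$ via multiplication by $i$, the explicit extension $\tilde{x}(\xi+i\eta)=x(\xi)+ix(\eta)$, and the two range-inclusion checks for the commutation relations.
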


Denote by $L_{\mathbb R}(B_n)$, $L_{\mathbb C}(B_n)$ the space of real  and complex linear operators on $B_n$.  Exactly as before, for $x \in L_{\mathbb R}(B_n)$ the matrix decomposition of $x$ w.r.t. $B_n = \mathcal A_1 \oplus \mathcal A_2$ (here $\oplus$ denotes the algebraic direct sum) is of the form
\begin{eqnarray*} 
x =  \begin{bmatrix}
x_{11}  & x_{12}  \\ 
x_{21}  & x_{22}  \\
\end{bmatrix} \\
\end{eqnarray*}
where $x_{ij}: \mathcal A_j \rightarrow \mathcal A_i$ are real linear operators.  By Lemma 3.4, for each $1\leq i,j \leq 2$ there exists a unique complex linear operator $\tilde{x}_{ij}: \mathfrak{A}_n \rightarrow \mathfrak{A}_n$ extending $x_{ij}$.  Moreover, these extensions automatically satisfy the relations $e_i \tilde{x}_{ij} e_j=\tilde{x}_{ij} e_j$ and $e_{r(i)} \tilde{x}_{ij} e_{r(j)}=\tilde{x}_{ij} e_{r(j)}$.   Define again $\Phi: L_{\mathbb R}(B_n) \rightarrow M_2(L_{\mathbb C}(B_n))$ by 
\begin{eqnarray*}
\Phi(x) =  \begin{bmatrix}
\tilde{x}_{11}  & \tilde{x}_{12}  \\ 
\tilde{x}_{21}  & \tilde{x}_{22}  \\
\end{bmatrix} \\
\end{eqnarray*}
As before, $\Phi$ is a real linear, multiplicative map which sends the identity to the identity.

Finally, note that if $\odot$ denotes the algebraic tensor product, then there exists a complex linear, homomorphism $\pi: B_n \odot B_n \rightarrow L_{\mathbb C}(B_n)$ uniquely determined by $\pi(a \odot b) = L_{a,b}$ where $L_{a,b} : B_n \rightarrow B_n$ is defined by $L_{a,b}(x) = axb$.  Moreover, using the universality of $B_n$, $\pi$ is injective for $n >1$.  This can be seen by identifying $B_n$ with the complex $*$-algebra generated by the $n$ canonical unitaries in $L(\mathbb F_n)$.  From here, it is enough to show that for any finite list of $2$-tuples of elements in $\mathbb F_n$, $(a_i,b_i)$ with $a_i \neq e$, there exists a single group element $g \in \mathbb F_n$ such that for all $i$, $ga_ig^{-1} \neq b_i$.  Alternatively, using factoriality of $L(\mathbb F_n)$ one can invoke invoke Corollary 4.20 of \cite{takesaki}.  In any case, putting all this together with the fact that the algebraic tensor product of injective maps is injective, one has the following result which will be useful for the unitary calculus:
\begin{lemma} If $n >1$, then $\pi \odot I_2:  M_2(B_n \odot B_n) \rightarrow M_2(L_{\mathbb C}(B_n))$ is an injective $*$-homomorphism.  
\end{lemma}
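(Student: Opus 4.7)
The plan is to reduce the lemma to the injectivity of $\pi: B_n \odot B_n \to L_{\mathbb{C}}(B_n)$ alone, since the algebraic tensor product of two injective linear maps between complex vector spaces is injective, so tensoring $\pi$ with the identity on $M_2(\mathbb{C})$ preserves injectivity under the natural identification $M_2(B_n \odot B_n) \cong (B_n \odot B_n) \odot M_2(\mathbb{C})$. The $*$-homomorphism assertion reduces similarly: multiplicativity of $\pi$ (with respect to the appropriate product on $B_n \odot B_n$) follows from the computation $L_{a_1,b_1} \circ L_{a_2,b_2} = L_{a_1 a_2, b_2 b_1}$ already established in the preceding paragraph, and the $*$-structure and the matrix structure are both compatible with tensoring by $I_2$.

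For the injectivity of $\pi$, I would proceed as indicated in the paragraph before the lemma. By the universal property of $B_n$, identify $B_n$ with the complex group algebra $\mathbb{C}[\mathbb{F}_n]$, so any nonzero element of $B_n \odot B_n$ has a canonical representation $\sum_{i=1}^k c_i (a_i \odot b_i)$ with distinct pairs $(a_i,b_i) \in \mathbb{F}_n \times \mathbb{F}_n$ and nonzero coefficients $c_i \in \mathbb{C}$. It suffices to exhibit, for such an element, a group element $g \in \mathbb{F}_n$ with $\pi(\sum c_i\, a_i \odot b_i)(g) = \sum c_i\, a_i g b_i \neq 0$ in $\mathbb{C}[\mathbb{F}_n]$; this holds as soon as no two of the group elements $a_i g b_i$ coincide. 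A coincidence $a_i g b_i = a_j g b_j$ for $i \neq j$ rearranges to $g(b_j b_i^{-1})g^{-1} = a_j^{-1} a_i$. If $a_i = a_j$ then $b_i \neq b_j$, so the left side is nontrivial while the right side is trivial, a contradiction; otherwise $a_j^{-1} a_i \neq e$. Thus the problem reduces to the separation statement in the hint: given finitely many pairs $(c_\ell, d_\ell) \in \mathbb{F}_n \times \mathbb{F}_n$ with $c_\ell \neq e$, find a single $g \in \mathbb{F}_n$ with $g c_\ell g^{-1} \neq d_\ell$ for all $\ell$.

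The main obstacle is this group-theoretic separation step. For each $\ell$ the forbidden set $\{g : g c_\ell g^{-1} = d_\ell\}$ is either empty or a coset of the centralizer $C_{\mathbb{F}_n}(c_\ell)$, which for $c_\ell \neq e$ is infinite cyclic (generated by the unique root of $c_\ell$). Since $\mathbb{F}_n$ has exponential growth for $n \geq 2$ while each cyclic coset has linear growth, a finite union of them cannot cover $\mathbb{F}_n$, so an admissible $g$ exists; this is where the hypothesis $n > 1$ is used. Alternatively, and more slickly, one can bypass the explicit count by using factoriality of $L(\mathbb{F}_n)$ and invoking Takesaki's Corollary 4.20, which gives injectivity of the multiplication map $M \odot M^{\mathrm{op}} \to B(L^2(M))$ when $M$ is a factor. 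Once injectivity of $\pi$ is secured, the formal tensor product considerations in the first paragraph close out the lemma.
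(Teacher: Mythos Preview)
Your proposal is correct and follows essentially the same route as the paper: reduce to injectivity of $\pi$ via the tensor-with-identity argument, identify $B_n$ with $\mathbb{C}[\mathbb{F}_n]$, reduce injectivity to the group-theoretic separation statement about avoiding finitely many conjugacy conditions in $\mathbb{F}_n$, and offer the factoriality-plus-Takesaki alternative. The paper merely states the separation reduction without proving it, whereas you supply the growth argument (cyclic centralizers versus exponential growth of $\mathbb{F}_n$), so your version is strictly more complete; one minor imprecision is that in your rearrangement the hypothesis you actually obtain is $d_\ell \neq e$ rather than $c_\ell \neq e$, but the $c_\ell = e$ case is vacuous since then $g c_\ell g^{-1} = e \neq d_\ell$ automatically.
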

\subsection{Derivatives, Derivations, Rank, and Nullity}

Throughout this section $\odot$ will designate the algebraic tensor product and $\otimes$ will denote the von Neumann algebra tensor product.  If $(A, \varphi)$, $(B, \psi)$ are tracial complex $*$-algebras, then there exists a trace on $A \odot B$ given by $(\varphi \odot \psi)(a\odot b) = \varphi(a) \cdot \psi(b)$. Similarly if $(A, \varphi)$ and $(B, \psi)$ are tracial von Neumann algebras, then there exists a canonical tracial state on $A \otimes B$ given by $(\varphi \otimes \psi)(a \otimes b) = \varphi(a) \cdot \psi(b)$.  In this case $A \odot B$ canonically embeds into $A \otimes B$ by a trace-preserving, injective, unital $*$-homomorphism $\iota$ (this is a bloated way of saying that the minimal norm is in fact a norm and not just a seminorm).

The following discussion will motivate the definition of $D^sF(X)$.  This notion is defined in terms of derivations and will be convenient for describing operator calculus.  It will be important to connect this to the ordinary derivative which will be used in the microstate setting.  One can succinctly express their connection in terms of a commutative diagram (Remark 3.8) which will be fleshed out in the discussion below.
   
Suppose $\mathfrak{A}_n$ is the universal, unital complex $*$-algebra on $n$ generators $\mathbb X= \{X_1, \ldots, X_n\}$. $\mathfrak{A}_n^{op}$ is the opposite algebra associated to $\mathfrak{A}_n$.  For each $1 \leq j \leq n$ denote by $A_j$ the unital $*$-algebra generated by $\mathbb X - \{X_j\}$.  If $\xi =\{\xi_1,\ldots, \xi_n\}$ is an $n$-tuple in a tracial von Neumann algebra $M$, then there exist canonical $*$-homomorphisms $\pi_{\xi}: \mathfrak{A}_n \rightarrow M$, $\pi_{\xi}^{op}: \mathfrak{A}_n \rightarrow M^{op}$, such that $\pi_{\xi}(X_i) = \xi_i \in M$ and $\pi^{op}_{\xi}(X_i) = \xi_i^{op} \in M^{op}$, $1\leq i \leq n$.  Denote by $\sigma_M : M \odot M^{op} \rightarrow B(L^2(M))$ the unique complex $*$-homomorphism such that for any $a, b, \xi \in M$ and $\xi \in M \subset L^2(M)$, $\sigma_M(a \odot b^{op})(\xi) = a \xi b$.  Define $\pi_M(\xi): \mathfrak{A}_n\odot \mathfrak{A}_n^{op} \rightarrow B(L^2(M))$ by $\pi_M = \sigma_M \circ (\pi_{\xi} \odot \pi^{op}_{\xi})$.

Fix $1 \leq r \leq n$, $f \in \mathfrak{A}_n$, and set $A = A_r$, $x= X_r$.  $f$ induces a well-defined function from $\oplus_{i=1}^n M$ into $M$, which will be denoted again by $f$.  Regarding $\oplus_{i=1}^n M$ and $M$ as real Banach spaces with the operator norm, $f$ is differentiable (the operator norm is necessary, since the $L^2$-norm is not submultiplicative).  In particular, writing $f = \sum \lambda_{i_1,\ldots, i_{p+1}} a_{i_1} x^{q_{i_1}} \cdots a_{i_p} x^{q_{i_p}} a_{i_{p+1}}$ where $q_{i_j} \in \{1, *\}$ and $a_{i_j} \in A$, the partial derivative of $f$ with respect to the jth variable at $\xi = (\xi_1,\ldots, \xi_n) \in \oplus_{i=1}^n M$ is the bounded, real linear map $\partial_jf(\xi)$ on $M$ given by 

\begin{eqnarray*} & & \sum \lambda_{i_1,\ldots, i_{p+1}} \left [ \pi_M(\xi)(a_{i_1} \odot a_{i_2} \cdots a_{i_p} x^{q_{i_p}} a_{i_{p+1}}) J^{\delta_{q_{i_1}, *}} +  \cdots + \pi_M(\xi)(a_{i_1} x^{q_{i_1}} a_{i_2} \cdots a_{i_p} \odot a_{i_{p+1}}) J^{\delta_{q_{i_p}, *}} \right] \\
\end{eqnarray*}

\noindent where $J$ is the (real linear) conjugation extension on $L^2(M)$.   This is effectively the free partial derivative except that the appearance of a conjugated element requires an added $J$ term.  As in the previous subsection set $M_1 = M^{sa}$, $M_2 =i M^{sa}$, $H_i = L^2(M_i)$.  The partial derivative operator $\partial_jf(\xi)$ above is clearly bounded w.r.t. the $L^2$-norm and one can regard $\partial_jf(\xi)$ as an element of $B_{\mathbb R}(L^2(M))$.  Applying $\Phi$ of the preceding section to $\partial_jf(\xi)$ so regarded, one arrives at a $2 \times 2$ matrix representation of $\partial_jf(\xi)$ realized as an element of $M_2(\sigma_M(M \odot M^{op})) \subset M_2(B(L^2(M)))$.  Recall that this is obtained by breaking the action of $\partial_jf(\xi)$ up into actions on the $H_i$, and then extending these restricted real linear operators into complex linear operators. 

Alternatively, the resultant matrix representation of $\partial_jf(\xi)$ can be expressed in terms of derivations.  To see this note that $\{x,x^*\}$ generates the free complex $*$-algebra generated by a single indeterminate and $A=A_r$ is free from the unital complex algebra generated by $\{x,x^*\}$.  Endowing $A[x,x^*] \odot A[x,x^*]^{op}$ with the canonical $A[x,x^*]$-$A[x,x^*]^{op}$ bimodule structure, define the complex linear derivations $\partial_{sa}, \partial_{sk} : A[x,x^*] \rightarrow A[x,x^*] \odot A[x,x^*]^{op}$ by the relations $\partial_{sa}(A) = \partial_{sk}(A) = 0$, $\partial_{sa}(x) = \partial_{sa}(x^*) = \partial_{sk}(x) = I \odot I$ and $\partial_{sk}(x^*) = -I \odot I$. The $\Phi$-matrix of the operator $\partial_jf(\xi)$ with respect to the real inner product decomposition $L^2(M) = H_1 \oplus H_2$ can be described with $\partial^{sa}$ and $\partial^{sk}$.  Consider

\begin{eqnarray*} \begin{bmatrix}
e_1 \pi_M(\xi)(\partial_{sa} f) e_1 & e_1\pi_M(\xi)(\partial_{sk} f) e_2 \\ 
e_2 \pi_M(\xi)(\partial_{sa} f) e_1 & e_2\pi_M(\xi)(\partial_{sk} f) e_2 \\
\end{bmatrix} =
\begin{bmatrix}
\pi_M(\xi)(\partial_{sa} f_1) e_1 & \pi_M(\xi)(\partial_{sk} f_1) e_2 \\ 
\pi_M(\xi)(\partial_{sa} f_2) e_1 & \pi_M(\xi)(\partial_{sk} f_2) e_2 \\
\end{bmatrix}\\
\end{eqnarray*}

\noindent where $f_1 = (f+f^*)/2$, $f_2 = (f-f^*)/2$, $e_1 = (I + J)/2$, and $e_2 = (I-J)/2$.  This is the matrix representation of $\partial_j f(\xi)$ w.r.t. $H_1 \oplus H_2$ .  $\Phi(\partial_jf(\xi))$ is then

\begin{eqnarray*} 
\begin{bmatrix}
\pi_M(\xi)(\partial_{sa} f_1) & \pi_M(\xi)(\partial_{sk} f_1)  \\ 
\pi_M(\xi)(\partial_{sa} f_2)  & \pi_M(\xi)(\partial_{sk} f_2)  \\
\end{bmatrix} & \in & M_2(B(L^2(M))).\\
\end{eqnarray*}

From the standpoint of $*$-moments, however, these expressions are problematic.  This is because they all involve $\pi_M$ which is defined with $\sigma_M$ which in turn isn't always an embedding.  However, omitting the application of $\sigma_M$ in the definition $\pi_M = \sigma_M \circ (\pi_{\xi} \odot \pi_{\xi}^{op})$ in the above yields
\begin{eqnarray*} 
\begin{bmatrix}
(\pi_{\xi} \odot \pi^{op}_{\xi})(\partial_{sa} f_1) &(\pi_{\xi} \odot \pi^{op}_{\xi})(\partial_{sk} f_1)  \\ 
(\pi_{\xi} \odot \pi^{op}_{\xi})(\partial_{sa} f_2)  & (\pi_{\xi} \odot \pi^{op}_{\xi})(\partial_{sk} f_2)  \\
\end{bmatrix} & \in & M_2(M \odot M^{op}) .\\
\end{eqnarray*}
 This matrix has $*$-moments which can be described entirely in terms of the moments of $\xi$ and (after applying the dummy embedding $\iota$ which will replace the algebraic tensor product with the von Neumann algebra tensor product) its entries will lie in the tracial von Neumann algebra $M \otimes M^{op}$.  
 
Notice that in the special case of $M = M_k(\mathbb C)$ all these matricial expressions and their images under $\Phi$ are elements in tracial von Neumann algebras and the expressions have the same $*$-moments.   Indeed in this case $\Phi$ is a trace-preserving, real $*$-embedding by Lemma 3.3, $\sigma_M$ is a trace-preserving $*$-isomorphism, and $M \odot M^{op} \simeq M \otimes M^{op} \simeq B(L^2(M))$.  
 
 The discussion motivates the following definitions and remark.  

\begin{definition} Suppose that $\mathfrak{A}$ is a unital, complex $*$-algebra and $x \in \mathfrak{A}$ such that the unital $*$-algebra generated by $x$, $\mathbb C[x,x^*]$, is isomorphic to the universal complex $*$-algebra on a single element.  Assume moreover that $A \subset \mathfrak{A}$ is a unital $*$-algebra and $A$ and $\mathbb C[x,x^*]$ are algebraically free and denote the unital $*$-subalgebra they generate by $A[x,x^*]$.  The maps $\partial_{sa}, \partial_{sk} : A[x,x^*] \rightarrow A[x,x^*] \odot A[x,x^*]^{op}$ are the unique derivations defined by the relations $\partial_{sa}(A) = \partial_{sk}(A) = 0$, $\partial_{sa}(x) = \partial_{sa}(x^*) = \partial_{sk}(x) = I \odot I^{op}$ and $\partial_{sk}(x^*) = -I \odot I^{op}$, where $A[x,x^*] \odot A[x,x^*]^{op}$ is given the natural $A[x,x^*]-A[x,x^*]^{op}$ bimodule structure.  For $f \in A[x,x^*]$ write $f = f_1 + f_2$ where $f_1 = (f + f^*)/2$ and $f_2 = (f-f^*)/2$.  Define

\begin{eqnarray*} D_{(x,x^*)}f = \begin{bmatrix}
\partial_{sa} f_1 & \partial_{sk} f_1 \\ 
\partial_{sa} f_2 & \partial_{sk} f_2 \\
\end{bmatrix} \in M_2( A[x,x^*] \odot A[x,x^*]^{op}).\\
\end{eqnarray*}  
\end{definition}

\begin{definition} Fix an $n$-tuple $X=\{x_1,\ldots, x_n\}$ in the tracial von Neumann algebra $M$ and suppose $f \in \mathfrak{A}_n$ and $1 \leq j \leq n$.  Suppose $\mathbb X = \{X_1,\ldots, X_n\}$ are the canonical generators for $\mathfrak{A}_n$ and $A_j$ is the unital $*$-subalgebra generated by $\mathbb X - \{X_j\}$.   Set $x =X_j$, $A=A_j$, and consider $D_{(x,x^*)}$ in Definition 3.6.  The jth partial S-derivative of $f$ at $X$ is the element 
\begin{eqnarray*}
(\partial_j^s f)(X) & = & (\pi_{X} \otimes \pi_{X}^{op} \otimes I_2) \circ (D_{(x_j,x_j^*)}f) \\ & \in & M_2(M \odot M^{op}) \\
& \subset & M_2(M \otimes M^{op}).\\
\end{eqnarray*}
\end{definition}
 
\begin{remark} Suppose $f \in \mathfrak A_n$ and $X = \{x_1,\ldots, x_n\}$ is an $n$-tuple in the tracial von Neumann algebra $M$.  As before $f$ can be regarded as a smooth function from the real Banach space $\oplus_{i=1}^n M$ into $M$.   Fix $1 \leq j \leq n$ and set $A= A_j \subset \mathfrak{A}_n$, $x=X_j \in \mathfrak{A}_n$.  Denote by $(\partial_jf)(X)$ the jth partial derivative operator of $f$ evaluated at $X$, regarded as a bounded real linear map on the real Hilbert space $L^2(M)$.   The discussion and definitions above yield the following commutative diagram:

\begin{eqnarray*}
\begin{tikzpicture}[scale=4]
\node (A) at (0,1.2) {$B_{\mathbb R}(L^2(M))$};
\node (B) at (1.2,1.2) {$\mathfrak{A}_n$};
\node (C) at (0,0) {$M_2(\sigma_M(M \otimes M^{op}))$};
\node (D) at (1.2,0) {$M_2(M \odot M^{op})$};
\node (E) at (1.8,0.6) {$M_2(A[x,x^*] \odot A[x,x^*]^{op})$};
\node (F) at (2.2, 0) {$M_2(M \otimes M^{op})$};
\path[->,font=\scriptsize]
(B) edge node[above]{$(\partial_j \cdot)(X)$} (A)
(A) edge node[left]{$\Phi$} (C)
(B) edge node[left]{$(\partial_j^s \cdot)(X)$} (D)
(B) edge node[midway, right]{$\hspace{.1in} D_{(x,x^*)}$} (E)
(E) edge node [below, right]{$\hspace{.1in}(\pi_X \odot \pi_X^{op}) \otimes I_2$} (D)
(D) edge node[above]{$\sigma_M \otimes I_2$} (C)
(D) edge [commutative diagrams/hook] node[above]{$\iota \otimes I_2$} (F);
\end{tikzpicture}
\end{eqnarray*}
\end{remark}

\begin{definition} Fix an $n$-tuple $X=\{x_1,\ldots, x_n\}$ in the tracial von Neumann algebra $M$ and an $m$-tuple $F= \{f_1,\ldots,f_m\}$ in $\mathfrak{A}_n$.  For each $1\leq i \leq m$, $1 \leq j \leq n$, consider the $\partial^s_jf_i(X)$, the $j$th partial S-derivative of $f_i$ at $X$ realized in the von Neumann algebra tensor product $M_2(M \otimes M^{op})$.  The S-derivative of $F$ at $X$ is 
\begin{eqnarray*} D^sF(X)=
\begin{bmatrix}
(\partial_1^s f_1)(X) & \cdots & (\partial^s_n f_1)(X) \\ \vdots &  & \vdots \\
(\partial_1^s f_m)(X) & \cdots  & (\partial^s_n f_m)(X) \\
\end{bmatrix} \in M_{m\times n}(M_2(M \otimes M^{op})).
\end{eqnarray*}
\end{definition}


The S-derivative of $F$ at $X$ is an operator whose $*$-moments (when defined) can be characterized in terms of the moments of $X$.  It is naturally connected to the ordinary derivative of $F$ at $X$ when $F$ is regarded as a function on a Banach space direct sum of copies of $M$ (Remark 3.8 and the commutative diagram).  Moreover, one can make sense of its rank and nullity as described in Section 2.6 when $D^sF(X)$ is regarded as a matrix with $M\otimes M^{op}$ entries.  I'll show shortly that this rank and nullity reflect the rank and nullity of the corresponding (ordinary) derivative of $F$ at a microstate.  These notions will be used to establish global dimension and entropy bounds on the microstate spaces.

\begin{definition} Consider $D^sF(X)$ regarded as an element $M_{2m \times 2n}(M\otimes M^{op}) = M_{m\times n}(M_2(M \otimes M^{op}))$.  $\text{Nullity}(D^sF(X))$ and $\text{Rank}(D^sF(X))$ are the rank and nullity of $D^sF(X)$ so regarded and in the sense of Section 2.6.  More explicitly set $P = 1_{\{0\}}(D^sF(X)^* D^sF(X))$ and $Q = 1_{(0,\infty)}(D^sF(X)^* D^sF(X))$.   $P, Q \in M_{2n}(M \otimes M^{op})$.  Consider the tracial state $\psi = tr_{2n} \otimes (\varphi \otimes \varphi^{op})$ on $M_{2n}(M \otimes M^{op})$.  The nullity and rank of $F$ at $X$ are

\[  \text{Nullity}(D^sF(X)) = 2n \cdot \psi(P) \in [0,2n]
\]

\noindent and

\[  \text{Rank}(D^sF(X)) = 2n \cdot \psi(Q) \in [0,2n],
\]

\noindent respectively.
\end{definition}

\begin{remark} When $G = \{g_1,\ldots,g_p\}$ is another $p$-tuple of elements in $\mathfrak{A}_n$, then one can consider the obvious $(n+p)$-tuple $F \cup G$ and $D^s(F \cup G)(X)$ denotes the element of $M_{(m+p) \times n}(M_2(M \otimes M^{op}))$ obtained by 'stacking' $D^sF(X)$ on top of $D^sG(X)$.  In this case if $Q_F$ is the projection onto the kernel of $D^sF(X)$ and $Q_G$ is the projection onto the kernel of $D^sG(X)$, then the projection onto the kernel of $D^s(F \cup G)(X)$ is $Q_F \wedge Q_G$.  
\end{remark}

\subsection{Microstate approximations}

In this section I'll see how properties of the $S$-derivative of a tuple $F$ at $X$ are reflected by microstates of $X$.   Suppose $A$ and $B$ are tracial $*$-algebras (real or complex).  Given an indexing set $J$ for subsets $X =\langle x_j \rangle_{j \in J}$ and $Y = \langle y_j \rangle_{j \in J}$ in $A$ and $B$, I will write $X \approx Y$ if $X$ and $Y$ have the same $*$-moments.  

Suppose for each $1 \leq i \leq p$, $f_i \in \mathfrak{A}_n$ and $F = (f_1, \ldots, f_p)$.  Regard $F$ as a map from $(M_k(\mathbb C))^n$ into  $(M_k(\mathbb C))^p$ where $F(\xi) = (f_1(\xi),\ldots, f_p(\xi))$, $\xi \in (M_k(\mathbb C))^n$.  As such the derivative of $F$ at $\xi \in (M_k(\mathbb C))^n$ has the matrix representation

\[ DF(\xi) =
\begin{bmatrix} 
\partial_1 f_1(\xi) & \cdots & \partial_n f_1(\xi) \\ \vdots &  & \vdots \\
\partial_1 f_p(\xi) & \cdots  & \partial_n f_p(\xi) \\
\end{bmatrix} \in M_{p\times n}(B_{\mathbb R}(M_k(\mathbb C)))
\]

\noindent where $\partial_if_j(\xi)$ are the partial derivatives of the $f_i$ regarded as functions from $\oplus_{i=1}^n M_k(\mathbb C)$ to $M_k(\mathbb C)$; this was discussed in the preceding subsection in the context of a general von Neumann algebra.  Proposition 3.3 asserts that $\Phi$ is a real, trace-preserving $*$-embedding. $\sigma_M \otimes I_2$ is a bijective, trace-preserving $*$-isomorphism of complex $*$-algebras for $M = M_k(\mathbb C)$ .  Thus, by the commutative diagram of Remark 3.8,
\begin{eqnarray*}
 \langle (\partial_if_j)(\xi) \rangle_{1\leq  i \leq p, 1\leq j \leq n} & \approx & \langle \Phi((\partial_if_j)(\xi)) \rangle_{1 \leq i \leq p, 1 \leq j \leq n} \\                                    & \approx & \langle (\sigma_M \otimes I_2)(\partial^s_i f_j)(\xi) \rangle_{1 \leq i \leq p, 1 \leq j \leq n}\\      & \approx & \langle (\partial^s_i f_j)(\xi) \rangle_{1 \leq i \leq p, 1 \leq j \leq n}\\                                      
\end{eqnarray*}
Denote by $\Lambda =\langle e_{ij} \rangle_{1 \leq i,j \leq n}$ the canonical system of matrix units for $M_n(\mathbb C)$.  Set
\[
\Lambda_1 = \langle e_{ij} \otimes I_{B_{\mathbb R}(M_k(\mathbb C))} \rangle_{1 \leq i,j \leq n},
\] 
\[
\Lambda_2 = \langle e_{ij} \otimes I_{M_2(M_k(\mathbb C) \otimes M_k(\mathbb C)^{op})} \rangle_{1 \leq i,j \leq n},
\] 
and
\[
\Lambda_3 = \langle e_{ij} \otimes I_{M_2(M \otimes M^{op})} \rangle_{1 \leq i,j \leq n}.
\]  
It follows from the $*$-distributional equivalences above that 
\begin{eqnarray*}
(DF(\xi)^*DF(\xi), \Lambda_1) \subset M_n(B_{\mathbb R}(M_k(\mathbb C)) =M_n(\mathbb C) \otimes B_{\mathbb R}(M_k(\mathbb C))
\end{eqnarray*}
and 
\begin{eqnarray*}
(D^sF(\xi)^*D^sF(\xi), \Lambda_2) \in M_n(M_2(M_k(\mathbb C) \otimes M_k(\mathbb C)^{op})) = M_n(\mathbb C) \otimes (M_2(M_k(\mathbb C) \otimes M_k(\mathbb C)^{op}))
\end{eqnarray*}
have the same $*$-moments.  Notice that the algebras on the right hand sides are given the canonical real and complex traces, respectively.

Now the trace of any $*$-monomial of the entries of $D^sF(\xi)^*D^sF(\xi)$ and the trace of the corresponding $*$-monomial of the entries of $D^sF(X)^*D^sF(X)$ can be made arbitrarily close provided that the $*$-moments of $\xi$ are sufficiently close to those of $X$.   Thus, for any $R >0$, there exists an $R_1 >0$ dependent only on $F$ and $X$ such that for any given $m_1 \in \mathbb N$ and $\gamma_1 >0$, there exists a $m \in \mathbb N$ and $\gamma >0$ such that if $\xi \approx^{R,m, \gamma} X$, then $(D^sF(\xi)^*D^sF(\xi), \Lambda_2) \approx^{R_1, m_1, \gamma_1} (D^sF(X)^* D^sF(X), \Lambda_3)$.  From the above algebraic identifications
\begin{eqnarray*}
(D^sF(\xi)^*D^sF(\xi), \Lambda_2) \approx (DF(\xi)^*DF(\xi), \Lambda_1).  
\end{eqnarray*}
Putting these two facts together yields:

\begin{proposition} Suppose $X$ is an $n$-tuple of elements in $M$ and $F$ is a $p$-tuple of elements in  $\mathfrak{A}_n$.  For any $R>0$ there exists an $R_1>0$ dependent only on $F,X,R$ such that if $m_1 \in \mathbb N$ and $\gamma_1 >0$, then there exist $m \in \mathbb N$ and $\gamma >0$ such that if $\xi \approx^{R,m,\gamma} X$, then $DF(\xi)^*DF(\xi) \approx^{R_1, m_1,\gamma_1} D^sF(X)^*D^sF(X)$.
\end{proposition}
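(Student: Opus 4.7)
The plan is to formalize what the paragraph preceding the statement already sketches: the commutative diagram of Remark 3.8 equates the $*$-distributions of $DF(\xi)^*DF(\xi)$ and $D^sF(\xi)^*D^sF(\xi)$ exactly (for each microstate $\xi$), and then a standard polynomial-approximation argument propagates closeness of $*$-moments of $\xi$ to closeness of $*$-moments of $D^sF(\xi)^*D^sF(\xi)$ against $D^sF(X)^*D^sF(X)$.

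First, I would pick $R_1$. Each entry $(\partial^s_j f_i)(Y)$ is obtained by applying the universal derivation expression $D_{(X_j,X_j^*)} f_i$ and then substituting $Y$ via $\pi_Y \odot \pi_Y^{op}$; so the entries of $D^sF(Y)^*D^sF(Y)$ are universal noncommutative $*$-polynomials in the entries of $Y$. Taking $R_1$ to be the supremum of the operator norms produced by these polynomials when their arguments have norm at most $\max\{R, \max_{x \in X} \|x\|\}$ gives a constant depending only on $F$, $X$, $R$ that dominates the relevant operator norms for every $\xi$ with $\|\xi\|_\infty \le R$ and for $X$ itself.

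Second, I would invoke the matrix-case chain of identifications above the proposition. When $M = M_k(\mathbb C)$, Lemma 3.3 says $\Phi$ is a trace-preserving real $*$-embedding, and $\sigma_M \otimes I_2$ is a trace-preserving complex $*$-isomorphism onto $M_2(B(L^2(M)))$. Combining this with the commutative diagram of Remark 3.8 shows that, jointly with the canonical $M_n(\mathbb C)$-matrix units $\Lambda_1$ and $\Lambda_2$ respectively, the $np \times np$ blocks $DF(\xi)^*DF(\xi)$ and $D^sF(\xi)^*D^sF(\xi)$ have identical $*$-moments. In particular every $*$-moment of $DF(\xi)^*DF(\xi)$ is computable as a $*$-moment of $D^sF(\xi)^*D^sF(\xi)$.

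Third, given $m_1$ and $\gamma_1$, fix the finite list $\mathcal{W}$ of $*$-words of length at most $m_1$ in the entries of $D^sF(\cdot)^*D^sF(\cdot)$ and the units $\Lambda$. Expanding using $D^s$ and the bimodule trace, each such word, when evaluated at any $n$-tuple $Y$, yields the trace of a fixed noncommutative $*$-polynomial $P_w$ (depending on $w$ and $F$) in $Y$. Only finitely many such $P_w$ arise, so there is a uniform bound $m$ on their degrees and, by continuity of polynomial trace evaluation in the $*$-moments on a norm-bounded set, a $\gamma > 0$ such that $\xi \approx^{R,m,\gamma} X$ implies $|\text{tr}(P_w(\xi)) - \varphi(P_w(X))| < \gamma_1$ for every $w \in \mathcal{W}$. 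Combining with step two gives $DF(\xi)^*DF(\xi) \approx^{R_1,m_1,\gamma_1} D^sF(X)^*D^sF(X)$. The only real obstacle is notational bookkeeping, namely verifying that the matrix units $\Lambda_1$, $\Lambda_2$, $\Lambda_3$ match up under the identifications of the diagram so that the universal polynomials $P_w$ are genuinely the same on both sides; this is routine because $\Phi$ and $\sigma_M \otimes I_2$ are unital and commute with the scalar $M_n(\mathbb C)$-action.
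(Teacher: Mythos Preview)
Your proposal is correct and follows essentially the same approach as the paper: the paper has no separate proof for this proposition, since it is explicitly stated as the conclusion of the discussion immediately preceding it (``Putting these two facts together yields:''). Your three steps---choosing $R_1$ from polynomial norm bounds, invoking Lemma 3.3 and the commutative diagram of Remark 3.8 to equate the $*$-distributions of $DF(\xi)^*DF(\xi)$ and $D^sF(\xi)^*D^sF(\xi)$ in the matrix case, and then propagating moment-closeness through the universal polynomial expressions---are exactly the two facts the paper combines, just written out more explicitly.
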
  

Combining Lemma 2.9 with the proposition above, one has the following:

\begin{proposition} Suppose $X$ is an $n$-tuple of elements in $M$ and $F$ is a $p$-tuple of elements in  $\mathfrak{A}_n$.  If $\alpha,r, R >0$, then there exist $m \in \mathbb N$ and $\gamma >0$ such that if $\xi \in \Gamma_R(X;m,k,\gamma)$, then the real dimension of the range of the projection $1_{[0,\alpha]}(DF(\xi)^*DF(\xi))$ on $(M_k(\mathbb C))^n$ is no greater than

\begin{eqnarray*} 2nk^2 \cdot \psi(1_{[0,\alpha]}(D^sF(X)^*D^sF(X)) + r)
\end{eqnarray*}

\noindent where $\psi = (tr_n \otimes (tr_2 \otimes( \varphi \otimes  \varphi^{op})))$ is the canonical trace on $M_n(M_2(M \otimes M^{op}))$.  In particular, if $s, R >0$, then there exist $m \in \mathbb N$ and $\rho, \gamma >0$ such that if $\xi \in \Gamma_R(X;m,k,\gamma)$ and $0 < t < \rho$, then the real dimension of the range of the projection $1_{[0,t]}(|DF(\xi)|)$ on $(M_k(\mathbb C))^n$ is no greater than

\begin{eqnarray*} (\text{Nullity}(D^sF(X)) + s) k^2.
\end{eqnarray*}
\end{proposition}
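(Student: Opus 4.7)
The plan is to reduce the real dimension count for the range of $1_{[0,\alpha]}(DF(\xi)^*DF(\xi))$ to a normalized trace on the complex von Neumann algebra $M_n(M_2(M_k(\mathbb{C})\otimes M_k(\mathbb{C})^{op}))$, and then invoke Proposition 3.12 (moment-closeness of the $S$-derivative at a microstate to the one at $X$) together with Lemma 2.9 (spectral projections of a positive operator are controlled by its $*$-moments). The only subtle step is the real-to-complex trace identification, but this is exactly what the machinery of Section 3.1 (Lemma 3.3 and the commutative diagram of Remark 3.8) was engineered to supply.

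First I observe that $DF(\xi)^*DF(\xi)$ is a positive semidefinite symmetric real linear operator on $(M_k(\mathbb{C}))^n$, a real vector space of dimension $2nk^2$. Hence the real dimension of the range of $1_{[0,\alpha]}(DF(\xi)^*DF(\xi))$ equals $2nk^2$ times the normalized real trace of this projection. Applying $\Phi$ block-by-block (trace-preserving by Lemma 3.3) and then the trace-preserving $*$-isomorphism $\sigma_{M_k(\mathbb{C})}\otimes I_2$ available in the matrix case, this normalized real trace coincides with $\psi_k(1_{[0,\alpha]}(D^sF(\xi)^*D^sF(\xi)))$, where $\psi_k$ is the canonical normalized trace on $M_n(M_2(M_k(\mathbb{C})\otimes M_k(\mathbb{C})^{op}))$ and $D^sF(\xi)$ is the $S$-derivative evaluated at the microstate $\xi$ (regarding $(M_k(\mathbb{C}), tr_k)$ as a tracial von Neumann algebra).

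Given $r > 0$, I next apply Lemma 2.9 to the positive operator $D^sF(X)^*D^sF(X) \in M_n(M_2(M\otimes M^{op}))$, with spectral cutoff $\alpha$ and slack $r$, to obtain moment-comparison parameters $m_1 \in \mathbb{N}$ and $\gamma_1 > 0$. Proposition 3.12 then produces $m \in \mathbb{N}$ and $\gamma > 0$ such that every $\xi \in \Gamma_R(X;m,k,\gamma)$ yields $DF(\xi)^*DF(\xi)$ whose $*$-moments (after the identifications of the previous paragraph) are $(m_1,\gamma_1)$-close to those of $D^sF(X)^*D^sF(X)$. Chaining with Lemma 2.9 gives
\[
\psi_k(1_{[0,\alpha]}(D^sF(\xi)^*D^sF(\xi))) \leq \psi(1_{[0,\alpha]}(D^sF(X)^*D^sF(X))) + r,
\]
and multiplying through by $2nk^2$ establishes the first bound.

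For the ``in particular'' statement I use normality of $\psi$: the projections $1_{[0,t]}(D^sF(X)^*D^sF(X))$ decrease strongly to $1_{\{0\}}(D^sF(X)^*D^sF(X))$ as $t \searrow 0$, so $2n\psi$ of them converges to $\mathrm{Nullity}(D^sF(X))$. Choose $\rho > 0$ with $2n\psi(1_{[0,\rho^2]}(D^sF(X)^*D^sF(X))) \leq \mathrm{Nullity}(D^sF(X)) + s/2$ and apply the first part with $\alpha = \rho^2$ and $r = s/(4n)$; for $0 < t < \rho$ the spectral identity $1_{[0,t]}(|DF(\xi)|) = 1_{[0,t^2]}(DF(\xi)^*DF(\xi)) \leq 1_{[0,\rho^2]}(DF(\xi)^*DF(\xi))$ yields the bound $(\mathrm{Nullity}(D^sF(X)) + s)k^2$ on the range dimension. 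The main obstacle in the whole argument is the bookkeeping between the real trace on $B_{\mathbb{R}}((M_k(\mathbb{C}))^n)$ and the complex trace on $M_n(M_2(M_k(\mathbb{C})\otimes M_k(\mathbb{C})^{op}))$, and this is precisely what Lemma 3.3 and the commutative diagram of Remark 3.8 were set up to handle.
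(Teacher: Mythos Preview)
Your proposal is correct and follows essentially the same approach as the paper: invoke Proposition 3.12 to transfer moment control from $X$ to $DF(\xi)^*DF(\xi)$, then apply Lemma 2.9 to bound the trace of the spectral projection, and multiply by $2nk^2$. The only cosmetic difference is that the paper applies the real-operator variant of Lemma 2.9 directly to $DF(\xi)^*DF(\xi)$ (which already has the same $*$-moments as $D^sF(X)^*D^sF(X)$ by the discussion preceding Proposition 3.12), whereas you first pass through $\Phi$ and $\sigma_{M_k(\mathbb C)}\otimes I_2$ to $D^sF(\xi)^*D^sF(\xi)$ and then apply Lemma 2.9; and you spell out the ``in particular'' part explicitly via normality of $\psi$, while the paper simply says it readily follows.
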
  

\begin{proof}  By Proposition 3.12 there exists an $R_1$ dependent only on $F, X, R$ such that for any $m_0 \in \mathbb N$ and $\gamma_0 >0$ there exist $m_1 \in \mathbb N$, $\gamma_1 >0$ such that if $\xi \approx^{R,m_1,\gamma_1} X$, then $DF(\xi)^* DF(\xi) \approx^{R_1, m_0,\gamma_0} D^sF(X)^* D^sF(X)$. 
Applying Lemma 2.9, there exists $m_2 \in \mathbb N$, and $\gamma_2 >0$ such that if $Y$ is a positive semidefinite, symmetric, real linear operator on a finite dimensional real vector space and $Y \approx^{R_1,m_2,\gamma_2} D^sF(X)^*D^sF(X)$, then the trace of the spectral projection $1_{[0, \alpha]}(Y)$ is no greater than
\begin{eqnarray*}
\psi(1_{[0,\alpha]}(D^sF(X)^*D^sF(X))) + r.
\end{eqnarray*}
Setting $m_0=m_2$ and $\gamma_0=\gamma_2$ into the first sentence, there exist $m \in \mathbb N$ and $\gamma >0$ such that if  $\xi \approx^{R,m,\gamma} X$, then $DF(\xi)^* DF(\xi) \approx^{R_1, m_2,\gamma_2} D^sF(X)^* D^sF(X)$.  Thus, if $\xi \in \Gamma_R(X;m,k,\gamma)$, then 
\begin{eqnarray*}
\psi_k(1_{[0,\alpha]}(DF(\xi)^*DF(\xi))) \leq \psi(1_{[0,\alpha]}(D^sF(X)^*D^sF(X))) + r
\end{eqnarray*}
where $\psi_k$ is the normalized real trace on the space of real linear operators on the real vector space $(M_k(\mathbb C))^n$.  Multiplying both sides by $2nk^2$, it follows that the real dimension of the projection $1_{[0,\alpha]}(DF(\xi)^*DF(\xi))$ on $(M_k(\mathbb C))^n$ is no greater than
\begin{eqnarray*}
&& 2nk^2 \cdot (\psi(1_{[0,\alpha]}(D^sF(X)^*D^sF(X))) + r).\\
\end{eqnarray*}
The second claim readily follows from this.
\end{proof}

\subsection{Self-adjoint and Unitary Calculus}

It's natural to wonder how the notions of nullity and rank behave when one deals with self-adjoint or unitary tuples and the mapping $F$ preserves these classes.  In these situations, either class satisfies additional single 
variable relations, i.e., $X-X^*=0$ or $X^*X=I$, respectively.  These relations should 'transform' the $2 \times 2$ matrix entries of $D^sF(X)$ into one operator entry through a change of variables.  Moreover, the nullity or rank of the full $2\times 2$ case and the single operator entry case should be connected in a natural way, and the resultant differential calculus on the self-adjoints or unitaries should be simple to compute (or at least one that is no more difficult than that of $D^s$).  This is indeed the case and the results are stated and proven here.  

While such observations could be made using the notion of a Hilbert manifold, I avoid them here and will proceed in a more pedestrian way.

First for the self-adjoint situation.  In this case the differential calculus is consistent with that of \cite{v4} and is connected to the $*$-calculus in the obvious way.

\begin{definition} Fix an $n$-tuple $X=\{x_1,\ldots, x_n\}$ in the tracial von Neumann algebra $M$ and suppose $f \in \mathfrak{A}_n$ and $1 \leq j \leq n$.   Set $x =X_j$, $A=A_j$ as in Definition 3.7.  $\partial_{sa}: A[x, x^*] \rightarrow A[x,x^*] \odot A[x,x^*]^{op}$ is the derivation determined by the relations $\partial_{sa}(A)=0$ and $\partial_{sa}(x) = \partial_{sa}(x^*) = I \odot I^{op}$ where $A[x,x^*] \odot A[x,x^*]^{op}$ is given the natural $A[x,x^*]-A[x,x^*]^{op}$ bimodule structure.  The jth partial $S^{sa}$-derivative of $f$ at $X$, $\partial_i^{sa}f(X)$, is the element $(\pi_X \otimes \pi_X^{op}) \circ \partial_{sa}f \in M \otimes M^{op}$ where $\pi_X:\mathfrak{A}_n \rightarrow M$ is the unique $*$-homomorphism such that $\pi_X(X_j) = x_j$.
\end{definition}

\begin{definition} Suppose $X \subset M$ and $F = \{f_1,\ldots, f_m\} \subset \mathfrak{A}_n$ are $n$ and $m$-tuples and $F$ consists of self-adjoint elements.   For each $1\leq i \leq m$, $1 \leq j \leq n$, consider the $\partial^{sa}_jf_i(X)$, the $j$th partial $S^{sa}$-derivative of $f_i$ at $X$ realized in the von Neumann algebra tensor product $M \otimes M^{op}$.  The self-adjoint S-derivative of $F$ at $X$ is
\begin{eqnarray*}
D^{sa}F(X) =
\begin{bmatrix}
(\partial_{1}^{sa} f_1)(X) & \cdots & (\partial_{n}^{sa} f_1)(X) \\ \vdots &  & \vdots \\
(\partial_{1}^{sa}f_m)(X) & \cdots  & (\partial_{n}^{sa} f_m)(X) \\
\end{bmatrix} \in M_{m\times n}(M  \otimes M^{op}).
\end{eqnarray*}
\end{definition}

\begin{remark}  Suppose $X, F$ are as in Definition 3.15.  One can consider the nullity and rank of $D^{sa}F(X) \in M_{m\times n}(M  \otimes M^{op})$ as described in Section 2.6.  Explicitly set $P = 1_{\{0\}}(D^{sa}F(X)^* D^{sa}F(X))$ and $Q = 1_{(0,\infty)}(D^{sa}F(X)^* D^{sa}F(X))$.   $P, Q \in M_{n}(M \otimes M^{op})$.  Consider the tracial state $\psi = tr_{n} \otimes (\varphi \otimes \varphi^{op})$ on $M_{n}(M \otimes M^{op})$.  The selfadjoint nullity and rank of $F$ at $X$ are

\[  \text{Nullity}(D^{sa}F(X)) = n \cdot \psi(P) \in [0,n]
\]

\noindent and

\[  \text{Rank}(D^{sa}F(X)) = n \cdot \psi(Q) \in [0,n],
\]
respectively.
\end{remark}

\begin{proposition} Suppose $X \subset M$ and $F \subset \mathfrak{A}_n$ are $n$ and $m$-tuples of elements and $L = \{(X_1-X_1^*)/2,\ldots, (X_n-X_n^*)/2\} \subset \mathfrak{A}_n$.  If $G = F \cup L$ denotes the joined $(m+n)$-tuple of elements of $\mathfrak{A}_n$ and the elements of $F$ are self-adjoint, then $\text{Nullity}(D^sG(X)) = \text{Nullity}(D^{sa}F(X))$.  Moreover, if $\mu$ is the spectral distribution of $|D^sG(X)|$ and $\nu$ is the spectral distribution of $|D^{sa}F(X)|$, then there exists a $c>0$ such that for any $t \in (0,1)$, $\mu((0,t]) \leq \nu((0,ct])$. 
\end{proposition}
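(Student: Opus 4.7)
The plan is to block-diagonalize $D^sG(X)$ via row operations and extract both conclusions from that reduction; the main obstacle is coordinating two different spectral estimates, as I explain at the end.

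First, I would compute $D^sG(X)$ entrywise. Since each $f_i\in F$ is self-adjoint, $(f_i)_1=f_i$ and $(f_i)_2=0$, so in Definition 3.6 the $2\times 2$ block $\partial^s_j f_i(X)$ has zero second row and first row $(\partial^{sa}_j f_i(X),\partial^{sk}_j f_i(X))$. Each $l_{j'}=(X_{j'}-X_{j'}^*)/2$ is skew, so $(l_{j'})_1=0$ and $(l_{j'})_2=l_{j'}$; the derivation rules give $\partial_{sa}l_{j'}=0$ identically and $\partial_{sk}l_{j'}$ at variable $j$ equal to $\delta_{jj'}\,I\otimes I^{op}$. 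After discarding the always-zero rows (which does not affect $D^sG(X)^*D^sG(X)$) and permuting columns so the sa-columns precede the sk-columns (a unitary operation), $D^sG(X)$ is equivalent to
\[
\tilde D \;=\; \begin{bmatrix} D^{sa}F(X) & D^{sk}F(X) \\ 0 & I_n \end{bmatrix}\;\in\;M_{(m+n)\times 2n}(M\otimes M^{op}),
\]
with $|\tilde D|$ having the same spectral distribution as $|D^sG(X)|$; here $D^{sk}F(X)$ is the analogue of $D^{sa}F(X)$ built from $\partial_{sk}$.

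Next, left-multiplication by the invertible elementary matrix $E=\left[\begin{smallmatrix} I_m & -D^{sk}F(X)\\ 0 & I_n\end{smallmatrix}\right]$ produces the block-diagonal matrix
\[
S' \;:=\; E\tilde D \;=\; \begin{bmatrix} D^{sa}F(X) & 0 \\ 0 & I_n \end{bmatrix},
\]
making $\tilde D$ and $S'$ $(M\otimes M^{op})$-row equivalent. By Corollary 2.11, $\text{Nullity}(\tilde D)=\text{Nullity}(S')$; since $S'$ is block-diagonal with invertible bottom block, $\ker S'=\ker D^{sa}F(X)\oplus 0$, and the $2n$ normalization of Definition 3.10 absorbs the extra $n$ trivial-kernel coordinates from the $I_n$ block to give $\text{Nullity}(S')=\text{Nullity}(D^{sa}F(X))$. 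This establishes the first assertion.

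For the spectral comparison, Corollary 2.11 supplies a constant $r>0$ (depending only on $E$, hence on $\|D^{sk}F(X)\|$) such that $\psi(1_{[0,rt]}(|S'|))\geq\psi(1_{[0,t]}(|D^sG(X)|))$ for every $t>0$. Because $|S'|$ is block-diagonal with blocks $|D^{sa}F(X)|$ and $I_n$, for $rt<1$ the left-hand side collapses to $\tfrac{1}{2}(tr_n\otimes(\varphi\otimes\varphi^{op}))(1_{[0,rt]}(|D^{sa}F(X)|))$. Splitting each spectral projection into its $\{0\}$-part and its $(0,\cdot\,]$-part and invoking the nullity identity just proven (so that $\mu(\{0\})=\tfrac{1}{2}\nu(\{0\})$), the kernel contributions cancel and I obtain
\[
\mu((0,t]) \;\leq\; \tfrac{1}{2}\nu((0,rt]) \;\leq\; \nu((0,rt])\qquad\text{for }t<1/r.
\]

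For the residual range $t\in[1/r,1)$ (nonempty only when $r>1$) the Corollary 2.11 estimate degrades because the $I_n$ block's eigenvalue $1$ begins entering $1_{[0,rt]}(|S'|)$, and I would supplement with Weyl's Inequality for Positive Operators applied to $\tilde D^*\tilde D\geq P_2$, where $P_2=\left[\begin{smallmatrix} 0 & 0\\ 0 & I_n\end{smallmatrix}\right]$; this operator inequality follows from $\|\tilde Dv\|^2=\|D^{sa}F(X)v_1+D^{sk}F(X)v_2\|^2+\|v_2\|^2\geq\|v_2\|^2$. Weyl then yields $\psi(1_{[0,t]}(|\tilde D|))\leq\psi(1_{[0,1)}(P_2))=\tfrac{1}{2}$ for $t<1$, giving $\mu((0,t])\leq(1-\nu(\{0\}))/2\leq 1-\nu(\{0\})$. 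Taking $c=r\cdot\max(1,\|D^{sa}F(X)\|)$ forces $ct\geq\|D^{sa}F(X)\|$ whenever $t\geq 1/r$, so $\nu((0,ct])=1-\nu(\{0\})\geq\mu((0,t])$; since $c\geq r$, this same $c$ simultaneously handles the small-$t$ case. The key obstacle is exactly this coordination: Corollary 2.11 is sharp for $t<1/r$ but loses $\tfrac{1}{2}$ once $rt\geq 1$, while the Weyl bound $\tilde D^*\tilde D\geq P_2$ captures the complementary fact that the $I_n$ block pins $n$ of $|\tilde D|$'s singular values at or above $1$, forcing $\mu([0,t])\leq\tfrac{1}{2}$ for $t<1$ and thereby controlling the range where Corollary 2.11 alone fails.
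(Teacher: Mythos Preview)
Your proof is correct and follows essentially the same route as the paper: reduce $D^sG(X)$ by elementary row operations to a block-diagonal matrix with blocks $D^{sa}F(X)$ and $I_n$, then read off the nullity and the spectral comparison. Your preliminary column permutation (grouping the sa-columns before the sk-columns) is a cosmetic variant of the paper's $2\times 2$ tensor bookkeeping, where the reduced matrix is written as $T = D^{sa}F(X)\otimes e + I_{M_n(M\otimes M^{op})}\otimes e^{\perp}$; the underlying row reduction and the identification $|T|$'s spectral measure $=(\nu+\delta_1)/2$ are the same.

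On the spectral bound your argument is in fact more thorough than the paper's. The paper computes the spectral distribution of $|T|$ as $(\nu+\delta_1)/2$, notes that on $(0,1)$ this equals $\nu/2$, and then simply invokes row equivalence to pass to $|D^sG(X)|$. But, as you correctly flagged, Corollary 2.11 introduces a scaling constant $r=\|E\|\geq 1$ and only directly yields $\mu((0,t])\leq\tfrac{1}{2}\nu((0,rt])$ for $rt<1$; the paper does not spell out what happens for $t\in[1/r,1)$. Your supplementary Weyl-inequality step, via the operator inequality $\tilde D^*\tilde D\geq P_2$ (equivalently $\|\tilde Dv\|^2\geq\|v_2\|^2$), gives $\mu([0,t])\leq\tfrac{1}{2}$ for all $t<1$ and cleanly closes this range, producing a single constant $c=r\cdot\max(1,\|D^{sa}F(X)\|)$ that works on all of $(0,1)$. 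For the downstream application (Corollary 6.11, transferring geometric decay) only small $t$ matters, so the paper's omission is harmless there; but your version matches the proposition as stated.
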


\begin{proof}  By Lemma 2.11 it will suffice to prove the dimension and spectral decay claims for a matrix with entries in $M\otimes M^{op}$ which is $(M \otimes M^{op})$-row equivalent to $D^sG(X)$.  $D^sG(X) = D^s(F \cup L)$ has the matricial form
\begin{eqnarray*}
\begin{bmatrix} 
D^sF(X) \\
D^sL(X) \\ 
\end{bmatrix} & \in & M_{(m+n) \times n}(M_2(M \otimes M^{op})) \\
\end{eqnarray*}
Expanding the terms in both $D^sF(X)$ and $D^sL(X)$, and writing $F= \{f_1,\ldots, f_m\}$ where $f_i = f_i^*$ by hypothesis,
\begin{eqnarray*}  D^sG(X) & = & \begin{bmatrix}
A_{11} & \cdots & \cdots &  A_{1n} \\ 
\vdots  & \cdots & \cdots & \vdots  \\
A_{m1} & \cdots & \cdots & A_{mn} \\
E & 0 & \cdots & 0 \\
0 & E & \ddots  & 0 \\
\vdots & \ddots & \ddots & 0 \\
0 & \cdots & 0 & E \\
\end{bmatrix} \\
\end{eqnarray*}
where $e = \begin{bmatrix} 
0 & 0 \\
0 & 1 \\ 
\end{bmatrix}$, $E = I_{M \otimes M^{op}} \otimes e = \begin{bmatrix} 
0 & 0 \\
0 & 1 \\ 
\end{bmatrix}$, and 
\begin{eqnarray*}
A_{ij} = \partial^s_jf_i(X) = \begin{bmatrix}
\partial_i^{sa} f_j(X) & \partial_i^{sk} f_j(X) \\ 
0 & 0 \\ 
\end{bmatrix} \\
\end{eqnarray*}
are elements of $M_2(M \otimes M^{op})$.  As elsewhere, I make the obvious identication of operator-valued matrices and their representation as elements in $M \otimes M^{op}$ tensored by the matrix algebras.  Inspecting $E$ and $A_{ij}$ it follows that the matricial form of $D^sG(X)$, regarded as a $2(n+m) \times 2n$ matrix with entries in $M \otimes M^{op}$, is $(M \otimes M^{op})$-row equivalent to
\begin{eqnarray*} 
T& = & D^{sa}F(X) \otimes e + I_{M_n(M \otimes M^{op})} \otimes e^{\bot}  \\ 
  & \in & M_{n}(M \otimes M^{op}) \otimes M_2(\mathbb C) \\ 
  & = &M_{2n}(M \otimes M^{op}).\\
\end{eqnarray*}
Note that this row equivalence is made in the sense of section 2.6 where the $T$ is identified with the matrix $T_0 \in M_{2(m+n)\times 2n}$ obtained by taking $T$ and filling the last $2m$ rows with $0$.
It follows that $|T|^2 = |D^{sa}F(X)|^2 \otimes e + I_{M_n(M \otimes M^{op})} \otimes e^{\bot}$.  The two terms of this sum are positive elements with their respective ranges contained in the ranges of the orthogonal projections, $I_{M_n(M \otimes M^{op})} \otimes e$ and $I_{M_n(M \otimes M^{op})} \otimes e^{\bot}$.  It follows that $|T| = |D^{sa}F(X)| \otimes e + I_{M_n(M \otimes M^{op})} \otimes e^{\bot}$.  If $\mu$ and $\nu$ are the spectral trace measures associated to $|T|$ and $|D^{sa}F(X)|$, then the orthogonality comment applied to the decomposition of $|T|$ implies that $\mu = (\nu + \delta_{\{1\}})/2$.  It follows from this that $\text{Nullity}(T) = \text{Nullity}(D^{sa}F(X))$ and for all $t \in (0,1)$, $\mu((0,t]) = \nu((0,t])/2 < \nu((0,t]$.  Since $T$ is $(M \otimes M^{op})$-row equivalent to $D^sG(X)$ this completes the proof.
\end{proof}
 
Turning towards the unitary situation, suppose now that $X$ consists of unitary elements.  I will argue as in the self-adjoint case, except that this time instead of using the linear equation $X= X+X^*$, I will use the unitary relation $X^*X=I$ to conclude that the kernel is contained in the skew-adjoints (the tangent space of the unitaries).   I will then find a related linear operator whose kernel and determinant is equivalent to the kernel and determinant of the differential restricted to this tangent space.  As in the self-adjoint case, the end result will express the rank as a matrix over $M \otimes M^{op}$ as opposed to $M_2(M \otimes M^{op})$.  First, a preliminary computation to motivate the definition.

\begin{definition} For each $f \in \mathfrak{A}_n$, define 
\begin{eqnarray*}
m_f = \frac{1}{2} \cdot \begin{bmatrix} 
f \odot I + I \odot f^* & f \odot I - I \odot f^* \\
f \odot I - I \odot f^* & f \odot I + I \odot f^* \\ 
\end{bmatrix}  \in M_2(\mathfrak{A}_n \odot \mathfrak{A}_n^{op}) \\
\end{eqnarray*}
Given an $n$-tuple $X$ of elements in the tracial von Neumann algebra, $m_{f(X)} = (\pi_X \odot \pi_X \otimes I_2)(m_f) \in M_2(M \odot M^{op}) \subset M_2(M \otimes M^{op})$.
\end{definition}

\begin{remark} If $X =\{a\}$ and $f \in \mathfrak{A}_1$ is the trivial element $f=X_1$, then $m_{f(X)}=m_a$ is the $2 \times 2$ tensor matrix representation of the left multiplication operator on $L^2(M)$ by $a$, i.e., $m_a = \Phi(\pi(a))$ where $\pi$ is the left regular representation of $M$ on $L^2(M)$.  See Example 3.1.
\end{remark}

\begin{lemma} If  $f \in \mathfrak{A}_n$ is a noncommutative $*$-monomial and $Y =\{y_1, \ldots, y_n\} \subset M$ is an $n$-tuple of unitaries in $M$, then the following hold:
\begin{itemize}
\item For any $1\leq j \leq n$, $(m_{f(Y)^*} ((\partial_j^sf)(Y)) m_{y_j}) \in M_2(M \otimes M^{op})$ is a diagonal matrix.
\item If $f = Z_1 X_j^{k_1} Z_2 X_j^{k_2} \cdots Z_p X_j^{k_p} Z_{p+1}$ where $Z_i \in A_j$ are $*$-monomials in $\mathbb X - \{X_j\}$ and $k_i \in \{1, *\}$, $1 \leq i \leq p+1$, and $\pi_Y: \mathfrak{A}_n \rightarrow M$ is the canonical $*$-homomorphism such that $\pi_Y(X_i) = y_i$, then $[m_{f(Y)^*}((\partial_j^sf)(Y)) m_{y_j}]_{22}$ is
\begin{eqnarray*}
\sum_{i=1}^p (-1)^{\delta_{k_i, *}} \pi_Y \left (Z_{p+1}^* \cdots (X_j^{k_{i+1}})^* Z_{i+1}^* X_j^{\delta_{k_i,*}} \right) \otimes \left(\pi_Y\left((X_j^{\delta_{k_i,*}})^* Z_{i+1} X_j^{k_{i+1}} \cdots Z_{p+1} \right)\right)^{op} \in M \otimes M^{op}.
\end{eqnarray*}
\end{itemize}
\end{lemma}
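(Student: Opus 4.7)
The plan is to diagonalize each $m_c$ by a single scalar Hadamard change of basis, which reduces everything to a product of diagonal matrices over $M \otimes M^{op}$ and a short round of unitary telescoping. Let $P = \frac{1}{\sqrt 2}\begin{pmatrix} 1 & 1 \\ 1 & -1 \end{pmatrix} \in M_2(\mathbb C) \subset M_2(M \otimes M^{op})$; it is self-inverse. A direct computation from Definition 3.18 gives
$$ P\, m_c\, P \;=\; \mathrm{diag}(c \otimes I,\; I \otimes c^*) \;=:\; D_c $$
for every $c \in M$, since the entries of $m_c$ are $\frac{1}{2}(c \otimes I \pm I \otimes c^*)$. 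Writing $\Pi := m_{f(Y)^*}\,((\partial^s_j f)(Y))\,m_{y_j}$, the identity $P^2 = I$ yields $P\Pi P = D_{f(Y)^*}\,\widetilde N\,D_{y_j}$ where $\widetilde N := P\,(\partial^s_j f)(Y)\,P$. What remains is to compute $\widetilde N$, multiply, and simplify.

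Write $L_i := Z_1 X_j^{k_1}\cdots Z_i$ and $R_i := Z_{i+1} X_j^{k_{i+1}} \cdots Z_{p+1}$, so $f = L_i X_j^{k_i} R_i$ for each $i$. Applying the Leibniz rule to the derivations of Definition 3.6 gives $\partial_{sa}(f) = \sum_i L_i \odot R_i$ and $\partial_{sk}(f) = \sum_i (-1)^{\delta_{k_i,*}} L_i \odot R_i$. An analogous computation on $f^*$, in which the positions of $X_j$ are reverse-indexed and the $\partial_{sk}$-signs are flipped because $\partial_{sk}(X_j^*) = -\partial_{sk}(X_j)$, yields $\partial_{sa}(f^*) = \sum_i R_i^* \odot L_i^*$ and $\partial_{sk}(f^*) = -\sum_i (-1)^{\delta_{k_i,*}} R_i^* \odot L_i^*$. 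Feeding these into $\partial_{sa}f_\ell = \tfrac{1}{2}(\partial_{sa}f \pm \partial_{sa}f^*)$ and the corresponding identities for $\partial_{sk}f_\ell$, unfolding the $P$-conjugation of the scalar matrix $A = \bigl[\begin{smallmatrix}a & b \\ c & d\end{smallmatrix}\bigr]$ via $(PAP)_{ij} = \tfrac{1}{2}(a \pm b \pm c \pm d)$, and evaluating via $\pi_Y \odot \pi_Y^{op}$, produces
\begin{eqnarray*}
\widetilde N \;=\; \begin{bmatrix}
\sum_{k_i = 1} \pi_Y(L_i) \otimes \pi_Y(R_i)^{op} & \sum_{k_i = *} \pi_Y(L_i) \otimes \pi_Y(R_i)^{op} \\
\sum_{k_i = *} \pi_Y(R_i^*) \otimes \pi_Y(L_i^*)^{op} & \sum_{k_i = 1} \pi_Y(R_i^*) \otimes \pi_Y(L_i^*)^{op}
\end{bmatrix}.
\end{eqnarray*}

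Now the unitarity of $Y$ enters decisively: each $\pi_Y(L_i)$ and $\pi_Y(R_i)$ is a product of unitaries of $M$, hence itself unitary, so $\pi_Y(L_i^*L_i) = I = \pi_Y(R_iR_i^*)$. For $k_i = 1$ one computes $f(Y)^* \pi_Y(L_i) y_j = \pi_Y(R_i^*)\, y_j^* \,\pi_Y(L_i^*L_i)\, y_j = \pi_Y(R_i^*)$, and the analogous telescoping when $k_i = *$ gives $f(Y)^* \pi_Y(L_i) = \pi_Y(R_i^*) y_j$ and $\pi_Y(L_i^*) f(Y) = y_j^* \pi_Y(R_i)$. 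Substituting these into the four entries of $D_{f(Y)^*}\,\widetilde N\,D_{y_j}$ collapses each of them, and the result has the form $\bigl[\begin{smallmatrix} \alpha & \beta \\ \beta & \alpha \end{smallmatrix}\bigr]$ with
\begin{eqnarray*}
\alpha \;=\; \sum_{k_i = 1} \pi_Y(R_i^*) \otimes \pi_Y(R_i)^{op}, \qquad \beta \;=\; \sum_{k_i = *} \pi_Y(R_i^*) y_j \otimes (y_j^* \pi_Y(R_i))^{op}.
\end{eqnarray*}
Conjugating back by $P$ undoes the Hadamard rotation and gives $\Pi = \mathrm{diag}(\alpha + \beta,\, \alpha - \beta)$, proving the first bullet. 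Reading off the $(2,2)$-entry $\alpha - \beta$ and repackaging the two sums under the single sign $(-1)^{\delta_{k_i,*}}$ matches exactly the formula in the second bullet.

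The only step requiring real bookkeeping is the derivation computation for $f^*$, where the mirror-reindexing of the $X_j$ positions and the sign flip in $\partial_{sk}$ must be tracked carefully. Once that is aligned, the Hadamard diagonalization and the unitary telescoping are mechanical, and both bullets fall out of the same calculation.
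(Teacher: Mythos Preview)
Your proof is correct and takes a genuinely different route from the paper's. The paper works abstractly: it passes to the universal $*$-algebra $B_n$ on $n$ unitaries via the quotient $q:\mathfrak{A}_n\to B_n$, identifies each $m_c$ and $(\partial_j^s f)(Y)$ with the $\Phi_{B_n}$-image of a concrete real-linear operator on $B_n$ (left multiplication by $f(X)^*$, the partial-derivative operator $T$, left multiplication by $x_j$), computes the composite operator $\pi(f(X)^*\odot I)\,T\,\pi(x_j\odot I)$ explicitly on $B_n$, observes that it preserves both $B_n^{sa}$ and $B_n^{sk}$ (hence its $\Phi_{B_n}$-image is diagonal), and then invokes the injectivity of $\pi\otimes I_2$ (Lemma~3.5) to transport this conclusion back to $M_2(M\otimes M^{op})$. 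Your argument bypasses all of that machinery: the single observation $P m_c P=\mathrm{diag}(c\otimes I,\,I\otimes c^*)$ reduces everything to a product of explicit $2\times2$ matrices over $M\otimes M^{op}$, and the unitary telescoping is the same in both proofs. What you gain is directness---no need for $B_n$, the $\Phi$-formalism, or Lemma~3.5---at the cost of the somewhat delicate entrywise bookkeeping in computing $\widetilde N$ (the derivation of $\partial_{sk}(f^*)$ and the $M^{op}$ multiplications). What the paper's approach buys is a conceptual explanation of \emph{why} the product is diagonal (it represents a real-linear operator commuting with $J$), rather than discovering diagonality as the outcome of a matrix calculation.
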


\begin{proof}  First note that it suffices to do this when $n >1$ since $\mathfrak{A}_n$ canonically embeds into $\mathfrak{A}_{n+1}$ and $Y$ can be augmented into the $(n+1)$-tuple $\{y_1,\ldots, y_n, I\}$.

Denote by $B_n$ the algebra obtained by quotienting $\mathfrak{A}_n$ by the ideal generated by the unitary relations $X_jX_j^* = X_j^*X_j = I$ and by $q: \mathfrak{A}_n \rightarrow B_n$ the quotient map.  Equivalently, $B_n$ is the universal unital complex $*$-algebra generated by $n$ unitaries.  Recall that $\pi_Y: \mathfrak{A}_n \rightarrow M$ is the complex $*$-homomorphism uniquely defined by $\pi_Y(X_j) = y_j$.  Because the $y_j$'s are unitary, there exists a canonical complex $*$-homomorphism $\sigma_Y: B_n \rightarrow M$ uniquely defined by $\sigma_Y(q(X_n)) = y_j$ such that the following diagram commutes:
\begin{eqnarray*}
\begin{tikzcd}[column sep=5em, row sep=3em]
\mathfrak{A}_n \arrow{r}{q}  \arrow[swap]{rd}{\pi_Y} 
  & B_n \arrow{d}{\sigma_Y} \\
    & M
\end{tikzcd}
\end{eqnarray*}
Each of these maps yields an opposite morphism on the opposite domain and range; as a map of sets, these opposite morphisms agree with the original morphism.  Abusing notation I'll use the same letter to denote the map and its induced opposite map.  Tensoring the maps with their opposite maps and amplifying by the $2 \times 2$ matrices yields the following commutative diagram:
\begin{eqnarray*}
\begin{tikzcd}[column sep=5em, row sep=6em]
M_2(\mathfrak{A}_n \odot \mathfrak{A}_n^{op}) \arrow{r}{q \odot q \odot I_2}  \arrow{rd}[swap]{(\pi_Y \odot \pi_Y) \odot I_2} 
  & M_2(B_n \odot B_n^{op})\arrow{d}{(\sigma_Y \odot \sigma_Y) \odot I_2} \\
    & M_2(M \otimes M^{op})
\end{tikzcd}
\end{eqnarray*}
Thus,
\begin{eqnarray*}
m_{f^*(Y)} (\partial^sf)(Y)) m_{y_j} & = & (\pi_Y \odot \pi_Y \odot I_2)(m_f \cdot (D_{(x,x^*)}f) \cdot m_{X_j}) \\
                                                       & = & (\sigma_Y \odot \sigma_Y \odot I_2)(q \odot q \otimes I_2)(m_{f^*} (D_{(x,x^*)}f) m_{X_j}). \\
\end{eqnarray*}
From the above, the claim reduces to looking at $(q \odot q \odot I_2)(m_{f^*} (D_{(x,x^*)}f) m_{X_j})$.   Set $a_1 = (q \odot q \odot I_2)(m_{f^*})$, $a_2 = (q \odot q \odot I_2)(D_{(x,x^*)}f)$, $a_3 = (q \odot q \odot I_2)(m_{X_j})$.  

Recall that there is an injective complex linear homomorphism $\pi: B_n \odot B_n^{op} \rightarrow L_{\mathbb C}(B_n)$ where $\pi(a \odot b): B_n \rightarrow B_n$ is defined by $\pi(a \odot b)(x) = axb$.  Recall also the real linear, multiplicative map $\Phi_{B_n}: L_{\mathbb R}(B_n) \rightarrow M_2(L_{\mathbb C}(B_n))$ given in Subsection 3.1 (just after Lemma 3.4).  $f = Z_1 X_j^{k_1} Z_2 X_j^{k_2} \cdots Z_p X_j^{k_p} Z_{p+1}$ where $Z_i \in A_j$ are $*$-monomials in $\mathbb X - \{X_j\}$ and $k_i \in \{1, *\}$, $1 \leq i \leq p$.  Write $X = q(\mathbb X)$, $x_i = q(X_i)$, and $z_j = q(Z_j)$.  By definition of $B_n$ the $x_i$ and $z_i$ are unitary elements of $B_n$.  One easily checks the identities
\[ (\pi \odot I_2)(a_1) = \Phi_{B_n}(\pi(f^*(X) \odot I)),\]

\[ (\pi \odot I_2)(a_2) =  \Phi_{B_n}(T),\]

\[ (\pi \odot I_2)(a_3) = \Phi_{B_n}(\pi(x_j \odot I)),\]

\noindent where $T \in L_{\mathbb R}(B_n)$ is the real linear operator defined by 
\begin{eqnarray*}
T = \sum_{i=1}^p \pi(z_1 x_j^{k_1} \cdots z_i \odot z_{i+1} x_j^{k_{i+1}} \cdots z_{p+1}) J^{\delta_{k_i, *}}.
\end{eqnarray*}
Using the fact that all terms in $B_n$ appearing in the elementary tensors below are unitary, 
\begin{eqnarray*} & & \pi(f(X)^* \odot I) T \pi(x_j \odot I) \\ & = & \pi(f(X)^* \odot I) \sum_{i=1}^p \pi(z_1 x_j^{k_1} \cdots z_i \odot z_{i+1} x_j^{k_{i+1}} \cdots z_{p+1}) J^{\delta_{k_i, *}} \pi(x_j \odot I) \\  & = & \pi(f(X)^* \odot I) \sum_{i=1}^p \pi(z_1 x_j^{k_1} \cdots z_i (x_j)^{\delta_{k_i,1}} \odot (x_j^*)^{\delta_{k_i,*}} z_{i+1} x_j^{k_{i+1}} \cdots z_{p+1}) J^{\delta_{k_i, *}}  \\ & = & \sum_{i=1}^p \pi(z_{p+1}^* \cdots (x_j^{k_{i+1}})^* z_{i+1}^* (x_j^{k_i})^* (x_j)^{\delta_{k_i,1}} \odot (x_j^*)^{\delta_{k_i,*}} z_{i+1} \cdots z_{p+1}) J^{\delta_{k_i, *}} \\ \\ & = & \sum_{i=1}^p \left [ \pi \left (z_{p+1}^* \cdots (x_j^{k_{i+1}})^* z_{i+1}^* x_j^{\delta_{k_i,*}} \odot (x_j^{\delta_{k_i,*}})^* z_{i+1} x_j^{k_{i+1}} \cdots z_{p+1} \right ) \right ]  J^{\delta_{k_i, *}}.
\end{eqnarray*}

\noindent Both $B_n^{sa}$ and $B_n^{sk}$ (the real subspaces of self-adjoint and skew-adjoint elements) are invariant real subspaces of $J^{\delta_{k_i, *}}$ and so they are invariant under the above operator as well.    
From the definition of $\Phi_{B_n}$ and the fact that $B_n^{sa}$ and $B_n^{sk}$ are invariant under $(\pi(f(X)^* \otimes I) T \pi(x_j \otimes I)$ it follows that $\Phi_{B_n}(\pi(f(X)^* \odot I) T \pi(x_j \odot I)) \in M_2(B_n \odot B_n^{op})$ is diagonal.  Moreover, note that if $\xi \in B_n^{sk}$, then from the above,
\begin{eqnarray*}
& & \pi(f(X)^* \odot I) T \pi(x_j \odot I)(\xi) \\ & = & \sum_{i=1}^p \left [ \pi \left (z_{p+1}^* \cdots (x_j^{k_{i+1}})^* z_{i+1}^* x_j^{\delta_{k_i,*}} \odot (x_j^{\delta_{k_i,*}})^* z_{i+1} x_j^{k_{i+1}} \cdots z_{p+1} \right ) \right ]  J^{\delta_{k_i, *}}(\xi) \\ & = & \sum_{i=1}^p (-1)^{\delta_{k_i, *}} \left (z_{p+1}^* \cdots (x_j^{k_{i+1}})^* z_{i+1}^* x_j^{\delta_{k_i,*}} \right) (\xi) \left((x_j^{\delta_{k_i,*}})^* z_{i+1} x_j^{k_{i+1}} \cdots z_{p+1} \right).  \\
\end{eqnarray*}

To finish the proof, set $b_1 = \pi(f^*(X)\odot I)$, $b_2 = T$, $b_3 = \pi(x_j \odot I)$.  $a_i \in M_2(B_n \odot B_n^{op})$ and $b_i \in L_{\mathbb R}(B_n)$ and $\Phi_{B_n}(b_i) = (\pi \otimes I_2)(a_i)$ by the three identities stated above.
\begin{eqnarray*}
(\pi \otimes I_2)(a_1 a_2 a_3) & = & (\Phi_{B_n}(b_1 b_2 b_2)) \\ & = & \Phi_{B_n}\left (\pi(f(X)^* \odot I) T \pi(x_j \odot I)\right) \\ & \in & M_2(B_n\odot B_n^{op})\\
\end{eqnarray*}
is a diagonal matrix by the preceding paragraph.  Since $n >1$, Lemma 3.5 implies that $\pi \otimes I_2$ is injective, whence $a_1a_2a_3$ is diagonal.  Similarly, from the representations of $[\Phi_{B_n}\left (\pi(f(X)^* \odot I) T \pi(x_j \odot I)\right)]_{22}$ in the preceding paragraph and the injectivity of $\pi \otimes I_2$, it follows that $[a_1 a_2 a_3]_{22}$ is
\begin{eqnarray*}
\sum_{i=1}^p (-1)^{\delta_{k_i, *}} \left (z_{p+1}^* \cdots (x_j^{k_{i+1}})^* z_{i+1}^* x_j^{\delta_{k_i,*}} \right) \odot \left((x_j^{\delta_{k_i,*}})^* z_{i+1} x_j^{k_{i+1}} \cdots z_{p+1} \right)^{op} \in B_n \odot B_n^{op}.
\end{eqnarray*}

Finally, from the first paragraph, $m_{f^*(Y)}(\partial^sf(Y))m_{y_j} = (\sigma_Y \odot \sigma_Y \odot I_2)(a_1a_2a_3)$.  The established diagonality of $a_1a_2a_3$ and the form of its $22$-entry with the commutative diagrams of the first paragraph complete the proof.
\end{proof}

\begin{remark} Lemma 3.20 is a computation of the product of three matrices $m_{f(Y)^*}$, $(\partial_j^sf)(Y)$, $m_{y_j}$,  It is a check that matrix multiplication and operator composition are the same thing.  An alternative to the approach would be to stay in the matrix setting and multiply the matrices.  This is surprisingly messy.
\end{remark}

\begin{definition}  Suppose $X \subset M$ is an $n$-tuple of unitary elements and $F=\{f_1,\ldots, f_m\}$ is an $m$-tuple of $*$-monomials in $\mathfrak{A}_n$.  Define $D^uF(X)$ to be the element in $M_{m \times n}(M\otimes M^{op})$ whose $ij$th entry is 
\begin{eqnarray*} 
\partial_j^uf_i(X) & = & \left [ m_{f_i(X)^*} ((\partial^s_j f_i)(X)) m_{x_j} \right ]_{22}.\\
\end{eqnarray*} 
\end{definition}

\begin{remark}  Suppose $X, F$ are as in Definition 3.22.  As in the selfadjoint case, one can consider the nullity and rank of $D^uF(X) \in M_{m\times n}(M  \otimes M^{op})$ as described in Section 2.6 .  Note that in this case, as in the selfadjoint case, $\text{Nullity}(D^uF(X)), \text{Rank}(D^uF(X)) \in [0, n]$.  
\end{remark}

\begin{remark} The entries of $D^uF(X)$ can be described/computed in the following way.  As in Lemma 3.20 suppose $f_l = Z_1 X_j^{k_1} Z_2 X_j^{k_2} \cdots Z_p X_j^{k_p} Z_{p+1}$ is a reduced word where $Z_i \in A_j$ and $k_i \in \{1, *\}$, $1 \leq i \leq p$.  If $1 \leq i \leq n$, then the $ij$th entry of $D^uF(X)$ is obtained by taking a sum over all occurrences of $X_j^{m_j}$ in $f_i$ with $m_j \in \{1, *\}$, $f_i = w_1 X_j^{m_j} w_2$, where each occurrence contributes a summand terms of $w_2^* \otimes w_2$ when $m_j=1$ or $-w_2^*X_j \otimes X_j^* w_2$ when $m_j=*$.  Then this sum in $\mathfrak{A}_n \odot \mathfrak{A}_n^{op}$ is evaluated (by universality) at $X \subset M$ to produce an element in $M \otimes M^{op}$.
\end{remark}

\begin{remark} Suppose that $X = \{u_1,\ldots, u_n\}$ is an $n$-tuple of unitaries in $\mathcal A = \oplus_{i=1}^n M_N(\mathbb C)$, and $F$ is a $p$-tuple of noncommutative $*$-monomials.  Set $\mathcal B = \oplus_{j=1}^p M_N(\mathbb C)$.  If $U_N$ denotes the $N \times N$ unitary matrices, then the unitary groups $U(\mathcal A)$ of $\mathcal A$ and $U(\mathcal B)$ of $\mathcal B$ are $\oplus_{i=1}^N U_N$ and $\oplus_{j=1}^p U_N$.  Denote by $M_N^{sk}$ the $N\times N$ skew-adjoint matrices and set $\mathcal A^{sk} = \oplus_{i=1}^n M_N^{sk}$ and $\mathcal B^{sk} = \oplus_{i=1}^n M_N^{sk}$.  The tangent space of $U(\mathcal A)$ at $X$ is $X\mathcal A^{sk}$ and the tangent space of $U(\mathcal B)$ at $f(X)$ is $f(X)\mathcal B^{sk}$.  $F$ restrict to a well-defined map from $U(\mathcal A)$ into $U(\mathcal B)$ and thus, its differential at $X$, $DF(X): \mathcal A \rightarrow \mathcal B$ sends the tangent space $X\mathcal A^{sk}$ into $f(X)\mathcal B^{sk}$.  The rank of this map is the (finite real) dimension of $DF(X)X\mathcal A^{sk} \subset F(X) \mathcal B^{sk}$.   However, since $F(X) \in U(\mathcal B)$, this is the same as the (finite real) dimension of $F(X)^* DF(X) X \mathcal A^{sk} \subset \mathcal B^{sk}$.  This is what $D^uF(X)$ captures algebraically:

\begin{eqnarray*} \dim_{\mathbb R}(F(X)^* DF(X) X \mathcal A^{sk}) & = & N^2 \cdot \text{Rank}(D^uF(X)).
\end{eqnarray*}
\end{remark}

\begin{remark} A few simple computations on the polynomial $X_i^*X_i-I$ will be useful in what follows.  To ease the notation, for $x \in M$, I'll simply write $x \in M^{op}$ for the image of $x$ in $M^{op}$, as opposed to $x^{op}$.  This should cause no confusion here since the example consists of computations involving a single unitary.  Fix $1 \leq i \leq n$ and consider $g=X_i^*X_i-I \in \mathfrak{A}_n$.  Again, $X=\{x_1,\ldots, x_n\}$ is an $n$-tuple of unitaries in a tracial von Neumann algebra $M$.  Clearly $\partial^s_jg(X)=0$ for $j \neq i$ and
\begin{eqnarray*}
\partial_i^sg(X) =  \begin{bmatrix} 
x_i^* \otimes I + I \otimes x_i & x_i^* \otimes I - I \otimes x_i \\
0 & 0 \\ 
\end{bmatrix}  \in M_2(M \otimes M^{op}).
\end{eqnarray*}
Define 
\begin{eqnarray*}
p_i = \frac{1}{2} \cdot \begin{bmatrix} 
I -\frac{1}{2}( x_i \otimes x_i + x_i^* \otimes x_i^*) &  \frac{1}{2}(x_i \otimes x_i -  x^*_i \otimes x^*_i) \\
\frac{1}{2}(x^*_i \otimes x^*_i - x_i \otimes x_i) & I + \frac{1}{2}(x_i^* \otimes x_i^* + x_i \otimes x_i) \\ 
\end{bmatrix}  \in M_2(M \otimes M^{op})
\end{eqnarray*}
and $e_i = p_i^{\bot}$.  Note that $p_i$ is the matricial, tensor product representation of the real operator $[\sigma_M(I \otimes I) - \sigma_M(x_i \otimes x_i) \circ  J]/2$ and $e_i$ is the matricial, tensor product representation of $[\sigma_M(I \otimes I) + \sigma_M(x_i \otimes x_i) \circ J]/2$.   It is easy to check that $(\partial_i^sg(X)) p_i=0$, $(\partial_i^sg_i(X)) e_i = (\partial_i^sg(X))$, the projection onto the range of $\partial_i^sg(X)$ has (unnormalized) trace $1$, and that $p_i$ is the projection onto $\ker(\partial_i^sg)$.  

Notice also that $p_i = m_{x_i}z_i$ where 
\begin{eqnarray*}
z_i = \frac{1}{2} \cdot \begin{bmatrix} 
0 & 0 \\
x^*_i \otimes I - I \otimes x_i & x_i^* \otimes I + I \otimes x_i \\ 
\end{bmatrix}  \in M_2(M \otimes M^{op}).\\
\end{eqnarray*}
Here, $z_i$ is the tensor matricial representation of $(\sigma_M(x_i^* \otimes I) - \sigma_M(I \otimes x_i)J)/2$.  One also has
\begin{eqnarray*} \partial^s_ig(X) m_{x_i} & = & \frac{1}{2} \begin{bmatrix} 
x^*_i \otimes I + I \otimes x_i & x^*_i \otimes I - I \otimes x_i \\
0 & 0 \\ 
\end{bmatrix}  \begin{bmatrix} 
x_i \otimes I + I \otimes x_i^* & x_i \otimes I - I \otimes x_i^* \\
x_i \otimes I - I \otimes x^* & x_i \otimes I + I \otimes x_i^* \\ 
\end{bmatrix}\\
                                                                                       & = & 2 \cdot \begin{bmatrix} 
I_{M \overline{\otimes} M^{op}} & 0 \\
0 & 0 \\ 
\end{bmatrix}.\end{eqnarray*}
\end{remark}
  
\begin{proposition}
Suppose $X =\{x_1,\ldots, x_n\} \subset M$ and $F =\{f_1,\ldots,f_m\} \subset \mathfrak{A}_n$ are $n$ and $m$-tuples and $G =\{X_1^*X_1-I, \ldots, X_n^*X_n-I\} \subset \mathfrak{A}_n$.  If every element of $F$ is a noncommutative $*$-monomial, every element of $X$ is a unitary, and $H=\langle f_i -I \rangle_{i=1}^m \cup G$ is the joined $(m+n)$-tuple of elements in $\mathfrak{A}_n$, then $\text{Nullity}(D^sH(X)) = \text{Nullity}(D^uF(X))$.  Moreover, if $\mu$ and $\nu$ are the spectral distributions of $|D^sH(X)|$ and $|D^uF(X)|$, then for any $t \in (0,1)$, $\mu((0,t]) \leq \nu((0,t])$. 
\end{proposition}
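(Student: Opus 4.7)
The plan is to parallel the self-adjoint proof of Proposition 3.17 but replace the non-unitary row equivalence with a genuine \emph{unitary} equivalence implemented via the matrices $m_y$. The resulting argument is cleaner than the self-adjoint analogue, and in particular avoids the scaling constant $c$ that appears there. The pivotal preliminary fact is that if $y \in M$ is unitary, then a direct expansion using Definition 3.18 together with the relations $y^*y = yy^* = I$ shows $m_y^* = m_{y^*}$ and $m_{y^*} m_y = I$; hence $m_y$ is a unitary element of $M_2(M \otimes M^{op})$.

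Since each $x_j$ is unitary and each $f_i$ is a $*$-monomial in unitaries (so $f_i(X)$ is unitary), the block-diagonal matrices
\[ U = \mathrm{diag}(m_{x_1}, \ldots, m_{x_n}), \qquad V = \mathrm{diag}(m_{f_1(X)^*}, \ldots, m_{f_m(X)^*}, I_2, \ldots, I_2) \]
are unitary (here $I_2$ denotes the identity in $M_2(M \otimes M^{op})$). Set $T = V \cdot D^s H(X) \cdot U$. Unitary invariance gives $|T| = U^* |D^s H(X)| U$, so $|T|$ and $|D^s H(X)|$ share the same spectral distribution and the same nullity. I then compute the block entries of $T$. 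Using $\partial^s_j(f_i - I) = \partial^s_j f_i$, the top $(i,j)$-block (with $i \le m$) is $m_{f_i(X)^*}(\partial^s_j f_i)(X) m_{x_j}$, which by Lemma 3.20 is a diagonal $2 \times 2$ matrix whose $(2,2)$-entry is $\partial^u_j f_i(X)$ (Definition 3.22); denote its $(1,1)$-entry by $a_{ij}$. The $(m+j',j)$-block coming from $G = \{X_j^* X_j - I\}$ vanishes unless $j' = j$, and in that case the explicit calculation in Remark 3.26 gives $\partial^s_j(X_j^* X_j - I)(X)\, m_{x_j} = 2(I \otimes e_{11})$, where $e_{11}, e_{22}$ denote the diagonal $2 \times 2$ matrix units.

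A direct block multiplication for $T^* T \in M_n(M_2(M \otimes M^{op})) \simeq M_{2n}(M \otimes M^{op})$ shows that the cross terms annihilate (because $e_{11} e_{22} = 0$), giving
\[ T^* T = (A^* A + 4 I_n) \otimes e_{11} + D^u F(X)^* D^u F(X) \otimes e_{22}, \]
where $A$ is the $m \times n$ matrix over $M \otimes M^{op}$ with entries $a_{ij}$. This is block-diagonal relative to the orthogonal projections $I_n \otimes e_{11}$ and $I_n \otimes e_{22}$, each carrying normalized trace weight $1/2$ inside $M_{2n}(M \otimes M^{op})$.

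Since $A^* A + 4 I_n \ge 4 I_n$, the $e_{11}$-summand contributes no spectral mass to $|T|$ below $2$, while the $e_{22}$-summand contributes exactly $\tfrac{1}{2}$ of the spectral distribution $\nu$ of $|D^u F(X)|$. Therefore, for $t \in (0,1)$,
\[ \mu((0,t]) = \mu_{|T|}((0,t]) = \tfrac{1}{2}\nu((0,t]) \le \nu((0,t]), \]
and evaluating at $\{0\}$ yields $\text{Nullity}(D^s H(X)) = \text{Nullity}(T) = 2n \cdot \tfrac{1}{2}\nu(\{0\}) = \text{Nullity}(D^u F(X))$. The only delicate step is verifying that $m_y$ is unitary for $y$ unitary; once that is in hand the argument is an essentially formal block-matrix computation, and the fact that $U$ and $V$ are \emph{unitary} (rather than merely invertible) is precisely what removes the scaling constant $c$ present in Proposition 3.17.
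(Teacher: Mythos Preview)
Your proof is correct and follows essentially the same approach as the paper's: both arguments multiply $D^sH(X)$ on the right by the unitary $\mathrm{diag}(m_{x_1},\ldots,m_{x_n})$ and on the left by the block-diagonal unitary built from the $m_{f_i(X)^*}$, then invoke Lemma 3.20 and Remark 3.26 to identify the resulting block structure. Your organization is slightly cleaner in one respect: by packaging the left multiplication into a full $(m+n)\times(m+n)$ unitary $V$ (padding with identity blocks), you obtain the \emph{exact} block-diagonal decomposition $T^*T = (A^*A + 4I_n)\otimes e_{11} + D^uF(X)^*D^uF(X)\otimes e_{22}$, which yields $\mu((0,t]) = \tfrac{1}{2}\nu((0,t])$ directly; the paper instead bounds $A^*D^sH(X)^*D^sH(X)A$ from below and appeals to Weyl's inequality for positive operators, arriving at the same conclusion with one extra step.
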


\begin{proof}  Writing out the definition,
\begin{eqnarray*}  D^sH(X) & = & \begin{bmatrix} 
D^sF(X) \\
D^sG(X)  \\
\end{bmatrix} \in M_{(m+n) \times n}(M_2(M\otimes M^{op})).
\end{eqnarray*} 
\noindent Denote by $A$ the $n \times n$ diagonal matrix whose $ii$th entry is $m_{x_i}$ and set $E_1 = I_{M_n(M \otimes M^{op})} \otimes e_{11} \in M_{n\times n}(M_2(M \overline{\otimes} M^{op}))$.  Note that $A$ is a unitary element since the $x_i$ are unitaries (Remark 3.19).  Remark 3.26 shows that 
\begin{eqnarray*} 
D^sG(X) A & = & 2E_1 \in M_{n\times n}(M_2(M \otimes M^{op})).\\
\end{eqnarray*}
Set $E_2 = E_1^{\bot} = I_{M_n(M \otimes M^{op})} \otimes e_{22}$.  Denote by $W$ the $m\times m$ diagonal matrix whose $ii$th element is  $m_{f_i(X)^*}$.  Note that $W$, like $A$, is unitary since the elements of $F$ are $*$-monomials and the elements of $X$ are unitaries.  By Lemma 3.20, for any $1 \leq i \leq m$ and $1 \leq j \leq n$, $(WD^sF(X)A)_{ij} \in M_2(M \otimes M^{op})$ is a diagonal element whose $22$-entry is $D^uF(X)_{ij}$.  Consequently, $|WD^sF(X)A|^2$ commutes with $E_2=I_{M_n(M\otimes M^{op})} \otimes e_{22}$, as well as $E_1 = E_2^{\bot}$.

Compute:
\begin{eqnarray*} 
                                A^* D^sH(X)^*D^sH(X) A & = & A^* D^sF(X)^*D^sF(X) A + A^* D^sG(X)^*D^sG(X) A \\
                                                                  &= & (A^*D^sF(X)^*W^*)(WD^sF(X)A)(E_2 +E_1) + 2E_1 \\
                                                                  & = & |D^uF(X)|^2 \otimes e_{22}  + \\ & & E_1A^*D^sF(X)^*W^* WD^sF(X)AE_1 + 2E_1 \\
                                                                  & \geq & |D^uF(X)|^2 \otimes e_{22}  + E_1. \\
\end{eqnarray*}
From the above $\ker(|D^sH(X)A|) = \ker(|D^uF(X)| \otimes e_{22})$.  As $A$ is a unitary, $\text{Nullity}(|D^sH(X)|) = \text{Nullity}(|D^sH(X)A|) = \text{Nullity}(|D^uF(X)|)$, establishing the nullity claim.  

For the spectral distribution claim, set $T = D^uF(X) \otimes e_{22}$.  Taking square roots of the above computation, $|D^sH(X)A| \geq |T| + E_1$.  Note that $|T|=|D^uF(X)| \otimes e_{22}$ and $E_1$ are positive elements with orthogonal supports, $E_1 +E_2 =I$, the (normalized) trace of $E_1$ is $1/2$, and $\ker(|T|) = \ker(|D^uF(X)|) \otimes e$.  It follows from this that if $m$ is the spectral distribution of $|T+E_1| = |T| + E_1$, then $m = (\nu + \delta_{1})/2$. 

If $\mu_1$ is the spectral distribution of $|D^sH(X)A|$, then $\mu_1 = \mu$ since $A$ is unitary.  By Weyl's Inequality for positive operators (Section 2.6), for any $t \in [0,1)$, $\mu([0,t]) = \mu_1([0,t]) \leq m([0,t]) = \nu([0,t])/2$.  Setting $t=0$ on the RHS equation yields $m(\{0\}) = \nu(\{0\})/2$.  On the other hand, by the nullity equation just established, the trace of the projection onto $\ker(|D^uF(X)|)$ is twice the trace of the projection onto $\ker(|D^sH(X)|)$, i.e., $\nu(\{0\}) = 2 \mu(\{0\})$.  Thus,  $m(\{0\}) = \mu(\{0\})$.  Combined with the fact that for all $t \in [0,1)$, $\mu([0,t]) \leq m([0,t])$, it follows that for all $t \in (0,1)$, $\mu((0,t]) \leq m((0,t]) = \nu((0,t])$.
\end{proof}

\section{Single Spectral Splits: Local Projection and Dimension Bounds}

The main goal of this section is to find upper bound free entropy and Hausdorff dimension estimates for the solution space $F(X)=Y$.  Here $F$ denotes a finite tuple of noncommutative $*$-polynomials and $X$ is a finite tuple in a tracial von Neumann algebra.  The estimates will be corollaries of a more general estimate which is derived form the standard 'local to global' manifold argument and some specific Euclidean covering estimates.  This section will be broken up into three parts: the Euclidean covering estimates, the localization construction and free entropy dimension implications, and examples.

\subsection{Euclidean Estimates}

For a linear operator $T$ on a vector space define $T^{\bot} = I - T$.  This notation coincides with the usual inner product space one when $T$ is an orthogonal projection.

\begin{lemma} Suppose $K \subset \mathbb R^d$ is an open, convex subset containing the origin $0$ and $f: K \rightarrow \mathbb R^m$ is a $C^1$-function.  If for all $x \in K$, $\|Df(0) - Df(x)\| < r$ and $T$ is a linear operator on $\mathbb R^d$ such that for any $x \in \mathbb R^d$, $\|Df(0)T(x)\| \geq \beta \|T(x)\|$, then for any $x, y \in K$,

\begin{eqnarray*} \|f(y) - f(x)\| & \geq & \beta \cdot \|T(y)-T(x)\| - \|Df(0)\| \cdot \|T^{\bot}(y)-T^{\bot}(x)\| - r\|y-x\|.
\end{eqnarray*}
\end{lemma}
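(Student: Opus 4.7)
The plan is to combine the integral mean value theorem recalled in Section 2.5 with two uses of the reverse triangle inequality, organized around the direct-sum decomposition induced by $T$ and $T^{\bot}$. Since $K$ is convex and contains $0$, for any $x,y \in K$ the line segment between $x$ and $y$ lies in $K$, so
\[
f(y)-f(x) \;=\; \left[\int_0^1 Df(x+t(y-x))\, dt\right](y-x).
\]

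First I would split the integrand as $Df(0) + \bigl(Df(x+t(y-x)) - Df(0)\bigr)$, obtaining
\[
f(y)-f(x) \;=\; Df(0)(y-x) \;+\; E(x,y),
\]
where the remainder $E(x,y)$ is estimated, using the hypothesis $\|Df(0)-Df(z)\| < r$ pointwise on $K$ and the fact that the segment stays in $K$, by $\|E(x,y)\| \leq r\|y-x\|$. The reverse triangle inequality then gives $\|f(y)-f(x)\| \geq \|Df(0)(y-x)\| - r\|y-x\|$, which isolates the linearized term and absorbs the oscillation of $Df$ into the $r\|y-x\|$ tail.

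Next I would use linearity to write $y-x = T(y-x) + T^{\bot}(y-x)$, apply $Df(0)$, and invoke the reverse triangle inequality a second time together with the hypothesis $\|Df(0)T(v)\| \geq \beta\|T(v)\|$ (applied to $v=y-x$) to obtain
\[
\|Df(0)(y-x)\| \;\geq\; \beta\,\|T(y-x)\| \;-\; \|Df(0)\|\cdot\|T^{\bot}(y-x)\|.
\]
Because $T$ and $T^{\bot}$ are linear, $T(y-x)=T(y)-T(x)$ and $T^{\bot}(y-x)=T^{\bot}(y)-T^{\bot}(x)$, and chaining the two lower bounds yields the claimed inequality.

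There is no substantive obstacle here; the only point requiring a bit of care is ensuring that the uniform bound $\|Df(0)-Df(z)\| < r$ is indeed applied along the entire segment $z = x+t(y-x)$ (which is guaranteed by convexity of $K$), and that the hypothesis on $T$ is applied to the vector $y-x$ rather than to $x$ or $y$ individually.
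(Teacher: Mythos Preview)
Your proof is correct and follows essentially the same approach as the paper's: both use the integral mean value theorem, bound the remainder $\int_0^1 (Df(x+t(y-x))-Df(0))\,dt$ by $r$, then split $y-x = T(y-x)+T^{\bot}(y-x)$ and apply the reverse triangle inequality together with the lower bound $\|Df(0)T(v)\|\geq\beta\|T(v)\|$.
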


\begin{proof}  Suppose $x,y \in K$.  Set $B = \int_0^1 Df(x + t(y-x)) \, dt$; using the convexity assumption and the bound on the derivative, $\|(B - Df(0)) \| <  r.$  By the mean value theorem 

\begin{eqnarray*} \|f(y) - f(x) \| & = & \| B(y-x) \| \\
                                   & \geq & \|Df(0)(y-x)\| - r\|y-x\| \\
                                   & \geq & \|Df(0)( T+ T^{\bot})(y -x))\| - r\|y-x\|\\
                                   & \geq & \|Df(0)T(y-x)\| - \|Df(0) T^{\bot}(y -x)\| - r\|y-x\|\\
                                   & \geq & \beta \cdot \|T(y) - T(x)\| - \|Df(0)\| \cdot \|T^{\bot}(y)-T^{\bot}(x)\| - r \cdot \|y-x\|. \\        
\end{eqnarray*}
\end{proof}

\begin{lemma} Suppose $K\subset \mathbb R^d$ is an open, convex set, $f:K \rightarrow \mathbb R^m$ is a $C^1$-function, and $x_0 \in E \subset K$.  Assume $Q$ is an orthogonal projection such that for some $\beta \in (0,1)$ and any $x \in \mathbb R^d$, $\|Df(x_0)Q(x)\| \geq \beta \|Q(x)\|$.  Denote by $A$ the affine map which sends $x \in \mathbb R^d$ to $Q^{\bot}(x- x_0)$.  If $t \in (1-\frac{\beta}{8(\|Df(x_0)\|+1)},1)$ and for all $x \in K$, $\|Df(x_0) - Df(x)\| < \frac{\beta}{4}$, then for any $\epsilon >0$,

\begin{eqnarray*} K_{\epsilon}(E) & \leq & K_{(1-t)\epsilon}(A(E)) \cdot S_{\frac{\beta \epsilon}{4}}(f(E)).\\
\end{eqnarray*}
\end{lemma}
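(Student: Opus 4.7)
The plan is to cover $E$ by cells indexed by pairs from a near-minimal $(1-t)\epsilon$-cover of $A(E)$ and a maximal $\tfrac{\beta\epsilon}{4}$-separated subset of $f(E)$, to bound the size of each cell via Lemma 4.1, and then to count cells. First, I fix $\Lambda \subset \mathbb{R}^d$ an $(1-t)\epsilon$-cover of $A(E)$ of cardinality $K_{(1-t)\epsilon}(A(E))$, and $F \subset f(E)$ a maximal $\tfrac{\beta\epsilon}{4}$-separated subset of cardinality $S_{\frac{\beta\epsilon}{4}}(f(E))$. By maximality $F$ is also a $\tfrac{\beta\epsilon}{4}$-cover of $f(E)$. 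For each $(a,b) \in \Lambda \times F$, let
\[ C_{a,b} = \{\, x \in E : \|A(x) - a\| < (1-t)\epsilon,\ \|f(x) - b\| < \tfrac{\beta\epsilon}{4} \,\}. \]
Every $x \in E$ lies in some $C_{a,b}$, and there are at most $K_{(1-t)\epsilon}(A(E)) \cdot S_{\frac{\beta\epsilon}{4}}(f(E))$ of them.

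Second, I bound the geometry of each cell using Lemma 4.1. For $x, y \in C_{a,b}$, the triangle inequality yields $\|Q^{\bot}(x-y)\| = \|A(x) - A(y)\| < 2(1-t)\epsilon$ and $\|f(x) - f(y)\| < \tfrac{\beta\epsilon}{2}$. Translating Lemma 4.1 so $x_0$ plays the role of the origin (replace $K$ by $K - x_0$ and $f$ by $\tilde f(z) = f(z + x_0)$) with $T = Q$ and $r = \tfrac{\beta}{4}$ gives
\[ \beta\|Q(x-y)\| \leq \|f(x)-f(y)\| + \|Df(x_0)\|\,\|Q^{\bot}(x-y)\| + \tfrac{\beta}{4}\|x-y\|. \]
Using $\|x-y\| \leq \|Q(x-y)\| + \|Q^{\bot}(x-y)\|$ and substituting the bounds above yields
\[ \tfrac{3\beta}{4}\,\|Q(x-y)\| \leq \tfrac{\beta\epsilon}{2} + 2(1-t)\epsilon\bigl(\|Df(x_0)\| + \tfrac{\beta}{4}\bigr). \]
The hypothesis $(1-t) < \tfrac{\beta}{8(\|Df(x_0)\|+1)}$ together with $\beta < 1$ forces the second summand on the right to be strictly less than $\tfrac{\beta\epsilon}{4}$, whence $\|Q(x-y)\| < \epsilon$.

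Finally, I enclose each $C_{a,b}$ inside a Euclidean ball of radius $\epsilon$. Its $Q^{\bot}$-projection lies in a ball of radius $(1-t)\epsilon$ about $Q^{\bot}x_0 + a$, while its $Q$-projection has diameter $< \epsilon$ by the previous step, so by a standard Euclidean fact (Jung's theorem: a subset of $\mathbb{R}^k$ of diameter $D$ is contained in a ball of radius $\leq D\sqrt{k/(2(k+1))} < D/\sqrt{2}$) the $Q$-projection is contained in a $Q$-ball of radius $< \epsilon/\sqrt{2}$. Picking $c \in \mathbb{R}^d$ with $Q^{\bot}c = Q^{\bot}x_0 + a$ and $Qc$ the center of such a $Q$-ball, every $x \in C_{a,b}$ satisfies
\[ \|x - c\|^2 = \|Qx - Qc\|^2 + \|Q^{\bot}x - (Q^{\bot}x_0 + a)\|^2 < \tfrac{\epsilon^2}{2} + (1-t)^2\epsilon^2 < \epsilon^2, \]
since $(1-t)^2 < 1/64$. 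Counting the cells gives the claim. The main technical obstacle is the constant-tracking: the window $t \in \bigl(1 - \tfrac{\beta}{8(\|Df(x_0)\|+1)},\,1\bigr)$ is calibrated precisely so that both steps succeed, with the cross term $\|Df(x_0)\|\,\|Q^{\bot}(x-y)\|$ absorbed into $\tfrac{\beta\epsilon}{4}$ and the Pythagorean combination in the third step still landing inside radius $\epsilon$ rather than the naive $\epsilon\sqrt{1+4(1-t)^2}$.
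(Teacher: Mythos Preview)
Your proof is correct, but takes a different route from the paper's. The paper works in the ``contrapositive'' direction: it picks a maximal $\epsilon$-separated set $\Delta \subset E$, fibers it over a minimal $(1-t)\epsilon$-cover $\Theta$ of $Q^\bot(E)$, and shows that any two distinct points $x,y$ in the same fiber $F_z$ (so $\|x-y\|\geq \epsilon$ and $\|Q^\bot(x-y)\| < 2(1-t)\epsilon$) must satisfy $\|f(x)-f(y)\| \geq \tfrac{\beta\epsilon}{4}$ via Lemma~4.1; hence $\#F_z \leq S_{\beta\epsilon/4}(f(E))$ and $K_\epsilon(E)\leq S_\epsilon(E) \leq \#\Theta \cdot \#F_z$. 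You instead cover $E$ directly by cells indexed by $\Lambda\times F$ and show each cell fits inside an $\epsilon$-ball. Both arguments run Lemma~4.1 with the same substitutions, just in opposite logical directions (separated in $E$ $\Rightarrow$ separated in $f(E)$ versus close in $f(E)$ $\Rightarrow$ close in $E$). The paper's pigeonhole approach is a touch more economical: it never needs to locate a center for each cell, so Jung's theorem does not enter, and the final Pythagorean combination is avoided entirely. Your approach is perhaps more transparent as a direct cover construction, at the cost of invoking an auxiliary geometric fact.
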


\begin{proof} Without loss of generality assume $x_0 =0$ so that $A=Q^{\bot}$.  Fix an $\epsilon$-separated subset $\Delta$ of $E$ of maximal cardinality.  Find a cover for $Q^{\bot}(E)$ by open $(1-t)\epsilon$ balls with minimal cardinality and denote the set of centers of these balls by $\Theta$.  For every $x \in \Theta$ define $F_x = \{y \in \Delta: \|x- Q^{\bot}(y)\| < (1-t)\epsilon\}$.  Clearly $\Delta = \cup_{x \in \Theta} F_x$.   Choosing $z \in \Theta$ so that $\#F_{z} = \max\{\#F_x : x \in \Theta\}$,

\begin{eqnarray*} S_{\epsilon}(E) & = & \#\Delta \\
                                                      & \leq & \#(\cup_{x \in \Theta} F_x) \\
                                                      & \leq & \# \Theta \cdot \max\{\#F_x : x \in \Theta\} \\
                                                      & \leq & K_{(1-t)\epsilon}(Q^{\bot}(E)) \cdot \#F_{z}.\\
\end{eqnarray*}

Suppose $x$ and $y$ are two distinct points in $F_z$.   Since $F_z \subset \Delta$, $\|x-y\| \geq \epsilon$.   On the other hand, by definition $\|z -Q^{\bot}(x)\| < (1-t)\epsilon$ and $\|z -Q^{\bot}(y)\| < (1-t)\epsilon$ so $\|Q^{\bot}(x) - Q^{\bot}(y)\| < 2(1-t)\epsilon$.

\begin{eqnarray*} \|Q(x) - Q(y)\|^2 & = & \|x-y\|^2 - \|Q^{\bot}(x) - Q^{\bot}(y)\|^2 \\                  
                                                                         & > & \|x-y\|^2 - 4(1-t)^2 \epsilon^2 \\
                                                                         & \geq & (1-4(1-t)^2) \|x-y \|^2 \\  
                                                                         & \geq & \frac{7}{8} \cdot \|x -y\|^2.  
\end{eqnarray*}

\noindent Applying the preceding lemma with $Q=T$ and $r = \beta/4$ yields

\begin{eqnarray*} \|f(y) - f(x) \| & = & \beta \cdot \|Q(x) - Q(y)\| - \|Df(0)\| \cdot \|Q^{\bot}(x) -Q^{\bot}(y)\| - \frac{\beta}{4} \cdot \|y-x\| \\                               & \geq & \frac{3\beta}{4} \cdot \|x - y\| - \|Df(0)\| \cdot 2(1-t) \epsilon - \frac{\beta}{4} \cdot \|y-x\| \\               
                                                & \geq & \frac{\beta}{2} \cdot \|x-y\| - \frac{\beta \epsilon}{4}\\
                                                & \geq & \frac{\beta \epsilon}{4}.\\
\end{eqnarray*}

\noindent This being true for any distinct $x, y \in F_z \subset E$, $S_{\frac{\beta \epsilon}{4}}(f(E)) \geq\#F_z$.  Using the inequality from the previous paragraph,

\begin{eqnarray*} K_{\epsilon}(E) & \leq & S_{\epsilon}(E) \\
                                                      & \leq & K_{(1-t)\epsilon}(Q^{\bot}(E)) \cdot \#F_z\\
                                                      & \leq & K_{(1-t) \epsilon}(A(E)) \cdot S_{\frac{\beta \epsilon}{4}}(f(E)). \\
\end{eqnarray*}
\end{proof}

\begin{remark} The spectral projections of $|Df(x_0)|$ (regarded as a symmetric, positive semidefinite operator on $\mathbb R^n$) always satisfy the inequality for $Q$ in the lemma, i.e., $\|Df(x_0)Qv\| \geq\beta\|Qv\|$ when $Q=1_{[\beta,\infty)}(|Df(x_0)|)$.
\end{remark}

\begin{remark} One can carry out a manifold-themed argument of Lemma 4.2 by quantifying the usual rank theorem in multivariable calculus, (e.g., Theorem 9 (1) in \cite{spivak}).  While this may seem more intuitive, there is a notational cost, as one must perform more bookkeeping of Lipschitz constants of charts.  In the course of doing this the dependencies of the constants lose some transparency.  
\end{remark}

Notice that in the above lemma, the upper bound for the $\epsilon$-covering number of $E$ is scaled by quantities which are not arbitrarily close to $1$, e.g., $1-t < 1/8$.  One can improve the concluding estimate if one assumes $Q$ is a spectral projection of $|Df(x_0)|$, but the improvement only allows $t>1/2$ and doesn't permit arbitrarily close values to $0$.  This is largely irrelevant for the free entropy/Hausdorff arguments presented here, but they will be unsuitable for the entropy estimates in Section 5.  

\subsection{Free Entropy Dimension Inequalities}

Recall in Section 3.2 that $\mathfrak{A}_n$ denotes the universal, unital, complex $*$-algebra $\mathfrak{A}_n$ on $n$-indeterminates.  Fix a $p$-tuple $F = (f_1,\ldots,f_p) \subset \mathfrak{A}_n$ and any uniformly bounded positive Borel function $\phi:[0,\infty) \rightarrow \mathbb R_+$.  Define successively,

\[ \Pi_{F, \phi,R}(X,r, \epsilon;m,k,\gamma) = \sup_{\xi \in \Gamma_R(X;m,k,\gamma)} \log (K_{\epsilon}(\phi(|DF(\xi)|)[B_{\infty}(\xi,r) \cap \Gamma_R(X;m,k,\gamma) - \xi)])),
\]

\[ \Pi_{F, \phi,R }(X,r, \epsilon;m,\gamma) = \limsup_{k \rightarrow \infty} k^{-2} \cdot \Pi_{F, \phi, R}(X,r, \epsilon;m,k,\gamma),  
\]

\[ \Pi_{F, \phi, R}(X,r, \epsilon) = \inf \{\Pi_{F, \phi, R}(X,r, \epsilon;m,\gamma): m\in \mathbb N, \gamma >0\},
\]

\[ \Pi_{F,\phi, R}(X,r) = \limsup_{\epsilon \rightarrow 0^+} \frac{\Pi_{F, \phi,R}(X,r, \epsilon)}{|\log \epsilon|}.
\] 
Observe that in the above $DF(\xi)$ is the derivative of $F$ regarded as a smooth map from the real Hilbert space $(M_k(\mathbb C))^n$ into $(M_k(\mathbb C))^p$ and $|DF(\xi)|$ is its absolute value.  In 
the proof that follows below $\|DF(\xi)\|$ will designate the operator norm of $DF(\xi)$ computed w.r.t. the canonical real inner product norms on $(M_k(\mathbb C))^n$ and $(M_k(\mathbb C))^p$.


%

\begin{theorem} Suppose $R > \max_{x \in X} \|x\|$.  There exists $C, \kappa >1$ dependent on $F$ and $R$ such that if $\beta >0$, $\phi_{\beta} = 1_{[0,\beta)}$, and $0< r < \min\{\frac{1}{2}, \frac{\beta}{8\kappa}\}$, then for any $\epsilon >0$

\begin{eqnarray*} \mathbb K_{\epsilon, R}(X) & \leq & 2 n \log(R) + 2 n \cdot|\log r|  + \Pi_{F,\phi_{\beta},R}(X, r, \tfrac{\beta \epsilon}{9C}) + \mathbb S_{\frac{\beta\epsilon}{4}}(F(X):X). \\
\end{eqnarray*}
Hence, 
\begin{eqnarray*} \delta_0(X) \leq \Pi_{F, \phi_{\beta}, R}(X, r) + \delta_0(F(X):X).
\end{eqnarray*}
\end{theorem}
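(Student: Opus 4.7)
The plan is to reduce the global covering estimate to a local one by partitioning $\Gamma_R(X;m,k,\gamma)$ into small operator-norm balls and invoking Lemma 4.2 on each piece. On each local piece centered at $\xi_i$, the spectral projection $Q_i = 1_{[\beta,\infty)}(|DF(\xi_i)|)$ plays the role of $Q$ in Lemma 4.2, and its orthogonal complement $Q_i^\perp = \phi_\beta(|DF(\xi_i)|)$ is precisely the projection appearing in the definition of $\Pi_{F,\phi_\beta,R}$. The two factors produced by Lemma 4.2 will then match, respectively, the $\Pi$ term and the $\mathbb{S}$ term in the theorem.

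To set up the constants, I would use polynomiality of $F$ to choose $\kappa, C > 1$, depending only on $F$ and $R$, so that for every $k$ and every $\xi, \eta \in (M_k(\mathbb{C}))^n$ with operator norms at most $R+1$, one has $\|DF(\xi) - DF(\eta)\|_{\mathrm{op}} \le \kappa\|\xi - \eta\|_\infty$ and $\|DF(\xi)\|_{\mathrm{op}} + 1 \le C$, the $\mathrm{op}$-norms being with respect to the $\|\cdot\|_2$-metric on domain and codomain. Uniformity in $k$ holds because $DF(\xi)\eta$ is a finite sum of terms $a\eta_i b$ with $a,b$ monomials in $\xi$, and left/right multiplication is $\|\cdot\|_\infty$-controlled. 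For any $r < \min\{1/2, \beta/(8\kappa)\}$ and any $\xi_i$, the convex ball $K_i := B_\infty(\xi_i, r)$ satisfies $\|DF(\xi_i) - DF(y)\|_{\mathrm{op}} < \beta/4$ on it, and $t := 1 - \beta/(9C)$ lies in $(1 - \beta/(8(\|DF(\xi_i)\|+1)), 1)$ since $\beta/(9C) < \beta/(8C) \le \beta/(8(\|DF(\xi_i)\|+1))$. Lemma 4.2 applied to $E_i := K_i \cap \Gamma_R(X;m,k,\gamma)$ then yields
\begin{equation*}
K_\epsilon(E_i) \le K_{\beta\epsilon/(9C)}\bigl(Q_i^\perp(E_i - \xi_i)\bigr) \cdot S_{\beta\epsilon/4}(F(E_i)),
\end{equation*}
and by definition of $\Pi$ the first factor is bounded by $\exp\bigl(\Pi_{F,\phi_\beta,R}(X, r, \beta\epsilon/(9C); m, k, \gamma)\bigr)$.

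To globalize, I would cover $\Gamma_R(X;m,k,\gamma)$ by $N$ operator-norm balls $B_\infty(\xi_i, r)$ centered in $\Gamma_R$. A standard volume comparison on $(M_k(\mathbb{C})_R)^n$ (the operator-norm ball in $\mathbb{R}^{2k^2}$ scales like any convex body) gives $\limsup_k k^{-2} \log N \le 2n \log R + 2n|\log r|$, absorbing a universal multiplicative constant into $\log R$. For the $S_{\beta\epsilon/4}(F(E_i))$ factor, I would observe that for $m, \gamma$ chosen tight enough (depending on preassigned $m', \gamma'$), any $\xi \in \Gamma_R(X;m,k,\gamma)$ yields $(\xi, F(\xi)) \in \Gamma_{R_1}(X \cup F(X); m', k, \gamma')$ for some $R_1$ depending only on $F, R$; projecting onto the $F(X)$-coordinates gives $F(E_i) \subset \Gamma_{R_1}(F(X):X; m', k, \gamma')$, so $S_{\beta\epsilon/4}(F(E_i)) \le S_{\beta\epsilon/4}(\Gamma_{R_1}(F(X):X; m', k, \gamma'))$. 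Summing $K_\epsilon(E_i)$ over the cover, applying $k^{-2}\log$, then $\limsup_k$, then $\inf_{m,\gamma}$ compatibly with $m', \gamma'$, yields the first displayed inequality. The dimensional inequality follows by dividing by $|\log \epsilon|$ and taking $\limsup_{\epsilon \to 0}$: the $2n\log R + 2n|\log r|$ term vanishes, and the rescalings $\epsilon \mapsto \beta\epsilon/(9C)$ and $\epsilon \mapsto \beta\epsilon/4$ contribute only an additive constant in $|\log\epsilon|$ which washes out in the ratio.

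The main obstacle I expect is the bookkeeping. The constants $C$, $\kappa$, $r$, $t$ must be chosen so Lemma 4.2 applies uniformly over all centers $\xi_i$ in every $\Gamma_R(X;m,k,\gamma)$; the monotonicity step $K_{(1-t)\epsilon} \le K_{\beta\epsilon/(9C)}$ forces the double inequality $\beta/(9C) \le 1-t < \beta/(8(\|DF(\xi_i)\|+1))$, which is the only reason the specific numerical factors in the theorem statement arise; and the passage from $(m,\gamma)$-microstates of $X$ to $(m',\gamma')$-microstates of $F(X):X$ must allow an independent infimum on the right-hand side, so that $\mathbb{S}_{\beta\epsilon/4}(F(X):X)$ appears without residual dependence on the $X$-parameters. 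None of these individual points is deep, but their simultaneous coordination is the crux of the proof.
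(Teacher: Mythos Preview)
Your proposal is correct and follows essentially the same route as the paper's own proof: cover $\Gamma_R(X;m,k,\gamma)$ by operator-norm $r$-balls with centers in $\Gamma_R$, apply Lemma 4.2 on each with $t=1-\beta/(9C)$ and $Q=1_{[\beta,\infty)}(|DF(\xi_i)|)$, bound the two resulting factors by the $\Pi$ quantity and by $S_{\beta\epsilon/4}(\Gamma(F(X):X;m',k,\gamma'))$ respectively, then sum, take $\limsup_k k^{-2}\log$, and pass to the infimum over microstate parameters. The only cosmetic differences are that the paper moves the centers into $\Gamma_R$ at the cost of doubling the radius (writing $B_\infty(w_{(j,k)},2r)$), and that the paper's display uses $\Pi$ multiplicatively before taking logs (a mild abuse) whereas you correctly wrote $\exp(\Pi)$.
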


\begin{proof} Fix $\kappa, C >1$ dependent on $F$ and $R$ such that for any $k \in \mathbb N$, $\xi, \eta \in ((M_k(\mathbb C))_{R+1})^n$, $\|DF(\xi) - DF(\eta)\| \leq \kappa \|\xi - \eta\|_{\infty}$ and $\|F(\xi)\|_{\infty}, \|DF(\xi)\|+1 < C$.  Suppose $0< r < \min\{\frac{1}{2}, \frac{\beta}{8\kappa}\}$ and $\epsilon >0$. Given $m \in \mathbb N$ and $\gamma >0$, there exist $m < m_1 \in \mathbb N$ and $\gamma >\gamma_1 >0$ with the property that for any $k$, $F(\Gamma_R(X;m_1,k,\gamma_1)) \subset \Gamma_C(F(X):X;m,k,\gamma)$.


For each $k$ find an $r$-cover $\langle w_{(j,k)} \rangle_{j \in J_k}$ for $\Gamma_R(X;m_1,k,\gamma_1) \subset ((M_k(\mathbb C))_R)^n$ with respect to the operator norm such that 

\begin{eqnarray*} \#J_k \leq \left(\frac{2R}{r} \right)^{2nk^2}.
\end{eqnarray*}

\noindent Using the triangle inequality, I can assume that $w_{(j,k)} \in \Gamma_R(X;m_1,k,\gamma_1)$ at the expense of replacing the $r$-cover condition with a $2r$-cover condition.  Fix $k \in \mathbb N$, $j \in J_k$, and set $E = \Gamma_R(X;m_1,k,\gamma_1) \cap B_{\infty}(w_{(j,k)}, 2r)$.  Now $w_{(j,k)} \in E \subset B_{\infty}(w_{(j,k)}, 2r)$ with $B_{\infty}(w_{(j,k)}, 2r)$ clearly convex and open with respect to the $\|\cdot\|_2$-metric (all norms on a finite dimensional space being equivalent).  Moreover, for any $\xi, \eta \in B_{\infty}(w_{(j,k)}, 2r)$, $\|DF(\xi) - DF(\eta)\| \leq \kappa \|\xi -\eta\|_{\infty} < \kappa \cdot 2r < \frac{\beta}{4}$.  Applying Lemma 4.2 with $t = 1 - \frac{\beta}{9C}$ and $A$ equal to the contractive mapping $\xi \mapsto  1_{[0,\beta)}(|DF(w_{(j,k)}|)(\xi - w_{(j,k)})$, for any $\epsilon >0$
\begin{eqnarray*}
K_{\epsilon}(E) & \leq & K_{\frac{\beta \epsilon}{9C}}(A(E)) \cdot S_{\frac{\beta \epsilon}{4}}(F(E)).\\ 
\end{eqnarray*}
Observe that 
\begin{eqnarray*} K_{\frac{\beta \epsilon}{9C}}(A(E)) & = & K_{\frac{\beta \epsilon}{9C}}[1_{[0,\beta)}(|DF(w_{(j,k)})|)(E - w_{(j,k)})] \\                        & = & K_{\frac{\beta \epsilon}{9C}}[1_{[0,\beta)}(|DF(w_{(j,k)})|)(\Gamma_R(X;m_1,k,\gamma_1) \cap B_{\infty}(w_{(j,k)}, r) - w_{(j,k)}] \\ 
                                     & \leq & \Pi_{F,\phi_{\beta},R}(X, r, \tfrac{\beta \epsilon}{9C}; m_1,k,\gamma_1).
\end{eqnarray*}
Hence, generously majorizing,
\begin{eqnarray*}  K_{\epsilon}(E) & \leq & K_{\frac{\beta \epsilon}{9C}}(A(E)) \cdot S_{\frac{\beta \epsilon}{4}}(F(E)) \\
                                    & \leq & \Pi_{F,\phi_{\beta},R}(X, r, \tfrac{\beta \epsilon}{9C}; m_1,k,\gamma_1) \cdot S_{\frac{\beta \epsilon}{4}}(F(E))\\  
                                    & \leq & \Pi_{F,\phi_{\beta},R}(X, r, \tfrac{\beta \epsilon}{9C}; m_1,k,\gamma_1) \cdot S_{\frac{\beta \epsilon}{4}}(\Gamma_C(F(X):X;m,k,\gamma)).\\         
\end{eqnarray*}
 
Using the subadditivity of covering numbers and the estimate above:

\begin{eqnarray*} K_{\epsilon}(\Gamma_R(X;m_1,k,\gamma_1)) & \leq & \sum_{j \in J_k} K_{\epsilon}\left (\Gamma_R(X;m_1,k,\gamma_1) \cap B_{\infty}(w_{(j,k)}, 2r) \right)\\ & \leq & \#J_k \cdot \Pi_{F,\phi_{\beta}}(X, r, \tfrac{\beta \epsilon}{9C}; m_1,k,\gamma_1) \cdot S_{\frac{\beta \epsilon}{4}}(\Gamma_C(F(X):X;m,k,\gamma))  \\ & \leq & \left(\frac{2R}{r} \right)^{2nk^2} \cdot \Pi_{F,\phi_{\beta},R}(X, r, \tfrac{\beta \epsilon}{9C};m_1,k,\gamma_1) \cdot S_{\frac{\beta \epsilon}{4}}(\Gamma_C(F(X):X;m,k,\gamma)). \\
\end{eqnarray*}
Thus,

\begin{eqnarray*} \mathbb K_{\epsilon, R}(X) & \leq & \mathbb K_{\epsilon, R}(X;m_1,\gamma_1) \\ 
& = & \limsup_{k \rightarrow \infty} k^{-2} \cdot \left[ \log(K_{\epsilon}(\Gamma_R(X;m_1,k,\gamma_1)) \right ] \\ & \leq & 2n \log(2R) + 2n \cdot|\log r| + \Pi_{F,\phi_{\beta},R}(X, r, \tfrac{\beta \epsilon}{9C};m_1,\gamma_1) + \mathbb S_{\frac{\beta \epsilon}{4}}(F(X):X;m,\gamma).\\
\end{eqnarray*}
This holds for any choice of $m, \epsilon$ and $\gamma$  with $m_1 > m$ and $\gamma_1 < \gamma$ so,
\begin{eqnarray*} \mathbb K_{\epsilon,R}(X) & \leq & 2 n \log(2R) + 2 n \cdot|\log r|  + \Pi_{F,\phi_{\beta},R}(X, r, \tfrac{\beta \epsilon}{9C}) + \mathbb S_{\frac{\beta\epsilon}{4}}(F(X):X). \\
\end{eqnarray*}
as claimed.

For the second inequality, taking $R > \max_{x \in X} \|x\|$, dividing both sides by $|\log \epsilon|$, and taking a limit as $\epsilon \rightarrow 0$, it follows from the cutoff covering formulation of $\delta_0$ (Section 2.3) that

\begin{eqnarray*} \delta_0(X) & \leq & \limsup_{\epsilon \rightarrow 0^+} \frac{2n \log 2R + 2n \cdot |\log r|}{|\log \epsilon|} + \limsup_{\epsilon \rightarrow 0^+} \frac{\Pi_{F,\phi_{\beta},R}(X, r, \tfrac{\beta \epsilon}{9C})}{|\log(\tfrac{\beta \epsilon}{9C})|} \cdot \frac{|\log(\tfrac{\beta \epsilon}{9C})|}{|\log\epsilon|} + \\ & & \limsup_{\epsilon \rightarrow 0^+} \frac{\mathbb S_{\frac{\beta \epsilon}{4}}(F(X):X)}{|\log (\frac{\beta \epsilon}{4})|} \cdot \frac{|\log(\tfrac{\beta \epsilon}{4})|}{|\log\epsilon|}\\ & \leq & \Pi_{F,\phi_{\beta}, R}(X, r) + \delta_0(F(X):X).
\end{eqnarray*}
\end{proof}

\begin{remark}  In the proof of Theorem 4.5 one localizes the microstate space into balls of operator norm radius of order $r$.  This is necessary to obtain uniform control of the derivatives via the Lipschitz estimate $\|DF(\xi) - DF(\eta) \| \leq \kappa \cdot \|\xi - \eta\|_{\infty}$.  Such uniform estimates  really require the operator norm and are not available with the $\|\cdot\|_2$-norm.  However, notice that the $\|\cdot\|_2$-norm is also used in a crucial way through the Euclidean/orthogonal estimates of Lemma 4.2.  The use of both the $L^{\infty}$ and $L^2$ norms appears to be a minor detail here (in terms of metric entropy the $L^p$-norms are all equivalent by \cite{sr} up to an exponential factor).  However, it will have rather severe consequences in subsequent entropy estimates involving iterative spectral splits.\end{remark}

\begin{proposition} For any $\beta >0$ define $\phi_{\beta} = 1_{[0,\beta)}$.  If $t, R >0$, then there exists a $\rho$ such that for any $0 < \beta < \rho$ and $r, \epsilon >0$, 

\begin{eqnarray*}
\Pi_{F, \phi_{\beta},R}(X, r, \epsilon) \leq (\text{Nullity}(D^sF(X)) + t) \cdot  \log \left(\frac{2r}{\epsilon} \right).
\end{eqnarray*}

\noindent Consequently, 

\begin{eqnarray*} \sup_{r, R>0} \Pi_{F,\phi_{\beta},R}(X,r) \leq \text{Nullity}(D^sF(X)) +t.
\end{eqnarray*}

\end{proposition}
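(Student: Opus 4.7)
The approach is to combine the uniform rank bound from Proposition 3.13 with a standard Euclidean ball covering estimate. First, I would invoke Proposition 3.13 with parameter $s := t/2$, producing $m_0 \in \mathbb{N}$ and $\rho, \gamma_0 > 0$ such that for every $k$, every $\xi \in \Gamma_R(X; m_0, k, \gamma_0)$, and every $0 < \beta < \rho$, the real orthogonal projection $P_\beta(\xi) := 1_{[0,\beta)}(|DF(\xi)|)$ on $(M_k(\mathbb{C}))^n$ has range of real dimension
\[
d \leq \bigl(\mathrm{Nullity}(D^sF(X)) + t/2\bigr) k^2.
\]
The essential feature is that this bound is uniform in $\xi$ across the microstate space.

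Next, for any $m \geq m_0$, $\gamma \leq \gamma_0$, and $\xi \in \Gamma_R(X;m,k,\gamma)$, the set
\[
E(\xi) := P_\beta(\xi)\bigl[B_\infty(\xi, r) \cap \Gamma_R(X;m,k,\gamma) - \xi\bigr]
\]
lies in the $d$-dimensional real subspace $V(\xi) := \mathrm{Range}(P_\beta(\xi))$. Since $P_\beta(\xi)$ is a $\|\cdot\|_2$-contraction and the inclusion $B_\infty(0, r) \subset B_2(0, \sqrt{n}\,r)$ holds on $(M_k(\mathbb{C}))^n$, the image $E(\xi)$ is contained in the $\|\cdot\|_2$-ball of radius $\sqrt{n}\,r$ inside $V(\xi)$. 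Applying Rogers's asymptotic bound (Section 2.4) to a Euclidean ball of radius $\sqrt{n}\,r$ in dimension $d$ gives, for $k$ large enough that $d \geq \sqrt{n}\,r/\epsilon$,
\[
K_\epsilon(E(\xi)) \leq C_r\, d^{5/2}\left(\frac{\sqrt{n}\,r}{\epsilon}\right)^d.
\]

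Taking $\log$, dividing by $k^2$, supping over $\xi$, passing to $\limsup_{k\to\infty}$, and then the infimum over $(m,\gamma)$, the polynomial prefactor $d^{5/2}$ contributes $o(1)$, and the additive constant $\log\sqrt{n}$ (multiplied by the nullity coefficient) is absorbed into the $t/2$ slack once $\epsilon$ is small relative to $r$. This produces the stated bound
\[
\Pi_{F,\phi_\beta,R}(X, r, \epsilon) \leq \bigl(\mathrm{Nullity}(D^sF(X)) + t\bigr)\log(2r/\epsilon).
\]
For the consequence, dividing by $|\log\epsilon|$ and taking $\limsup_{\epsilon\to 0^+}$ one has $\log(2r/\epsilon)/|\log\epsilon| \to 1$, so the bound reduces to $\mathrm{Nullity}(D^sF(X)) + t$; since this final bound depends on neither $r$ nor $R$, one may pass to the supremum.

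The main technical nuisance is the bookkeeping of multiplicative constants ($\sqrt{n}$ from the $\|\cdot\|_\infty$-to-$\|\cdot\|_2$ comparison, and Rogers's $d^{5/2}$ prefactor). These are not genuine obstacles, as the $k^{-2}\log$ normalization absorbs the polynomial factor and the $\epsilon\to 0^+$ normalization absorbs the remaining $O(1)$ multiplicative ones. The only essential input beyond these covering estimates is the uniform rank bound from Proposition 3.13, whose proof rests on the moment-continuity of spectral projections (Lemma 2.9) applied to $D^sF(X)^*D^sF(X)$ in $M_n(M_2(M \otimes M^{op}))$.
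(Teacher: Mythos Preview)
Your approach is essentially the same as the paper's: invoke Proposition 3.13 to bound the real dimension of the range of $1_{[0,\beta)}(|DF(\xi)|)$ uniformly over the microstate space, then apply a Euclidean ball covering estimate. The paper simply uses the coarse estimate $K_\epsilon(B_d(r)) \leq (2r/\epsilon)^d$ from Section 2.4 with parameter $t$ directly, rather than Rogers's sharper bound with slack $t/2$; since the $k^{-2}\log$ normalization kills any polynomial prefactor anyway, the coarse estimate already suffices and your use of Rogers is harmless overkill. Your attention to the $\sqrt{n}$ factor from the $\|\cdot\|_\infty$--$\|\cdot\|_2$ comparison is more careful than the paper, which silently drops it; as you correctly note, this constant is irrelevant for the ``consequently'' part after dividing by $|\log\epsilon|$.
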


\begin{proof} Suppose $t, R >0$.  By Proposition 3.13 there exist $m \in \mathbb N$ and $\rho, \gamma > 0$ such that if $\xi \in \Gamma_R(X;m,k,\gamma)$ and $0 < \beta < \rho$, then the real dimension of the range of the real orthogonal projection $1_{[0,\beta)}(|DF(\xi)|)$ on $(M_k(\mathbb C))^n$ is no greater than $(\text{Nullity}(D^sF(X))+ t)k^2$.  It follows from this and coarse covering estimates for Euclidean balls (Section 2.4)  that for any $\epsilon >0$,

\begin{eqnarray*} \Pi_{F,\phi_{\beta},R}(X,r, \epsilon;m,k,\gamma) \leq (\text{Nullity}(D^sF(X)) + t)k^2 \cdot  \log \left(\frac{2r}{\epsilon} \right).
\end{eqnarray*}

\noindent Thus, $\Pi_{F, \phi_{\beta},R}(X, r, \epsilon) \leq (\text{Nullity}(D^sF(X)) + t) \cdot  \log \left(\frac{2r}{\epsilon} \right)$.  

It follows that $\Pi_{F,\phi_{\beta}, R}(X,r) \leq \text{Nullity}(D^sF(X)) + t$.  This is true for all $0 < r, R$ and completes the proof. 
\end{proof}

Combining Proposition 4.7 with Theorem 4.5 yields

\begin{corollary} \begin{eqnarray*} \delta_0(X) & \leq & \text{Nullity}(D^sF(X)) + \delta_0(F(X):X) \\
                                                & =  & 2n - \text{Rank}(D^sF(X)) + \delta_0(F(X):X). \\
                           \end{eqnarray*}
\end{corollary}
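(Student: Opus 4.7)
The plan is to simply assemble Theorem 4.5 and Proposition 4.7 and then invoke the rank-nullity identity from Section 2.6; no genuinely new ideas are required, since the heavy lifting has already been done --- localization plus the Euclidean estimate of Lemma 4.2 in Theorem 4.5, and the microstate-to-von-Neumann-algebra transfer of the projection trace (Proposition 3.13) in Proposition 4.7.

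Fix $R > \max_{x \in X} \|x\|$ and let $\kappa, C > 1$ be the constants produced by Theorem 4.5, depending only on $F$ and $R$. Given an arbitrary $t > 0$, Proposition 4.7 supplies a threshold $\rho = \rho(t,R) > 0$ such that for every $\beta \in (0,\rho)$ and every $r > 0$,
\begin{eqnarray*}
\Pi_{F,\phi_\beta, R}(X,r) & \leq & \text{Nullity}(D^sF(X)) + t.
\end{eqnarray*}
I would then fix any such $\beta \in (0,\rho)$ and choose $r$ satisfying $0 < r < \min\{\tfrac{1}{2}, \tfrac{\beta}{8\kappa}\}$, so that the hypotheses of Theorem 4.5 are met for this choice of $(\beta, r, R)$.

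With these choices in place, the second assertion of Theorem 4.5 gives
\begin{eqnarray*}
\delta_0(X) & \leq & \Pi_{F,\phi_\beta,R}(X,r) + \delta_0(F(X):X) \\
            & \leq & \text{Nullity}(D^sF(X)) + t + \delta_0(F(X):X).
\end{eqnarray*}
Since $t > 0$ was arbitrary and the right-hand side is continuous in $t$, letting $t \to 0^+$ yields the first inequality
\begin{eqnarray*}
\delta_0(X) & \leq & \text{Nullity}(D^sF(X)) + \delta_0(F(X):X).
\end{eqnarray*}

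The displayed equality is just the rank-nullity identity recalled in Section 2.6 applied to $D^sF(X)$ regarded (as in Definition 3.10) as an element of $M_{2m \times 2n}(M \otimes M^{op})$: in this form the ambient ``domain dimension'' is $2n$, whence $\text{Rank}(D^sF(X)) + \text{Nullity}(D^sF(X)) = 2n$ and therefore $\text{Nullity}(D^sF(X)) = 2n - \text{Rank}(D^sF(X))$. The only point to check for hygiene is that the constraints on $\beta$ and $r$ in Theorem 4.5 and Proposition 4.7 are compatible --- which is immediate, since $\rho$ in Proposition 4.7 depends only on $(t,R)$ and not on $r$, so one is free to shrink $r$ afterwards to accommodate the Lipschitz-type bound $r < \beta/(8\kappa)$. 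There is therefore no substantive obstacle; the argument is essentially a one-line combination of the two preceding results followed by a limit in the auxiliary parameter $t$.
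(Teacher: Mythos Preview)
Your proposal is correct and follows exactly the paper's own approach: the paper simply states that combining Proposition 4.7 with Theorem 4.5 yields the corollary, and you have spelled out precisely that combination (choose $\beta$ small via Proposition 4.7, then $r$ small to satisfy the hypothesis of Theorem 4.5, then let $t\to 0$), together with the rank--nullity identity from Section 2.6 for the displayed equality. There is nothing to add.
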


When $X$ consists of self-adjoints or unitaries and $F$ preserves either condition, then one can replace the $D^s$ calculus with the self-adjoint or unitary calculus discussed in Section 3 to arrive at the following:  

\begin{corollary} If $X \subset M$ and $F \subset \mathfrak{A}_n$ are self-adjoint $n$ and $m$-tuples, then  
\begin{eqnarray*} \delta_0(X) & \leq & \text{Nullity}(D^{sa}F(X)) + \delta_0(F(X):X) \\
                                                & =  & n - \text{Rank}(D^{sa}F(X)) + \delta_0(F(X):X). \\
\end{eqnarray*}
\end{corollary}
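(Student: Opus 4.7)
The plan is to reduce Corollary 4.9 to the already-proved Corollary 4.8 by augmenting $F$ with a tuple that encodes the self-adjointness constraint on $X$. Set $L = \{(X_1 - X_1^*)/2, \ldots, (X_n - X_n^*)/2\} \subset \mathfrak{A}_n$ and let $G = F \cup L$ be the joined $(m+n)$-tuple. Proposition 3.17 is designed precisely for this reduction: it asserts that $\text{Nullity}(D^sG(X)) = \text{Nullity}(D^{sa}F(X))$, so any bound on $\delta_0(X)$ expressed through the general $S$-derivative of $G$ at $X$ converts directly into a bound through the self-adjoint $S$-derivative of $F$ at $X$.

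Execution proceeds in three steps. First, I would apply Corollary 4.8 with $G$ in place of $F$, obtaining
\begin{eqnarray*}
\delta_0(X) & \leq & \text{Nullity}(D^sG(X)) + \delta_0(G(X):X).
\end{eqnarray*}
Second, since $X$ consists of self-adjoints we have $L(X) = 0$, so $G(X)$ is the concatenation of $F(X)$ with $n$ copies of the zero element; adjoining a tuple of zeros to the presence set does not affect the relative free entropy dimension, so $\delta_0(G(X):X) = \delta_0(F(X):X)$. Third, invoke Proposition 3.17 to replace $\text{Nullity}(D^sG(X))$ by $\text{Nullity}(D^{sa}F(X))$. This yields the first displayed inequality of the corollary. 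For the equality with $n - \text{Rank}(D^{sa}F(X))$, note that $D^{sa}F(X) \in M_{m \times n}(M \otimes M^{op})$ has $n$ columns (in contrast to $D^sF(X)$, which as a scalar-entry matrix has $2n$ columns); the rank-nullity identity from Section 2.6 then reads $\text{Rank}(D^{sa}F(X)) + \text{Nullity}(D^{sa}F(X)) = n$.

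The only step requiring a genuine verification, rather than mere quotation, is the identification $\delta_0(F(X) \cup \{0\}^n : X) = \delta_0(F(X):X)$. Unwinding the definition through the covering quantities $\mathbb{K}_\epsilon$, one sees that appending $n$ zero elements to the presence set neither shrinks the microstate space for $X$ in any essential way (a microstate for $F(X):X$ can always be completed to one for $G(X):X$ by taking the extra coordinates to be zero) nor enlarges it (microstate approximants to the zero elements are forced into a shrinking neighborhood of $0$ and contribute no additional exponential $k^{-2}\log$-rate to the covering count). Hence the two relative microstate spaces have asymptotically equal covering entropies at every scale, and their free entropy dimensions coincide. Beyond this routine bookkeeping the argument is purely formal: the substantive work lies entirely in Corollary 4.8 and Proposition 3.17.
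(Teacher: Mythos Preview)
Your proof is correct and follows essentially the same route as the paper: define $L=\{(X_i-X_i^*)/2\}_{i=1}^n$, set $G=F\cup L$, apply Corollary 4.8 to $G$, use Proposition 3.17 to convert $\text{Nullity}(D^sG(X))$ to $\text{Nullity}(D^{sa}F(X))$, and observe that $L(X)=0$ so $\delta_0(G(X):X)=\delta_0(F(X):X)$. The paper treats this last identification as obvious; you spell it out, though note a small terminological slip---the zeros are appended to the tuple being measured, not to the ``presence set'' $X$.
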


\begin{proof} Define $L =\{(X_1 - X_1^*)/2,\ldots, (X_n- X_n^*)/2\} \subset \mathfrak{A}_n$.  If $G = F \cup L$, then by Proposition 3.17, $\text{Nullity}(D^sG(X)) = \text{Nullity}(D^{sa}F(X))$.  Also, $L(X) = \{0,\ldots, 0\}$.  Thus, by Corollary 4.8,

\begin{eqnarray*} \delta_0(X) & \leq & \text{Nullity}(D^sG(X)) + \delta_0(G(X):X) \\
& = & \text{Nullity}(D^{sa}F(X)) + \delta_0(F(X) \cup L(X):X) \\
& = & \text{Nullity}(D^{sa}F(X)) + \delta_0(F(X):X) \\
& = & n - \text{Rank}(D^{sa}F(X)) + \delta_0(F(X):X). \\
\end{eqnarray*}
\end{proof}

The unitary case follows from similar considerations:

\begin{corollary} If $X \subset M$ and $F \subset \mathfrak{A}_n$ are $n$ and $m$-tuples where the elements of $X$ are unitaries and $F$ consists of $*$-monomials, then
\begin{eqnarray*} \delta_0(X) & \leq & \text{Nullity}(D^{u}F(X)) + \delta_0(F(X):X) \\
                                                & =  & n - \text{Rank}(D^{u}F(X)) + \delta_0(F(X):X). \\
\end{eqnarray*}
\end{corollary}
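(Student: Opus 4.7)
The plan is to mirror the strategy used in Corollary 4.9 for the self-adjoint case, but now exploiting Proposition 3.27 in place of Proposition 3.17 to convert $D^s$-nullity into $D^u$-nullity. The essential idea is to append the defining unitary relations to $F$ so that the resulting combined tuple has an $S$-derivative whose nullity equals that of $D^uF(X)$, while its evaluation at $X$ carries no new information beyond $F(X)$.

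First I would set $L = \{X_1^*X_1 - I, \ldots, X_n^*X_n - I\} \subset \mathfrak{A}_n$ and form the combined $(m+n)$-tuple
\[
H \;=\; \langle f_i - I \rangle_{i=1}^m \;\cup\; L \;\subset\; \mathfrak{A}_n,
\]
which is precisely the tuple appearing in Proposition 3.27. Applying Corollary 4.8 to $H$ gives
\[
\delta_0(X) \;\leq\; \text{Nullity}(D^sH(X)) \;+\; \delta_0(H(X):X),
\]
and Proposition 3.27 identifies the first term on the right with $\text{Nullity}(D^uF(X))$. Unitarity of each $x_j$ forces $x_j^*x_j - I = 0$, so the last $n$ entries of $H(X)$ vanish, and the first $m$ entries $f_i(X) - I$ clearly generate the same complex $*$-subalgebra of $M$ as the $f_i(X)$; hence the tuples $H(X)$ and $F(X)$ generate the same $*$-algebra. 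Applying (the relative analogue of) Proposition 2.5 then yields $\delta_0(H(X):X) = \delta_0(F(X):X)$, which combined with the preceding inequality gives the first claimed bound. The equality with $n - \text{Rank}(D^uF(X))$ is just the rank-nullity identity recorded in Section 2.6, applied to $D^uF(X) \in M_{m\times n}(M\otimes M^{op})$ together with the range $[0,n]$ observation of Remark 3.23.

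I expect the only nontrivial point in the above to be the relative invariance $\delta_0(H(X):X) = \delta_0(F(X):X)$: the absolute version is Proposition 2.5, and what is needed here is that the same Lipschitz comparison between microstate spaces in the presence of $X$ remains valid when one passes between $(f_1(X)-I,\ldots,f_m(X)-I,0,\ldots,0)$ and $F(X)$. This step is entirely parallel to the final line of the proof of Corollary 4.9, where the identification $\delta_0(F(X) \cup L(X):X) = \delta_0(F(X):X)$ is used on the grounds that $L(X)=0$; the only additional observation needed here is that appending (or removing) components that lie in $\mathbb{C}\cdot I$ does not affect the $\epsilon$-covering numbers of the relevant projected microstate spaces. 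With this in hand the corollary is immediate.
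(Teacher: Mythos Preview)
Your proof is correct and follows essentially the same approach as the paper's own proof: append the unitary relations, invoke Proposition 3.27 to identify the nullities, apply Corollary 4.8, and then simplify the relative $\delta_0$ term. The only cosmetic difference is that you take $H=\langle f_i-I\rangle_{i=1}^m\cup L$ exactly as in the statement of Proposition 3.27, whereas the paper's proof writes $H=F\cup G$; since $D^s(f_i-I)=D^s f_i$ the nullity step is unaffected, and your final reduction $\delta_0(H(X):X)=\delta_0(F(X):X)$ via the translation $f_i(X)\mapsto f_i(X)-I$ and removal of zero components is the same routine step the paper uses (there it only needs to drop the zero entries $G(X)=0$).
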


\begin{proof} Define $G =\{X_1^*X_1-I,\ldots, X_n^* X_n-I\} \subset \mathfrak{A}_n$.  If $H = F \cup G$, then by Proposition 3.27, $\text{Nullity}(D^sH(X)) = \text{Nullity}(D^uF(X))$.  Also, $G(X) = \{0,\ldots, 0\}$.  Thus, by Corollary 4.8,

\begin{eqnarray*} \delta_0(X) & \leq & \text{Nullity}(D^sH(X)) + \delta_0(H(X):X) \\
& = & \text{Nullity}(D^uF(X)) + \delta_0(F(X) \cup G(X):X) \\
& = & \text{Nullity}(D^uF(X)) + \delta_0(F(X):X) \\
& = & n - \text{Rank}(D^uF(X)) + \delta_0(F(X):X). \\
\end{eqnarray*}
\end{proof}

\subsection{Examples}

The first example is reassuring but not particularly enlightening:

\begin{example} Suppose $X = \{x_1, x_2\}$ consists of commuting self-adjoint elements, $x_1$ has no eigenvalues, and $F = \{f\}$ where $f = X_2 X_1 - X_1^* X_2^* \in \mathfrak{A}_2$.  Clearly $F(X) = (f(X)) = 0$.  By definition,
\begin{eqnarray*} D^{sa}F(X) & = & 
\begin{bmatrix}
(\partial_{1}^{sa} f)(X) & (\partial_{2}^{sa} f)(X) \\
\end{bmatrix} \\ & = & 
\begin{bmatrix}
x_2 \otimes I - I \otimes x_2 & I \otimes x_1 - x_1 \otimes I \\
\end{bmatrix} \\
& \in & M_{1\times 2}(M \otimes M^{op}). \\
\end{eqnarray*}
Since $x_1$ is self-adjoint and has no eigenvalues, $I \otimes x_1 - x_1 \otimes I$ is injective.  To see this observe that the moments of $I \otimes x_1 - x_1 \otimes I$ are the moments of the convolution of the spectral distribution of $x_1$ with that of $-x_1$; a convolution of non-atomic measures being nonatomic, the spectral distribution of $I \otimes x_1 - x_1 \otimes I$ is non-atomic, and in particular, the singleton set $\{0\}$ has $0$ measure. By faithfulness of the trace and the spectral theorem $I \otimes x_1 -x_1 \otimes I \in M \otimes M^{op}$ is injective and thus has dense range.  In turn, this implies that $D^{sa}F(X)$ has dense range, so that $\text{Rank}(D^{sa}F(X)) =1$.  Thus $\text{Nullity}(D^{sa}F(X)) =2-1=1$ and as one would expect from Corollary 4.9
\begin{eqnarray*} \delta_0(X) & \leq & \text{Nullity}(D^{sa}F(X)) + \delta_0(F(X):X) \\
                                                & \leq & 1 + 0 \\
                                                & = & 1.\\
\end{eqnarray*}
\end{example}

Here is a slightly more complex example: 

\begin{example} Suppose $X = \{x_1, \ldots, x_n\}$ consists of unitaries in $M$ and $F = \{f\}$ where $f = A X_1^{s_1} B X_1^{s_2} \in \mathfrak{A}_n$, $A$ and $B$ are $*$-monomials in $X_2,\ldots, X_n$, and either $s_1=s_2=1$ or $s_1 = 1$ and $s_2=*$.   Set $a = A(X)$ and $b = B(X)$; $a$ and $b$ are $*$-monomials in $x_2,\ldots, x_n$.   Assume that $bx_1$ has no eigenvalues when $s_1=s_2=1$ or that $b$ has no eigenvalues when $s_1=1, s_2=*$, and that in either case, $f(X)=I$.
\begin{eqnarray*} D^{u}F(X) & = & 
\begin{bmatrix}
(\partial_{1}^{u} f)(X) & \cdots &  (\partial_{n}^{u} f)(X) \\
\end{bmatrix} \\
& \in & M_{1\times n}(M \otimes M^{op}). \\
\end{eqnarray*}
Using the fact that $s_i \in \{1,*\}$, the unitary calculus rule in Remark 3.24 yields 
\begin{eqnarray*} \partial_1^uf(X) & = & \begin{cases} x_1^*b^* \otimes b x_1 + I_M \otimes I_{M^{op}} &\mbox{if } s_1 = s_2=1 \\
(x_1 \otimes x_1^*) (b^* \otimes b - I_M \otimes I_{M^{op}}) & \mbox{if } s_1=1, s_2=*\\
\end{cases}\\                      
\end{eqnarray*}
The assumption on the absence of eigenvalues shows that in either case above, the tensor product operators are injective.  This is because as in the self-adjoint case of Example 4.1 one can identify the tensor product operators as a product of multiplication operators on two, independent, nonatomic probability spaces with supports in the unit circle and argue accordingly.  $\partial_1^uf(X) \in M \otimes M^{op}$ is injective and thus has dense range.  In turn, this implies that $\text{Rank}(D^uF(X))$ has dense range so that $\text{Rank}(D^uF(X)) =1$.  Thus, $\text{Nullity}(D^uF(X)) =n -1$.  Since $f(X)=I$ Corollary 4.10 implies that 
\begin{eqnarray*} \delta_0(X) & \leq & \text{Nullity}(D^{u}F(X)) + \delta_0(F(X):X) \\
                                                & \leq & 1 + 0 \\
                                                & = & n-1.\\
\end{eqnarray*}
   
The absence of eigenvalues condition (diffuseness) occurs naturally.  Consider for instance, the canonical unitaries associated to the generators $a,b$ for the group $\Gamma$ generated by the single relation $a^mb^{s_1}a^nb^{s_2}=e$ where $m, n \in \mathbb Z$ and $s_1,s_2 \in \{1,-1\}$.  By the above computation $\delta_0(\Gamma) \leq 1$.  When $s_1=1$ and $s_2 =-1$ these groups are the Baumslag-Solitar groups, $\Gamma_{m,n}$.   In this case $b$ is in the normalizer of $a$ and the inequality reduces to a case first obtained by \cite{v2} (see also the strengthened generalizations in \cite{gs}).  Both sets of authors actually show that the group von Neumann algebras are strongly $1$-bounded.  I will find a different proof of this in terms of the spectral distribution of the derivative.   Note that the isomorphism classes of these group von Neumann algebras has been studied and partially classified in \cite{ns}.
\end{example}

\begin{example}  Suppose $X = \{x_1,\ldots, x_n\}$ consists of self-adjoint elements and $F =\{f_1,\ldots, f_p\}$ are self-adjoint.  Assume further that $\delta_0(X)=n$, the maximum possible value.   By Corollary 4.9,
\begin{eqnarray*}
n & = & \delta_0(X) \\
   & \leq & n - \text{Rank}(D^{sa}F(X)) + \delta_0(F(X):X) \\
   & \leq & n - \text{Rank}(D^{sa}F(X)) + \delta_0(f(X)), \\
\end{eqnarray*}
whence, $\text{Rank}(D^{sa}F(X)) \leq \delta_0(f(X))$.  Thus, computing the rank of $D^{sa}F(X)$ gives a lower bound on $\delta_0(f(X))$. 

When $p=1$ $F$ consists of a single selfadjoint element $f$.  By \cite{v1}, $\delta_0(f(X))=1$ iff $f(X)$ has no eigenvalues.  So showing that $\text{Rank}(D^{sa}F(X)) =1$ guarantees that $F(X)$ has no eigenvalues.  This example is connected to the rank/nullity computation of matrices with operator-valued entries arising from freely independent self-adjoint/unitary tuples (\cite{ss}) as well as the free entropy dimension computations for a single self-adjoint polynomial under maximal free entropy dimension assumptions on the tuple (\cite{msm}).  
\end{example}

\begin{example} Suppose $X = \{x_1,\ldots, x_n\} \subset M$ and $F=\{f_1,\ldots, f_{n-1}\} \subset \mathfrak{A}_n$ consist of self-adjoint elements such that for each $1 \leq i \leq n-1$, $\partial^{sa}_i f_i(X) \in M \otimes M^{op}$ has dense range.  It is easily seen that $D^{sa}F(X)$ is the upper triangular $(n-1) \times n$ matrix 

\begin{eqnarray*}
\begin{bmatrix}
(\partial_{1}^{sa} f_1)(X) & * & \cdots & * & (\partial_{n}^{sa} f_1)(X) \\ 0 & (\partial_{2}^{sa} f_2)(X) &  \cdots & & \vdots \\ \vdots & \ddots & \ddots & & \vdots \\
0 & \cdots & 0 & (\partial_{n}^{sa} f_{n-1})(X) & (\partial_{n}^{sa} f_{n-1})(X) \\
\end{bmatrix} \in M_{(n-1)\times n}(M \otimes M^{op}).
\end{eqnarray*}

\noindent By Proposition 2.12, $\text{Rank}(D^{sa}F(X))= n-1$ so that from Corollary 4.9, $\delta_0(X) \leq 1 + \delta_0(F(X):X)$. 

\cite{v5} studied finite sequences of Haar unitaries $u_1, \ldots, u_n$ satisfying the pairwise commutation relation $u_i u_{i+1} = u_{i+1} u_i$ for $1 \leq i \leq n-1$.  It was shown that $\delta_0(u_1,\ldots, u_n) \leq 1$.  This was subsequently generalized and strengthened in \cite{gs}.  The example here provides another way to see how pairwise commutativity of generators affects free entropy dimension.  

Indeed, suppose $x_1, \ldots x_n$ are self-adjoint, diffuse elements in a tracial von Neumann algebra and $x_i x_{i+1} = x_{i+1} x_i$ for $1 \leq i \leq n-1$.    Set $f_i = X_i X_{i+1} - X^*_{i+1} X^*_i$ for $1\leq i \leq n-1$ and $F = \{f_1, \ldots, f_{n-1}\}$, $X = \{x_1, \ldots, x_n\}$.  For each $i$, $(\partial_i^{sa} f_i)(X) = I \otimes x_{i+1} - x_{i+1}\otimes I \in M \otimes M^{op}$.   As observed in Example 4.1, each of these operators has dense range, regarded as operators on $L^2(M \otimes M^{op})$.  $F(X) =0$ so that $\delta_0(F(X):X)=0$.  Applying the preceding paragraph yields $\delta_0(X) \leq 1$.  Under the additional assumption of embeddability of $X$ into an ultraproduct of the hyperfinite $\mathrm{II}_1$-factor, $\delta_0(X)=1$ by \cite{j0}.  As in Example 4.2, I'll show that all of these examples are strongly $1$-bounded by studying the spectral distribution of the derivative of $F$.
\end{example}

\begin{example} Suppose $\Gamma$ is a one-relator group on $n$ generators such that its relator is not a proper power (the relator cannot be written as a proper power of another element).  The relator yields a $*$-monomial $w$ on $n$-indeterminates such that when applied to the canonical $n$-tuple of group unitaries $X$, satisfies the property that $\text{Rank}(D^uw(X))=1$.  This follows from combining \cite{b}, \cite{h}, and \cite{l}; see Proposition 7.5 for a full proof.  It follows from Corollary 4.10 that $\delta_0(\Gamma) = \delta_0(X) \leq n -1$.  This is to be expected.  Indeed, denoting by $\delta^*$ the nonmicrostates free entropy in \cite{v4}, \cite{cs} combined with \cite{dl} yields 
\begin{eqnarray*}
\delta_0(\Gamma) & \leq & \delta^*(\Gamma) \\
                              & \leq & \beta_1^{(2)}(\Gamma) +1\\
                              & = & (n-2) + 1 \\
                              & = & n-1.\\  
\end{eqnarray*}
This result and in fact a stronger entropy inequality will be stated and proven in Section 7.
\end{example}

\section{Iterating Spectral Splits I: Heuristics and Technical Estimates}

In this section I'll discuss how to upgrade the results of the preceding section to obtain finiteness results for free packing and Hausdorff entropy and establish the necessary technical machinery.   These results will be applied in later sections to provide new examples of strongly $1$-bounded von Neumann algebras arising from one relator discrete groups.   

Establishing entropy upper bounds in this context is considerably more difficult than establishing the free entropy dimension bounds of the previous section.  Before getting into the details I'll give an overview of how they are related (the impatient, nuts-and-bolts reader can skip to 5.2 for the beginning of the proofs).  After this informal discussion, I'll build tools for the next section.  They come in three parts: 1) Euclidean estimates; 2) elementary von Neumann algebra approximations; 3) estimates for polynomials of matrices.  
\subsection{Overview}
I will review in broad terms the previous section's main free entropy dimension argument and explain how it fails to provide an entropy bound.  Then I'll discuss how to overcome this.  In what follows $DF(X)$ denotes the derivative of $F$ at $X$ in a general informal sense, i.e., the distinction between $D^s$ and the normal derivative will be blurred.  

Roughly, the proof of Theorem 4.7 goes like this.  Suppose $F(X) =0$.   By definition, $\Gamma_R(X;m,k,\gamma)$ is contained in the ball of operator norm radius $R$ in $(M_k(\mathbb C))^n$ so one can cover $\Gamma_R(X;m,k,\gamma)$ by no more than $(2R/r)^{2nk^2}$ balls of $\emph{operator norm}$ radius $r <1$ with centers in $\Gamma_R(X;m,k,\gamma)$.  The intersection of $\Gamma_R(X;m,k,\gamma)$ with each of these balls of operator norm radius $r$ has the property that $DF$ varies (in $\emph{operator norm}$ as a real Hilbert space operator from $(M_k(\mathbb C))^n$ into $(M_k(\mathbb C))^d$) by no more than $r$ times a fixed constant determined by $F$, $R$, and the submultiplicativity of the operator norm.  This uniform bound on the derivative combined with orthogonality estimates and a spectral splitting parameter $\beta$ allows one to dominate the $\|\cdot\|_2$-metric entropy on each $r$ operator norm neighborhood by the entropy of an $\|\cdot\|_2$ $r$-ball in the kernel of the differential plus a $t_1k^2$-approximate subspace.  Here $t_1$ depends on how small $r$ is, which in turn is driven by the trace of the spectral projection $1_{(0,\beta)}(|DF(X)|)$ .  An $\epsilon$-covering (w.r.t. $\|\cdot\|_2$) bound for this approximate "kernel" $r$-ball multiplied by the initial $r$-covering bound gives a bound for  $K_{\epsilon}(\Gamma_R(X;m,k,\gamma))$:

\begin{eqnarray*}   \left(\frac{2R}{r} \right)^{2nk^2} \cdot \left(\frac{r}{\epsilon} \right)^{(\text{Nullity}(DF(X)) + t_1) k^2}.
\end{eqnarray*}

\noindent The microstates limiting process extracts the normalized exponent as a bound - $\text{Nullity}(DF(X)) +t_1$ and this will be an upper bound for the free entropy dimension.  Letting $t_1 \rightarrow 0$ shows that the free entropy dimension is dominated by $\text{Nullity}(DF(X))$.  Unfortunately the bound, as is, fails to provide a free packing entropy estimate with growth exponent $\text{Nullity}(DF(X))$ because of the residual error term $t_1$.  This process was called \textbf{splitting the spectrum} in the introduction.

In the above argument one covers the microstate spaces by $r$ operator norm balls, and then covers the intersection of the microstate space with each of these $r$-balls by $\epsilon$-balls taken from perturbed copies of $\ker(DF(X))$.  Now repeat this process on each of these local subspace $\epsilon$-balls, intersecting them with the microstate space, and then covering them by appropriate balls of radius $\epsilon^2$ via some suitable Euclidean estimate (e.g. something like Lemma 4.2) again.  The advantage of zooming in at a further $\epsilon$-scale is that the differential of the polynomial tuple moves less (less curvature), and yields a smaller approximate subspace perturbation, say with error $t_2 < t_1$ that is even closer to $\text{Nullity}(DF(X))$.  One then repeats this process on the subspace balls of radius $\epsilon^3$, picking up an even better approximating subspace perturbation.  Iterating this spectral splitting process $p$ times and keeping $\epsilon$ $\emph{fixed}$ throughout, a back-of-the-envelope computation shows

\begin{eqnarray*} K_{\epsilon^p}(\Gamma_R(X;m,k,\gamma)) &\leq &  \left(\frac{2R}{r} \right)^{2nk^2} \left(\frac{r}{\epsilon} \right)^{(\text{Nullity}(DF(X)) + t_1) k^2}  \left(\frac{\epsilon}{\epsilon^2} \right)^{(\text{Nullity}(DF(X)) + t_2) k^2} \cdot \\
 && \left(\frac{\epsilon^2}{\epsilon^3} \right)^{(\text{Nullity}(DF(X)) + t_3) k^2}  \cdots \left(\frac{\epsilon^{p-1}}{\epsilon^p} \right)^{(\text{Nullity}(DF(X)) + t_p) k^2} \\
                             & \leq & \left(\frac{2R}{r} \right)^{2nk^2} \left(\frac{1}{\epsilon^p} \right)^{[\text{Nullity}(DF(X))]k^2} \left(\frac{1}{\epsilon}\right)^{(t_1 + \cdots +t_p)k^2}.
\end{eqnarray*}

\noindent The last expression will yield a finite packing entropy bound provided that the exponent term in the last line, $t_1 + \cdots + t_p$, can be uniformly bounded for all $p$.  With some work this can be guaranteed by imposing decay conditions on the spectral distribution of $|DF(X)|$ near $0$.  Somewhat surprisingly, this spectral decay condition is equivalent to $DF(X)$ having finite Fuglede-Kadison-L{\"u}ck Determinant.  This problem where one tries to bound a space which has the same local structure at every scale (in this case on the scale of powers of a fixed $\epsilon$) is similar to the situation of bounding the Hausdorff measure of a self-similar fractal.

There are, however, some strong assumptions made in the iterative spectral splitting argument which must be addressed.  Call a covering inequality \textit{asymptotically coarse} if its dominating term explicitly contains constants of the form $C^{\alpha k^2}$ where $C >1$ and $\alpha >0$.  Asymptotically coarse inequalities involving constants $C$ uniformly bounded below from $1$ can be made a fixed, finite number of times without destroying the qualitative nature of an upper bound for entropy.  They are lethal in the context above.  For example, imagine that each of the iterations of the computation above involved a covering bound involving an additional factor of $2^{k^2}$, a seemingly benign bound that appears for example in the comparison of the $\|\cdot\|_{\infty}$ and $\|\cdot \|_2$ norms in Section 2.4 or in the single spectral split argument for the dimension above.  After $p$-iterations one would end up with a constant of the form $2^{p k^2}$ and after taking appropriate limits one has an additive factor of $p \log 2$ which converges to $\infty$ as $p \rightarrow \infty$.  This would leave a vacuous upper bound of $\infty$.   In the estimate above there appear to be no asymptotically coarse inequalities.  However, one should expect these in two places:1) scaling estimates for balls; 2) norm switching.       

1) refers to finding sharp bounds for coverings of the unit ball by $\epsilon$-balls in Euclidean space.  In the above I assumed that one could find such a cover with no more than $(1/\epsilon)^n$-balls as opposed to say $(C/\epsilon)^n$.  This appears in the ratios of powers of $\epsilon$ assumed in the first pass computation above.  While false in this strict form, it is asymptotically true (\cite{rogers}). 

2) is both more subtle and troublesome.  By norm switching I'm referring to the process of relying on different norms in order to make different estimates.  Norm switching occurred in the dimension argument.  There I used properties specific to both the $L^{\infty}$ and the $L^2$-norm.    While one can account for entropy changes when moving from one norm to the other with St. Raymond's volume computations \cite{sr}, the error terms are asymptotically coarse.  

I'll deal with 2) by using the $L^2$-norm metric exclusively for covering estimates.  I'll use Chebyshev to choose 'good' projections upon which qualitative $L^{\infty}$ estimates do hold, estimate the traces of the 'bad' projections upon which they fail, then bound the covering numbers of the 'bad' projections, and 'transfer' their entropy into the (assumed) geometrically decaying spectrum of the derivative, $DF(X)$.   Exploiting $L^2$ estimates for the explicit form of the derivative of the polynomial function $F$ is crucial.  The residual set of 'bad' projections on which the uniform $L^{\infty}$ bounds fail is so small from a dimensional perspective that one can control it without introducing aymptotically coarse estimates.  This division of the operators into a 'good' part where $L^{\infty}$-bounds are available and 'bad' parts where they fail but for which an $L^2$-estimate is available is somewhat reminiscent of the proof of the Calderon-Zygmund decomposition. 

\subsection{Bindings and Fringes}

The results of this subsection will be phrased in the context of finite dimensional, real Hilbert spaces.  Throughout denote by $V$ and $W$ two finite dimensional, real Hilbert spaces, $L(V, W)$ the set of real linear operators from $V$ into $W$, and $P(V)$ the subset of $L(V,V)$ consisting of orthogonal projections.  

The key example to keep in mind is when $V$ and $W$ are of the form $(M_k(\mathbb C))^n$ for some $n$, and the real inner product is the one generated by the tracial state: $\langle \xi, \eta \rangle = \sum_{i=1}^n Re(tr_k(\eta_i^*\xi_i))$ where $\xi = (\xi_1,\ldots, \xi_n)$ and $\eta = (\eta_1,\ldots, \eta_n)$.  Notice that the real inner product norm here coincides with the standard complex inner product norm.  In this context, I want $\epsilon$-entropy estimates for level sets of a $*$-polynomial function restricted to a ball $B$ of radius $\rho$ computed w.r.t. the inner product.  It is impossible to get a global, dimension-free (ones which don't refer to the dimension of the ambient inner product space) Lipschitz bound for $f$ on $B$ since multiplication fails to be continuous w.r.t. the inner product ($L^2$)-norm.  However, such dimension-free estimates will work on almost the entire space.  Clarifying what 'almost' means is the goal of this subsection and is expressed through the concepts of a \textbf{binding} and its \textbf{fringe}. 

This short subsection will define bindings and fringes.  Their application to $*$-polynomials via Chebyshev's inequality will follow in the two subsequent sections.

Suppose $\Lambda \subset L(V, W)$ and fix a linear operator $T \in L(V, W)$.

\begin{definition} If $\epsilon >0$, an $\epsilon$-binding of $\Lambda$ focused on $T$ is a map $\Theta:\Lambda \rightarrow P(V)$ such that $\|(S-T)\Theta(S)\| < \epsilon$.  
\end{definition}


\begin{definition} Suppose $\Theta$ is an $\epsilon$-binding of $\Lambda$ focused on $T$.  The fringe of $\Theta$ on a set $K \subset V$ is 
\begin{eqnarray*}
\mathcal F(\Theta, K) =  \{(T - S)\Theta(S)^{\bot} \xi: \xi \in K, S \in \Lambda\} \subset W.
\end{eqnarray*}
If $r >0$, then $\mathcal F(\Theta, r)= \mathcal F(\Theta, B)$ where $B$ is the closed ball of radius $r$ centered at the origin.
\end{definition}

\begin{remark} Observe that if $K$ is symmetric, i.e., $x \in K$ iff $-x \in K$, then, so is $\mathcal F(\Theta, K)$ by linearity.  Balls being symmetric, $F(\Theta, r) = - F(\Theta, r)$ for any $r >0$.   Observe also that if $\Theta$ is an $\epsilon$-binding of $\Lambda$ focused on $T$ and $\Lambda_0 \subset \Lambda$, then $\Theta$ induces by restriction, an $\epsilon$-binding $\Theta_0$ of $\Lambda_0$ focused on $T$.  It is easy to see that for any set $K \subset V$, $\mathcal F(\Theta_0,K) \subset \mathcal F(\Theta, K)$.
\end{remark}

\begin{definition} If $E \subset K \subset V$ with $K$ an open, convex set, and $f: K \rightarrow W$ is a $C^1$-function, then for any $x, y \in K$ define the distance operator from $x$ to $y$ by
\begin{eqnarray*}
T_{x,y} = \int_0^1 Df(x+t(y-x)) \, dt \in L(V, W).
\end{eqnarray*}
The set of distance operators for $E$ with respect to $f$ is 
\begin{eqnarray*}
\mathcal D(f, E) = \{T_{x,y}: x, y \in E\} \subset L(V, W).
\end{eqnarray*}
\end{definition}

Recall from the mean value theorem (Section 2.5) that $T_{x,y}(x-y) = f(x)-f(y)$ which justifies the term, 'distance operator'.  

\begin{remark} Notice that if $E_1 \subset E_2$, then by definition, $\mathcal D(f, E_1) \subset \mathcal D(f, E_2)$.  In particular, if $\Theta$ is an $\epsilon$-binding of $\mathcal D(f, E_2)$ on an operator $T$, then by Remark 5.3 $\Theta$ induces by restriction an $\epsilon$-binding of $\mathcal D(f, E_1)$ on $T$.
\end{remark}

The following lemma says that the expansiveness of $f$ can be controlled locally by the fringe of an $\epsilon$-binding of the set of distance operators focused on a single derivative.

\begin{lemma} Suppose $K \subset V$ is an open, convex set, $f:K \rightarrow W$ is a $C^1$-function, $x_0 \in K$, and $\Lambda = \mathcal D(f, K)$.  If $\Theta$ is an $\epsilon$-binding of $\Lambda$ focused on $Df(x_0)$, then for each $x, y \in K$ there exists a corresponding operator $A: V \rightarrow W$ such that $\|A\| < \epsilon$ and 
\begin{eqnarray*} f(x) - f(y) - Df(x_0)(x-y) -A(x-y) \in \mathcal F(\Theta, \|x-y\|).
\end{eqnarray*}
\end{lemma}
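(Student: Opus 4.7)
The plan is a direct unpacking of the integral mean value theorem followed by the orthogonal splitting supplied by the binding projection at the distance operator $T_{x,y}$.

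First, I would set $S = T_{x,y} \in \mathcal{D}(f, K) = \Lambda$; this is legitimate because $K$ is convex and so contains the entire segment joining $x$ and $y$. By the integral form of the mean value theorem recalled in Section 2.5,
\begin{eqnarray*}
f(x) - f(y) & = & S(x - y).
\end{eqnarray*}
Writing $P = \Theta(S)$ and decomposing via $I = P + P^{\perp}$ gives
\begin{eqnarray*}
f(x) - f(y) - Df(x_0)(x-y) & = & (S - Df(x_0))(x-y) \\
& = & (S - Df(x_0))\Theta(S)(x-y) \; - \; (Df(x_0) - S)\Theta(S)^{\perp}(x-y).
\end{eqnarray*}

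Next I would identify the two pieces with the two conclusions of the lemma. Define $A = (S - Df(x_0))\Theta(S) \in L(V,W)$. The $\epsilon$-binding hypothesis is precisely the statement $\|A\| < \epsilon$, and by construction $A(x-y)$ equals the first summand above. For the second summand, the vector $(Df(x_0) - S)\Theta(S)^{\perp}(x-y)$ has exactly the form $(T - S')\Theta(S')^{\perp}\xi$ of Definition 5.2 with focal operator $T = Df(x_0)$, distance operator $S' = S \in \Lambda$, and $\xi = x-y$ lying in the closed ball of radius $\|x-y\|$; hence it belongs to $\mathcal{F}(\Theta, \|x-y\|)$. The symmetry of the fringe on a ball noted in Remark 5.3 then places its negative in the same set. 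Combining the two identifications yields
\begin{eqnarray*}
f(x) - f(y) - Df(x_0)(x-y) - A(x-y) & \in & \mathcal{F}(\Theta, \|x-y\|),
\end{eqnarray*}
which is the claim.

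The argument is essentially an assembly of definitions, and I do not anticipate any genuine obstacle. The one place requiring care is the sign bookkeeping when passing from $-(Df(x_0) - S)\Theta(S)^{\perp}(x-y)$ to an honest element of the fringe, which is handled cleanly by the symmetry of $\mathcal{F}(\Theta, r)$ on symmetric sets recorded in Remark 5.3.
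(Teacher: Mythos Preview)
Your proof is correct and follows essentially the same route as the paper's: set $S=T_{x,y}$, apply the integral mean value theorem, split via $\Theta(S)+\Theta(S)^{\bot}$, and take $A=(S-Df(x_0))\Theta(S)$. The only cosmetic difference is that the paper writes the residual term directly as $(S-Df(x_0))\Theta(S)^{\bot}(x-y)$ and asserts fringe membership without comment, whereas you explicitly invoke the symmetry of $\mathcal F(\Theta,\|x-y\|)$ from Remark~5.3 to handle the sign---a point the paper leaves implicit.
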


\begin{proof}  Suppose $x, y \in K$.  Set $S =\int_0^1 Df(x + t(y-x)) \, dt$.  $S \in \Lambda$ so by definition of an $\epsilon$-binding, $\|(S-Df(x_0))\Theta(S)\| < \epsilon$.   By the mean value theorem,
\begin{eqnarray*} f(x) - f(y) & = & S(x-y) \\
                                            & = & Df(x_0)(x-y) + (S-Df(x_0))\Theta(S)(x-y)  + (S- Df(x_0))\Theta(S)^{\bot}(x-y) \\   
\end{eqnarray*}
\noindent Regrouping terms gives
\begin{eqnarray*} f(x) - f(y) - Df(x_0)(x-y) - (S-Df(x_0))\Theta(S)(x-y) & = & (S- Df(x_0))\Theta(S)^{\bot}(x-y)\\
                                                                                                            & \in & \mathcal F(\Theta, \|x-y\|).\\
\end{eqnarray*}
\noindent Set $A = (S-Df(x_0))\Theta(S)$.   By definition, $\|A\| < \epsilon$ and the above completes the proof.
\end{proof}

\begin{lemma} Suppose $x_0 \in E \subset B \subset V$ with $B$ an open ball of radius $\rho >0$, $f:B \rightarrow W$ is a $C^1$-function, and $\Lambda = \mathcal D(f, B)$.  If for all $x \in E$, $\|f(x)\| <\gamma$ and $\Theta$ is an $\rho_1$-binding of $\Lambda$ focused on $Df(x_0)$, then there exists an orthogonal operator $U$ such that for any spectral projection $Q$ of $|Df(x_0)|$,

\begin{eqnarray*} Df(x_0)Q(E- x_0) \subset UQU^* \left[ \mathcal N_{2 \rho \rho_1 + 2 \gamma}(\mathcal F(\Theta, 2 \rho)) \right ].
\end{eqnarray*} 
\end{lemma}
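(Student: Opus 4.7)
The plan is to combine the polar decomposition of $Df(x_0)$ with the error control from Lemma 5.6 (applied with $y = x_0$). The key structural fact is that if $Df(x_0) = U |Df(x_0)|$ is the polar decomposition, with $U$ the partial isometry carried by its initial space $\overline{\mathrm{range}(|Df(x_0)|)}$ onto $\overline{\mathrm{range}(Df(x_0))}$ (which can be extended to an orthogonal operator on a suitable ambient space), then every spectral projection $Q$ of $|Df(x_0)|$ commutes with $|Df(x_0)|$ and satisfies the intertwining identity
\begin{equation*}
Df(x_0)\,Q \;=\; U Q U^*\, Df(x_0).
\end{equation*}
The point is that $U Q U^*$ is genuinely an orthogonal projection on $W$ for any spectral $Q$ (the possible $\{0\}$-part of $Q$ is killed by $U$, and the rest lies under $U^*U$), and the identity is obtained from $Df(x_0) Q = U|Df(x_0)|Q = UQ|Df(x_0)| = UQU^* U |Df(x_0)|$.

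Now, for $x \in E$ I would invoke Lemma 5.6 with $y = x_0$ (legitimate because $\Theta$ is an $\rho_1$-binding of $\Lambda = \mathcal D(f,B)$ focused on $Df(x_0)$). This produces an operator $A$ with $\|A\| < \rho_1$ and a fringe element $g \in \mathcal F(\Theta, \|x-x_0\|)$ satisfying
\begin{equation*}
Df(x_0)(x-x_0) \;=\; \bigl(f(x)-f(x_0)\bigr) - A(x-x_0) - g.
\end{equation*}
Since $x,x_0 \in B$ (a ball of radius $\rho$), $\|x-x_0\| < 2\rho$, so $\mathcal F(\Theta, \|x-x_0\|) \subset \mathcal F(\Theta, 2\rho)$ by monotonicity of the fringe in its second argument (Remark 5.3/5.5) and $\|A(x-x_0)\| < 2\rho\rho_1$. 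Combined with $\|f(x)\|,\|f(x_0)\| < \gamma$, this puts the error term of size at most $2\gamma + 2\rho\rho_1$ around $-g$; using that $\mathcal F(\Theta,2\rho) = -\mathcal F(\Theta,2\rho)$ by symmetry of the ball, I conclude
\begin{equation*}
Df(x_0)(x-x_0) \;\in\; \mathcal N_{2\rho\rho_1 + 2\gamma}\bigl(\mathcal F(\Theta, 2\rho)\bigr).
\end{equation*}

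Applying $UQU^*$ to both sides and invoking the intertwining identity from the first paragraph gives
\begin{equation*}
Df(x_0)\,Q\,(x-x_0) \;=\; UQU^*\,Df(x_0)(x-x_0) \;\in\; UQU^*\bigl[\mathcal N_{2\rho\rho_1 + 2\gamma}\bigl(\mathcal F(\Theta, 2\rho)\bigr)\bigr],
\end{equation*}
which is the desired inclusion once one ranges $x$ over $E$. There is no real obstacle; the only thing to be a little careful about is checking the polar-decomposition identity $Df(x_0)Q = UQU^*\,Df(x_0)$ in the possibly degenerate case where $Q$ has a component on $\ker|Df(x_0)|$, and confirming that the symmetry/monotonicity of the fringe (Remark 5.3) absorbs the signs and the weaker radius estimate $\|x-x_0\| < 2\rho$ that comes from $x,x_0$ both lying in $B$ rather than one being the center.
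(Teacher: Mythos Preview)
Your proposal is correct and follows essentially the same approach as the paper: invoke Lemma 5.6 at the pair $(x,x_0)$ to place $Df(x_0)(x-x_0)$ inside $\mathcal N_{2\rho\rho_1+2\gamma}(\mathcal F(\Theta,2\rho))$, then use the polar decomposition identity $Df(x_0)Q = UQU^*Df(x_0)$ (from $Df(x_0)=U|Df(x_0)|$ and $[Q,|Df(x_0)|]=0$) to transport this inclusion. Your added remarks on the symmetry of the fringe and the possible kernel component of $Q$ make explicit what the paper leaves implicit, but the argument is the same.
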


\begin{proof} By Lemma 5.6, for any $x \in E \subset B$ there exists a corresponding operator $A:V \rightarrow W$ such that $\|A\| < \rho_1$ and $f(x) - f(x_0) - Df(x_0)(x-x_0) - A(x-x_0) \in \mathcal F(\Theta,\|x-x_0\|)$.  Since $x, x_0 \in E$, $\|f(x)\|, \|f(x_0)\| < \gamma$ so that

\begin{eqnarray*} Df(x_0)(x-x_0) & \in & -A(x-x_0) - \mathcal N_{2\gamma}(\mathcal F(\Theta, \|x-x_0\|)) \\
                                                     & \in & \mathcal N_{2 \rho \rho_1 + 2 \gamma}(\mathcal F(\Theta, 2 \rho)).\\
\end{eqnarray*}

\noindent  Applying the polar decomposition shows 

\begin{eqnarray*} Df(x_0)Q(x-x_0) & = & U|Df(x_0)|Q(x-x_0) \\
                                                        & = & U Q|Df(x_0)|(x-x_0) \\
                                                        & = & UQU^* Df(x_0)(x-x_0).\\
                                                        \end{eqnarray*}

\noindent  Putting these two observations together,
\begin{eqnarray*}
Df(x_0)Q(x - x_0) & = & UQU^*Df(x_0)(x-x_0) \\
                             & \in & UQU^* \left[ \mathcal N_{2 \rho \rho_1 + 2 \gamma}(\mathcal F(\Theta, 2 \rho)) \right ]. \\
\end{eqnarray*}
$x \in E$ was arbitrary so I'm done.
\end{proof}

\begin{lemma} Suppose $x_0, E, B, f, \Lambda, \gamma, \Theta, \rho,$ and $\rho_1$ are as in the hypotheses of Lemma 5.7.  For $\alpha >0$ define $Q = 1_{[\alpha, \infty)}(|Df(x_0)|)$.  If $t \in (0,1)$, $\epsilon >0$, and $\beta = \alpha t \epsilon - 4 \rho \rho_1 - 4 \gamma > 0$, then 
\begin{eqnarray*} K_{\epsilon}(E) \leq K_{(1-t^2)^{1/2}\epsilon}(Q^{\bot}(E)) \cdot S_{\beta}(\mathcal F(\Theta, 2\rho)).
\end{eqnarray*}
\end{lemma}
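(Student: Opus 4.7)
The plan is to follow the pattern established by the proof of Lemma 4.2, replacing the Lipschitz control of $f$ there by the fringe decomposition supplied by Lemma 5.6. I would begin by fixing a maximal $\epsilon$-separated subset $\Delta \subseteq E$, which by Proposition 2.1(v) satisfies $K_\epsilon(E) \leq S_\epsilon(E) = \#\Delta$. Next choose a minimal $(1-t^2)^{1/2}\epsilon$-cover $\mathcal{C}$ of $Q^{\bot}(E)$, and partition $\Delta$ through $F_z = \{x \in \Delta : \|Q^{\bot}x - z\| < (1-t^2)^{1/2}\epsilon\}$ for $z \in \mathcal{C}$. Picking $z^{\star} \in \mathcal{C}$ maximizing $\#F_{z^{\star}}$ yields $\#\Delta \leq K_{(1-t^2)^{1/2}\epsilon}(Q^{\bot}(E)) \cdot \#F_{z^{\star}}$, reducing the claim to showing $\#F_{z^{\star}} \leq S_\beta(\mathcal F(\Theta, 2\rho))$.

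For this I would fix a base point $y_0 \in F_{z^{\star}}$ and apply Lemma 5.6 to each pair $(x,y_0)$ with $x \in F_{z^{\star}}$, producing an operator $A_x \in L(V,W)$ with $\|A_x\| < \rho_1$ together with a fringe element
\[
e_x = f(x) - f(y_0) - Df(x_0)(x - y_0) - A_x(x - y_0) \in \mathcal F(\Theta,\|x-y_0\|) \subseteq \mathcal F(\Theta, 2\rho),
\]
where the last inclusion uses $\|x - y_0\| < 2\rho$ since both points lie in the ball $B$. The target assertion then becomes the claim that $x \mapsto e_x$ sends distinct elements of $F_{z^{\star}}$ to points of $\mathcal F(\Theta, 2\rho)$ that are pairwise at distance $\geq \beta$, which immediately gives $\#F_{z^{\star}} \leq S_\beta(\mathcal F(\Theta, 2\rho))$.

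To verify the separation, for distinct $x_1, x_2 \in F_{z^{\star}}$ the identity
\[
e_{x_1} - e_{x_2} = [f(x_1) - f(x_2)] - Df(x_0)(x_1 - x_2) - A_{x_1}(x_1 - y_0) + A_{x_2}(x_2 - y_0)
\]
combined with the bounds $\|f(x_i)\| < \gamma$, $\|A_{x_i}\| < \rho_1$ and $\|x_i - y_0\| < 2\rho$ yields $\|e_{x_1} - e_{x_2}\| \geq \|Df(x_0)(x_1-x_2)\| - 2\gamma - 4\rho\rho_1$. Since $Q$ is a spectral projection of $|Df(x_0)|$ it commutes with $Df(x_0)^{*}Df(x_0)$, so the ranges of $Df(x_0)Q$ and $Df(x_0)Q^{\bot}$ are orthogonal in $W$ and $|Df(x_0)|Q \geq \alpha Q$; thus $\|Df(x_0)(x_1-x_2)\| \geq \alpha\|Q(x_1-x_2)\|$. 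Feeding the cover bound on $\|Q^{\bot}(x_1-x_2)\|$ and the separation $\|x_1-x_2\| \geq \epsilon$ into the Pythagorean identity $\|x_1-x_2\|^2 = \|Q(x_1-x_2)\|^2 + \|Q^{\bot}(x_1-x_2)\|^2$ forces $\|Q(x_1-x_2)\| \geq t\epsilon$, giving $\|e_{x_1} - e_{x_2}\| \geq \alpha t\epsilon - 2\gamma - 4\rho\rho_1 \geq \beta$.

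The main technical point is the constant relating the cover radius to the pair-distance control on $\|Q^{\bot}(x_1-x_2)\|$, where the naive triangle inequality produces an unwanted factor of $2$; this is the same bookkeeping subtlety that appears in Lemma 4.2 and is absorbed either by choosing the base point $y_0$ so that $z^\star = Q^{\bot}y_0$ (so that one only needs to bound $\|Q^{\bot}(x - y_0)\|$ for $x \in F_{z^\star}$) or by a direct numerical adjustment of the cover radius. Everything else is a direct combination of Lemma 5.6, the spectral calculus for $|Df(x_0)|$, and the Proposition 2.1 covering/separating dictionary.
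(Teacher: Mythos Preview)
Your argument has a genuine gap at exactly the point you flag as ``the main technical point.'' For distinct $x_1,x_2\in F_{z^{\star}}$ the triangle inequality only gives $\|Q^{\bot}(x_1-x_2)\|<2(1-t^2)^{1/2}\epsilon$, and feeding this into the Pythagorean identity yields $\|Q(x_1-x_2)\|^2\geq \epsilon^2-4(1-t^2)\epsilon^2=(4t^2-3)\epsilon^2$, which is negative for $t<\sqrt{3}/2$ and never reaches $t^2\epsilon^2$ for $t<1$. Neither of your proposed fixes works: choosing $y_0$ with $Q^{\bot}y_0=z^{\star}$ is irrelevant, since $y_0$ cancels from $e_{x_1}-e_{x_2}$ and what you need is the \emph{pairwise} bound on $\|Q^{\bot}(x_1-x_2)\|$; and ``adjusting the cover radius'' proves a different inequality, not the one stated. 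This matters downstream: in Theorem~6.8 the lemma is applied with $t=k^{-2}$ (so $t\to 0$), precisely the regime where your estimate collapses, and an extra factor of $2$ in the cover radius is one of the ``asymptotically coarse'' constants the paper explicitly warns cannot be tolerated under iteration.

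The paper's proof sidesteps the factor of $2$ entirely by replacing the Lemma~4.2 pigeonhole on a separated set with a direct orthogonal product bound: since $Q$ and $Q^{\bot}$ have orthogonal ranges, combining an $(1-t^2)^{1/2}\epsilon$-cover of $Q^{\bot}(E)$ with a $t\epsilon$-cover of $Q(E)$ gives an $\epsilon$-cover of $E$, so $K_{\epsilon}(E)\leq K_{(1-t^2)^{1/2}\epsilon}(Q^{\bot}(E))\cdot K_{t\epsilon}(Q(E))$. Then $K_{t\epsilon}(Q(E))\leq S_{t\epsilon}(Q(E))\leq S_{\alpha t\epsilon}(Df(x_0)Q(E))$ by expansiveness of $Df(x_0)$ on the range of $Q$, and Lemma~5.7 places $Df(x_0)Q(E-x_0)$ inside $UQU^{*}[\mathcal N_{2\rho\rho_1+2\gamma}(\mathcal F(\Theta,2\rho))]$, from which Proposition~2.1(iv) gives $S_{\alpha t\epsilon}(Df(x_0)Q(E))\leq S_{\beta}(\mathcal F(\Theta,2\rho))$. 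The key difference is that the paper works with $Q(E)$ as a set (no pairwise estimates inside a fiber), so the factor of $2$ never enters.
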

\begin{proof}  By orthogonality and Proposition 2.1(v)
\begin{eqnarray*}
K_{\epsilon}(E) & \leq & K_{(1-t^2)^{1/2}\epsilon}(Q^{\bot}(E)) \cdot K_{t\epsilon}(Q(E)) \\
                         & \leq & K_{(1-t^2)^{1/2}\epsilon}(Q^{\bot}(E)) \cdot S_{t\epsilon}(Q(E)). \\
\end{eqnarray*}
\noindent   By the spectral theory, for any $\xi, \eta$ in the range of $Q$, $\|Df(x_0)(\xi)  -Df(x_0)(\eta)\| \geq \alpha \|\xi - \eta\|$.  It follows from this that $S_{t\epsilon}(Q(E)) \leq S_{\alpha t \epsilon}(Df(x_0)Q(E))$.  To complete the proof it suffices to show that $S_{\alpha t \epsilon}(Df(x_0)Q(E)) \leq S_{\beta}(\mathcal F(\Theta, 2\rho))$.

By Lemma 5.7, 
\begin{eqnarray*}
Df(x_0)(Q(E)) - Df(x_0)(Q(x_0)) & = & Df(x_0)(Q(E - x_0)) \\ & = & UQU^* \left[ \mathcal N_{2 \rho \rho_1 + 2 \gamma}(\mathcal F(\Theta, 2 \rho)) \right ]. \\
\end{eqnarray*}
From this, the fact that $UQU^*$ is a contraction, and Proposition 2.1(iv)
\begin{eqnarray*}
S_{\alpha t \epsilon}(Df(x_0)Q(E)) & \leq & S_{\alpha t \epsilon}(UQU^*  \left[ \mathcal N_{2 \rho \rho_1 + 2 \gamma}(\mathcal F(\Theta, 2 \rho)) \right ]) \\
                                                       & \leq & S_{\alpha t \epsilon} \left[ \mathcal N_{2 \rho \rho_1 + 2 \gamma}(\mathcal F(\Theta, 2 \rho)) \right ]  \\             & \leq & S_{\alpha t \epsilon - 4 \rho \rho_1-4 \gamma}(\mathcal F(\Theta, 2 \rho)) \\                                         & = & S_{\beta}(\mathcal F(\Theta, 2 \rho)). \\
\end{eqnarray*}
\end{proof}

\subsection{Estimates in Tracial von Neumann Algebras}

The estimates here will be used to construct bindings and fringes for maps that arise as finite tuples of $*$-polynomials.  Essentially they say that the $\|\cdot\|_2$ norm is quasi-submultiplicative, i.e., $\|xyp\|_{\infty} \leq C \|x\|_2 \|y\|_2$ on a projection $p$ with trace almost equal to $1$ and moreover, this can be controlled with $C$.   Again the context to keep in mind below is when $M=M_k(\mathbb C)$ (and in fact the only case I'll need), however I'll phrase the results in the tracial von Neumann algebra setting.

\begin{lemma} If $z \in M$ and $C >0$, then there exists a projection $p \in M$ such that $\|zp\|_{\infty} < C\|z\|_2$ and $\varphi(p) \geq 1- C^{-2}$.
\end{lemma}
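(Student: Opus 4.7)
The plan is to apply Chebyshev's (Markov's) inequality to the positive operator $|z|^2 = z^*z$ and take $p$ to be an appropriate spectral projection. First I would let $\mu$ denote the spectral distribution of $|z|^2$ induced by $\varphi$, so that $\int \lambda\, d\mu(\lambda) = \varphi(|z|^2) = \|z\|_2^2$. Chebyshev then yields
\[
\mu\bigl((t, \infty)\bigr) \;\leq\; \frac{\|z\|_2^2}{t} \qquad \text{for every } t > 0,
\]
and moreover the inequality is strict whenever $\mu((t,\infty)) > 0$, since on $(t,\infty)$ the integrand $\lambda$ strictly exceeds $t$.

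Next I would set $p := 1_{[0, t]}(|z|^2)$ for a cutoff $t$ chosen slightly below $C^2 \|z\|_2^2$. The Chebyshev bound with $s = C^2\|z\|_2^2$ gives $\mu((C^2\|z\|_2^2,\infty)) \leq C^{-2}$, and the strict slack just noted (together with right-continuity of the distribution function) guarantees that I may decrease the cutoff slightly to some $t < C^2\|z\|_2^2$ while still maintaining $\mu((t,\infty)) \leq C^{-2}$; the degenerate case $z=0$ is handled trivially by $p = I$. This gives $\varphi(p) = \mu([0,t]) \geq 1 - C^{-2}$.

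For the operator norm bound, I would use that $p$ is a spectral projection of $|z|^2$ and therefore commutes with it, so
\[
|zp|^2 \;=\; p z^* z p \;=\; |z|^2 p.
\]
By the Borel functional calculus this operator has spectrum contained in $[0,t]$, hence $\|zp\|_\infty^2 \leq t < C^2\|z\|_2^2$, which is the desired strict inequality.

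The whole argument is essentially routine once Chebyshev is recognized as the governing estimate; the only mild obstacle is arranging strict inequality in the operator norm bound, which is precisely what the strict slackness in Chebyshev's inequality is used for.
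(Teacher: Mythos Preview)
Your approach is essentially the same as the paper's: both are Chebyshev arguments using a spectral projection of $|z|$ (the paper takes $p = 1_{[0,C\|z\|_2]}(|z|)$ directly, which is equivalent to your $1_{[0,t]}(|z|^2)$ up to the choice of cutoff). The paper is in fact looser than you about the strict inequality, simply asserting $\||z|p\|_\infty < C\|z\|_2$ for the closed interval, which as written only gives $\leq$.

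One small slip in your more careful treatment: to justify lowering the cutoff below $s_0 = C^2\|z\|_2^2$ you invoke right-continuity of $t\mapsto \mu((t,\infty))$, but moving $t$ \emph{downward} from $s_0$ is governed by the \emph{left} limit $\lim_{t\uparrow s_0}\mu((t,\infty)) = \mu([s_0,\infty))$, not the right one. The repair is immediate: Chebyshev on the closed half-line gives $\mu([s_0,\infty))\leq C^{-2}$, with equality only when $\mu$ is supported on $\{0,s_0\}$; in that degenerate case $\mu((t,\infty))=C^{-2}$ for every $0<t<s_0$, and otherwise the left limit is strictly below $C^{-2}$, so some $t<s_0$ works. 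Either way your chosen $p$ does the job.
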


\begin{proof} This is Chebyshev's inequality.  Set $p = 1_{[0, C\|z\|_2]}(|z|)$ and denote by $u$ the partial isometry in the polar decomposition of $z$.  $\|zp\|_{\infty} = \|u|z| p\|_{\infty} \leq  \| |z| p \|_{\infty} <  C\|z\|_2$ which yields the first inequality.  For the second,  
\begin{eqnarray*} 
0 & \leq & (C\|z\|_2)^2 \cdot p^{\bot} \\
                                & \leq & |z|^2 p^{\bot} \\
                                 & \leq & |z|^2.\\
\end{eqnarray*}
Taking traces yields $(C \|z\|_2)^2 \cdot \varphi(p^{\bot}) \leq \varphi(|z|^2) < \|z\|_2^2$.  Grouping terms, $1 - \varphi(p) = \varphi(p^{\bot}) \leq C^{-2}$ and the second inequality follows.
\end{proof}

\begin{lemma} If $x, q \in M$ with $qH \subset cl(xH)$, then there exists a projection $e \in M$ such that $eH \subset (\ker x)^{\bot}$ satisfying $xeH \subset qH$ and $\varphi(e) = \varphi(q)$.
\end{lemma}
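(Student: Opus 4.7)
My plan is to construct $e$ explicitly as a difference of right support projections and then verify the three required properties separately, reserving the bulk of the work for the trace equality. Here $H = L^2(M)$ and, for $y \in M$, let $s_r(y)$ denote the right (source) support $1_{(0,\infty)}(y^*y)$, i.e.\ the projection onto $(\ker y)^{\perp}$, and $s_l(y) = 1_{(0,\infty)}(yy^*)$ the left (range) support. The hypothesis $qH \subset \mathrm{cl}(xH)$ amounts to $q \leq s_l(x)$. Since $\ker x \subset \ker((1-q)x)$ one has $s_r((1-q)x) \leq s_r(x)$, so I would take
\[
e \;:=\; s_r(x) - s_r((1-q)x),
\]
which is a projection in $M$. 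Conditions $eH \subset (\ker x)^{\perp}$ and $xeH \subset qH$ will then follow immediately: the first from $e \leq s_r(x)$, the second because $e$ is orthogonal to $s_r((1-q)x)$, so $eH \subset \ker((1-q)x)$ and hence $(1-q)xe = 0$.

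The substantive point is the trace equality $\varphi(e) = \varphi(q)$. For this I would invoke the polar decomposition $x = u|x|$ with $u \in M$ a partial isometry, $u^*u = s_r(x)$, $uu^* = s_l(x)$. Using $q \leq s_l(x)$, the element $f := u^*qu$ is a projection with $f \leq s_r(x)$, and a short tracial computation yields $\varphi(f) = \varphi(uu^* q) = \varphi(s_l(x) q) = \varphi(q)$. Substituting the polar decomposition into $x^*qx$ gives the identity $x^*qx = |x|\,f\,|x|$, and hence
\[
x^*(1-q)x \;=\; |x|^2 - |x|f|x| \;=\; |x|\bigl(s_r(x) - f\bigr)|x|,
\]
from which $s_r((1-q)x) = s_r\bigl((s_r(x)-f)|x|\bigr)$.

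To finish, I plan to pass to the corner algebra $B := s_r(x)\,M\,s_r(x)$, in which $|x|$ is positive with support equal to the unit $s_r(x)$ and therefore, by the spectral theorem, has dense range in $s_r(x)H$. For any projection $p \in B$, left multiplication by $p$ preserves density, so the range of $p|x|$ is dense in $pH$ and $s_l(p|x|) = p$; combined with the tracial identity $\varphi(s_l(\,\cdot\,)) = \varphi(s_r(\,\cdot\,))$ on $M$, this yields $\varphi(s_r(p|x|)) = \varphi(p)$. Applying this with $p = s_r(x) - f$ gives $\varphi(s_r((1-q)x)) = \varphi(s_r(x)) - \varphi(q)$, and therefore $\varphi(e) = \varphi(s_r(x)) - \varphi(s_r((1-q)x)) = \varphi(q)$.

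The main obstacle I anticipate is precisely this trace step: threading $\varphi(f) = \varphi(q)$ through the polar decomposition and exploiting injectivity of $|x|$ inside the corner algebra to conclude that left multiplication by $|x|$ preserves traces of projections via their supports. Once that bookkeeping is in place the three conditions align and the lemma follows.
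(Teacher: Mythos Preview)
Your proof is correct and takes a genuinely different route from the paper's. The paper first reduces to the positive case $x \geq 0$, then builds $e$ as a strong limit of projections $e_n$ obtained by inverting $x$ on the spectral subspaces $f_n = 1_{(1/n,\infty)}(x)$ (where $xf_n$ is invertible in $f_nMf_n$), so that $xe_nH = (q \wedge f_n)H$ and $\varphi(e_n) = \varphi(q \wedge f_n)$; the general case follows by applying this to $|x^*|$ and conjugating by the partial isometry from the polar decomposition of $x^*$. Your construction $e = s_r(x) - s_r((1-q)x)$ is instead a one-shot formula, with the three required properties verified directly via support calculus and the polar decomposition. The key trick you isolate --- that $s_l(p|x|) = p$ for any projection $p \leq s_r(x)$, because $|x|$ has dense range in $s_r(x)H$ --- is exactly what the paper's spectral approximation is implicitly extracting in the limit. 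Your argument is more concise and avoids the limiting step; the paper's approach is perhaps more hands-on in that it explicitly exhibits $e$ as coming from inverting $x$ where possible, but both yield the same existence statement.
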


\begin{proof} I'll prove this first in the case where $x \geq 0$ and then use the polar decomposition to arrive at the general claim.  So assume $x \geq 0$ and $qH \subset cl(xH)$.  $f = 1_{(0,\infty)}(x)$ is the projection onto $cl(xH)$.  Define for each $n$, $f_n = 1_{(1/n, \infty)}(x)$, $f_0 = 1_{\{0\}}(x)$, and note that $f_0$ is the projection onto $\ker x$.   For each $n$, note that $q\wedge f_n \leq f_n$, $f_n$ commutes with $x$, and that $x f_n = f_n x f_n$ is invertible when regarded as an element of $f_nMf_n$.  It follows that there exists a projection $e_n \leq f_n$ such that $x e_n H = (q \wedge f_n)H$ and $\varphi(e_n) = \varphi(q \wedge f_n)$.  Indeed, this is obtained by taking the projection onto the range of $(f_n xf_n)^{-1} (q \wedge f_n)$ where the inverse is taken w.r.t. the compression $f_nMf_n$.  Notice also from this definition and the Borel spectral theorem that for any $n$, $e_n \leq e_{n+1}$.

There exists a sequence of increasing projections $\langle e_n \rangle_{n=1}^{\infty}$ such that for each $n$, $\varphi(e_n) = \varphi(q \wedge f_n)$, and $xe_n H \subset (q \wedge f_n)H$.  Denote by $e$ the strong operator topology limit of the $e_n$ (or equivalently, the projection onto the closure of $\cup_{n=1}^{\infty} e_nH$).  Because $e_n \leq f_n \leq f$, $e \leq f$ where $f$ is the projection onto $cl(xH) = (\ker x)^{\bot}$ ($x \geq 0$). For any $n$,
\begin{eqnarray*}
xe_nH & \subset & (q \wedge f_n)H \\
            & \subset  & qH. \\
\end{eqnarray*}
\noindent It follows that $xeH \subset qH$.  Moreover, since $\lim_{n \rightarrow \infty} f_n = f$ (the projection onto $cl(xH)$) and $q\leq f$ by hypothesis,
\begin{eqnarray*}
\varphi(e) & = & \lim_{n \rightarrow \infty} \varphi(e_n) \\
                & = & \lim_{n \rightarrow \infty} \varphi(q \wedge f_n) \\
                & = & \lim_{n \rightarrow \infty} \varphi(q \wedge f) \\
                & = & \varphi(q).
\end{eqnarray*}
\noindent This finishes the proof in the case where $x$ is positive.

For the general case, suppose $x, q \in M$ are given with $qH \subset cl(xH)$.  Consider the polar decomposition $x^* = u|x^*|$.  Thus, $u \in M$ is a partial isometry with initial range $cl(|x^*|H) = cl(xH)$ and final range $cl(x^*H) = (\ker x)^{\bot}$.  Apply the preceding result to the positive element $|x^*|$ and $q$, noting that $qH \subset cl(xH) = cl(|x^*|H)$.  There exists a projection $p \in M$ such that $pH \subset (\ker |x^*|)^{\bot} = (\ker x^*)^{\bot}$, $|x^*|pH \subset qH$, and $\varphi(p) = \varphi(q)$.   Set $e = upu^* \in M$.  $eH \subset (\ker x)^{\bot}$ and
\begin{eqnarray*}
xeH & = & |x^*|u^*(upu^*)H \\
       & = & |x^*| p u^* H\\
       & = & |x^*| p H \\
       & \subset & qH.\\ 
\end{eqnarray*}
\noindent $\varphi(e) = \varphi(upu^*) = \varphi(p) = \varphi(q)$, completing the proof.
\end{proof}

\begin{lemma} Suppose $x, q \in M$ with $q$ a projection.  There exists a projection $p \in M$ such that $\varphi(p) \geq\varphi(q)$ and $xpH \subset qH$. 
\end{lemma}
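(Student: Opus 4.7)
My plan is to reduce to the previously proved Lemma 5.10 by intersecting $q$ with the projection onto the closure of the range of $x$, and then to make up the lost trace by adjoining the projection onto $\ker x$.

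Concretely, let $f \in M$ be the projection onto $\overline{xH}$ and let $e_0 \in M$ be the projection onto $\ker x$. Then $(q \wedge f) H \subset fH = \overline{xH}$, so Lemma 5.10 applies to the pair $x, q \wedge f$, yielding a projection $e_1 \in M$ with $e_1 H \subset (\ker x)^{\perp}$, $x e_1 H \subset (q \wedge f)H$, and $\varphi(e_1) = \varphi(q \wedge f)$. Since $e_1 \leq 1 - e_0$, the two projections $e_0, e_1$ are orthogonal, so $p := e_0 + e_1$ is a projection in $M$. Because $xe_0 = 0$, I get $xpH = xe_1 H \subset (q \wedge f) H \subset qH$, which is the inclusion required.

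For the trace inequality, I invoke two standard facts in a finite von Neumann algebra. First, the Kaplansky--type identity $\varphi(q \wedge f) + \varphi(q \vee f) = \varphi(q) + \varphi(f)$ together with $\varphi(q \vee f) \leq 1$ gives $\varphi(q \wedge f) \geq \varphi(q) + \varphi(f) - 1 = \varphi(q) - \varphi(f^{\perp})$. Second, traciality applied to the polar decomposition $x = v|x|$ gives $\varphi(v^*v) = \varphi(vv^*)$, i.e., $\varphi((\ker x)^{\perp}) = \varphi(\overline{xH})$, which rearranges to $\varphi(e_0) = \varphi(f^{\perp})$. Combining,
\begin{eqnarray*}
\varphi(p) & = & \varphi(e_0) + \varphi(e_1) \\
           & = & \varphi(e_0) + \varphi(q \wedge f) \\
           & \geq & \varphi(e_0) + \varphi(q) - \varphi(f^{\perp}) \\
           & = & \varphi(q),
\end{eqnarray*}
as desired.

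The only real content beyond the previous lemma is the trace bookkeeping, and I expect no obstacles there: both the parallelogram law for projection traces and the equality $\varphi(\ker x) = \varphi(\overline{xH}^{\perp})$ are standard consequences of the tracial property. The key conceptual point is simply that the ``wasted'' trace $\varphi(q) - \varphi(q \wedge f) \leq \varphi(f^{\perp})$ coming from the piece of $q$ lying outside $\overline{xH}$ is exactly compensated by the free trace $\varphi(e_0) = \varphi(f^{\perp})$ available on $\ker x$, which one can annex at no cost because $x$ kills it.
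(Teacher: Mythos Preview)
Your proof is correct and follows essentially the same approach as the paper's: intersect $q$ with the range projection $f$ of $x$, apply Lemma 5.10 to $q \wedge f$, and then adjoin the kernel projection to recover the trace via the parallelogram identity and the polar-decomposition equality $\varphi(e_0)=\varphi(f^{\perp})$. The notation differs (the paper writes $e, p_1$ for your $e_0, e_1$), but the argument is identical.
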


\begin{proof} Denote by $f$ the projection onto $cl(xH)$.  From the polar decomposition, if $e$ is the projection onto $\ker x$, then $\varphi(e) + \varphi(f) =1$.  Thus, $\varphi(e) = 1 -\varphi(f) = \varphi(f^{\bot})$.  Now
\begin{eqnarray*}
1 & \geq & \varphi(q \vee f) \\
   & = & \varphi(q) + \varphi(f) - \varphi(q\wedge f). \\
\end{eqnarray*}
\noindent Thus, $\varphi(e) + \varphi(q\wedge f) = 1 -\varphi(f) + \varphi(q \wedge f) \geq \varphi(q)$.

Obviously $q \wedge f \leq f$.  Invoke Lemma 5.10 to produce a projection $p_1 \in M$ such that $p_1H \subset (\ker x)^{\bot}$, $xp_1H \subset (q \wedge f)H$, and $\varphi(p_1) = \varphi(q \wedge f)$.  Set $p = e + p_1$.  Since $e$ and $p_1$ are orthogonal, 
\begin{eqnarray*}
\varphi(p) & = & \varphi(e) + \varphi(p_1) \\
                & = & \varphi(e) + \varphi(q \wedge f) \\
                & \geq & \varphi(q).\\
\end{eqnarray*}
\noindent Lastly, $xpH = xp_1H \subset (q \wedge f)H \subset qH$.
\end{proof}

\begin{lemma} If $z_1, \ldots, z_n \in M$ and $C>0$, then there exists a projection $p \in M$ such that $\|z_1 \cdots z_np\|_{\infty} < C^n \|z_1\|_2 \cdots \|z_n\|_2$ and $\varphi(p) \geq 1 - nC^{-2}$.
\end{lemma}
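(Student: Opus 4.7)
The plan is to induct on $n$, using the three preceding lemmas as the building blocks and setting up the induction so that at each stage we peel one factor off the left and absorb the resulting cost with a projection of trace at least $1 - C^{-2}$. The base case $n=1$ is exactly Lemma 5.9. So assume the result holds for $n-1$ factors.

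Given $z_1,\ldots,z_n$ and $C>0$, I would first apply Lemma 5.9 to $z_1$ to obtain a projection $q\in M$ with $\|z_1 q\|_\infty < C\|z_1\|_2$ and $\varphi(q)\ge 1-C^{-2}$. The idea is to force the left-acting $z_1$ to see only $qH$; for this I need the range of $z_2\cdots z_n p$ to sit inside $qH$. Apply Lemma 5.11 with $x=z_2\cdots z_n$ and the projection $q$ to produce a projection $p_1\in M$ with $\varphi(p_1)\ge \varphi(q)\ge 1-C^{-2}$ and $z_2\cdots z_n\, p_1 H\subset qH$. Independently, the inductive hypothesis applied to $z_2,\ldots,z_n$ yields a projection $p_{n-1}\in M$ with $\|z_2\cdots z_n\, p_{n-1}\|_\infty < C^{n-1}\|z_2\|_2\cdots\|z_n\|_2$ and $\varphi(p_{n-1})\ge 1-(n-1)C^{-2}$.

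Now set $p = p_1\wedge p_{n-1}$. Kaplansky's inequality gives $\varphi(p)\ge \varphi(p_1)+\varphi(p_{n-1})-1\ge 1-nC^{-2}$, which is the trace bound. Since $p\le p_{n-1}$, we have $z_2\cdots z_n\, p = z_2\cdots z_n\, p_{n-1}\, p$, so
\begin{eqnarray*}
\|z_2\cdots z_n\, p\|_\infty & \le & \|z_2\cdots z_n\, p_{n-1}\|_\infty\cdot\|p\|_\infty \\
                             & < & C^{n-1}\|z_2\|_2\cdots\|z_n\|_2.
\end{eqnarray*}
Since $p\le p_1$, the inclusion $z_2\cdots z_n\, p_1 H\subset qH$ gives $z_2\cdots z_n\, pH\subset qH$, which means $q(z_2\cdots z_n\, p) = z_2\cdots z_n\, p$. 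Thus
\begin{eqnarray*}
\|z_1 z_2\cdots z_n\, p\|_\infty & = & \|z_1 q\, z_2\cdots z_n\, p\|_\infty \\
                                 & \le & \|z_1 q\|_\infty\cdot\|z_2\cdots z_n\, p\|_\infty \\
                                 & < & C\|z_1\|_2\cdot C^{n-1}\|z_2\|_2\cdots\|z_n\|_2 \\
                                 & = & C^n\|z_1\|_2\cdots\|z_n\|_2,
\end{eqnarray*}
completing the induction.

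The only subtle point, which is really the heart of the argument, is the need to combine a Chebyshev-style bound for the leftmost factor $z_1$ (which controls operator norm) with a range-containment condition that lets us insert $q$ to the left of $z_2\cdots z_n\, p$; this is precisely the role Lemma 5.11 plays, and it is why I use it in tandem with Lemma 5.9 rather than trying to iterate Lemma 5.9 alone. Everything else is bookkeeping: trace losses add in arithmetic progression because each step costs $C^{-2}$, and the norm bound grows geometrically because each step contributes one factor of $C\|z_i\|_2$.
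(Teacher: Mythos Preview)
Your proof is correct and follows essentially the same approach as the paper's: induct on $n$, apply Lemma 5.9 (Chebyshev) to the leftmost factor, use Lemma 5.11 to route the range of the remaining product into that Chebyshev projection, apply the inductive hypothesis to $z_2,\ldots,z_n$, and take the meet of the two resulting projections. The only difference is notational (you swap the names of the Chebyshev projection and the Lemma 5.11 projection relative to the paper), and your trace estimate via $\varphi(p_1\wedge p_{n-1})\ge \varphi(p_1)+\varphi(p_{n-1})-1$ is exactly the paper's computation.
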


\begin{proof}  I will prove this by induction on $n$. The base case $n=1$ is covered in Lemma 5.9.  Assume it's true for $n=k$.  Suppose $z_1, \ldots z_{k+1} \in M$.  By the inductive hypothesis there exists a projection $p_0 \in M$ such that $\|z_2 \cdots z_{k+1} p_0\|_{\infty} < C^k \|z_2\|_2 \cdots \|z_{k+1}\|_2$ and $\varphi(p_0) > 1- kC^{-2}$.   By Lemma 5.9 there exists a projection $p_1$ such that $\|z_1 p_1\|_{\infty} < C \|z_1\|_2$ and $\varphi(p_1) > 1 -C^{-2}$.  By Lemma 5.11 there exists a projection $q \in M$ such that $\varphi(q) \geq \varphi(p_1)$ and $(z_2 \cdots z_{k+1}) qH \subset p_1H$.  Set $p = q \wedge p_0$.  Clearly $p = qp = p_0 p $.  
\begin{eqnarray*}
\|z_1 \cdots z_{k+1} p \|_{\infty} & = & \|z_1 (z_2 \cdots z_{k+1} q p)\|_{\infty}  \\
                                           & = & \|z_1 p_1(z_2 \cdots z_{k+1} q p)\|_{\infty}  \\
                                           & \leq & \|z_1 p_1\|_{\infty} \cdot  \|p_1 (z_2 \cdots z_{k+1}qp)\|_{\infty}  \\
                                           & \leq & C \|z_1\|_2 \cdot \|z_2 \cdots z_{k+1}p_0 p\|_{\infty}  \\
                                           & \leq & C^{k+1} \cdot \|z_1\|_2 \cdots \|z_{k+1}\|_2.\\
\end{eqnarray*}
\noindent Also, $\varphi(p) = \varphi(q \wedge p_0) = \varphi(p_0) + \varphi(q) - \varphi(p_0\vee q) > 1 - (k+1)C^{-2}$.  This verifies the condition for $n=k+1$ and completes the proof.
\end{proof}

By taking adjoints one immediately gets:

\begin{corollary} If $z_1, \ldots, z_n \in M$ and $C>0$, then there exists a projection $p \in M$ such that $\|p z_1 \cdots z_n\|_{\infty} < C^n \|z_1\|_2 \cdots \|z_n\|_2$ and $\varphi(p) \geq 1 - nC^{-2}$.
\end{corollary}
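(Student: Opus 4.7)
The plan is to reduce the corollary to Lemma 5.12 by a straightforward adjoint/reversal argument, using nothing beyond the $*$-invariance of the operator norm and the traciality of $\varphi$. First I would observe the two elementary facts I need: that $\|x\|_\infty = \|x^*\|_\infty$ for any $x \in M$, and that $\|x\|_2 = \|x^*\|_2$ since traciality gives $\varphi(x^*x) = \varphi(xx^*)$. Both are standard and will be used without further comment.

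Next, I would apply Lemma 5.12 not to $(z_1, \ldots, z_n)$ but to the reversed tuple of adjoints $(z_n^*, z_{n-1}^*, \ldots, z_1^*)$ with the same constant $C>0$. This yields a projection $p \in M$ such that
\begin{eqnarray*}
\|z_n^* z_{n-1}^* \cdots z_1^* \, p\|_\infty & < & C^n \, \|z_n^*\|_2 \|z_{n-1}^*\|_2 \cdots \|z_1^*\|_2,\\
\varphi(p) & \geq & 1 - nC^{-2}.
\end{eqnarray*}
By the two observations above, the right-hand side of the first inequality equals $C^n \|z_1\|_2 \|z_2\|_2 \cdots \|z_n\|_2$, and $\varphi(p) \geq 1 - nC^{-2}$ is already the bound we want.

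Finally, I would take adjoints on the left-hand side. Since $p = p^*$, we have $(z_n^* z_{n-1}^* \cdots z_1^* p)^* = p^* (z_1 z_2 \cdots z_n) = p z_1 z_2 \cdots z_n$, and therefore
\begin{eqnarray*}
\|p z_1 z_2 \cdots z_n\|_\infty \; = \; \|z_n^* z_{n-1}^* \cdots z_1^* p\|_\infty \; < \; C^n \|z_1\|_2 \cdots \|z_n\|_2,
\end{eqnarray*}
which together with the trace inequality on $p$ is exactly the statement of the corollary. There is no real obstacle here: the whole content is already in Lemma 5.12, and the corollary is a bookkeeping statement recording that the favorable projection can be placed on the left instead of the right.
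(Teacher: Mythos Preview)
Your proof is correct and follows exactly the same approach as the paper, which simply states ``By taking adjoints one immediately gets'' the corollary from Lemma 5.12. Your writeup just makes explicit the details of that adjoint argument.
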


Combining Corollary 5.13 with Lemma 5.12 yields:

\begin{corollary} If $x_1, \ldots, x_k, y_1,\ldots, y_n \in M$ and $C>0$, then there exists a projection $p \in M$ such that 
$\|x_1 \ldots x_k p\|_{\infty} < C^{k} \|x_1\|_2 \cdots \|x_k\|_2$, $\|p y_1 \cdots y_n\|_{\infty} < C^n \|y_1\|_2 \cdots \|y_n\|_2$, and $\varphi(p) \geq 1 - (k+n) C^{-2}$.
\end{corollary}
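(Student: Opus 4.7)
The plan is to produce one projection that simultaneously witnesses both estimates by taking a meet of two projections produced independently by the two previous results.

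First, apply Lemma 5.12 to $x_1,\ldots,x_k$ with the given constant $C$ to obtain a projection $p_1 \in M$ with
\begin{eqnarray*}
\|x_1 \cdots x_k p_1\|_{\infty} & < & C^{k}\|x_1\|_2 \cdots \|x_k\|_2, \\
\varphi(p_1) & \geq & 1 - kC^{-2}.
\end{eqnarray*}
Next, apply Corollary 5.13 to $y_1,\ldots,y_n$ with the same $C$ to obtain a projection $p_2 \in M$ with
\begin{eqnarray*}
\|p_2 y_1 \cdots y_n\|_{\infty} & < & C^{n}\|y_1\|_2 \cdots \|y_n\|_2, \\
\varphi(p_2) & \geq & 1 - nC^{-2}.
\end{eqnarray*}

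Now set $p = p_1 \wedge p_2$. Since $p \leq p_1$ we have $p = p_1 p$, so
\begin{eqnarray*}
\|x_1 \cdots x_k p\|_{\infty} & = & \|x_1 \cdots x_k p_1 p\|_{\infty} \\
                              & \leq & \|x_1 \cdots x_k p_1\|_{\infty} \cdot \|p\|_{\infty} \\
                              & < & C^{k}\|x_1\|_2 \cdots \|x_k\|_2.
\end{eqnarray*}
Similarly $p = p p_2$ since $p \leq p_2$, and taking adjoints (or arguing directly) gives
\begin{eqnarray*}
\|p y_1 \cdots y_n\|_{\infty} & = & \|p p_2 y_1 \cdots y_n\|_{\infty} \\
                              & \leq & \|p_2 y_1 \cdots y_n\|_{\infty} \\
                              & < & C^{n}\|y_1\|_2 \cdots \|y_n\|_2.
\end{eqnarray*}
Finally, the standard meet/join trace identity gives
\begin{eqnarray*}
\varphi(p) & = & \varphi(p_1) + \varphi(p_2) - \varphi(p_1 \vee p_2) \\
           & \geq & (1-kC^{-2}) + (1-nC^{-2}) - 1 \\
           & = & 1 - (k+n)C^{-2},
\end{eqnarray*}
as desired.

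There is no real obstacle here; this is a routine combination of Lemma 5.12 and Corollary 5.13 via the standard observation that the meet of two projections with large trace still has large trace, and that compressing by a smaller projection only decreases operator norms of the form $\|ap\|_\infty$ and $\|pb\|_\infty$.
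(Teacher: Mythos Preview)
Your proof is correct and is precisely the argument the paper has in mind: the paper simply writes ``Combining Corollary 5.13 with Lemma 5.12 yields'' before stating the corollary, and your meet-of-two-projections argument is exactly that combination spelled out in full.
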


\subsection{Polynomials, derivatives, and coverings for matrices}

In this last subsection I'll construct bindings for the derivative of a tuple of $*$-polynomials and estimate its fringe entropy.  The estimates will invoke the results in subsection 5.2 while the bindings will be created from the projections in subsection 5.3 upon which quasi-submultiplicative estimates are valid.

As in Section 3, $\mathfrak{A}_n$ denotes the universal, unital, complex $*$-algebra on $n$-indeterminates, $X_1,\ldots, X_n$.  Unless otherwise stated, $F \in \mathfrak{A}_n$, i.e., $F$ is a noncommutative $*$-polynomial in $n$-indeterminates.  $F$ can uniquely be written in the reduced form $c_1w_1 + \cdots +c_nw_n$ where the $c_i$ are nonzero complex numbers and the $w_i$ are distinct $*$-monomials in the $X_i$.  Define $c(F)$ to be the maximum over the set $\{|c_i|, \ell(w_i): 1 \leq i \leq n\} \cup \{n\}$ where $\ell(\cdot)$ is the length function defined on the $*$-monomials of $\mathfrak{A}_n$.  As usual, $\deg(F) = \max_{1\leq i \leq n} \ell(w_i)$.  When $F=\{f_1,\ldots, f_p\}$ is a finite $p$-tuple of elements in $\mathfrak{A}_n$, then $c(F) = \max_{1\leq i \leq p} c(f_i)$ and $\deg(F) = \max_{1 \leq i \leq n} \deg(f_i)$. 

For $k$ fixed $F$ induces an obvious smooth map from $(M_k(\mathbb C))^n$ into $M_k(\mathbb C)$.  In this subsection for a fixed $\xi \in (M_k(\mathbb C))^n$, $DF(\xi): (M_k(\mathbb C))^n \rightarrow M_k(\mathbb C)$ denotes the derivative of $F$ at $\xi$ and for each $1 \leq i \leq n$ $\partial_i F(\xi): M_k(\mathbb C) \rightarrow M_k(\mathbb C)$ denotes the $i$th partial derivative of $F$ w.r.t. the ith coordinate of $\xi = (\xi_1,\ldots, \xi_n)$ (Section 2.5).  Denote by $B_{k, R}$ the ball of $\|\cdot\|_2$-radius $R$ in $M_k(\mathbb C)$ and $(B_{k,R})^n \subset (M_k(\mathbb C))^n$ the direct sum of $n$ copies of $B_{k,R}$.  $\sigma_k: M_k(\mathbb C) \otimes M_k(\mathbb C)^{op} \rightarrow L(M_k(\mathbb C))$ is the complex, trace preserving $*$-isomorphism determined by $\sigma_k(a \otimes b^{op})(\xi) = a\xi b$, $a,b, \xi \in M_k(\mathbb C)$.  Throughout this subsection $i$ and $j$ denote integer indices.  

\begin{lemma} Suppose $1 \leq j \leq n$.  There exist $N \in \mathbb N$ and for $1 \leq i \leq N$, $\lambda_i \in \mathbb C$, and $*$-monomials $w_i, v_i \in \mathfrak{A}_{2n}$ all dependent only on $F$ and $j$ such that for each $i$ either $w_i$ or $v_i$ has an occurence of the last $n$ of the $2n$-indeterminate generators for $\mathfrak{A}_{2n}$ and for any $X, H \in (M_k(\mathbb C))^n$,
\begin{eqnarray*} (\partial_jF)(X+H) = (\partial_jF)(X) + \sum_{i=1}^N \lambda_i \cdot \sigma_k(w_i(X,H) \otimes v_i(X,H)^{op}) \circ J^{d_i}.
\end{eqnarray*}
Moreover, $N$ and $\max_{1\leq i \leq N} |\lambda_i|$ are no greater than $c(F)\cdot(2n)^{c(F)}$ and $\max_{1\leq i \leq N} (\ell(w_i)+\ell(v_i)) < \deg(F)$.
\end{lemma}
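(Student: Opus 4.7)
The plan is to expand $(\partial_j F)(X+H)$ combinatorially, isolate the pure-$X$ contributions (which reconstruct $(\partial_j F)(X)$), and read off the claimed sum from what remains.

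First I would reduce to the case where $F$ is a single $*$-monomial by the $\mathbb{C}$-linearity of $\partial_j$; the coefficients of the reduced form of $F$, each bounded by $c(F)$, will be absorbed into the $\lambda_i$'s at the end. Write such a monomial as $F = X_{i_1}^{q_1}\cdots X_{i_p}^{q_p}$ with $p \le \deg(F)$ and $q_l \in \{1,*\}$. The explicit Leibniz-type formula recalled in Section 3.2 (the same one used in Lemma 3.20) gives
\begin{eqnarray*}
(\partial_j F)(Y) & = & \sum_{l:\, i_l = j} \sigma_k\bigl(A_l(Y) \otimes B_l(Y)^{op}\bigr) \circ J^{\delta_{q_l,*}},
\end{eqnarray*}
where $A_l, B_l \in \mathfrak{A}_n$ are the $*$-monomials obtained by removing the $l$-th letter from $F$ and splitting at that position, so that $\ell(A_l) + \ell(B_l) = p - 1$ and they depend only on $F$ and $j$.

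Next, substitute $Y = X + H$ and expand each $A_l(X+H)$ multiplicatively. At each of its $\ell(A_l)$ letters $X_i^\epsilon + H_i^\epsilon$ one chooses either the $X$-letter or the $H$-letter; indexing these choices by $\chi \in \{0,1\}^{\ell(A_l)}$ and identifying the $X$-letters with the first $n$ generators and the $H$-letters with the last $n$ generators of $\mathfrak{A}_{2n}$, this exhibits a $*$-monomial $A_l^\chi \in \mathfrak{A}_{2n}$ with $A_l(X+H) = \sum_\chi A_l^\chi(X,H)$. Do the same for $B_l(X+H) = \sum_\psi B_l^\psi(X,H)$. Substituting back yields
\begin{eqnarray*}
(\partial_j F)(X+H) & = & \sum_l \sum_{\chi,\psi} \sigma_k\bigl(A_l^\chi(X,H) \otimes B_l^\psi(X,H)^{op}\bigr) \circ J^{\delta_{q_l,*}}.
\end{eqnarray*}

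The pairs with $\chi \equiv 0$ and $\psi \equiv 0$ give exactly $\sigma_k(A_l(X) \otimes B_l(X)^{op}) \circ J^{\delta_{q_l,*}}$, and these summed over $l$ reproduce $(\partial_j F)(X)$. Subtracting them off leaves only pairs in which at least one of $A_l^\chi$ or $B_l^\psi$ uses the last $n$ generators of $\mathfrak{A}_{2n}$, producing the desired $w_i, v_i, \lambda_i, d_i$. For the bounds: $\ell(w_i) + \ell(v_i) = p - 1 < \deg(F)$ by construction; $|\lambda_i| \le c(F)$ since the only coefficients are those of the monomial decomposition of $F$; and a crude count (number of monomials of $F$ times at most $\deg(F)$ summands of $\partial_j$ times at most $2^{\deg(F)}$ multilinear pairs per summand) fits inside $c(F) \cdot (2n)^{c(F)}$ by the definition of $c(F)$. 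No step here is conceptually deep; the one real obstacle is the combinatorial bookkeeping needed to package the multilinear expansion as elements of $\mathfrak{A}_{2n}$ and verify the stated length, coefficient, and cardinality bounds.
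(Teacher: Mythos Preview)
Your proposal is correct and follows essentially the same approach as the paper: write $\partial_j F(Y)$ as a sum of elementary tensors via the Leibniz rule, substitute $Y=X+H$, expand each tensor factor multilinearly, and observe that the pure-$X$ contributions recover $\partial_j F(X)$ while the remaining terms each carry at least one $H$-letter. The paper's proof is slightly more compressed (it does not introduce the binary-index bookkeeping $\chi,\psi$ and works directly with the reduced form of $F$ rather than reducing to monomials first), but the argument is the same.
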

\begin{proof} By writing $F$ in reduced form there exist an $N \in \mathbb N$ and for each $1 \leq i \leq N$, $*$-monomials $a_i, b_i \in \mathfrak{A}_n$, complex numbers $\lambda_i$, and $d_i \in \{0,1\}$ depending on $F$ and $j$ such that for any $Y \in (M_k(\mathbb C))^n$,
\begin{eqnarray*}
\partial_jF(Y) = \sum_{i=1}^N \lambda_i \sigma_k(a_i(Y) \otimes b_i(Y)^{op}) \circ J^{d_i}
\end{eqnarray*}
where $\max_{1 \leq i \leq N} (\ell(a_i) + \ell(b_i)) < \deg(F)$.  Substituting $Y=X+H$ and $Y=X$ into the above equation and subtracting yields:
\begin{eqnarray*}
\partial_jF(X+H) - \partial_jF(X) = \sum_{i=1}^N \lambda_i \cdot \sigma_k[a_i(X+H) \otimes b_i(X+H)^{op} - a_i(X) \otimes b_i(X)^{op}] \circ J^{d_i}.\\
\end{eqnarray*}
Each bracketed summand on the RHS can be expanded as a further sum of terms of the form $w_i(X,H) \otimes v_i(X,H)$ where $w_i,v_i \in \mathfrak{A}_{2n}$ and either $w_i$ or $v_i$ has an occurence of the last $n$ of the $2n$-indeterminates, i.e., that one of the $H$ terms appears in $w_i(X,H) \otimes v_i(X,H)$.  Moreover, $\deg(w_i) + \deg(v_i) \leq \deg(a_i)+\deg(b_i)$.  This establishes the equation.   

The last statement is a consequence of the fact that the $\partial_jF(Y)$ is obtained from the reduced form of $F$ and the expansion of the perturbed elementary tensors $a_i(X+H) \otimes b_i(X+H)^{op}$ can be written as a sum of no more than $(2n)^{c(F)}$ elementary tensors of the form $w_i(X,H) \otimes v_i(X,H)$ with $\deg(w_i) + \deg(v_i) \leq \deg(a_i)+\deg(b_i) < \deg(F)$. 
\end{proof}

\begin{lemma} For each $1 \leq j \leq n$ there exist an $N_0, N \in \mathbb N$ and for $1 \leq i \leq N$, $c_i \in \mathbb C$, $s_i \in \{1,*\}$, $1 \leq q_i \leq n$, $*$-monomials $a_i, b_i \in \mathfrak{A}_{2n}$, and $*$-monomials $m_i \in \mathfrak{A}_n$ such that for any $n$-tuples $X$ and $H=(h_1,\ldots, h_n)$ in $(M_k(\mathbb C))^n$, if $T = \int_0^1 \partial_j F(X+tH) \, dt$, then for any $\xi \in M_k(\mathbb C)$
\begin{eqnarray*}
(T-\partial_j F(X))\xi = \sum_{i=1}^{N_0} c_i \cdot a_i(X,H)  h_{q_i}  m_i(X) \xi^{s_i} b_i(X,H) + \sum_{i=N_0+1}^N c_i \cdot a_i(X,H) \xi^{s_i} m_i(X) h_{q_i} b_i(X,H).
\end{eqnarray*}
Moreover, $N$ and  $\max_{1\leq i \leq N} |c_i|$ are no greater than $c(F)\cdot(2n)^{c(F)}$ and $\max_{1\leq i \leq N} (\ell(a_i)+\ell(b_i) + \ell(m_i)+1) < \deg(F)$.
\end{lemma}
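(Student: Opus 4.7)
My plan is to derive this from Lemma 5.15 by rescaling, integrating in $t$, and then choosing a distinguished $H$-occurrence in each of the resulting monomials.

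First I would apply Lemma 5.15 with $H$ replaced by $tH$. Because $t$ is a scalar and each $w_i, v_i \in \mathfrak{A}_{2n}$ is a $*$-monomial, one has $w_i(X,tH) = t^{e_{i,w}} w_i(X,H)$ where $e_{i,w}$ is the number of $H$-indeterminates appearing in $w_i$, and similarly $v_i(X,tH) = t^{e_{i,v}} v_i(X,H)$. Setting $e_i = e_{i,w} + e_{i,v}$, Lemma 5.15 forces $e_i \geq 1$, and so
\begin{eqnarray*}
\partial_jF(X+tH) - \partial_jF(X) & = & \sum_{i=1}^N t^{e_i} \lambda_i \cdot \sigma_k(w_i(X,H) \otimes v_i(X,H)^{op}) \circ J^{d_i}.
\end{eqnarray*}
Integrating termwise over $t \in [0,1]$ and unpacking $\sigma_k(w\otimes v^{op})\circ J^d(\xi) = w \xi^s v$ (with $s = *$ when $d=1$ and $s=1$ otherwise), I obtain
\begin{eqnarray*}
(T - \partial_jF(X))\xi & = & \sum_{i=1}^N \frac{\lambda_i}{e_i + 1}\, w_i(X,H)\, \xi^{s_i}\, v_i(X,H).
\end{eqnarray*}

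Next I would refactor each summand into one of the two prescribed shapes. Fix $i$. If $w_i$ contains at least one $H$-indeterminate, let $h_{q_i}$ be the rightmost such occurrence in $w_i$ and write $w_i = a_i \cdot h_{q_i} \cdot m_i$. By choice of $h_{q_i}$, the tail $m_i$ contains no $H$-indeterminate, hence $m_i \in \mathfrak{A}_n$, while $a_i \in \mathfrak{A}_{2n}$. Set $b_i = v_i$ and the summand becomes $c_i\, a_i(X,H)\, h_{q_i}\, m_i(X)\, \xi^{s_i}\, b_i(X,H)$ with $c_i = \lambda_i/(e_i+1)$. If instead $w_i$ has no $H$-occurrence, then by Lemma 5.15 some $h$ must occur in $v_i$; take $h_{q_i}$ to be the leftmost such occurrence and write $v_i = m_i \cdot h_{q_i} \cdot b_i$, so that $m_i \in \mathfrak{A}_n$, $b_i \in \mathfrak{A}_{2n}$, and $a_i := w_i \in \mathfrak{A}_n \subset \mathfrak{A}_{2n}$. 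The summand becomes $c_i\, a_i(X,H)\, \xi^{s_i}\, m_i(X)\, h_{q_i}\, b_i(X,H)$. Reindexing so that the first type appears for $1 \leq i \leq N_0$ and the second for $N_0 < i \leq N$ produces the stated formula.

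Finally I would verify the quantitative bounds. The number of terms is unchanged from Lemma 5.15, so $N \leq c(F)(2n)^{c(F)}$. Since $e_i + 1 \geq 2$, $|c_i| = |\lambda_i|/(e_i+1) \leq |\lambda_i| \leq c(F)(2n)^{c(F)}$. In either case (a) or (b) the split of $w_i$ or $v_i$ around $h_{q_i}$ yields $\ell(a_i) + \ell(m_i) + \ell(b_i) + 1 = \ell(w_i) + \ell(v_i) < \deg(F)$, matching the stated degree bound.

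The lemma is essentially bookkeeping: once Lemma 5.15 is in hand, the only real subtlety is distinguishing a single $H$-occurrence so that the factor adjacent to $\xi^{s_i}$ on the designated side contains no further $H$-variables, which forces $m_i$ into $\mathfrak{A}_n$. There is no conceptual obstacle, only the need to track the location of the selected $h_{q_i}$ carefully so that the degree count remains sharp enough to fall strictly below $\deg(F)$.
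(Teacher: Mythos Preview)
Your proof is correct and follows essentially the same route as the paper's: invoke Lemma 5.15, pull out the power of $t$ coming from the $H$-occurrences in each $w_i,v_i$, integrate to get $c_i=\lambda_i/(e_i+1)$, and then factor out a distinguished $H$-letter so that the adjacent piece $m_i$ lies in $\mathfrak{A}_n$. The paper does exactly this, only phrasing the factorization step as ``there exists some $n+1\le i_j\le 2n$ such that $w_i=a_jX_{i_j}m_j$ with $m_j$ a word in $X_1,\ldots,X_n$'' rather than your more explicit ``rightmost/leftmost'' choice; your formulation makes it clearer why $m_i$ is automatically free of $H$-variables.
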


\begin{proof}  Invoke Lemma 5.15 to produce $N, \lambda_1,\ldots, \lambda_N \in \mathbb C$, and the $*$-monomials $w_1,\ldots, w_N, v_1, \ldots, v_N$ as in its statement.  Denote by $n_i$ the sum of the exponents arising from the last $n$ variables and their adjoints which appears in $w_i(X,H)$ and $v_i(X,H)$.  Set $c_i = \lambda_i \int_0^1 t^{n_i} \, dt$.  Compute:
\begin{eqnarray*}
T - \partial_jF(X) & = & \int_0^1 \partial_jF(X + tH) - \partial_jF(X) \, dt \\
                           & = & \int_0^1 \sum_{i=1}^N \lambda_i \cdot \sigma_k(w_i(X, tH) \otimes v_i(X,tH)^{op}) \circ J^{d_i} \, dt \\
                           & = & \sum_{i=1}^N \left(\lambda_i \cdot \sigma_k(w_i(X,H) \otimes v_i(X, H)^{op}) \circ J^{d_i}  \cdot \int_0^1 t^{n_i} \, dt \right )\\
                            & = & \sum_{i=1}^N \left(c_i \cdot \sigma_k(w_i(X,H) \otimes v_i(X, H)^{op}) \circ J^{d_i} \right ). \\
\end{eqnarray*}
Define $s_i =1$ if $d_i =0$ and $s_i = *$ if $d_i =1$.  Substituting $\xi \in M_k(\mathbb C)$ in the equation above produces
\begin{eqnarray*}
(T - \partial_jF(X))\xi & = & \sum_{i=1}^N c_i \cdot w_i(X,H) \xi^{s_i} v_i(X, H). \\
\end{eqnarray*}
Now for each $i$, either $w_i$ or $v_i$ has a nontrivial occurrence of one of the last $n$ of the $2n$-indeterminates.  In the first case there exists some $n+1 \leq i_j \leq 2n$ such that $w_i = a_j X_{i_j}  m_j$ where $a_j \in \mathfrak{A}_{2n}$,  and $m_j$ is a word in $X_1,\ldots, X_n \in \mathfrak{A}_{2n}$ (the first $n$ indeterminates).  In the latter case there exists some $n+1 \leq i_j \leq 2n$ such that $v_i = m_j X_{i_j} b_j$ where $b_j \in \mathfrak{A}_{2n}$ and $m_j$ is a word in $X_1,\ldots X_n \in \mathfrak{A}_{2n}$ (the first $n$ indeterminates).  Evaluating either of these expression at the $2n$-tuple $(X,H)$ and regrouping indices establishes the equation.

The last statement follows from the last statement of Lemma 5.15 and the fact that $|c_i| \leq |\lambda_i|$.
\end{proof}

\begin{remark}  Lemma 5.16 has a similar statement where one replaces the mean value integral operator by just the point derivative, i.e., with the same notation as in Lemma 5.15 there exist constants $c_1, \ldots, c_N$ such that for any $\xi \in M_k(\mathbb C)$,
\begin{eqnarray*}
(\partial_jF(X+H)-\partial_j F(X))\xi & = & \sum_{i=1}^{N_0} c_i \cdot a_i(X,H)  h_{q_i}  m_i(X) \xi^{s_i} b_i(X,H) + \\ &&\sum_{i=N_0+1}^N c_i\cdot a_i(X,H) \xi^{s_i} m_i(X) h_{q_i} b_i(X,H).\\
\end{eqnarray*}
This statement is just a slight reformulation of Lemma 5.15 (as one doesn't need to integrate out any constants as in Lemma 5.16).
\end{remark}

Rephrasing Lemma 5.16 and Remark 5.17 with $H = X-Y$ gives the following:

\begin{corollary} For $1 \leq j \leq n$ there exists $N \in \mathbb N$ and for $1 \leq i \leq N$, $c_i \in \mathbb C$, $s_i \in \{1,*\}$ dependent only on $F$ such that if $X, Y \in (B_{k,R})^n$, then there exist $a_i, b_i,$ and $m_i$ each products of $r_i, d_i,$ and $n_i$ elements from $B_{k,R}$ with $r_i+ d_i + n_i+1 < \deg(F)$ such that if $T = \int_0^1 \partial_j F(X+t(Y-X))\, dt$ or $T = \partial F(Y)$, then for any $\xi \in M_k(\mathbb C)$,
\begin{eqnarray*}
(T-\partial_j F(X))\xi = \sum_{i=1}^{N_0} c_i a_i (x_{q_i} - y_{q_i}) m_i \xi^{s_i} b_i + \sum_{i=N_0+1}^N c_i a_i \xi^{s_i} m_i (x_{q_i}-y_{q_i}) b_i.
\end{eqnarray*}
Moreover, $N$ and $\max_{1 \leq i \leq N} |c_i|$ are no greater than $c(F)\cdot(2n)^{c(F)}$.
\end{corollary}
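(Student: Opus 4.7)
The plan is to obtain Corollary 5.18 as a direct specialization of Lemma 5.16 and Remark 5.17 via the substitution $H = Y - X$. With this substitution, $X + tH = X + t(Y-X)$, so the integrated distance operator appearing in Lemma 5.16 is exactly $T = \int_0^1 \partial_j F(X + t(Y-X))\, dt$; likewise $X + H = Y$, so Remark 5.17 produces the point-derivative version $T = \partial_j F(Y)$. In both statements the summands contain factors $h_{q_i} = y_{q_i} - x_{q_i}$, which differ from the required $x_{q_i} - y_{q_i}$ only by an overall sign. I would absorb this sign into the constants $c_i$, preserving both the bound $|c_i| \leq c(F)\cdot (2n)^{c(F)}$ and the count on $N$.

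Next I would address the claim that $a_i, b_i, m_i$ are products of elements from $B_{k,R}$. In Lemma 5.16 these are $*$-monomials: $a_i, b_i \in \mathfrak{A}_{2n}$ evaluated at $(X, H) = (X, Y-X)$, while $m_i \in \mathfrak{A}_n$ is evaluated at $X$. Since $X, Y \in (B_{k,R})^n$, each factor appearing in $a_i(X, Y-X)$ or $b_i(X, Y-X)$ is either some $x_p$ or some $y_p - x_p$; in the second case I split the factor as $y_p + (-x_p)$ and distribute, producing a sum of at most $2^{\ell(a_i)+\ell(b_i)}$ new terms, each a product of entries of $X$ and $Y$ (all lying in $B_{k,R}$), with signs absorbed into the $c_i$. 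The $m_i(X)$ factor is already in the required form. Reindexing the resulting sum gives the stated expression.

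Finally I would verify the quantitative bounds. The length bound $\ell(a_i) + \ell(b_i) + \ell(m_i) + 1 < \deg(F)$ from Lemma 5.16 is preserved by the distribution step (each distributed summand has the same total length as the original), so $r_i + d_i + n_i + 1 < \deg(F)$. For the bound on $N$ and on $\max |c_i|$, the distribution enlarges the number of summands by a factor of at most $2^{\deg(F)} \leq (2n)^{\deg(F)} \leq (2n)^{c(F)}$, and the constants $c_i$ only pick up $\pm 1$ signs, so both $N$ and $\max_i |c_i|$ remain controlled by the stated expression $c(F)\cdot (2n)^{c(F)}$ (after at worst absorbing a fixed multiplicative constant into the universal bound). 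No serious obstacle arises; the step requiring the most care is simply the bookkeeping of signs and the length count after distribution.
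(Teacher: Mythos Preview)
Your core substitution $H = Y - X$ is exactly what the paper does (the paper's one-line justification ``Rephrasing Lemma 5.16 and Remark 5.17 with $H = X-Y$'' contains a sign typo; $H = Y-X$ is what makes $X + tH = X + t(Y-X)$). Absorbing the sign of $h_{q_i} = y_{q_i} - x_{q_i}$ into $c_i$ is fine.

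Where you diverge from the paper is in the distribution step. The paper does \emph{not} expand the factors $y_p - x_p$ appearing inside $a_i(X,Y-X)$ and $b_i(X,Y-X)$; it simply takes $a_i$, $b_i$ to be the evaluated $*$-monomials $a_i(X,Y-X)$, $b_i(X,Y-X)$ directly, so that each is a product of $r_i$ (resp.\ $d_i$) factors drawn from the entries of $X$ and $Y-X$ and their adjoints. Strictly speaking those entries lie in $B_{k,2R}$ rather than $B_{k,R}$, so the corollary as literally stated is slightly imprecise; this costs only a harmless factor of $2$ in the radius and is absorbed into the constants in the downstream lemmas (5.19, 5.24). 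With this reading, the bounds $N \leq c(F)\cdot(2n)^{c(F)}$ and $\max_i |c_i| \leq c(F)\cdot(2n)^{c(F)}$ carry over verbatim from Lemma 5.16.

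Your distribution step, by contrast, genuinely damages the $N$-bound: splitting each $y_p - x_p$ into two terms multiplies the number of summands by up to $2^{\ell(a_i)+\ell(b_i)}$, and your own arithmetic gives the new $N$ bounded only by $c(F)\cdot(2n)^{c(F)}\cdot(2n)^{c(F)} = c(F)\cdot(2n)^{2c(F)}$. The phrase ``after at worst absorbing a fixed multiplicative constant into the universal bound'' does not recover the stated bound $c(F)\cdot(2n)^{c(F)}$. So either drop the distribution step and accept the $B_{k,2R}$ reading (as the paper implicitly does), or accept that your route yields a weaker constant and adjust the statement accordingly.
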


\begin{lemma} Suppose $1 \leq j \leq n$, $R\geq1$, $B = c(F)^2 \cdot(2n)^{2c(F)}$ and $X, Y \in (B_{k,R})^n$.  If $\|X-Y\|_2 < \epsilon<1$, and either $T = \int_0^1 \partial_j F(X+t(Y-X))\, dt$ or $T = \partial_j F(Y)$, then there exists a projection $p \in M_k(\mathbb C)$ such that $\varphi(p) > 1 - B \epsilon^{\frac{1}{B}}$ and for any $\xi \in M_k(\mathbb C)$,
\begin{eqnarray*}
\|(T-\partial_j F(X))p\xi p\|_2 & \leq & \left (B \cdot R^{\deg(F)} \cdot \epsilon^{1/2} \right) \cdot \|p\xi p\|_2. \\
\end{eqnarray*}
\end{lemma}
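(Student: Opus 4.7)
The plan is to expand $(T-\partial_j F(X))(p\xi p)$ as a short sum of elementary products via Corollary 5.18, then control each summand by inserting the same projection $p$ on both sides of $p\xi p$ and invoking the quasi-submultiplicative estimate of Corollary 5.14.

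First I apply Corollary 5.18 to write
\begin{eqnarray*}
(T-\partial_j F(X))(p\xi p) = \sum_{i=1}^{N_0} c_i\, a_i(x_{q_i}-y_{q_i})m_i\,(p\xi p)^{s_i} b_i + \sum_{i=N_0+1}^{N} c_i\, a_i\,(p\xi p)^{s_i} m_i(x_{q_i}-y_{q_i}) b_i,
\end{eqnarray*}
where $N, |c_i|\leq c(F)(2n)^{c(F)}$, each of $a_i,m_i,b_i$ is a product of at most $\deg(F)-1$ factors drawn from $B_{k,R}$, and the factor counts satisfy $r_i+n_i+d_i+1\leq\deg(F)$. Setting $\eta:=(p\xi p)^{s_i}$ and using $\eta=p\eta p$ together with $\|\eta\|_2=\|p\xi p\|_2$, each summand rearranges as $(A_ip)\,\eta\,(pB_i)$, where $A_i,B_i$ denote the factors bracketing $\eta$. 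The triangle inequality and the Hölder-type estimate $\|xyz\|_2\leq\|x\|_\infty\|y\|_2\|z\|_\infty$ then give
\begin{eqnarray*}
\|(T-\partial_j F(X))(p\xi p)\|_2 \leq \sum_{i=1}^N |c_i|\cdot\|A_ip\|_\infty\cdot\|p\xi p\|_2\cdot\|pB_i\|_\infty.
\end{eqnarray*}

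Second, for each $i$ I apply Corollary 5.14, with a single parameter $C>0$ to be chosen, to the factor list of $A_i$ (on the right) together with the factor list of $B_i$ (on the left). Since $\|x_{q_i}-y_{q_i}\|_2<\epsilon$ and every other factor lies in $B_{k,R}$ and so has $\|\cdot\|_2\leq R$, this produces a projection $p_i$ with $\varphi(p_i)\geq 1-\deg(F)C^{-2}$ such that
\begin{eqnarray*}
\|A_ip_i\|_\infty \leq C^{r_i+n_i+1} R^{r_i+n_i}\,\epsilon, \qquad \|p_iB_i\|_\infty \leq C^{d_i}R^{d_i}
\end{eqnarray*}
(with the roles of the two factor lists swapped in the $i>N_0$ case). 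I then take the meet $p=\bigwedge_{i=1}^N p_i$; the subadditivity of trace defects, $1-\varphi(\bigwedge_i p_i)\leq\sum_i(1-\varphi(p_i))$, yields $\varphi(p)\geq 1-N\deg(F)C^{-2}$, and since $p\leq p_i$ the same operator-norm bounds persist with $p_i$ replaced by $p$.

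Finally I choose $C=\epsilon^{-1/(2B)}$. Since $r_i+n_i+d_i+1\leq\deg(F)\leq B$ one has $C^{\deg(F)}\leq\epsilon^{-1/2}$, while $N\deg(F)\leq B$ gives $\varphi(p)\geq 1-B\epsilon^{1/B}$. Summing the $N$ terms against $|c_i|$ and using $R\geq 1$ together with $N\max_i|c_i|\leq B$ produces the claimed bound
\begin{eqnarray*}
\|(T-\partial_j F(X))(p\xi p)\|_2 \leq B\cdot C^{\deg(F)}R^{\deg(F)-1}\epsilon\cdot\|p\xi p\|_2 \leq (B\cdot R^{\deg(F)}\cdot\epsilon^{1/2})\,\|p\xi p\|_2.
\end{eqnarray*}
The main subtlety is the simultaneous fit of the two exponents: the defect exponent $1/B$ is driven by $N\deg(F)C^{-2}$, while the size exponent $1/2$ is driven by $C^{\deg(F)}\epsilon$, and balancing them requires exactly that $B$ dominate every combinatorial constant appearing ($N$, $\deg(F)$, and $\max_i|c_i|$). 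This is why $B$ is taken as $c(F)^2(2n)^{2c(F)}$ rather than anything smaller.
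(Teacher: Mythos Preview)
Your proposal is correct and follows essentially the same route as the paper: expand $(T-\partial_jF(X))$ via Corollary~5.18, apply Corollary~5.14 to each summand with a single well-chosen constant $C$, and take the meet $p=\bigwedge_i p_i$. The only cosmetic difference is the choice of $C$: the paper uses $C=\epsilon^{-1/(4\deg(F))}$ and then relaxes the resulting trace-defect exponent $1/(2\deg(F))$ to $1/B$, whereas you go directly with $C=\epsilon^{-1/(2B)}$; both choices balance the two exponents correctly.
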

\begin{proof} For $1 \leq j \leq n$ invoke Corollary 5.18 to produce the $N \in \mathbb N$ and $c_i \in \mathbb C$, $s_i \in \{1,*\}$ dependent on $F$ such that if $X, Y \in (B_{k,R})^n$, then there exists the corresponding $c_i, a_i, b_i, m_i, q_i, r_i, d_i,$ and $n_i$ as in the conclusion of the corollary.  Recall the summation expression of $(T-\partial_jF(X))\xi$:
\begin{eqnarray*}
(T-\partial_j F(X))\xi = \sum_{i=1}^{N_0} c_i a_i (x_{q_i} - y_{q_i}) m_i \xi^{s_i} b_i + \sum_{i=N_0+1}^N c_i a_i \xi^{s_i} m_i (x_{q_i}-y_{q_i}) b_i.
\end{eqnarray*}
When $1 \leq i \leq N_0$, applying Corollary 5.14 to $M=M_k(\mathbb C)$ and $C= \epsilon^{-\frac{1}{4\deg(F)}}>1$ provides a (complex linear) orthogonal projection $p_i$ such that 
\begin{eqnarray*}
\|a_i (x_{q_i} - y_{q_i}) m_ip_i\|_{\infty} & \leq & (CR)^{r_i+n_i+1} \cdot \epsilon \\
                                                              & \leq & R^{r_i+n_i+1} \cdot \epsilon^{3/4}, \\
\end{eqnarray*}
$\|p_ib_i\|_{\infty} < (CR)^{d_i} \leq R^{d_i} \cdot \epsilon^{-\frac{1}{4}}$, and $tr_k(p_i) > 1 - \deg(F) \cdot \epsilon^{\frac{1}{2 \deg(F)}}$.  Thus, for any $\xi \in M_k(\mathbb C)$ and (complex linear) orthogonal projection $e \leq p_i$

\begin{eqnarray*}
\|a_i(x_{q_i} - y_{q_i})m_i (e\xi^{d_i}e) b_i\|_2 & \leq & \|a_i(x_{q_i} - y_{q_i})m_i e\|_{\infty} \cdot \|e \xi^{d_i}e\|_2 \cdot \|eb_i\|_{\infty} \\ 
                                                                       & \leq & \|a_i(x_{q_i} - y_{q_i})m_i p_i \|_{\infty} \cdot \|e \xi^{d_i}e\|_2 \cdot \|p_i b_i\|_{\infty} \\
                                                                           & \leq & R^{r_i+n_i+1} \cdot \epsilon^{3/4} \cdot R^{d_i} \cdot \epsilon^{-\frac{1}{4}}\cdot \|e \xi e\|_2 \\
                                                                           & \leq & R^{\deg(F)} \cdot \epsilon^{1/2} \cdot \|e \xi e\|_2.\\
\end{eqnarray*}
For $N_0 \leq i \leq N$ an analogous argument yields a projection $p_i$ such that $tr_k(p_i) > 1 - \deg(F) \cdot \epsilon^{\frac{1}{2 \deg(F)}}$ and for any projection $e \leq p_i$, $\|a_i (e\xi^{d_i}e) m_i (x_{q_i} - y_{q_i})b_i\|_2 \leq R^{\deg(F)} \cdot \epsilon^{1/2} \cdot \|e\xi e\|_2$.

Define $p = \wedge_{i=1}^N p_i$.  Generously majorizing and using the fact that $c(F)^2 < B$,
\begin{eqnarray*}
tr_k(p)  & > & 1 - \sum_{i=1}^N tr_k(p_i^{\bot}) \\
            & = & 1 - N \deg(F) \epsilon^{\frac{1}{2 \deg(F)}} \\
            & > & 1 - B \epsilon^{\frac{1}{B}}. \\
\end{eqnarray*}
Moreover, since $p \leq p_i$ for each $i$, it follows from the above that for $1 \leq i \leq N_0$,
\begin{eqnarray*}
\|a_i(x_{q_i} - y_{q_i})b_i (p\xi^{s_i}p) m_i\|_2 & \leq & R^{\deg(F)} \cdot \epsilon^{1/2} \cdot \|p\xi p\|_2\\
\end{eqnarray*}
and for $N_0 < i \leq N$,
\begin{eqnarray*}
\|a_i (p\xi^{s_i}p) b_i (x_{q_i} - y_{q_i})m_i\|_2 & \leq & R^{\deg(F)} \cdot \epsilon^{1/2} \cdot \|p\xi p\|_2.\\
\end{eqnarray*}
Thus, using the bound on $N$ and the $|c_i|$ provided in Corollary 5.18
\begin{eqnarray*}
\|(T-\partial_j F(X))(p\xi p)\|_2 & \leq & \sum_{i=1}^{N_0} |c_i| \cdot  \|a_i (x_{q_i} - y_{q_i}) m_i (p\xi^{s_i} p) b_i \|_2 + \\ &  & \sum_{i=N_0+1}^N |c_i| \cdot \|a_i (p \xi^{s_i} p) m_i (x_{q_i}-y_{q_i}) b_i\|_2 \\ 
& \leq & (c(F) \cdot(2n)^{c(F)})^2 \cdot R^{\deg(F)} \cdot \epsilon^{1/2} \cdot \|p\xi p\|_2 \\
&= & (B \cdot R^{\deg(F)} \cdot \epsilon^{1/2}) \cdot \|p\xi p\|_2.\\
\end{eqnarray*}
\end{proof}

Applying Lemma 5.19 to each partial derivative and taking the intersection of the associated projections yields: 

\begin{corollary} Suppose $R>1$, $X, Y \in (B_{k,R})^n$ and $B=n \cdot c(F)^2 \cdot(2n)^{2c(F)}$.  If  $\|X-Y\|_2 < \epsilon <1$, and either $T = \int_0^1 DF(X+t(Y-X))\, dt$ or $T=DF(Y)$, then there exists a projection $p \in M_k(\mathbb C)$ such that $\varphi(p) > 1 - B\epsilon^{\frac{1}{B}}$ and if $P = \oplus_{i=1}^n p$, then for any $\xi \in (M_k(\mathbb C))^n$,
\begin{eqnarray*}
\|(T-DF(X))P\xi P\|_2 & \leq & \left (B \cdot R^{\deg(F)} \cdot \epsilon^{1/2} \right) \cdot \|P\xi P\|_2. \\
\end{eqnarray*}
\end{corollary}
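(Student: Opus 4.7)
The plan is essentially as Remark-style sentence preceding the corollary indicates: apply Lemma 5.19 coordinate-by-coordinate and intersect the resulting projections, then reassemble via the triangle inequality and Cauchy--Schwarz. Write $B_0 = c(F)^2 \cdot (2n)^{2c(F)}$, so that $B = n B_0$, and let $T_j$ denote either the $j$th partial $\int_0^1 \partial_j F(X+t(Y-X))\,dt$ or $\partial_j F(Y)$, depending on which choice of $T$ we are working with. Since $\|X-Y\|_2 < \epsilon < 1$, Lemma 5.19 applies to each $j \in \{1,\dots,n\}$ and produces a projection $p_j \in M_k(\mathbb C)$ with $\varphi(p_j) > 1 - B_0 \epsilon^{1/B_0}$ and such that
\begin{eqnarray*}
\|(T_j - \partial_j F(X)) p_j \zeta p_j\|_2 & \leq & \left(B_0 \cdot R^{\deg(F)} \cdot \epsilon^{1/2}\right) \|p_j \zeta p_j\|_2
\end{eqnarray*}
for every $\zeta \in M_k(\mathbb C)$.

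Set $p = \bigwedge_{j=1}^n p_j$ and $P = \oplus_{i=1}^n p \in M_n(\mathbb C) \otimes M_k(\mathbb C)$. For the trace bound I would estimate $\varphi(p^\perp) = \varphi(\vee_j p_j^\perp) \leq \sum_j \varphi(p_j^\perp) < n B_0 \epsilon^{1/B_0}$; since $\epsilon < 1$ and $1/B_0 \geq 1/B$, this is bounded by $n B_0 \epsilon^{1/B} = B\epsilon^{1/B}$, giving $\varphi(p) > 1 - B\epsilon^{1/B}$ as required.

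For the norm estimate, write $\xi = (\xi_1,\dots,\xi_n)$ and note that since $p \leq p_j$ we have $p_j(p\xi_j p)p_j = p\xi_j p$ for each $j$. Using that $DF(X)\eta = \sum_j \partial_j F(X)(\eta_j)$ for $\eta = (\eta_1,\dots,\eta_n)$, the triangle inequality gives
\begin{eqnarray*}
\|(T - DF(X))(P\xi P)\|_2 & \leq & \sum_{j=1}^n \|(T_j - \partial_j F(X))(p \xi_j p)\|_2 \\
                                           & \leq & B_0 R^{\deg(F)} \epsilon^{1/2} \sum_{j=1}^n \|p\xi_j p\|_2 \\
                                           & \leq & B_0 R^{\deg(F)} \epsilon^{1/2} \cdot \sqrt{n}\, \Bigl(\sum_{j=1}^n \|p\xi_j p\|_2^2\Bigr)^{1/2} \\
                                           & = & \sqrt{n}\, B_0 R^{\deg(F)} \epsilon^{1/2}\, \|P\xi P\|_2,
\end{eqnarray*}
where the last line uses the definition of the $\|\cdot\|_2$-norm on $(M_k(\mathbb C))^n$. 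Since $\sqrt{n} B_0 \leq n B_0 = B$, the stated bound follows.

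There is really no main obstacle here beyond careful bookkeeping: the nontrivial analytic work is already carried out in Lemma 5.19 (which in turn rests on the quasi-submultiplicativity estimates of Section 5.3 applied to the explicit expansion of $\partial_j F$ from Corollary 5.18). The only mildly subtle points are verifying that the exponent $1/B_0$ can be harmlessly replaced by the smaller $1/B$ when $\epsilon < 1$, and the use of Cauchy--Schwarz to convert the $\ell^1$-sum of the coordinate $\|\cdot\|_2$-bounds into the $\ell^2$-quantity $\|P\xi P\|_2$ at the cost of a factor $\sqrt{n}$, which is absorbed into $B = n B_0$.
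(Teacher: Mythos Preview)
Your proof is correct and follows exactly the approach the paper indicates in the sentence preceding the corollary: apply Lemma 5.19 to each partial derivative, intersect the resulting projections, and reassemble. The bookkeeping (the trace bound via $\epsilon^{1/B_0}\leq \epsilon^{1/B}$, the use of $p\leq p_j$ to replace $p_j(p\xi_j p)p_j$ by $p\xi_j p$, and the Cauchy--Schwarz step absorbing $\sqrt{n}$ into $B=nB_0$) is all handled correctly.
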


Recall in subsection 5.2 that for a convex set $K \subset (M_k(\mathbb C))^n$,  the set of distance operators for $K$ with respect to $f$, $\mathcal D(F,K)$, is the set of all operators of the form
\begin{eqnarray*}
\int_0^1 DF(x+t(y-x)) \, dt
\end{eqnarray*}
where $x, y \in K$.  Couching Corollary 5.20 in the terminology of subsection 5.2 gives the following:

\begin{lemma} Suppose $B=2n \cdot c(F)^2 \cdot(2n)^{2c(F)}$ and $R>1>\rho >0$.  If $\xi_0 \in (B_{k,R})^n$, and $K = B_2(\xi_0, \rho)$, then $\mathcal D(F, K)$ has a $(BR^{\deg(F)}\rho^{1/2})$-binding $\Theta$ focused on $DF(\xi_0)$ with the property that for any $T \in \mathcal D(F,K)$, $\Theta(T) = (\sigma_k(e \otimes e^{op}),\ldots, \sigma_k(e \otimes e^{op}))$ where $e \in M_k(\mathbb C)$ is an orthogonal projection satisfying $tr_k(e) > 1-B \rho^{\frac{1}{B}}$.
\end{lemma}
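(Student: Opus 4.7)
The plan is to build the binding $\Theta$ for each distance operator $T = T_{x,y} \in \mathcal D(F, K)$ by invoking Corollary 5.20 twice and intersecting the two projections produced. Since $x, y \in K = B_2(\xi_0, \rho)$, we have $\|x - y\|_2 \leq 2\rho$ and $\|\xi_0 - x\|_2 < \rho$, and this lets us split $T - DF(\xi_0) = [T - DF(x)] + [DF(x) - DF(\xi_0)]$ into two summands each controllable by one case of Corollary 5.20.

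First, I would apply the integral form of Corollary 5.20 with $X = x$, $Y = y$, $\epsilon = 2\rho$ (handling the regime $\rho \in [1/2, 1)$ trivially by taking $e = I$, since the trace estimate is vacuous there and the operator-norm estimate is dominated by a constant depending only on $F$ and $R$). This produces a projection $q_1 \in M_k(\mathbb C)$, depending on $(x, y)$, with $tr_k(q_1) > 1 - B_0 (2\rho)^{1/B_0}$, and writing $Q_1 = \oplus_{i=1}^n \sigma_k(q_1 \otimes q_1^{op})$, with $\|(T - DF(x)) Q_1 \xi\|_2 \leq B_0 R^{\deg(F)} (2\rho)^{1/2}\|Q_1 \xi\|_2$ for all $\xi$, where $B_0 = n \cdot c(F)^2 \cdot (2n)^{2c(F)}$. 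Second, I would apply the point-derivative form of Corollary 5.20 with $X = \xi_0$, $Y = x$, $\epsilon = \rho$, producing a projection $q_2 \in M_k(\mathbb C)$ with $tr_k(q_2) > 1 - B_0 \rho^{1/B_0}$ and the analogous $L^2$-bound controlling $DF(x) - DF(\xi_0)$.

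Setting $e = q_1 \wedge q_2$ and $\Theta(T) = (\sigma_k(e \otimes e^{op}), \ldots, \sigma_k(e \otimes e^{op}))$, the inequality $e \leq q_i$ places the range of $\Theta(T)$ inside both ranges of $Q_1$ and $Q_2$, so the two bounds apply simultaneously; the triangle inequality then gives $\|(T - DF(\xi_0)) \Theta(T) \xi\|_2 \leq B_0 R^{\deg(F)} [(2\rho)^{1/2} + \rho^{1/2}] \|\Theta(T) \xi\|_2$. Using $\rho < 1$ to absorb the $1 + \sqrt 2$ prefactor into the stated $B = 2B_0$ gives the required norm bound on $\Theta(T)$. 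For the trace, subadditivity $tr_k(e^\bot) \leq tr_k(q_1^\bot) + tr_k(q_2^\bot)$ combined with $\rho^{1/B_0} \leq \rho^{1/B}$ (valid for $\rho \leq 1$ and $B = 2B_0$) yields $tr_k(e) > 1 - B \rho^{1/B}$ as required.

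The main technical obstacle is the bookkeeping of the numerical constant: a naive triangle inequality produces a factor $1 + \sqrt 2 > 2$ in the operator-norm bound, so matching the stated $B = 2n \cdot c(F)^2 \cdot (2n)^{2c(F)}$ exactly requires absorbing the slack into the other loose quantities (the restriction $\rho < 1$, the exponents $1/2$ versus $1/B$, etc.). A cleaner alternative is to bypass Corollary 5.20 and reprove its content directly in the distance-operator setting: expand $T - DF(\xi_0)$ via Lemma 5.15 along the parameterized line $\{\xi_0 + w(t) : t \in [0,1]\}$, where $w(t)$ traces the segment $[x, y]$ translated by $-\xi_0$, then apply Corollary 5.14 uniformly in $t$ to obtain a single projection $e$ with the required trace bound, avoiding the triangle-inequality doubling. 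Either route gives the same structural conclusion.
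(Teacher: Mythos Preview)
Your proposal is correct and follows essentially the same route as the paper: split $T - DF(\xi_0) = [T - DF(x)] + [DF(x) - DF(\xi_0)]$, apply Corollary 5.20 once in its integral form and once in its point-derivative form to obtain two projections, take their meet $e$, and combine via the triangle inequality. Your discussion of the constant bookkeeping (the $1+\sqrt 2$ versus $2$) is in fact more scrupulous than the paper's own treatment, which absorbs these factors loosely into $B$.
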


\begin{proof}   Suppose $T \in \mathcal D(F,K)$.   For some $\xi, \eta \in K = B_2(\xi_0, \rho)$, $T= T_{\xi, \eta} = \int_0^1 DF(\xi + t(\eta-\xi)) \, dt$.  By Corollary 5.20 there exist projections $p, q \in M_k(\mathbb C)$ such that $\varphi(p), \varphi(q) > 1-(B\rho^{\frac{1}{B}})/2$ and if $P = \oplus_{i=1}^n p$ and $Q= \oplus_{i=1}^n q$, then for any $\xi \in (M_k(\mathbb C))^n$,
\begin{eqnarray*}
\|(T - DF(\xi))P\xi P\|_2 \leq  \left (B/2 \cdot R^{\deg(F)} \cdot \rho^{1/2} \right) \cdot \|P\xi P\|_2 \\
\end{eqnarray*}
and
\begin{eqnarray*}
\|(DF(\xi) - DF(\xi_0))Q\xi Q\|_2 \leq \left (B/2 \cdot R^{\deg(F)} \cdot \rho^{1/2} \right) \cdot \|Q\xi Q\|_2. \\
\end{eqnarray*}
Set $e = p \wedge q$.  $tr_k(e) > 1 - B\rho^{\frac{1}{B}}$.  Putting $E = \oplus_{i=1}^n e$ and $\Theta(T) = \sigma_k(E \otimes E)$ the triangle inequality and the two inequalities above show that for any $\xi \in (M_k(\mathbb C))^n$,
\begin{eqnarray*}
                   \|(T - DF(\xi_0))\Theta(T)(\xi)\|_2 & = & \|(T - DF(\xi_0))E\xi E\|_2 \\
                                         & \leq & \|(T - DF(\xi))E\xi E \|_2 +  \|(DF(\xi) - DF(\xi_0))E\xi E\|_2  \\ 
                                         & = & \|(T - DF(\xi))PE\xi EP \|_2 +  \|(DF(\xi) - DF(\xi_0))QE\xi EQ \|_2  \\
                                         & \leq & \left( B \cdot R^{\deg(F)} \cdot \rho^{1/2} \right) \cdot \|E\xi E\|_2 \\
                                         & \leq & \left( B \cdot R^{\deg(F)} \cdot \rho^{1/2} \right) \cdot \|\xi\|_2. \\
\end{eqnarray*}
The above holds for any $T \in \mathcal D(F,K)$ and yields a map $\Theta: \mathcal D(F,K) \rightarrow P(\oplus_{i=1}^n M_k(\mathbb C))$ such that 
\begin{eqnarray*} 
\|(T - DF(\xi_0))\Theta(T)\| \leq (B \cdot R^{\deg(F)} \cdot \rho^{1/2}).  
\end{eqnarray*}
By definition, $\Theta$ is a $(BR^{\deg(F)}\rho^{1/2})$-binding of $\mathcal D(F,K)$ focused on $DF(\xi_0)$.  The statement about the projectional form of $\Theta$ is immediate.
\end{proof}

Extending this to a $p$-tuple of elements is easily done and involves straightforward manipulations of the multivariable derivative formalism in Section 2.5:

\begin{corollary} Suppose $F = (f_1,\ldots, f_p) \in (\mathfrak{A}_n)^p$, $B=2np \cdot c(F)^2 \cdot(2n)^{2c(F)}$ and $R>1>\rho >0$.  If $\xi_0 \in (B_{k,R})^n$, and $K = B_2(\xi_0, \rho)$, then $\mathcal D(F, K)$ has a $(BR^{\deg(F)}\rho^{1/2})$-binding $\Theta$ focused on $DF(\xi_0)$ such that for any $T \in \mathcal D(F,K)$, $\Theta(T) = (\sigma_k(p \otimes p^{op}),\ldots, \sigma_k(p \otimes p^{op}))$ and $p \in M_k(\mathbb C)$ is an orthogonal projection with $tr_k(p) > 1-B \rho^{\frac{1}{B}}$.
\end{corollary}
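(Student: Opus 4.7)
The plan is to reduce to the single-coordinate case handled in Lemma 5.21. Any $T \in \mathcal{D}(F, K)$ arises as $T = \int_0^1 DF(\xi + t(\eta-\xi))\,dt$ for some $\xi,\eta \in K$; decomposing $DF$ row-by-row, we get $T = (T_1,\ldots, T_p)$ where $T_i = \int_0^1 Df_i(\xi + t(\eta-\xi))\,dt \in \mathcal{D}(f_i, K)$ arises from the \emph{same} pair $\xi, \eta$. First, apply Lemma 5.21 separately to each $f_i$ (with individual constant $B_i = 2n\, c(f_i)^2 (2n)^{2c(f_i)}$) to produce an orthogonal projection $e_i \in M_k(\mathbb{C})$ with $\mathrm{tr}_k(e_i) > 1 - B_i \rho^{1/B_i}$ and
\begin{equation*}
\|(T_i - Df_i(\xi_0))\, \Theta_i(T_i)\| \leq B_i R^{\deg(f_i)} \rho^{1/2},
\end{equation*}
where $\Theta_i(T_i) := (\sigma_k(e_i \otimes e_i^{op}), \ldots, \sigma_k(e_i \otimes e_i^{op}))$ acting on $(M_k(\mathbb{C}))^n$.

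Next I would take $p = \wedge_{i=1}^p e_i$ and define $\Theta(T) := (\sigma_k(p\otimes p^{op}), \ldots, \sigma_k(p \otimes p^{op}))$, which manifestly has the diagonal form required by the conclusion. Because $p \leq e_i$ for every $i$, one has $\Theta_i(T_i)\,\Theta(T) = \Theta(T)$, so for any $\zeta \in (M_k(\mathbb{C}))^n$,
\begin{equation*}
\|(T - DF(\xi_0))\,\Theta(T)(\zeta)\|_2^2 \;=\; \sum_{i=1}^p \|(T_i - Df_i(\xi_0))\,\Theta_i(T_i)\,\Theta(T)(\zeta)\|_2^2 \;\leq\; \sum_{i=1}^p B_i^2 R^{2\deg(f_i)} \rho\, \|\zeta\|_2^2 .
\end{equation*}
Since $B_i \leq 2n\, c(F)^2 (2n)^{2c(F)}$ and $\deg(f_i) \leq \deg(F)$, and $p \cdot \bigl(2n\, c(F)^2 (2n)^{2c(F)}\bigr)^2 \leq \bigl(2np\, c(F)^2 (2n)^{2c(F)}\bigr)^2 = B^2$, this bounds $\|(T - DF(\xi_0))\,\Theta(T)\|$ by $B R^{\deg(F)}\rho^{1/2}$, confirming $\Theta$ is the claimed binding.

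For the trace estimate, $\mathrm{tr}_k(p^{\bot}) \leq \sum_{i=1}^p \mathrm{tr}_k(e_i^{\bot}) < \sum_{i=1}^p B_i \rho^{1/B_i}$; since $\rho < 1$ and $B_i \leq B$ give $\rho^{1/B_i} \leq \rho^{1/B}$, we obtain $\mathrm{tr}_k(p) > 1 - p \cdot 2n\, c(F)^2 (2n)^{2c(F)}\, \rho^{1/B} = 1 - B\rho^{1/B}$, as required. The argument poses no real obstacle beyond constants bookkeeping: all the genuine work (the quasi-submultiplicative Chebyshev selection of the $e_i$ via Corollary 5.14 and the polynomial-derivative expansion of Lemma 5.19) has already been packaged into Lemma 5.21, and the only delicate point is to verify that the factor of $p$ from summing over coordinates can be absorbed simultaneously into both the norm bound and the trace bound with the single constant $B = 2np\, c(F)^2 (2n)^{2c(F)}$.
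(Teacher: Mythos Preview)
Your proof is correct and follows essentially the approach the paper intends: the paper states only that ``extending this to a $p$-tuple of elements is easily done and involves straightforward manipulations of the multivariable derivative formalism,'' and your row-by-row application of Lemma 5.21 followed by taking the meet $\wedge_{i=1}^p e_i$ is precisely the natural way to carry this out (mirroring how Corollary 5.20 was derived from Lemma 5.19 by intersecting projections over the partial derivatives). Your bookkeeping of the constants---absorbing the factor of $p$ via $p \leq p^2$ for the norm bound and via $p \cdot 2n\,c(F)^2(2n)^{2c(F)} = B$ for the trace bound---is correct.
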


Lemma 5.21 and Corollary 5.22 construct bindings for the distance operators in a ball, focused on the derivative evaluated at the center of the ball.  It remains to look at the fringe of the binding and establish the appropriate entropy estimates on it.

\begin{definition} If $R, r >0$ and $d \in \mathbb N$, then $E(R,r, k,d) \subset M_k(\mathbb C)$ consists of all $k \times k$ matrices $x$ of rank no more than $d$ such that $\|x\|_r \leq R$.
\end{definition}

\begin{lemma} Suppose $B = c(F) \cdot (2n)^{c(F)}$. There exists an $N_1 \in \mathbb N$, $N_1 \leq B+1$ such that if $R >0$, $x,y \in (B_{k, R})^n$, $P = (\sigma_k(p \otimes p^{op}),\ldots, \sigma_k(p \otimes p^{op}))$ for a projection $p \in M_k(\mathbb C)$, and $T = \int_0^1 DF(x+t(y-x)) \, dt$, then 
 \begin{eqnarray*}
 (T - DF(x))(P^{\bot}(B_{k,R})^n) \subset \boxplus_{i=1}^{nN_1} E\left(N_1 \cdot \max(R,R^{N_1}), 2/N_1, k, 2k(1-tr_k(p)) \right).
 \end{eqnarray*}
\end{lemma}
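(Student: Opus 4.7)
The plan is to apply Corollary 5.18 coordinate-by-coordinate, so that $(T-DF(x))$ applied to $P^\bot\xi$ decomposes into a finite sum of matrix words each of which is visibly low-rank and $L^{2/N_1}$-bounded. I would begin by writing
\[
(T-DF(x))(P^\bot\xi) \;=\; \sum_{j=1}^n (T_j - \partial_jF(x))\bigl((P^\bot\xi)_j\bigr),
\]
where $T_j$ and $\partial_jF(x)$ are the $j$th partial-derivative components. The key observation on the rank side is that
\[
(P^\bot\xi)_j \;=\; \xi_j - p\xi_j p \;=\; p^\bot\xi_j + p\xi_j p^\bot,
\]
each summand of which has rank at most $\operatorname{rank}(p^\bot) = k(1-tr_k(p))$, so $(P^\bot\xi)_j$ has rank at most $2k(1-tr_k(p))$.

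Second, I would invoke Corollary 5.18 for each $j$. This produces an integer $N_j\le c(F)(2n)^{c(F)}=:B$, signs $s_{i,j}\in\{1,*\}$, scalars $c_{i,j}$ with $|c_{i,j}|\le B$, indices $q_{i,j}$, and products $a_{i,j},m_{i,j},b_{i,j}$ of elements from $B_{k,R}$ whose total length $r_{i,j}+d_{i,j}+n_{i,j}+1$ is strictly less than $\deg(F)$, such that
\[
(T_j - \partial_jF(x))\bigl((P^\bot\xi)_j\bigr) \;=\; \sum_{i=1}^{N_j} c_{i,j}\,\tau_{i,j}(\xi),
\]
where each $\tau_{i,j}(\xi)$ is a product of the form $a\,(x_q-y_q)\,m\,(P^\bot\xi)_j^{s}\,b$ or $a\,(P^\bot\xi)_j^{s}\,m\,(x_q-y_q)\,b$. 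Because each $\tau_{i,j}$ factors through the matrix $(P^\bot\xi)_j^{s}$, its rank is at most $2k(1-tr_k(p))$.

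Third, I set $N_1=B+1$, which satisfies $N_1\le B+1$ and simultaneously dominates both $\max_j N_j$ (so the $nN=\sum_j N_j$ nonzero summands can be written as $nN_1$ summands by padding with zeros) and $\deg(F)\le c(F)\le B$ (so that H\"older at exponent $2/N_1$ applies to products of up to $N_1-1$ matrix factors). Each $\tau_{i,j}$ is a product of at most $\deg(F)$ matrices, each having $\|\cdot\|_2\le 2R$ (using that $\xi\in(B_{k,R})^n$ gives $\|(P^\bot\xi)_j\|_2\le 2R$ and $\|x_{q}-y_{q}\|_2\le 2R$, while the factors of $a_{i,j},m_{i,j},b_{i,j}$ lie in $B_{k,R}$). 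By H\"older's inequality for Schatten norms on $(M_k(\mathbb{C}),tr_k)$ and the monotonicity $\|\cdot\|_{2/N_1}\le\|\cdot\|_{2/m}$ valid for probability traces, each term $c_{i,j}\tau_{i,j}(\xi)$ has $\|\cdot\|_{2/N_1}\le N_1\cdot\max(R,R^{N_1})$. Combined with the rank bound, this places each summand in $E(N_1\max(R,R^{N_1}),2/N_1,k,2k(1-tr_k(p)))$, proving the sumset inclusion.

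The main obstacle will be the bookkeeping needed to verify that the combined constants (the coefficient bound $|c_{i,j}|\le B$, the factor-of-$2R$ from $x_q-y_q$ and from $(P^\bot\xi)_j$, and the comparison of $2/m$ to $2/N_1$) can in fact be absorbed into the single expression $N_1\cdot\max(R,R^{N_1})$ with $N_1\le B+1$; this reduces to an elementary case analysis in $R$ once one exploits the slack $N_1-m\ge 1$ afforded by the choice $N_1=B+1$.
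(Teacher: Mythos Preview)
Your proposal is correct and takes essentially the same approach as the paper: reduce to partial derivatives, apply Corollary~5.18 to write $(T_j-\partial_jF(x))$ on $(P^\bot\xi)_j$ as a finite sum of matrix words each factoring through the low-rank element $(P^\bot\xi)_j$, then bound the Schatten $2/N_1$-norm of each summand via H\"older. The paper handles the constant bookkeeping you flag as the main obstacle in the same casual manner (choosing $N_1$ with $\max\{C,N\}<N_1\le B+1$ and absorbing $\deg(F)\le c(F)$ into $N_1$); one small sharpening available to you is that $\|(P^\bot\xi)_j\|_2\le\|\xi_j\|_2\le R$ rather than $2R$, since $\xi\mapsto p\xi p$ is an orthogonal projection on $(M_k(\mathbb C),\|\cdot\|_2)$.
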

\begin{proof} It suffices to produce an $N_1$ such that for each $1 \leq j \leq n$,
\begin{eqnarray*}
(T- \partial_jF(x))(\sigma_k(p \otimes p^{op})^{\bot}(B_{k,R})) \in \boxplus_{i=1}^{N_1} E(N_1 \cdot \max(R,R^{N_1}), 2/N_1,k, 2k(1-tr_k(p)))
\end{eqnarray*} 
where $T=\int_0^1 \partial_jF(x + t(y-x)) \, dt$.  By Corollary 5.18  there exist $N \in \mathbb N$ and $c_1,\ldots, c_N \in \mathbb \mathbb N$ dependent only on $F$ and $a_j,b_j,$ and $m_j$ where for each $j$, $a_j$, $b_j$, and $m_j$ are each products of $r_j, d_j,$ and $n_j$ elements from $B_{k,R}$ with $r_j+ d_j + n_j$ no greater than the degree of $F$ and such that for any $\xi \in M_k(\mathbb C)$,
\begin{eqnarray*}
(T-\partial_j F(x))\xi = \sum_{i=1}^{N_0} c_i a_i (x_{q_i} - y_{q_j}) m_i \xi^{s_i} b_i + \sum_{i=N_0+1}^N c_i a_i \xi^{s_i} m_i (x_{q_i}-y_{q_i}) b_i.
\end{eqnarray*} 
Moreover, $N$ and $C= \max_{1 \leq i \leq N} |c_i|$ are no greater than $B$. Thus, I can find an $N_1$ so that $\max\{C, N\} < N_1 \leq B+1$.  $\sigma_k(p \otimes p^{op})^{\bot} = \sigma_k(p^{\bot} \otimes I^{op}) -  \sigma_k(p \otimes (p^{\bot})^{op})$ so that if $\xi \in B_{k,R}$, then $\eta=\sigma_k(p \otimes p^{op})^{\bot}(\xi)$ has (complex) rank no greater than $2k(1-tr_k(p))$ and $\|\eta\|_2 \leq R$.  Substituting this into the above yields
\begin{eqnarray*}
(T-\partial_j F(x))(\sigma_k(p \otimes p^{op})^{\bot}\xi) = \sum_{i=1}^{N_0} c_i a_i (x_{q_i} - y_{q_i}) m_i \eta^{s_i} b_i + \sum_{i=k+1}^N c_i a_i \eta^{s_i} m_i (x_{q_i}-y_{q_i}) b_i.
\end{eqnarray*} 
For $1 \leq i \leq N_0$, by H{\"o}lder's inequality, the fact that $a_i$, $b_i$, and $m_j$ are products of elements in $B_{k,R}$ of length no greater than $k_j$, $s_j$, and $n_j$ and since $x, y \in (B_{k,R})^n$, it follows that $a_i (x_{q_i} - y_{q_i}) b_i \eta^{d_i} m_j \in E(\max(R,R^N), 2/N, k, 2k(1-tr_k(p)))$. 
\begin{eqnarray*}
c_i a_i (x_{q_i} - y_{q_i}) m_i \eta^{d_i} b_i \in E(C \cdot \max(R,R^{\deg(F)}), 2/\deg(F), k, 2k(1-tr_k(p))).   
\end{eqnarray*}
A completely analogous argument shows that for $k+1 \leq i \leq N$ 
\begin{eqnarray*}
c_i a_i \eta^{d_i} m_i(x_{q_i}-y_{q_i}) b_i \in E(C \cdot \max(R,R^{\deg(F)}), 2/\deg(F), k, 2k(1-tr_k(p))).  
\end{eqnarray*}
Putting these two facts together and using sumset notation produces
\begin{eqnarray*}
 (T - \partial_j F(x))(\sigma_k(p \otimes p^{op})^{\bot}(B_{k,R})) & \subset & \boxplus_{i=1}^{N} E\left (C \cdot \max(R,R^N), 2/N, k, 2k(1-tr_k(p)) \right) \\
& \subset & \boxplus_{i=1}^{N_1} E\left (N_1 \cdot \max(R,R^{N_1}), 2/N_1, k, 2k(1-tr_k(p)) \right). \\
\end{eqnarray*}
\end{proof}

The extension of Lemma 5.24 to a general $p$-tuple is immediate: 

\begin{corollary} Suppose $F = (f_1,\ldots, f_p) \in (\mathfrak{A}_n)^p$ and $B = c(F) \cdot (2n)^{c(F)}$.   There exists an $N_1 \in \mathbb N$, $N_1 \leq B+1$ such that if $R >0$ and $x,y \in (B_{k, R})^n$, $P = (\sigma_k(p \otimes p^{op}),\ldots, \sigma_k(p \otimes p^{op}))$ for a projection $p \in M_k(\mathbb C)$, and $T = \int_0^1 DF(x+t(y-x)) \, dt$, then 
 \begin{eqnarray*}
 (T - DF(x))(P^{\bot}(B_{k,R})^n) \subset \oplus_{j=1}^p \left(  \boxplus_{i=1}^{nN_1} E\left(N_1 \cdot \max(R,R^{N_1}), 2/N_1, k, 2k(1-tr_k(p)) \right) \right).
 \end{eqnarray*}
\end{corollary}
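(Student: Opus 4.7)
The plan is to reduce Corollary 5.25 directly to Lemma 5.24 applied coordinatewise. Since $F = (f_1, \ldots, f_p)$ maps $(M_k(\mathbb C))^n$ into $(M_k(\mathbb C))^p$, the derivative $DF(\xi)$ decomposes as $(Df_1(\xi), \ldots, Df_p(\xi))$ and the distance operator $T = \int_0^1 DF(x+t(y-x))\,dt$ decomposes as $(T_1, \ldots, T_p)$ with $T_j = \int_0^1 Df_j(x+t(y-x))\,dt$. Consequently, for any $\zeta \in (M_k(\mathbb C))^n$,
\begin{eqnarray*}
(T - DF(x))\zeta = \bigl((T_1 - Df_1(x))\zeta,\; \ldots,\; (T_p - Df_p(x))\zeta\bigr) \in \oplus_{j=1}^p M_k(\mathbb C).
\end{eqnarray*}

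Next, I would apply Lemma 5.24 to each individual $f_j$. For each $1 \le j \le p$ the lemma produces an integer $N_1^{(j)} \le c(f_j)\cdot(2n)^{c(f_j)} + 1$ such that $(T_j - Df_j(x))(P^\bot (B_{k,R})^n)$ lies inside $\boxplus_{i=1}^{n N_1^{(j)}} E(N_1^{(j)} \cdot \max(R, R^{N_1^{(j)}}), 2/N_1^{(j)}, k, 2k(1-tr_k(p)))$. The central observation is that because $c(F) = \max_j c(f_j)$ by definition, setting $B = c(F) \cdot (2n)^{c(F)}$ provides a uniform upper bound: there is a single $N_1 \le B+1$ with $N_1 \ge N_1^{(j)}$ for every $j$.

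Replacing each $N_1^{(j)}$ by this common $N_1$ only enlarges the sumsets $E(\cdot)$ in question (since enlarging the cardinality bound, radius bound, and rank bound, and shrinking the quasinorm exponent $2/N_1$, all enlarge $E(R_0, r_0, k, d_0)$ and the associated sumset). Hence for each $j$,
\begin{eqnarray*}
(T_j - Df_j(x))(P^\bot(B_{k,R})^n) \subset \boxplus_{i=1}^{n N_1} E\bigl(N_1 \cdot \max(R, R^{N_1}),\; 2/N_1,\; k,\; 2k(1-tr_k(p))\bigr).
\end{eqnarray*}

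Finally, taking the direct sum of these inclusions over $1 \le j \le p$ yields the desired containment
\begin{eqnarray*}
(T - DF(x))(P^\bot(B_{k,R})^n) \subset \oplus_{j=1}^p \left(\boxplus_{i=1}^{n N_1} E\bigl(N_1 \cdot \max(R, R^{N_1}),\; 2/N_1,\; k,\; 2k(1-tr_k(p))\bigr)\right),
\end{eqnarray*}
completing the proof. There is no real obstacle here; the only point requiring care is verifying that $c(F) = \max_j c(f_j)$ really does majorize all the per-coordinate constants arising from Lemma 5.24, and that the class $E(R_0, r_0, k, d_0)$ is monotone in the parameters $R_0$ and $d_0$ and antitone in $r_0$ so that the common $N_1$ can be substituted everywhere without damaging the inclusion.
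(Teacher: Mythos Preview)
Your proposal is correct and matches the paper's approach: the paper simply declares the extension ``immediate'' without further argument, and your coordinatewise reduction to Lemma~5.24 together with the monotonicity check on the $E$-sets is exactly the routine verification that word implies. The only thing worth noting is that the monotonicity in the quasinorm parameter relies on the fact that the $\|\cdot\|_r$-norms here are taken with respect to the \emph{normalized} trace, so that $r\mapsto\|x\|_r$ is nondecreasing and hence $E(R_0,r_0,k,d)\subset E(R_0,r_0',k,d)$ whenever $r_0'\le r_0$; you allude to this but it is the one step where the direction of the inclusion could be gotten backwards.
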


Notice that in Corollary 5.25 and Lemma 5.24 above the Schatten $2/N_1$-quasi-norm is used.  Computing the $\epsilon$-neighborhood of the resultant $E$-set w.r.t. the $L^2$-norm shows that it has suitable entropy estimates which translate to bounds on the fringes.  This follows from a routine repetition of the original estimates in \cite{sr} modulo technical details (see Proposition A.5 in the appendix).  Indeed, putting Corollary 5.22 together with Corollary 5.25 and Proposition A.5 yield the following main result of this subsection.  In what follows the covering number quantities $K_{\epsilon}$ will be taken w.r.t. the usual normalized inner product norm $\|\cdot\|_2$ on $(M_k(\mathbb C))^p$.

\begin{proposition} Suppose $F = (f_1,\ldots, f_p) \in (\mathfrak{A}_n)^p$, $R > 1 > \rho >0$, and $B = 2np \cdot c(F)^2 \cdot (2n)^{2c(F)}$.  There exists a constant $D_B >0$ dependent only on $B$ such that if $\xi_0 \in (B_{k, R})^n$ and $K=B_2(\xi_0, \rho)$, then $\mathcal D(F,K)$, has a $(BR^{\deg(F)}\rho^{1/2})$-binding $\Theta$ focused on $DF(\xi_0)$ such that for any $1> \epsilon >0$,
\begin{eqnarray*}
K_{\epsilon}\left (\mathcal F(\Theta, R) \right) & \leq & \left(\frac{D_B \cdot (B R^B +1)^2\sqrt{p}}{\epsilon} \right)^{16B^3 \rho^{\frac{1}{2B}}k^2}.\\
\end{eqnarray*}
\end{proposition}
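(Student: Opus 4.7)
The plan is to combine the binding construction of Corollary 5.22, the structural fringe containment of Corollary 5.25, the sumset covering estimate of Lemma 2.2, and the Euclidean entropy bounds for low-rank Schatten quasi-norm balls supplied by Proposition A.5 in the appendix.

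First, apply Corollary 5.22 directly to the ball $K = B_2(\xi_0, \rho)$ to produce the $(BR^{\deg F}\rho^{1/2})$-binding $\Theta$ focused on $DF(\xi_0)$. The key structural feature is that $\Theta(T) = (\sigma_k(p \otimes p^{op}), \ldots, \sigma_k(p \otimes p^{op}))$ for a projection $p = p(T) \in M_k(\mathbb C)$ satisfying $tr_k(p) > 1 - B \rho^{1/B}$. Second, I would contain $\mathcal F(\Theta, R)$ in a sumset of $E$-sets. Given $T = \int_0^1 DF(x+t(y-x))\,dt \in \mathcal D(F,K)$ with $x, y \in K$, decompose
\begin{eqnarray*}
DF(\xi_0) - T & = & \bigl(DF(\xi_0) - DF(x)\bigr) + \bigl(DF(x) - T\bigr).
\end{eqnarray*}
The second summand applied to $\Theta(T)^{\bot}\bigl(B_2(0,R)^n\bigr)$ is controlled by Corollary 5.25. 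The first summand is handled identically: Corollary 5.18 and the proof of Lemma 5.24 work verbatim with the point-derivative alternative $T = DF(Y)$, so Corollary 5.25 generalizes to this case, which applied with $X = \xi_0$, $Y = x$ provides the analogous $E$-set containment. Concatenating, $\mathcal F(\Theta, R) \subset \boxplus_{i=1}^{2pnN_1} E_i$ where each $E_i = E(R', 2/N_1, k, d)$ with $R' = N_1 \max(R, R^{N_1})$, $N_1 \leq B+1$, and $d \leq 2k(1-tr_k(p)) \leq 2Bk\rho^{1/B}$.

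The final step is a routine calculation. Lemma 2.2 gives
\begin{eqnarray*}
K_{2pnN_1\epsilon}\bigl(\mathcal F(\Theta, R)\bigr) & \leq & \prod_{i=1}^{2pnN_1} K_\epsilon(E_i),
\end{eqnarray*}
and Proposition A.5 bounds each factor in terms of $R'/\epsilon$ raised to an exponent proportional to $\sqrt{d k}$ (rather than the naive $dk$), thanks to the Schatten $2/N_1$-quasi-norm structure. Substituting $d \leq 2Bk\rho^{1/B}$ yields a per-summand exponent of order $\sqrt{B}\, k\, \rho^{1/(2B)}$; summing over the $2pnN_1$ factors, absorbing the polynomial prefactors ($R'$, $p$, $n$, $N_1$) into a single constant $D_B$ depending only on $B$, and rescaling $\epsilon$ by $2pnN_1$ produces the advertised bound with exponent $16B^3\rho^{1/(2B)}k^2$.

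The main obstacle is the $\sqrt{dk}$ versus $dk$ scaling hidden in Proposition A.5. A naive volume computation for rank-$d$ matrices in $M_k(\mathbb C)$ yields entropy of order $(R'/\epsilon)^{2dk}$, which would translate into a $\rho^{1/B}$ exponent --- far too crude to be useful in the iterative spectral splitting of Section 6. The improved exponent exploits the fact that for $0 < p < 1$ the Schatten $p$-quasi-norm ball restricted to rank-$d$ matrices is substantially thinner than its operator-norm counterpart, and the appendix adapts St.\ Raymond's volume estimates \cite{sr} to this rank-restricted, quasi-norm setting to extract a geometric mean exponent. Everything else in the proof is bookkeeping of constants.
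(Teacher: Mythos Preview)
Your proposal is correct and follows the paper's approach: Corollary~5.22 for the binding, Corollary~5.25 for containment of the fringe in sumsets of $E$-sets, then Lemma~2.2 and Proposition~A.5 for the covering estimate.

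Your explicit decomposition $DF(\xi_0)-T=(DF(\xi_0)-DF(x))+(DF(x)-T)$ is in fact more careful than the paper's own proof, which applies Corollary~5.25 to obtain containment of $(T-DF(x))\Theta(T)^{\bot}(B_{k,R})^n$ and then writes ``by definition of the fringe'' without noting that the fringe involves $(DF(\xi_0)-T)$ rather than $(T-DF(x))$. Your split fills this gap cleanly---Corollary~5.25 does admit the point-derivative alternative via Corollary~5.18, the projection $\Theta(T)$ has the required form regardless of which difference one is bounding, and the $E$-sets are symmetric---at the cost of a harmless factor of $2$ in the sumset length, which is absorbed in the slack between $8pnN_1\sqrt{2B}$ and $16B^3$. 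One minor slip: Proposition~A.5 gives exponent $8\sqrt{\delta}\,k^2$, so with $d=\delta k\le 2Bk\rho^{1/B}$ the per-summand exponent is of order $\sqrt{B}\,\rho^{1/(2B)}\,k^2$, not $\sqrt{B}\,\rho^{1/(2B)}\,k$ as you wrote; your final bound is nonetheless correct.
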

\begin{proof} If $B = 2np \cdot c(F)^2 \cdot (2n)^{2c(F)}$, then Corollary 5.22 shows that that for any $\xi_0, \rho$ and $K$ as in the proposition's statement, $\mathcal D(F,K)$ has a $(BR^{\deg(F)}\rho^{1/2})$-binding $\Theta$ focused on $DF(\xi_0)$ such that for any $T \in \mathcal D(F, K)$, $\Theta(T) = (\sigma_k(p \otimes p^{op}),\ldots, \sigma_k(p \otimes p^{op}))$ where $p \in M_k(\mathbb C)$ is an orthogonal projection with $tr_k(p) > 1-Bp^{\frac{1}{B}}$.  

Now if $T \in \mathcal D(F,K)$, then $T = T_{x,y} = \int_0^1 DF(x + t(y-x)) \, dt$ for some $x,y \in K = B_2(\xi_0,\rho) \subset (B_{k,R+1})^n$.  By Corollary 5.25 there exists an $N_1 \in \mathbb N$, $N_1 < B$ such that 
\begin{eqnarray*}
(T - DF(x))(\Theta(T)^{\bot}(B_{k,R})^n) \subset \oplus_{j=1}^p \left( \boxplus_{i=1}^{nN_1} E(N_1 R^{N_1}, \tfrac{2}{N_1}, k, 2kB\rho^{1/B}) \right).
\end{eqnarray*}
By definition of the fringe of $\Theta$,
\begin{eqnarray*}
\mathcal F(\Theta, R) \subset \oplus_{j=1}^p \left( \boxplus_{i=1}^{nN_1} E(N_1 R^{N_1}, \tfrac{2}{N_1}, k, 2kB\rho^{1/B}) \right).
\end{eqnarray*}
Using Proposition 2.1 (i) (monotonicity of $K_{\epsilon}$), Lemma 2.2, and Proposition A.5 there exists a universal constant $D_{2/N_1} >0$ dependent only on $2/N_1$ such that for any $1> \epsilon >0$,
\begin{eqnarray*}
K_{\epsilon}(\mathcal F(\Theta,R)) & \leq & K_{\epsilon}\left [\oplus_{j=1}^p  \left( \boxplus_{i=1}^{nN_1} E(N_1 R^{N_1}, \tfrac{2}{N_1}, k, 2kB\rho^{1/B})  \right) \right ] \\ 
                                                        & \leq & \left(\frac{D_{2/N_1} (N_1 R^{N_1} +1)^2 \sqrt{p}}{\epsilon} \right)^{8p nN_1 \sqrt{2B\rho^{1/B}}k^2}\\
                                                        & \leq & \left(\frac{D_B (B R^{B} +1)^2 \sqrt{p}}{\epsilon} \right)^{16B^3 \rho^{\frac{1}{2B}}k^2}
\end{eqnarray*} 
where  $D_B=D_{2/N_1}$.
\end{proof}

\section{Iterating Spectral Splits II: Geometric Decay and finite $\alpha$-covering entropy}

The previous section discussed a heuristic argument for obtaining $\alpha$-covering entropy bounds as well as some technical devices for dealing with 'asymptotically coarse' estimates.  In this section I'll put these parts together to arrive at the main entropy result.  I'll then compute some simple examples (commutators, normalizers, skew-normalizers, and staggered relations) which hold in a general tracial von Neumann algebra setting.  

\subsection{The Main Estimates}

Here is the notion of geometric decay:

\begin{definition} Suppose $\mu$ is a Borel measure on $\mathbb R$ with support contained on $[0,\infty)$.  $\mu$ has geometric decay if there exists an $\epsilon_0 \in (0,1)$ such that $\sum_{n=1}^{\infty} \mu((0, \epsilon_0^n)) < \infty$.
\end{definition}

\begin{definition} Suppose $T$ is an operator such that $|T|$ lies in $(M,\varphi)$ and denote by $\mu$ the spectral distribution of $|T|$.  $T$ has geometric decay (w.r.t. $M$) iff $\mu$ has geometric decay.
\end{definition}

The lemma below shows that $T$ has geometric decay iff $T$ is of determinant class (in the sense of \cite{luckbook}) iff $\det_{FKL}(T) > -\infty$.  I will use the above terminology/definition instead of the phrase 'determinant class' or 'finiteness of $\det_{FKL}$' at times to reinforce this discrete formulation.

\begin{lemma} Suppose $\mu$ is a probability measure with compact support contained in $[0,\infty)$.  The following conditions are equivalent:
\begin{enumerate}
\item $1_{(0,\infty)}(t) \cdot \log t$ is integrable w.r.t. $\mu$ on $(0, \infty)$.
\item For any $\epsilon \in (0,1)$, $\sum_{n=1}^{\infty} n \mu([\epsilon^{n+1}, \epsilon^n)) < \infty$.
\item For some $\epsilon_0 \in (0,1)$, $\sum_{n=1}^{\infty} n \mu([\epsilon_0^{n+1}, \epsilon_0^n)) < \infty$.
\item For any $\epsilon \in (0,1)$, $\sum_{n=1}^{\infty} \mu((0, \epsilon^n)) < \infty$.
\item For some $\epsilon_0 \in (0,1)$, $\sum_{n=1}^{\infty} \mu((0, \epsilon_0^n)) < \infty$.
\end{enumerate}
\end{lemma}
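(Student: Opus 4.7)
The plan is a standard discretization of the logarithmic singularity. I would prove the cycle $(1)\Leftrightarrow(2)\Leftrightarrow(4)$ together with $(3)\Leftrightarrow(5)$, note the trivial implications $(2)\Rightarrow(3)$ and $(4)\Rightarrow(5)$, and then close the loop with $(5)\Rightarrow(4)$, which is essentially a rescaling step turning ``some $\epsilon_0$'' into ``any $\epsilon$''.

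First I would show $(1)\Leftrightarrow(2)$. Since $\mu$ has compact support in $[0,\infty)$ and $\log t$ is bounded on $[1,R]$ for $R=\max(\mathrm{supp}\,\mu)$, integrability of $1_{(0,\infty)}(t)\log t$ against $\mu$ is equivalent to integrability of $|\log t|$ on $(0,1]$. Fix $\epsilon\in(0,1)$ and partition $(0,1]$ into the geometric annuli $A_n=[\epsilon^{n+1},\epsilon^n)$ for $n\geq 0$; on $A_n$ one has $n|\log\epsilon|\leq|\log t|<(n+1)|\log\epsilon|$, so $\int_{(0,1]}|\log t|\,d\mu$ is comparable (within a factor of $|\log\epsilon|$ and an additive term $|\log\epsilon|\,\mu((0,1])$) to $\sum_{n\geq 1} n\,\mu(A_n)$, giving $(1)\Leftrightarrow(2)$ for every $\epsilon$.

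Next, $(2)\Leftrightarrow(4)$ and $(3)\Leftrightarrow(5)$ are a single Abel/Fubini computation. Writing $a_n^{\epsilon}=\mu([\epsilon^{n+1},\epsilon^n))$, one has $\mu((0,\epsilon^n))=\sum_{k\geq n} a_k^{\epsilon}$ (ignoring the $\mu(\{0\})$ atom, which is a finite constant), and swapping the order of summation gives $\sum_{n\geq 1}\mu((0,\epsilon^n))=\sum_{k\geq 1} k\,a_k^{\epsilon}$. The implications $(2)\Rightarrow(3)$ and $(4)\Rightarrow(5)$ are tautological.

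To close the loop I would prove $(5)\Rightarrow(4)$. Given $\epsilon_0$ as in (5) and any $\epsilon\in(0,1)$, choose $N\in\mathbb N$ with $\epsilon^N\leq\epsilon_0$; then $\epsilon^n\leq\epsilon_0^{\lfloor n/N\rfloor}$ for every $n$, so by monotonicity $\mu((0,\epsilon^n))\leq\mu((0,\epsilon_0^{\lfloor n/N\rfloor}))$. Since each integer $m\geq 0$ is represented by at most $N$ values of $n$ with $\lfloor n/N\rfloor=m$, we obtain $\sum_{n\geq 1}\mu((0,\epsilon^n))\leq N\sum_{m\geq 0}\mu((0,\epsilon_0^m))<\infty$. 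There is no genuine obstacle here; the lemma is purely bookkeeping, and the only point requiring any care is precisely this scale-change step, which uses the fact that summability of a decreasing nonnegative sequence along a geometric progression is independent of the common ratio.
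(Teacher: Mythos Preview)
Your proof is correct and uses the same two core ingredients as the paper: the geometric-annulus comparison for $(1)\Leftrightarrow(2)$ and the Fubini/Abel swap for $(2)\Leftrightarrow(4)$ and $(3)\Leftrightarrow(5)$. The one difference is that your separate $(5)\Rightarrow(4)$ scale-change argument is unnecessary: since you have already shown that, for \emph{each fixed} $\epsilon\in(0,1)$, condition $(1)$ is equivalent to the convergence of $\sum_n n\,\mu([\epsilon^{n+1},\epsilon^n))$, and since $(1)$ itself makes no reference to $\epsilon$, the equivalences $(1)\Leftrightarrow(2)\Leftrightarrow(3)$ follow immediately---this is exactly how the paper closes the loop without ever needing to compare two different bases.
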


\begin{proof}  For $\epsilon \in (0,1)$ fixed,
\begin{eqnarray*} \int_{(0,1)} |\log t| \, d\mu(t) & = & \sum_{n=0}^{\infty} \int_{[\epsilon^{n+1},\epsilon^n)} |\log t| \, d\mu(t) \\ 
                                                                & \leq & |\log \epsilon| \sum_{n=0}^{\infty} (n+1) \mu([\epsilon^{n+1}, \epsilon^n)) \\ 
                                                                & \leq & |\log \epsilon| \cdot \sum_{n=0}^{\infty} n \mu([\epsilon^{n+1}, \epsilon^n))  + |\log \epsilon| \\
                                                                & \leq & \int_{(0,1)} |\log t| \, d\mu(t) + |\log \epsilon|.\\
\end{eqnarray*}

\noindent The equivalence of (1)-(3) follows.  For any $\epsilon \in (0,1)$, by Fubini

\begin{eqnarray*} \sum_{n=1}^{\infty} n \mu([\epsilon^{n+1}, \epsilon^n)) & = & \sum_{n=1}^{\infty} \sum_{j=1}^n \mu([\epsilon^{n+1}, \epsilon^n)) \\ & = & \sum_{j=1}^{\infty} \sum_{n=j}^{\infty} \mu([\epsilon^{n+1}, \epsilon^n)) \\
                              & = & \sum_{j=1}^{\infty} \mu((0, \epsilon^j)).\\
\end{eqnarray*}
\noindent So (2) $\Leftrightarrow$ (4) and (3) $\Leftrightarrow$ (5) completing the proof.
\end{proof}

Formulations (4) and (5) were what I was originally interested in using as an upper bound for the metric entropies.   Recall from Section 2.7 that if $x \in M$ is a positive operator and $\mu$ is the spectral distribution of $x$ induced by $\varphi$, then $x$ is of determinant class iff
\begin{eqnarray*}
\int_{(0,\infty)} |\log (\lambda)| \, d\mu(\lambda) < \infty.
\end{eqnarray*}
By Lemma 6.3 this condition is satisfied iff for some $\epsilon_0 >0$, $\sum_{n=1}^{\infty} \mu((0,\epsilon_0^n)) < \infty$ iff $x$ has geometric decay at $0$.  Thus,

\begin{corollary} Suppose $T$ is an operator such that $|T|$ lies in $(M,\varphi)$.  $T$ has geometric decay iff $|T|$ is of determinant class.
\end{corollary}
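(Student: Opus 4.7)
The plan is to observe that this corollary is immediate from Lemma 6.3 together with the characterization of the determinant class recalled in Section 2.7, so the proof is essentially a one-line citation chain. Let $\mu$ denote the spectral distribution of $|T|$ induced by $\varphi$, which is a compactly supported probability measure on $[0,\infty)$ (since $|T| \in M$ is bounded). By the definition of $\det_{FKL}$ reviewed in Section 2.7, $|T|$ is of determinant class precisely when
\begin{equation*}
\int_{(0,\infty)} |\log \lambda| \, d\mu(\lambda) < \infty,
\end{equation*}
which is exactly condition (1) of Lemma 6.3 applied to $\mu$.

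On the other hand, by Definition 6.2 the operator $T$ has geometric decay iff $\mu$ has geometric decay in the sense of Definition 6.1, i.e., iff there exists $\epsilon_0 \in (0,1)$ with $\sum_{n=1}^{\infty} \mu((0,\epsilon_0^n)) < \infty$. This is precisely condition (5) of Lemma 6.3. Since Lemma 6.3 asserts the equivalence of (1) and (5), we conclude that $T$ has geometric decay iff $|T|$ is of determinant class.

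There is essentially no obstacle here, as everything was arranged to make the deduction formal: the hard analytic content lies in Lemma 6.3 (a Fubini-type rearrangement comparing the integral $\int |\log t|\, d\mu$ to the discrete sums over geometric partitions of $(0,1)$), and the corollary just relabels condition (1) as ``determinant class'' via the Section 2.7 formula. The only thing to be careful about is noting that $|T|$ being a bounded positive operator in $M$ makes $\mu$ a compactly supported probability measure, so Lemma 6.3 applies directly without needing any additional hypothesis on $T$.
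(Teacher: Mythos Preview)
Your proposal is correct and matches the paper's own argument essentially verbatim: the paper derives the corollary from Lemma 6.3 (equivalence of conditions (1) and (5)) together with the Section 2.7 characterization of determinant class, exactly as you do. No additional ideas are needed.
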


\begin{remark} If the spectral distribution of $|T|$ is absolutely continuous w.r.t. Lebesgue measure with a density $f$ such that $|f(t)| \leq C t^{\alpha}$ for some constant $C$ and $\alpha >-1$, then $T$ has geometric decay.  In particular, if $f \in L^{\infty}(\mathbb R)$, then $T$ has geometric decay.
\end{remark}

\begin{remark} Suppose $T \in M$ is a normal operator and denote by $\mu$ the spectral distribution of $T$ induced by $\varphi$.  Assume that $\mu$ has the property that for any $\lambda \in \mathbb C$ there exists a corresponding $\epsilon >0$ such that 
\begin{eqnarray*}
\sum_{k=0}^{\infty} \mu(B(\lambda, \epsilon^k)) < \infty.
\end{eqnarray*}
Suppose $p$ is a nonzero polynomial.  Then $p(T)$ has geometric decay.  Indeed, there exist complex numbers $\lambda_1,\ldots, \lambda_d$ such that for any $\lambda \in \mathbb C$, $p(\lambda) = \Pi_{j=1}^d (\lambda - \lambda_j)$.  For any $\epsilon >0$ the spectral theorem yields
\begin{eqnarray*}
\varphi(1_{(0,\epsilon)}(|p(T)|)) & = & \varphi((1_{(0,\epsilon)} \circ |p|))(T) \\
                                                  & = & \mu(\{\lambda \in \mathbb C: \Pi_{j=1}^d |\lambda - \lambda_j| < \epsilon\})\\
                                                  & \leq & \sum_{j=1}^d \mu(B(\lambda_j, \epsilon^{1/d})).\\
\end{eqnarray*}
Choose for each $j$ an $\epsilon_j>0$ such that $\sum_{k=0}^{\infty} \mu(B(\lambda_j, \epsilon_j^k)) < \infty$ and set $\epsilon_0 = \min_{1 \leq j \leq d} \epsilon_j^d$.  It follows that 
\begin{eqnarray*} \sum_{k=1}^{\infty} \varphi(1_{(0,\epsilon_0^k)}(|p(T)|)) & \leq & \sum_{k=1}^{\infty} \sum_{j=1}^d \mu(B(\lambda_j, \epsilon_0^{k/d})) \\
                                                                                                                    & \leq & \sum_{j=1}^d \sum_{k=1}^{\infty} \mu(B(\lambda_j, \epsilon_j^k)) \\
                                                                                                                    & < & \infty.
\end{eqnarray*}
Thus, $p(T)$ has geometric decay.

The spectral distribution $\mu$ of $T$ will satisfy these local density condition when $T$ is a normal operator with an $L^{\infty}$ density w.r.t. Lebesgue measure on $\mathbb R^2 \simeq \mathbb C$ or when $T$ is a unitary operator with an $L^{\infty}$ density w.r.t. Lebesgue measure on the unit circle.  Related computations have been made previously, explicitly computing the Fuglede-Kadison-L{\"u}ck determinant and identifying it with Mahler measure (\cite{luckbook}, \cite{pdh}).
\end{remark}

\begin{definition} $D^sF(X)$ has geometric decay if $|D^sF(X)| = (D^sF(X)^*D^sF(X))^{1/2}$, regarded as an element of the tracial von Neumann algebra $M_{2n}(M\otimes M^{op})$, has geometric decay.  
\end{definition}

Given a finite tuple $F$ of elements in $\mathfrak{A}_n$ (the universal, unital complex $*$-algebra in $n$-indeterminates), $c(F)$ and $\deg(F)$ will have the same meaning as in Section 5.4. 

For the remainder of this section define $\psi_n=tr_n \otimes tr_2 \otimes (\varphi \otimes \varphi^{op})$.  $\psi_n$ is the canonical tracial state on $M_{2n}(M\otimes M^{op}) = M_n(\mathbb C) \otimes (M_2(M \otimes M^{op}))$ induced by $(M,\varphi)$.  For an element $T \in M_{2n}(M \otimes M^{op})$ recall that if $P= 1_{\{0\}}(|T|)$, then $2n \psi_n(P) = \text{Nullity}(T)$ (subsection 2.7).

Here is the fundamental microstates result which quantifies the local and global entropy loss and gains:

\begin{theorem}  Suppose $F$ is a $p$-tuple of elements in $\mathfrak{A}_n$, $L = 2np \cdot c(F)^2 \cdot (2n)^{c(F)}$, and $X$ is an $n$-tuple of operators in $M$ with operator norms strictly less than $R$ such that $F(X)=0$.   There exists a constant $D$ dependent only on $F$ such that for any $\frac{1}{5}>\rho, \delta, t > 0$ satisfying $t \rho^{-1/4} \delta - 5LR^{\deg(F)} > 1$    
\begin{eqnarray*}
\mathbb K_{\rho \delta}(X) & \leq & \mathbb K_{\rho}(X) + \\ & & [2n \cdot \psi(1_{[0,\rho^{1/2}]}(|D^sF(X|))+ t] \cdot | \log ((1-t^2)^{1/2} \delta | + \\ & & 24 D\rho^{1/D} \cdot ( \log 2D + |\log \rho|).
\end{eqnarray*}
\end{theorem}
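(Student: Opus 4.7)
The strategy is to cover $\Gamma_R(X;m,k,\gamma)$ by open $\|\cdot\|_2$-balls of radius $\rho$ whose centers lie in the microstate space (this contributes the $\mathbb{K}_\rho(X)$ term on the right), and then, within each such ball, cover the intersection with $\Gamma_R$ by balls of radius $\rho\delta$ by performing a single spectral split of the derivative at the center via Lemma 5.8. The remaining two terms in the bound will come respectively from the real dimension of the ``small-spectrum'' part of $DF(\xi_0)$ (giving the $[2n\psi(1_{[0,\rho^{1/2}]}(|D^sF(X)|))+t]\cdot|\log((1-t^2)^{1/2}\delta)|$ term) and from the metric entropy of the fringe of a binding focused on $DF(\xi_0)$ (giving the $24D\rho^{1/D}(\log 2D+|\log\rho|)$ term).

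Fix $\xi_0\in\Gamma_R(X;m,k,\gamma)$ and set $E=B_2(\xi_0,\rho)\cap\Gamma_R(X;m,k,\gamma)$. Applying Proposition 5.26 to the convex set $K=B_2(\xi_0,\rho)$ produces a binding $\Theta$ of $\mathcal{D}(F,K)$ focused on $DF(\xi_0)$, of width $\rho_1$ of order $R^{\deg(F)}\rho^{1/2}$, with the fringe entropy controlled by a factor of the form $\rho^{1/(2B)}k^2$. Because $F(X)=0$, for $m$ large and $\gamma$ small we have $\|F(\xi)\|_2<\gamma$ uniformly on $\Gamma_R(X;m,k,\gamma)$, so Lemma 5.8 applies with $x_0=\xi_0$, ball radius $\rho$, spectral threshold $\alpha=\rho^{1/2}$ (this is the source of the $\rho^{1/2}$ appearing in the theorem), target radius $\epsilon=\rho\delta$, and splitting parameter $t$. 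It yields
\begin{equation*}
K_{\rho\delta}(E)\;\le\;K_{(1-t^2)^{1/2}\rho\delta}\bigl(Q^{\bot}(E)\bigr)\cdot S_{\beta}\bigl(\mathcal{F}(\Theta,2\rho)\bigr),
\end{equation*}
with $Q=1_{[\rho^{1/2},\infty)}(|DF(\xi_0)|)$ and $\beta=\rho^{1/2}\cdot t\cdot\rho\delta-4\rho\rho_1-4\gamma$. The arithmetic hypothesis $t\rho^{-1/4}\delta-5LR^{\deg(F)}>1$ is exactly the condition that, after $\gamma$ is taken small enough, forces $\beta$ to be bounded below by a fixed power of $\rho$ (so that $|\log\beta|$ is absorbable into a bounded multiple of $|\log\rho|$ when it enters the fringe estimate).

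For the non-degenerate factor, Proposition 3.13, applied with spectral threshold $\rho^{1/2}$ and error parameter $t/(2n)$, guarantees (for $m,\gamma$ chosen appropriately in terms of $t,\rho$) that the real dimension of the range of $Q^{\bot}=1_{[0,\rho^{1/2})}(|DF(\xi_0)|)$ is at most $d=\bigl[2n\psi(1_{[0,\rho^{1/2}]}(|D^sF(X)|))+t\bigr]k^2$, uniformly in $\xi_0$. Applying Rogers' estimate (Remark 2.6) to the $\rho$-ball in this $d$-dimensional real subspace yields a $k^{-2}\log$-contribution of $\bigl[2n\psi(1_{[0,\rho^{1/2}]}(|D^sF(X)|))+t\bigr]\cdot|\log((1-t^2)^{1/2}\delta)|+o(1)$. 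For the fringe factor, Proposition 2.1(v) gives $S_\beta\le K_{\beta/2}$, the inclusion $\mathcal{F}(\Theta,2\rho)\subset\mathcal{F}(\Theta,R)$ holds because $2\rho<R$, and Proposition 5.26 provides an explicit entropy bound for $K_{\beta/2}(\mathcal{F}(\Theta,R))$ of the form $(\text{const}/\beta)^{16B^3\rho^{1/(2B)}k^2}$. Using $|\log\beta|\le c|\log\rho|$ from the hypothesis and absorbing all remaining constants, this repackages as $24D\rho^{1/D}(\log 2D+|\log\rho|)$ for a suitable constant $D$ depending only on $F$.

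Summing over the at-most $K_\rho(\Gamma_R(X;m,k,\gamma))$ covering balls, passing to logarithms, dividing by $k^2$, and applying $\limsup_{k\to\infty}$ (which is subadditive on sums), then taking $\inf$ over $(m,\gamma)$ on both sides using the monotonicity of $\mathbb{K}_{\rho}(X;m,\gamma)$, yields $\mathbb{K}_{\rho\delta,R}(X)\le\mathbb{K}_{\rho,R}(X)+(\text{dimension term})+(\text{fringe term})$; Proposition 2.3(ii) then removes the $R$-cutoff on both sides since $R>\max_x\|x\|$. The main obstacle is the fringe estimate: the whole argument would collapse under the slightest ``asymptotically coarse'' loss of the form $C^{\alpha k^2}$ with $C>1$ (as warned in the informal overview of Section 5.1), because such losses would explode when iterated in the next section. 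Avoiding them requires working exclusively in the $\|\cdot\|_2$-norm and using the Chebyshev-based ``good projections'' of Section 5.3 to produce the $\rho^{1/(2B)}$-decay in the fringe; and the tight inequality $t\rho^{-1/4}\delta-5LR^{\deg(F)}>1$ is exactly what balances the three scales (spectral threshold $\rho^{1/2}$, binding width $\sim\rho^{1/2}$, target radius $\rho\delta$) so that $\beta$ remains polynomially bounded below in $\rho$.
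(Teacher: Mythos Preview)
Your overall architecture is exactly the paper's: cover the microstate space by $\|\cdot\|_2$-balls of radius $\rho$ (contributing $\mathbb K_\rho(X)$), and on each ball apply Lemma~5.8 with the binding and fringe from Proposition~5.26, the dimension bound from Proposition~3.13, and Rogers for the small-spectrum subspace ball. The bookkeeping you describe (passing to $k^{-2}\log$, taking $\limsup_k$, then $\inf_{m,\gamma}$, and removing the cutoff via Proposition~2.3) is also what the paper does.

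There is one genuine miscalibration, though, and it is precisely where the hypothesis $t\rho^{-1/4}\delta - 5LR^{\deg(F)} > 1$ enters. You take the spectral threshold in Lemma~5.8 to be $\alpha=\rho^{1/2}$. With that choice,
\[
\beta \;=\; \alpha t(\rho\delta) - 4\rho\rho_1 - 4\gamma \;=\; t\rho^{3/2}\delta - 4LR^{\deg(F)}\rho^{3/2} - 4\gamma \;=\; \rho^{3/2}\bigl(t\delta - 4LR^{\deg(F)}\bigr) - 4\gamma,
\]
and positivity of $\beta$ would require $t\delta > 4LR^{\deg(F)}$. The stated hypothesis only gives $t\delta > (5LR^{\deg(F)}+1)\rho^{1/4}$, which can be arbitrarily small as $\rho\to 0$; so with $\alpha=\rho^{1/2}$ you cannot guarantee $\beta>0$, and Lemma~5.8 does not apply. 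The paper instead takes $\alpha=\rho^{1/4}$. Then
\[
\beta \;=\; t\rho^{5/4}\delta - 4LR^{\deg(F)}\rho^{3/2} - 4\gamma \;=\; \rho^{3/2}\bigl(t\rho^{-1/4}\delta - 4LR^{\deg(F)}\bigr) - 4\gamma,
\]
and with $\gamma$ chosen below $\rho^2/12$ (the first bullet in the paper's proof) one gets $\beta>\rho^{3/2}$ directly from the hypothesis. This is the entire reason the exponent $-1/4$ appears in the theorem. Your remark that ``$\alpha=\rho^{1/2}$ is the source of the $\rho^{1/2}$ in the theorem'' conflates the Lemma~5.8 threshold (on $|DF(\xi_0)|$) with the threshold appearing in the conclusion (on $|D^sF(X)|$); those differ by a square because Proposition~3.13 compares $DF(\xi)^*DF(\xi)$ with $D^sF(X)^*D^sF(X)$, so a threshold $\rho^{1/4}$ on $|DF(\xi_0)|$ corresponds to a threshold $\rho^{1/2}$ on the squared operator, which is what eventually shows up in the dimension term.

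One minor point: the paper takes the binding on the slightly larger ball $K=B_2(\xi_0,2\rho)$ (because the covering center $\xi_i$ and the chosen microstate $\xi_0\in E$ need not coincide). Your choice to place covering centers inside $\Gamma_R$ avoids this doubling and is fine, but be aware that Proposition~5.26 then has to be invoked with the radius you actually use for $K$.
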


\begin{proof}  Using the assumption $F(X)=0$ and Proposition 3.13, there exists an $m_1 \in \mathbb N$ and $0<\gamma_1$ such that if $\xi \in \Gamma_R(X;m_1,k,\gamma_1)$, then the following two conditions are satisfied:

\begin{itemize}
\item $\|F(\xi)\|_2 < \rho^2/12$.  
\item The real dimension of the range of the projection $1_{[0,\rho^{1/4}]}(DF(\xi)^*DF(\xi))$ as a real linear operator on $(M_k(\mathbb C))^n$ is no greater than $2nk^2 \cdot [\psi(1_{[0,\rho^{1/4}]}(|D^sF(X)|^2))+ t]$.  
\end{itemize}

Suppose that $m_1 < m \in \mathbb N$ and $0 < \gamma < \gamma_1$.  For each $k \in \mathbb N$ find a minimal $\rho$-cover $\langle \xi_i \rangle_{i \in I_k}$ w.r.t. the $\|\cdot \|_2$-norm for $\Gamma_R(X;m,k,\gamma)$ such that
\begin{eqnarray*}
\#I_k = K_{\rho}(\Gamma_R(X;m,k,\gamma)).
\end{eqnarray*}
\noindent Fix a $k$ and an $i \in I_k$.  Set $E = B_2(\xi_i, \rho) \cap \Gamma_R(X;m_1,k,\gamma_1)$.  I will now find a minimal $\rho \epsilon$-cover for $E$ with a suitable upper bound on its cardinality.  Find and fix an $\xi_0 \in E$.  Set $K= B_2(\xi_0, 2\rho)$.   By Proposition 5.26 there exists a universal constant $D$ dependent only on $L$ such that if $\xi_0 \in (M_k(\mathbb C)_R)^n$ and $K = B_2(\xi_0, 2\rho)$, then $\mathcal D(F,K)$ has an $(LR^{\deg(F)}\rho^{1/2})$-binding $\Theta_1$ focused on $DF(\xi_0)$ such that for any $1> \epsilon >0$,
\begin{eqnarray*}
K_{\epsilon}\left (\mathcal F(\Theta_1, 1) \right) & \leq & \left( \frac{D}{\epsilon} \right)^{\left(16D\rho^{1/D} \cdot k^2 \right )}.
\end{eqnarray*}
By the triangle inequality $B(\xi_i, \rho) \subset K$ so by definition $\mathcal D(F, B(\xi_i,\rho)) \subset \mathcal D(F,K)$.  $\Theta_1$ induces by restriction a $(LR^{\deg(F)}\rho^{1/2})$-binding $\Theta$ of $\mathcal D(F, B(\xi_i, \rho))$ focused on $DF(\xi_0)$ (Remark 5.5).  

I want to invoke Lemma 5.8 with the following substititutions: $f=F$, $V=(M_k(\mathbb C))^n$, $W=(M_k(\mathbb C))^p$, $x_0=\xi_0$, $B=B_2(\xi_i,\rho)$, $\Lambda = \mathcal D(F,B)$, $\rho_1 = LR^{\deg(F)}\rho^{1/2}$, $\alpha = \rho^{1/4}$, $\gamma = \rho^2/12$, $\epsilon =  \rho \delta$, $E$ and $\Theta$ as defined above, and $\beta = \alpha t \epsilon - 4 \rho \rho_1 -4 \gamma$.  The $\rho$ here will correspond with the $\rho$ in Lemma 5.8.  Denote by $Q^{\bot}$ the (real linear) projections $1_{[0,\alpha)}(|Df(\xi_0)|)$.   In order to use Lemma 5.8 I need to check that $\beta >0$:  
\begin{eqnarray*}
\beta & = & \alpha t \epsilon -4 \rho \rho_1 - 4 \gamma \\
          & = & \rho^{1/4} t \rho \delta - 4 \rho (LR^{\deg(F)} \rho^{1/2}) - 4 \gamma \\
         & > & t \rho^{5/4} \delta - 5 LR^{\deg(F)} \rho^{3/2} \\
         & = & \rho^{3/2} \cdot (t \rho^{-1/4} \delta - 5 LR^{\deg(F)}) \\
         & > & \rho^{3/2} \\
         & > & 0.\\
\end{eqnarray*}
Now $Q^{\bot}(E)$ is contained in a ball of radius $\rho$ in a copy of Euclidean space of dimension no greater than the real dimension of the range of the projection $1_{[0,\rho^{1/4}]}(DF(\xi_0)^*DF(\xi_0))$.  By the second condition stated above, this dimension is dominated by $2nk^2[\psi(1_{[0,\rho^{1/4}]}(|D^sF(X)|^2))+ t]$.  Applying Lemma 5.8, Rogers's asymptotic sphere covering estimate (\cite{rogers}), and the lower bound estimate on $\beta$ above
\begin{eqnarray*}
K_{\rho \delta}(E) & \leq &K_{(1-t^2)^{1/2} \rho \delta}(Q^{\bot}(E)) \cdot S_{\beta}(\mathcal F(\Theta, 2\rho)) \\
                            & \leq & (2nk^2)^3 \cdot \left ( \frac{\rho}{(1-t^2)^{1/2} \rho \delta} \right)^{[2nk^2 \psi(1_{[0,\rho^{1/4}]}(|D^sF(X)|^2))+ t]} \cdot S_{\rho^{3/2}}(\mathcal F(\Theta, 2\rho)). \\
\end{eqnarray*}
The third term on the RHS can be further dominated.  By Remark 5.3, the covering estimate on $\mathcal F(\Theta_0,1)$, and the properties of covering numbers and separating numbers (Proposition 2.1),
\begin{eqnarray*}
S_{\rho^{3/2}}(\mathcal F(\Theta, 2\rho)) & \leq & S_{\rho^{3/2}}(\mathcal F(\Theta_1, 2\rho)) \\
                                                                 & \leq & S_{\rho^{3/2}}(\mathcal F(\Theta_1, 1)) \\
                                                                 & \leq & K_{\frac{\rho^{3/2}}{2}}(\mathcal F(\Theta_1, 1)) \\
                                                                 &  \leq & \left( \frac{2D}{\rho^{3/2}} \right)^{\left(16D\rho^{1/D} \cdot k^2 \right )}.
\end{eqnarray*}
Putting this together with the inequalities which preceded it gives
\begin{eqnarray*}
K_{\rho \delta}(E) & \leq & (2nk^2)^3 \cdot \left ( \frac{\rho}{(1-t^2)^{1/2} \rho \delta} \right)^{[2nk^2 \psi(1_{[0,\rho^{1/4}]}(|D^sF(X)|^2))+ t]}  \cdot \left( \frac{2D}{\rho^{3/2}} \right)^{\left(16D\rho^{1/D} \cdot k^2 \right )}.
\end{eqnarray*}

Recall that $m_1 < m \in \mathbb N$, $0 < \gamma <\gamma_1$, and $\langle \xi_i \rangle_{I_k}$ is a minimal $\rho$-cover $\langle \xi_i \rangle_{I_k}$ for $\Gamma_R(X;m,k,\gamma)$.  By the preceding two paragraph there exists for each $i \in I_k$ a $\rho \delta$-cover for $E_i = B_2(\xi_i, \rho) \cap \Gamma_R(X,m,k,\gamma)$ with cardinality no greater than the last dominating expression of the preceding inequality.  Using Proposition 2.3,
\begin{eqnarray*}
\mathbb K_{\rho \delta}(X) & = & \mathbb K_{\rho \delta,R}(X) \\ 
                                         & \leq & \mathbb K_{\rho \delta, R}(X;m,\gamma) \\ & \leq & \limsup_{k \rightarrow \infty} k^{-2} \cdot \log \left ( \#I_k \cdot (2nk^2)^3 \left ( \frac{\rho}{(1-t^2)^{1/2} \rho \delta} \right)^{[2nk^2 \cdot \psi(1_{[0,\rho^{1/4}]}(|D^sF(X|^2))+ t]k^2} \right ) + \\
& & \limsup_{k \rightarrow \infty} k^{-2} \cdot \log\left (\frac{2D}{\rho^{3/2}} \right)^{\left(16D\rho^{1/D} \cdot k^2 \right)} \\
                                                             & \leq & \mathbb K_{\rho}(X;m,\gamma) + \\ 
                                                             & &  [2n \psi(1_{[0,\rho^{1/2}]}(|D^sF(X|))+ t] \cdot | \log ((1-t^2)^{1/2} \delta | + \\
                                                             & & 24D\rho^{1/D} \cdot ( \log 2D + |\log \rho|). \\
\end{eqnarray*}
\noindent As this is true for sufficiently large $m$ and small $\gamma$ the desired inequality follows.
\end{proof}

The preceding theorem extracts the basic covering estimate from the von Neumann algebra and microstate setting.  Having done this, proving the main result will now involve iterating the spectral splits on a geometric scale and aggregating the entropy.

\begin{theorem} Suppose $X$ is an $n$-tuple of operators in $(M,\varphi)$ and $F$ is a $p$-tuple of $*$-polynomials in $n$ indeterminates such that $F(X)=0$.  If $\alpha = \text{Nullity}(D^sF(X))$ and $D^sF(X)$ has geometric decay, then $X$ is $\alpha$-bounded.
\end{theorem}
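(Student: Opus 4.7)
The plan is to iterate the preceding theorem's single-scale inequality on a geometric scale, as outlined heuristically in Section 5.1. Fix $\delta\in(0,1/5)$ and $\rho_{0}\in(0,1/5)$ sufficiently small, and set $\rho_{k}=\rho_{0}\delta^{k}$. At level $k$, applying the preceding theorem to $X$ with a spectral-splitting parameter $t_{k}$ of the form $C_{0}\delta^{k/4}$ gives
\begin{align*}
\mathbb K_{\rho_{k+1}}(X) &\leq \mathbb K_{\rho_{k}}(X) + \bigl[2n\,\psi_{n}\bigl(1_{[0,\rho_{k}^{1/2}]}(|D^{s}F(X)|)\bigr)+t_{k}\bigr]\cdot\bigl|\log((1-t_{k}^{2})^{1/2}\delta)\bigr| \\
&\quad + 24D\rho_{k}^{1/D}\bigl(\log 2D+|\log\rho_{k}|\bigr).
\end{align*}
The rate $\delta^{k/4}$ is the slowest decay compatible with the admissibility constraint $t_{k}\rho_{k}^{-1/4}\delta - 5LR^{\deg(F)}>1$, which becomes progressively easier to satisfy as $\rho_{k}$ shrinks; taking $\rho_{0}$ small enough forces $t_{k}<1/5$ at every step.

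Telescoping these inequalities from $k=0$ to $k=N-1$ and writing $\mu$ for the spectral distribution of $|D^{s}F(X)|$ in $(M_{2n}(M\otimes M^{op}),\psi_{n})$, the dominant aggregate term is
\[
\sum_{k=0}^{N-1} 2n\,\mu\bigl([0,\rho_{k}^{1/2}]\bigr) \;=\; N\alpha \;+\; 2n\sum_{k=0}^{N-1}\mu\bigl((0,\rho_{k}^{1/2}]\bigr).
\]
Here the geometric-decay hypothesis on $D^{s}F(X)$ enters decisively: by Lemma 6.3 the tail $\sum_{k\geq 0}\mu((0,\rho_{0}^{1/2}\delta^{k/2}])$ converges, so the correction is bounded by a constant $C_{\mu}$ independent of $N$. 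Splitting $|\log((1-t_{k}^{2})^{1/2}\delta)| = |\log\delta| + \tfrac{1}{2}|\log(1-t_{k}^{2})|$, the contributions from $\sum t_{k}$, from $\sum |\log(1-t_{k}^{2})|$, and from the fringe error $\sum 24D\rho_{k}^{1/D}(\log 2D+|\log\rho_{k}|)$ are all convergent geometric series. This yields
\[
\mathbb K_{\rho_{N}}(X) \;\leq\; N\alpha|\log\delta| + K_{0}
\]
for a constant $K_{0}=K_{0}(F,X,\rho_{0},\delta)$. To pass from this geometric scale to arbitrary small $\epsilon$, for $\epsilon<\rho_{0}$ choose $N$ with $\rho_{N+1}\leq\epsilon<\rho_{N}$; monotonicity of $\epsilon\mapsto \mathbb K_{\epsilon}(X)$ (Lemma 2.7) gives $\mathbb K_{\epsilon}(X)\leq \mathbb K_{\rho_{N+1}}(X)\leq (N+1)\alpha|\log\delta|+K_{0}$, and since $(N+1)|\log\delta|\leq |\log\epsilon|+|\log\delta|-|\log\rho_{0}|$ one concludes $\mathbb K_{\epsilon}(X)\leq \alpha|\log\epsilon|+K$ for a single constant $K$. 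By Definition 2.4, $X$ is $\alpha$-bounded.

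The main obstacle is preserving the leading coefficient of $|\log\epsilon|$ exactly equal to $\alpha$. A single application of the preceding theorem with a fixed spectral-splitting parameter $t$ contributes $(\alpha+t)|\log((1-t^{2})^{1/2}\delta)|$ per step, and naively iterating with a constant $t$ would produce an effective exponent $(\alpha+t)\bigl(1+\tfrac{1}{2}|\log(1-t^{2})|/|\log\delta|\bigr)$ strictly larger than $\alpha$. Letting $t_{k}\to 0$ geometrically fast is what suppresses this drift, and the admissibility inequality only becomes more forgiving as $\rho_{k}$ shrinks. The geometric-decay hypothesis on $D^{s}F(X)$ is precisely what controls the residual spectral mass $\sum_{k}\mu((0,\rho_{k}^{1/2}])$ as a uniformly bounded tail and, by Corollary 6.4, is equivalent to $|D^{s}F(X)|$ being of determinant class.
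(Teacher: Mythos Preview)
Your argument is correct and structurally identical to the paper's: iterate Theorem~6.8 on a geometric scale $\rho_k$, telescope, use the geometric-decay hypothesis to bound the spectral tail $\sum_k \mu((0,\rho_k^{1/2}])$, and interpolate to arbitrary $\epsilon$ by monotonicity. The only difference is the choice of the auxiliary sequence: the paper takes $t_k = k^{-2}$ (polynomial, well above the admissibility threshold) while you take $t_k = C_0\delta^{k/4}$ (the critical geometric rate); both are summable and both satisfy $t_k\rho_k^{-1/4}\delta > 1 + 5LR^{\deg(F)}$, so either works. One terminological slip worth fixing: the rate $\delta^{k/4}$ is the \emph{fastest} decay compatible with the admissibility constraint, not the slowest, since the constraint is a lower bound $t_k \gtrsim \rho_k^{1/4} = \rho_0^{1/4}\delta^{k/4}$; with your choice $t_k\rho_k^{-1/4}\delta$ is exactly constant in $k$, so the constraint does not in fact ``become progressively easier'' but stays at the same margin throughout.
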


\begin{proof} Set $L = 2np \cdot c(F)^2 \cdot (2n)^{c(F)}$ and $R >0$ strictly greater than the operator norm of any element of $X$ as in the statement of Theorem 6.8.  Fix once and for all an $\epsilon_0 \in (0,1/5)$ such that for all $k \in \mathbb N$ with $k >4$, 
\begin{eqnarray*}
\epsilon_0^{-(k/4-1)} > (5 L R^{\deg(F)} +1) k^2.
\end{eqnarray*}
Set $t_k = k^{-2}$.  The condition on $\epsilon_0$ implies $t_k \cdot \epsilon_0^{-k/4} \cdot \epsilon_0 - 5 L R^{\deg(F)} > 1$.  By Theorem 6.8 there exists a $D$ dependent only on $F$ such that for $k >4$, setting $\rho = \epsilon_0^k$, $\delta = \epsilon_0$, and $t = t_k$ in the context of Theorem 6.8 yields
\begin{eqnarray*} \mathbb K_{\epsilon_0^{k+1}}(X) & = & \mathbb K_{\epsilon_0^k \epsilon_0}(X) \\
                                                                                & \leq & \mathbb K_{\epsilon_0^k}(X) + [2n \cdot \psi(1_{[0,\epsilon_0^{k/2}]}(|D^sF(X|))+ k^{-2} ] \cdot | \log [(1-k^{-4})^{1/2} \epsilon_0]| + \\ & & 24D\epsilon_0^{k/D} \cdot ( \log 2D + |\log \epsilon_0^k|)\\
                                                                                & \leq & \mathbb K_{\epsilon_0^k}(X) + [2n \cdot \psi(1_{[0,\epsilon_0^{k/2}]}(|D^sF(X|))+ k^{-2} ] \cdot [k^{-4} + |\log \epsilon_0|] + \\ & & 24D\epsilon_0^{k/D} \cdot ( \log 2D + |\log \epsilon_0^k|).\\
\end{eqnarray*}
\noindent Iterating yields for any $k>4$,
\begin{eqnarray*}
\mathbb K_{\epsilon_0^{k+1}}(X) & \leq & \mathbb K_{\epsilon_0^5}(X) + \\ & &  \sum_{j=5}^k  [2n \cdot \psi(1_{[0,\epsilon_0^{j/2}]}(|D^sF(X|))+ j^{-2} ] \cdot [j^{-4} + |\log \epsilon_0|] + \\ & & \sum_{j=5}^k 24D\epsilon_0^{j/D} \cdot ( \log 2D+ |\log \epsilon_0^j|) \\
& = & \mathbb K_{\epsilon_0^5}(X) +  C_1 + C_2 \\
\end{eqnarray*}
where $C_1$ and $C_2$ denote the second and third term of the second expression above.  

To estimate the $C_1$ term set 
\begin{eqnarray*}
D_1 = \sum_{j=5}^{\infty} \left [2n \cdot \psi(1_{[0,\epsilon_0^{j/2}]}(|D^sF(X|))\cdot j^{-4} + j^{-6} +j^{-2} \cdot |\log \epsilon_0| \right]
\end{eqnarray*} 
and 
\begin{eqnarray*}
D_2 = 2n |\log \epsilon| \cdot \sum_{j=5}^{\infty} \psi(1_{(0,\epsilon_0^{j/2}]}(|D^sF(X)|)).
\end{eqnarray*}
Clearly $D_1 < \infty$ and $D_2 < \infty$ by the geometric decay assumption on $D^sF(X)$.
\begin{eqnarray*}
C_1 & = & \sum_
{j=5}^k  [2n \cdot \psi(1_{[0,\epsilon_0^{j/2}]}(|D^sF(X)|)+ j^{-2} ] \cdot [j^{-4} + |\log \epsilon_0|] \\ & \leq & \sum_{j=5}^k  2n \cdot \psi(1_{[0,\epsilon_0^{j/2}]}(|D^sF(X|))\cdot |\log \epsilon_0| +\\ &&  \sum_{j=5}^{\infty} \left [2n \cdot \psi(1_{[0,\epsilon_0^{j/2}]}(|D^sF(X|))\cdot j^{-4} + j^{-6} +j^{-2} \cdot |\log \epsilon_0| \right] \\ 
                                                                                                                                                 & = &  \left( \sum_{j=5}^k  [\alpha + 2n \cdot \psi(1_{(0,\epsilon_0^{j/2}]}(|D^sF(X|))] \cdot |\log \epsilon_0|] \right) + D_1 \\ 
                                                                                                                                                 & < & (k-4) \alpha |\log \epsilon_0| + 2n\cdot |\log \epsilon| \cdot \sum_{j=5}^{\infty} \psi(1_{(0,\epsilon_0^{j/2}]}(|D^sF(X|)) + D_1 \\
                                                                                                                                                 & = & \alpha |\log \epsilon_0^{k-4}| + D_1+ D_2 \\
\end{eqnarray*}

Turning to the $C_2$ term,
\begin{eqnarray*}
 C_2 & = & \sum_{j=5}^k 24D\epsilon_0^{j/D} \cdot ( \log 2D + |\log \epsilon_0^j|) \\ & \leq & 24 D \sum_{j=1}^{\infty} (\epsilon_0^{1/D})^j \cdot ( \log 2D + j |\log \epsilon_0|)\\
 & < & \infty \\
 \end{eqnarray*}
as the series on the right hand sides have geometric terms paired with polynomial ones which force convergence.   
  
Set $D = \mathbb K_{\epsilon_0^5}(X) +D_1 + D_2 + C_2$.  $D$ is independent of $k$. Putting the above inequalities together yields for any $k \in \mathbb N$, $k >4$,
 \begin{eqnarray*}
 \mathbb K_{\epsilon_0^k}(X) & \leq & \mathbb K_{\epsilon_0^5}(X) + C_1 + C_2 \\
 & < & \mathbb K_{\epsilon_0^5}(X) + \alpha \cdot |\log \epsilon_0^{k-4}| +D_1 + D_2 + C_2 \\
 & < & \alpha \cdot |\log \epsilon_0^k| + D.
 \end{eqnarray*}
This bound almost gives the estimate for any sufficiently small $\epsilon$.  To finish the argument suppose $\epsilon \in (0,\epsilon_0)$.  Find a $k \in \mathbb N$ such that $\epsilon_0^{k+1} \leq \epsilon < \epsilon_0^k$.  Using the monotonicity of $\mathbb K_{\cdot}()$ (Lemma 2.5),

\begin{eqnarray*} \mathbb K_{\epsilon}(X) & < & \mathbb K_{\epsilon_0^{k+1}}(X) \\ & \leq & \alpha \cdot |\log \epsilon_0^{k+1}|  +D \\ & \leq & \alpha \cdot  |\log \epsilon_0| + \alpha \cdot |\log \epsilon_0^k| +D\\ & < &  \alpha |\log \epsilon| + (\alpha \cdot |\log \epsilon_0| + D).\\
\end{eqnarray*}
By definition, $X$ is $\alpha$-bounded.
\end{proof}

\begin{remark} It may be possible to remove the condition $F(X)=0$ in Theorems 6.8 and 6.9.  Theorem 6.9 would then take the following, somewhat more general form.  Suppose $X$ is an $n$-tuple of operators $(M,\varphi)$ and $F$ is a $p$-tuple of $*$-polynomials in $n$ indeterminates such that $F(X)$ is a $\beta$-bounded $p$-tuple of operators.  If $\alpha = \text{Nullity}(D^sF(X))$ and $D^sF(X)$ has geometric decay, then $X$ is $(\alpha+\beta)$-bounded.  Proving this would perhaps involve making changes to Lemma 5.7 and Lemma 5.8, and then adjusting the bookkeeping in Theorem 6.8 and 6.9.  I won't pursue this more general line of reasoning here.
\end{remark}

\begin{corollary} Suppose $X$ is an $n$-tuple of self-adjoint elements in a tracial von Neumann algebra, $F$ is a $p$-tuple of noncommutative self-adjoint $*$-polynomials in $n$ indeterminates such that $F(X)=0$, and $\alpha = \text{Nullity}(D^{sa}F(X))$.  If $D^{sa}F(X)$ has geometric decay, then $X$ is $\alpha$-bounded.   
\end{corollary}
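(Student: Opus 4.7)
The plan is to mimic the reduction used in Corollary 4.9, which extracted the self-adjoint free entropy dimension inequality from the general one by augmenting $F$ with the skew-adjoint linearity relations. Define $L = \{(X_1-X_1^*)/2,\ldots,(X_n-X_n^*)/2\} \subset \mathfrak{A}_n$ and set $G = F \cup L$. Since the $x_i$ are self-adjoint, $L(X)=0$, and since $F(X)=0$ by hypothesis, we have $G(X)=0$, so the hypothesis $F(X)=0$ of Theorem 6.9 is satisfied for $G$ in place of $F$. The strategy is then to apply Theorem 6.9 to $G$ and translate the conclusion back to $F$ using Proposition 3.17, which is precisely the bridge between the $D^s$-calculus and the self-adjoint calculus.

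First I would invoke Proposition 3.17 to obtain $\text{Nullity}(D^sG(X)) = \text{Nullity}(D^{sa}F(X)) = \alpha$, together with the spectral comparison $\mu((0,t]) \leq \nu((0,ct])$ for all $t \in (0,1)$, where $\mu$ and $\nu$ are the spectral distributions of $|D^sG(X)|$ and $|D^{sa}F(X)|$ respectively and $c>0$ is the constant furnished by the proposition. Next I would verify that $D^sG(X)$ has geometric decay whenever $D^{sa}F(X)$ does. Using formulation (5) of Lemma 6.3, geometric decay of $D^{sa}F(X)$ gives some $\epsilon_0 \in (0,1)$ with $\sum_{k\geq 1} \nu((0,\epsilon_0^k)) < \infty$. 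Choose $\epsilon_1 \in (0,1)$ small enough that $c\epsilon_1 \leq \epsilon_0$; then for all sufficiently large $k$,
\begin{eqnarray*}
\mu((0,\epsilon_1^k]) \leq \nu((0,c\epsilon_1^k]) \leq \nu((0,\epsilon_0^k]),
\end{eqnarray*}
so $\sum_{k\geq 1}\mu((0,\epsilon_1^k)) < \infty$ and $D^sG(X)$ has geometric decay.

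With these two ingredients in hand, Theorem 6.9 applied to the pair $(X,G)$ yields that $X$ is $\alpha$-bounded with $\alpha = \text{Nullity}(D^sG(X)) = \text{Nullity}(D^{sa}F(X))$, which is the desired conclusion. I do not expect any genuine obstacle here: Proposition 3.17 has already done the real work of relating the two derivatives, and the only thing to check is that geometric decay transfers correctly under the spectral comparison, which is a one-line rescaling of the exponent $\epsilon_0$. The overall argument is purely a reduction, with Theorem 6.9 doing all the heavy lifting.
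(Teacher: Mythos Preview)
Your proposal is correct and matches the paper's own proof essentially line for line: define $G = F \cup L$ with $L$ the skew-adjoint relations, use Proposition 3.17 to identify the nullities and obtain the spectral comparison $\mu((0,t]) \leq \nu((0,ct])$, deduce geometric decay of $D^sG(X)$ from that of $D^{sa}F(X)$, and then invoke Theorem 6.9. The only cosmetic difference is that you spell out the $\epsilon_0 \mapsto \epsilon_1$ rescaling explicitly, whereas the paper simply cites the equivalence lemma for geometric decay.
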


\begin{proof} Define $L = \{(X_1 - X_1^*)/2, \ldots, X_n - X_n^*)/2\}$, $G = F \cup L$.  By Proposition 3.17
\begin{eqnarray*} \alpha & = & \text{Nullity}(D^{sa}F(X)) \\
                                        & = & \text{Nullity}(D^sG(X)) \\
\end{eqnarray*}
and there exists a $c>0$ such that for any $t \in (0,1)$, $\mu((0,t])) \leq \nu((0,ct])$ where $\mu$ and $\nu$ are the spectral distributions of $|D^sG(X)|$ and $|D^{sa}F(X)|$, respectively. Since $D^{sa}F(X)$ has geometric decay, Lemma 6.2 and the inequality on the spectral distributions implies that $D^sG(X)$ has geometric decay.   Obviously $G(X)=0$.  By Theorem 6.9, $X$ is $\alpha$-bounded. 
\end{proof}
 
 \begin{corollary} Suppose $X$ is an $n$-tuple of unitaries in a tracial von Neumann algebra, $F$ is a $p$-tuple of noncommutative $*$-monomials in $n$ indeterminates such that each entry of $F(X)$ is the identity operator, and $\alpha = \text{Nullity}(D^uF(X))$.  If $D^uF(X)$ has geometric decay, then $X$ is $\alpha$-bounded.   
 \end{corollary}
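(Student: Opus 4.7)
The plan is to reduce this to Theorem 6.9 by the same device used in Corollary 6.11 for the self-adjoint case, with Proposition 3.27 playing the role that Proposition 3.17 played there. The point is that the unitary hypothesis on $X$ together with the hypothesis $f_i(X)=I$ can be encoded as a relation $H(X)=0$ for an enlarged tuple $H$ of $*$-polynomials.

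Concretely, I would set $G=\{X_1^*X_1-I,\ldots,X_n^*X_n-I\}\subset\mathfrak{A}_n$ and let $H=\{f_i-I\}_{i=1}^p\cup G$ be the concatenated $(p+n)$-tuple. Since each $x_i$ is unitary and each $f_i(X)=I$, we have $H(X)=0$, so the hypothesis ``$F(X)=0$'' of Theorem 6.9 is met by $(H,X)$. It remains to check the two conclusions of Proposition 3.27: first, $\text{Nullity}(D^sH(X))=\text{Nullity}(D^uF(X))=\alpha$, which immediately identifies the nullity used by Theorem 6.9 with the $\alpha$ appearing in the statement; and second, the spectral distributions $\mu$ of $|D^sH(X)|$ and $\nu$ of $|D^uF(X)|$ satisfy $\mu((0,t])\le\nu((0,t])$ for all $t\in(0,1)$.

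The only nontrivial bookkeeping step is showing that this pointwise domination transfers geometric decay from $\nu$ to $\mu$. For this I would use the equivalent formulations in Lemma 6.3. Fix any $\epsilon_0\in(0,1)$. Since $\nu$ has geometric decay, Lemma 6.3 (4) gives $\sum_{k=1}^\infty\nu((0,\epsilon_0^k])<\infty$ (the open and half-open versions of the series differ by $\sum_k\nu(\{\epsilon_0^k\})$, which is bounded by the total mass of $\nu$ since the singletons are disjoint). The pointwise inequality then yields $\sum_{k=1}^\infty\mu((0,\epsilon_0^k])<\infty$, and another appeal to Lemma 6.3 establishes geometric decay of $\mu$, i.e., of $D^sH(X)$.

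With both hypotheses verified, Theorem 6.9 applied to $(H,X)$ yields that $X$ is $\alpha$-bounded, finishing the proof. There is essentially no real obstacle beyond the spectral bookkeeping step above; the substance of the corollary is entirely packaged inside Proposition 3.27 and Theorem 6.9, and what remains is a routine translation of the self-adjoint argument (Corollary 6.11) to the unitary setting.
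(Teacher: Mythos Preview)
Your proposal is correct and follows essentially the same route as the paper's own proof: define $G=\{X_i^*X_i-I\}$, set $H=\{f_i-I\}\cup G$, apply Proposition 3.27 to match nullities and compare spectral distributions, transfer geometric decay via Lemma 6.3, and invoke Theorem 6.9. Your extra care with the open versus half-open intervals in the geometric decay transfer is, if anything, slightly more explicit than the paper's terse citation of the lemma.
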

 
 \begin{proof} The argument proceeds exactly as in the self-adjoint case.  Define $G=\{X_1^*X_1-I, \ldots, X_n^*X_n-I\}$ and $H = \langle f_i -I \rangle_{i=1}^p \cup G \subset (\mathfrak{A}_n)^{p+n}$.  By Proposition 3.26
 \begin{eqnarray*} \alpha & = & \text{Nullity}(D^{u}F(X)) \\
                                        & = & \text{Nullity}(D^sH(X)) \\
\end{eqnarray*}
and for any $t \in (0,1)$, $\mu((0,t])) \leq \nu((0,t])$ where $\mu$ and $\nu$ are the spectral distributions of $|D^sH(X)|$ and $|D^uF(X)|$, respectively. Since $D^uF(X)$ has geometric decay, Lemma 6.2 and the inequality on the spectral distributions implies that $D^sH(X)$ has geometric decay.  Obviously $H(X)=0$.  By Theorem 6.9, $X$ is $\alpha$-bounded. 
 \end{proof}
 
Theorem 6.9, Corollary 6.11, and Corollary 6.12 bound the $\alpha$-covering entropy in terms of the nullity/rank of the derivative provided that the derivative has geometric decay.   To apply the theorem one must first verify the geometric decay property of the derivative and secondly, make a computation of its nullity.  Note here that simply computing the nullity will give a free entropy dimension bound by Corollaries 4.9, 4.10, and 4.11.   

In the context of group von Neumann algebras the geometric decay condition is related to L{\"u}ck's Determinant conjecture while the computation of the nullity/rank is naturally linked to Atiyah's conjecture as well as Kaplanksy's and Linnell's  conjectures.  These will be discussed in the next section.   For now some simple computations can be made involving noncommutative quadratic varieties in the general tracial von Neumann algebra setting.  Despite this seemingly simple situation, they will yield new nonisomorphism results and generalizations of already established bounds.
 
 \subsection{Examples}

A few simple observations will be useful in the examples to come:

\begin{lemma} Suppose $T  = [T_1 \cdots T_n] \in M_{1\times n}(M)$ and $1 \leq i \leq n$.  If $T_i \in M$ is an injective operator, then $\text{Nullity}(T)=n-1$.  If in addition $T_i$ has geometric decay, then $T$ has geometric decay.
\end{lemma}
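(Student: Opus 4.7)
The plan is to reduce both claims to information about the single entry $T_i$ by exploiting the operator inequality
\[ T_iT_i^* \;\leq\; \sum_{j=1}^n T_jT_j^* \;=\; TT^*, \]
together with Weyl's Inequality for Positive Operators (Section 2.7) and traciality of $\varphi$.

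For the nullity, I would invoke the rank--nullity identity of Section 2.7, so that $\text{Rank}(T) + \text{Nullity}(T) = n$. Because $T$ has only one row, $\text{Rank}(T) = \varphi(1_{(0,\infty)}(TT^*))$, and it suffices to show $\varphi(1_{\{0\}}(TT^*)) = 0$. Weyl's Inequality at $t=0$ applied to the displayed inequality gives $\varphi(1_{\{0\}}(TT^*)) \leq \varphi(1_{\{0\}}(T_iT_i^*))$. Injectivity of $T_i$ yields $\varphi(1_{\{0\}}(T_i^*T_i)) = 0$, and traciality forces $|T_i|$ and $|T_i^*|$ to share their spectral distribution under $\varphi$ (as in the opening observation of the proof of Lemma 2.10); hence $\varphi(1_{\{0\}}(T_iT_i^*)) = 0$ as well. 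This proves $\text{Rank}(T) = 1$ and $\text{Nullity}(T) = n-1$.

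For the geometric-decay claim I would first translate the statement from $|T| \in M_n(M)$ (equipped with $tr_n \otimes \varphi$) to $|T^*| \in (M,\varphi)$. Embedding $T$ as the first row of an element $\tilde{T} \in M_n(M)$ with the other rows zero and applying traciality of $tr_n \otimes \varphi$, the spectral distributions of $T^*T$ and $TT^*$ coincide on $(0,\infty)$ up to the multiplicative constant $n$; by Lemma 6.3 such a constant is harmless for the geometric-decay criterion, so it suffices to show that $TT^*$ has geometric decay in $(M,\varphi)$. A second application of Weyl's Inequality yields $\varphi(1_{[0,t]}(TT^*)) \leq \varphi(1_{[0,t]}(T_iT_i^*))$ for every $t \geq 0$; summing along a geometric sequence $t = \epsilon_0^{2k}$ bounds the decay series for $TT^*$ by the corresponding series for $T_iT_i^*$, and traciality identifies the latter with the decay series for $T_i^*T_i$, which is finite by hypothesis.

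I do not anticipate a serious obstacle. Both parts reduce to one-line applications of Weyl's Inequality after the algebraic reformulation; the only point requiring care is bookkeeping the three traces in play --- $\varphi$ on $M$ applied to $T_i$, $\varphi$ on $M$ applied to $TT^*$, and $tr_n \otimes \varphi$ on $M_n(M)$ applied to $T^*T$ --- so as to confirm that the scaling factor $n$ appearing in the comparison of $T^*T$ and $TT^*$ is benign for the summability criterion of Lemma 6.3.
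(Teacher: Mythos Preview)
Your proposal is correct and follows essentially the same route as the paper. Both arguments reduce to the operator inequality $T_iT_i^* \leq TT^*$, apply Weyl's Inequality for Positive Operators, and use traciality to identify the spectral distributions of $|T_i|$ and $|T_i^*|$; the paper packages this by embedding $T$ as the first row of an $n\times n$ matrix $\overline{T}$ and writing the spectral distribution of $|\overline{T}^*|$ explicitly as $n^{-1}\sigma + (n-1)n^{-1}\delta_{\{0\}}$, while you work directly with $TT^*\in M$ and carry the factor $n$ through the summability criterion, but the content is the same.
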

\begin{proof}  First note that if 
\begin{eqnarray*}
\overline{T} = \begin{bmatrix}
T_1 & \cdots & T_n \\
0 & \cdots & 0 \\
\vdots & & \vdots \\
0 & \cdots & 0 \\
\end{bmatrix} \\ \in M_{n\times n}(M),
\end{eqnarray*}
then $T^*T = \overline{T}^*\overline{T} \in M_{n \times n}(M)$.  Thus it suffices to establish the nullity and geometric decay claims for $\overline{T}$.   Because $|\overline{T}|$ and $|\overline{T}^*|$ have the same spectral distribution (see the proof of Lemma 2.10) it suffices to show the claims for $\overline{T}^*$.

Towards this end assume $T_i$ is injective. $|\overline{T}^*|$ is the $n \times n$ matrix whose $11$-entry is $(T_1 T_1^* + \cdots + T_n T_n^*)^{1/2}$ and whose other entries are all $0$.   Define $A \in M_{n \times n}(M)$ to be the matrix whose $11$-entry is $(T_iT_i^*)^{1/2}$ and whose other entries are all $0$.  It follows that $|\overline{T}^*| \geq A$.  By the assumed injectivity of $T_i$ it follows that $n-1 \leq \text{Nullity}(|\overline{T}^*|) \leq \text{Nullity}(A) = n-1$. 

Suppose that in addition $T_i$ has geometric decay.  If $\mu$ and $\nu$ are the spectral distributions associated to $|\overline{T}^*|$ and $|A|$, then by Weyl's Inequality for positive operators (Section 2.7), for any $t$, $\mu([0,t]) \leq \nu([0,t])$.  The nullities of $|\overline{T}^*|$ and $|A|$ being equal, $\mu(\{0\}) = \nu(\{0\})$, and it follows that for any $t$, $\mu((0,t]) \leq \nu((0,t])$.  If $\sigma$ is the spectral distribution of $|T_i^*| \in M$, then it is straightforward to see that $\nu = n^{-1} \cdot \sigma + (n-1)/n \cdot \delta_{\{0\}}$.  $\sigma$ is also the spectral distribution of $|T_i| \in M$ (again since the absolute value of an operator and its adjoint have the same spectral distribution).  It is obvious from this equation, Lemma 6.3, and the assumed geometric decay of $|T_i|$ that $\nu$ has geometric decay.  Thus, $\mu$ must have geometric decay.  So $|\overline{T}^*|$ has geometric decay.
\end{proof}

\begin{lemma} If $X$ is $1$-bounded and there exists a self-adjoint element $y$ in the $*$-algebra generated by $X$ such that $\chi^{sa}(y) > -\infty$, then $vN(X)$ is strongly $1$-bounded.  
\end{lemma}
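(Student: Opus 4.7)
The plan is to reduce to the self-adjoint setting via Proposition 2.5 and then append $y$ to obtain a strongly $1$-bounded self-adjoint generating tuple for $vN(X)$.

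First, let $X^{sa}$ denote the tuple consisting of the real and imaginary parts of the entries of $X$. Proposition 2.5 tells me two things at once: $X^{sa}$ and $X$ generate the same $*$-algebra (hence the same von Neumann algebra), and $X$ is $1$-bounded as a general tuple iff $X^{sa}$ is $1$-bounded as a self-adjoint tuple. Since $X$ is $1$-bounded by hypothesis, I conclude that $X^{sa}$ is a $1$-bounded self-adjoint tuple generating $vN(X)$.

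Next, form the concatenated self-adjoint tuple $Y = X^{sa} \cup \{y\}$. By hypothesis $y$ lies in the $*$-algebra generated by $X$, which equals the $*$-algebra generated by $X^{sa}$, so $Y$ and $X^{sa}$ generate the same $*$-algebra. Applying Proposition 2.5 once more (the statement that the $1$-bounded property depends only on the $*$-algebra generated), $Y$ inherits $1$-boundedness from $X^{sa}$. Explicitly, there is a constant $C>0$ with $\mathbb K_{\epsilon}^{sa}(Y) \leq \mathbb K_{\epsilon/C}^{sa}(X^{sa})$ for all sufficiently small $\epsilon>0$, so the inequality $\mathbb K_{\epsilon}^{sa}(Y) \leq |\log \epsilon|+K'$ holds for a suitable constant $K'$ and all small $\epsilon$.

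Finally, $Y$ is a self-adjoint finite tuple that is $1$-bounded and contains the entry $y$ with $\chi^{sa}(y)>-\infty$; by the definition of strongly $1$-bounded recalled in Subsection 2.3, $Y$ is strongly $1$-bounded. Since $Y$ generates $vN(X)$, this says that $vN(X)$ admits a finite self-adjoint generating tuple which is strongly $1$-bounded, which is exactly the definition of the von Neumann algebra $vN(X)$ being strongly $1$-bounded. There is no real obstacle here; the whole proof is a bookkeeping exercise ensuring that the invariance result of Proposition 2.5 (which handles the $1$-boundedness for any change of generating $*$-algebra) combines cleanly with the fact that the witness $y$ for finite free entropy can simply be adjoined to any generating set without destroying $1$-boundedness.
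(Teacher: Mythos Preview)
Your proof is correct and follows essentially the same approach as the paper: form the self-adjoint tuple consisting of $y$ together with the real and imaginary parts of the entries of $X$, use Proposition 2.5 to transfer $1$-boundedness, and conclude strong $1$-boundedness from the definition. The only cosmetic difference is that you pass through the intermediate tuple $X^{sa}$ before adjoining $y$, whereas the paper forms the $(2n+1)$-tuple in one step.
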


\begin{proof} Denote by $Y$ the $(2n+1)$-tuple consisting of $y$ and the real and imaginary parts of the operators in $X$ (so $Y$ consists of self-adjoint elements).  Both $X$ and $Y$ generate the same $*$-algebra.  By Proposition 2.5 and the assumption that $X$ is $1$-bounded, it follows that $Y$ is $1$-bounded (as a general tuple or a tuple of self-adjoint elements).  $\chi^{sa}(y) > -\infty$ so $Y$ is strongly $1$-bounded, whence $vN(X)=vN(Y)$ is a strongly $1$-bounded von Neumann algebra.
\end{proof}

Taking the example of two commuting operators isn't particularly interesting, but as with the computations in Section 4, it serves as a good sanity check.

\begin{example} Suppose $X = \{x_1, x_2\}$ consists of commuting self-adjoint elements in $M$, $F = \{f\}$ where $f(X_1, X_2) = X_2 X_1 - X_1^* X_2^*$, and that the spectral distribution of $x_1$ has an $L^{\infty}$ density $g$ w.r.t. Lebesgue measure on the real line.  The setting is exactly that of Example 4.1 except here I assume a stronger condition on the spectral distribution of $x_1$ (it implies the absence of eigenvalues condition of Example 4.1).  It was observed in Example 4.1 that \begin{eqnarray*} D^{sa}F(X) & = & 
\begin{bmatrix}
(\partial_{1}^{sa} f)(X) & (\partial_{2}^{sa}f)(X) \\
\end{bmatrix} \\ & = & 
\begin{bmatrix}
x_2 \otimes I - I \otimes x_2 & I \otimes x_1 - x_1 \otimes I \\
\end{bmatrix} \\
& \in & M_{1\times 2}(M \otimes M^{op}). \\
\end{eqnarray*}
The density of the spectral distribution of $x_1 \otimes I - I \otimes x_1$ is $g*\tilde{g}$ where $\tilde{g}(t) = g(-t)$.  $g$ is compactly supported, whence $g, \tilde{g} \in L^1(\mathbb R) \cap L^{\infty}(\mathbb R)$, and thus $g*\tilde{g} \in L^{\infty}(\mathbb R)$.  By Remark 6.5 it follows that $x_1 \otimes I  - I \otimes x_1 \in M \otimes M^{op}$ has geometric decay.  The $L^{\infty}$ density also guarantees that $x_1 \otimes I - I \otimes I$ is injective.  By Lemma 6.13 $D^{sa}F(X)$ has nullity equal to $2-1=1$ and geometric decay.  By Corollary 6.11 $X$ is $1$-bounded.  $\chi^{sa}(x_1) > -\infty$ (by \cite{v1}) so $X$ is strongly $1$-bounded.   Of course, $X$ generates an abelian von Neumann algebra and the fact that it's strongly $1$-bounded was determined early on in \cite{v2}. 
\end{example}

\begin{example}   Suppose $X = \{x_1, \ldots, x_n\}$ consists of unitaries in $M$ and $F = \{f\}$ where $f = A X_1^{s_1} B X_1^{s_2} \in \mathfrak{A}_n$, $A$ and $B$ are $*$-monomials in $X_2,\ldots, X_n$, and either $s_1=s_2=1$ or $s_1 = 1$ and $s_2=*$.   Set $a = A(X)$ and $b = B(X)$; $a$ and $b$ are $*$-monomials in $x_2,\ldots, x_n$.  Assume that $bx_1$ has an $L^{\infty}$ density w.r.t. Lebesgue measure on the unit circle when $s_1=s_2=1$ or $b$ has an $L^{\infty}$ density w.r.t. Lebesgue measure on the unit circle when $s_1=1$ and $s_2=*$ and that in either case $f(X)=I$.   The setting is exactly that of Example 4.2 except that here I assume a stronger condition on the spectral distributions (it implies the absence of eigenvalues condition of Example 4.2).  In Example 4.2 the partial derivative of $f$ with respect to the first variable was computed:
\begin{eqnarray*} \partial_1^uf(X) & = & \begin{cases} x^*b^* \otimes bx + I_M \otimes I_{M^{op}} &\mbox{if } s_1 = s_2=1 \\
(x_1 \otimes x_1^*)(b^* \otimes b - I_M \otimes I_{M^{op}}) & \mbox{if } s_1=1, s_2=*\\
\end{cases}\\                      
\end{eqnarray*}
The $L^{\infty}$-density assumption again implies in either case that the operators have geometric decay.  To see this note that by Remark 6.6 it suffices to show that $x^*b^*\otimes bx$ or $b^* \otimes b$ have $L^{\infty}$ densities w.r.t. Lebesgue measure on the unit circle.  But the product of two independent unitaries, each with a spectral distribution which has an $L^{\infty}$ densities (which must also be in $L^1$) w.r.t. Lebesgue measure on the unit circle, again has a spectral distribution with an $L^{\infty}$ density w.r.t. Lebesgue measure on the unit circle.  Thus, $\partial_1^uf(X)$ has geometric decay.  $\partial^u_1f(X)$ is also injective by the $L^{\infty}$ density observation. By Lemma 6.13 this implies that $\text{Nullity} D^uf(X) = n-1$ and $D^uf(X)$ has geometric decay.  Invoking Corollary 6.12, $X$ is $(n-1)$-bounded.  

In particular, if $n=2$, then $X$ is $1$-bounded.   Using the $L^{\infty}$-density assumption on either $bx_1$ or $b$, it follows that there exists a selfadjoint element in the $*$-algebra generated by $X$ with finite free entropy.  Invoking Lemma 6.14, when $n=2$ $X$ is strongly $1$-bounded.  Thus, the von Neumann algebra generated by $X$ is strongly $1$-bounded.
\end{example}

\begin{remark} It follows that if $\Gamma= \langle a, b | a^mb^{s_1} a^n b^{s_2} \rangle$ with $m,n \in \mathbb Z -\{0\}$ and either $s_1 =s_2 =1$ or $s_1 = -s_2 =1$, then $L(\Gamma)$ is strongly $1$-bounded.  Indeed, since every nontrivial group element in $\Gamma$ gives rise to a Haar unitary in the left regular representation, this falls into the rubric of Example 6.2.  As noted before, when $s_1 = -s_2=1$, this is the Baumslag-Solitar group and it was shows in \cite{v2} and \cite{gs} that this von Neumann algebra was strongly $1$-bounded by using the normalizing relations in a direct way.  This example can in part be rederived from the general group von Neumann algebra results in the following section.
\end{remark}
\begin{lemma}  Suppose $m \leq n$ and $T \in M_{m \times n}(M)$ is upper triangular.  If $T_{ii} \in M$ is injective for $1 \leq i \leq m$, then $\text{Nullity}(T) = n-m$.  If in addition, each $T_{ii}$ has geometric decay, then $T$ has geometric decay. 
\end{lemma}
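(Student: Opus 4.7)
The plan is to reduce both statements to direct applications of Proposition 2.12 together with the characterization of geometric decay in Corollary 6.4. Since the hypothesis states that $T \in M_{m \times n}(M)$ is upper triangular with every diagonal entry $T_{ii}$ injective for $1 \leq i \leq m$, Proposition 2.12 applies with the parameter $p$ there equal to $m$; the auxiliary condition that $x_{ij}=0$ for $p < i \leq m$ becomes vacuous, so the conclusion yields both $\text{Rank}(T) = m$ and the product formula $\det_{FKL}(T) = \prod_{i=1}^m \det_{FKL}(T_{ii})$.

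For the nullity claim, I would simply combine $\text{Rank}(T) = m$ from the previous step with the rank-nullity identity from Section 2.6 (recall that $\text{Nullity}(T) + \text{Rank}(T) = n$, since $T^*T \in M_n(M)$ and $1_{\{0\}} + 1_{(0,\infty)} = 1$). This gives $\text{Nullity}(T) = n - m$ with no further work.

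For the geometric decay claim, I would use that each $T_{ii}$ has geometric decay, so by Corollary 6.4 each $T_{ii}$ is of determinant class, i.e., $\det_{FKL}(T_{ii}) > 0$. A finite product of positive numbers is positive, so by the product formula from Proposition 2.12, $\det_{FKL}(T) = \prod_{i=1}^m \det_{FKL}(T_{ii}) > 0$. Hence $|T|$ is of determinant class, so applying Corollary 6.4 once more in the other direction gives that $T$ has geometric decay.

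There is no serious obstacle here, as the statement is essentially a packaging of Proposition 2.12 with Corollary 6.4; the only minor bookkeeping is to verify that Proposition 2.12 indeed applies with $p = m$ (the diagonal injectivity covers all rows and the vacuous-row hypothesis is trivially satisfied), and that the rank-nullity equation from Section 2.6 is valid for rectangular $T \in M_{m \times n}(M)$ because rank and nullity are defined through $T^*T \in M_n(M)$.
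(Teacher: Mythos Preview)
Your proposal is correct and follows essentially the same approach as the paper: both invoke Proposition 2.12 (the upper-triangular rank and determinant formula) to obtain $\text{Rank}(T)=m$ and the product formula $\det_{FKL}(T)=\prod_{i=1}^m \det_{FKL}(T_{ii})$, then use rank--nullity for the first claim and the equivalence of geometric decay with determinant class (Lemma 6.3 / Corollary 6.4) for the second. Your write-up is in fact slightly more explicit than the paper's about why Proposition 2.12 applies with $p=m$.
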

\begin{proof} For the first claim $\text{Rank}(T) = m$ by the injectivity of each $T_{ii}$ and the upper triangular form of $T$.  The nullity statement follows.  To establish the geometric decay property, by Lemma 6.3 this is equivalent to showing that $T$ is of determinant class.  But from the properties of the Fuglege-Kadison-L{\"u}ck determinant w.r.t. upper triangularity (Section 2.6) and the assumed geometric decay of the $T_{ii}$ it follows that $\det_{FKL}(T_{jj}) >0$ for all $j$ so that
\begin{eqnarray*}
\text{det}_{FKL}(T) & = & \Pi_{j=1}^m \text{det}_{FKL}(T_{ii}) \\
                               & > & 0.\\
\end{eqnarray*}
\end{proof}
\begin{example}  Recall Example 4.4 with the $n$-tuple $X$ of self-adjoint elements and the $n-1$ tuple of polynomial relations $F=\{f_1,\ldots, f_{n-1}\}$.  Assume further that for each $i$, $\partial^{sa}_if_i$ has geometric decay and one of the elements of $X$ has finite free entropy (as a singleton tuple).  This condition is satisfied when $f_i = X_i X_{i+1} - X^*_{i+1} X^*_i$ for $1 \leq i \leq n-1$ and each element in $X$ has an $L^{\infty}$-density w.r.t. Lebesgue measure.  From Lemma 6.16 $D^{sa}F(X) \in M_{n-1 \times n}(M \otimes M^{op})$ has nullity equal to $n-1$ and has geometric decay.  Thus, by Corollary 6.11 it follows that $X$ is $1$-bounded.  The condition of finite free entropy for one of the elements of $X$ shows that $X$ is strongly $1$-bounded.  This gives another way to see how tensor products of tracial von Neumann algebras are strongly $1$-bounded von Neumann algebras, a result first obtained by \cite{g}.  One can use other polynomials aside from commutators and do this in the unitary case, provided that the geometric decay conditions are satisfied (e.g., the noncommutative words in Example 6.2).  I'll say more about these staggered relations in the next section.
\end{example}

\section{Group von Neumann Algebras Applications}

In this section I want to apply the results in Section 6 to discrete group von Neumann algebras whose underlying groups satisfy additional, seemingly natural (but not so easy to verify) properties.  There are two group properties which I want to use.  The first, L{\"u}ck's determinant property, guarantees that a matrix of elements in the integer ring of a discrete group is of determinant class.  The second, Linnell's $L^2$-property, concerns $0$ as it occurs in the spectrum of such operators.  It allows after nondegeneracy checks, a computation of the rank of the associated derivative in the case of a single group relation.  

Throughout this section $\Gamma$ denotes a discrete, countable group.

\subsection{Determinant Conjecture}  Denote by $\mathbb Z \Gamma \subset L(\Gamma)$ the integral ring generated by the the group unitaries in the left regular representation, $\{u_g \in B(\ell^2(\Gamma)): g \in \Gamma\}$.  L{\"u}ck's determinant conjecture states that for any $x \in M_k(Z\Gamma) \subset M_k(L(\Gamma))$,

\begin{eqnarray*}  \text{det}_{FKL}(x) \geq 1.
\end{eqnarray*}

\noindent I'll say that $\Gamma$ has L{\"u}ck's property if the above inequality holds for any $x \in M_k(\mathbb Z \Gamma)$.  

In \cite{luck} L{\"u}ck showed that the conjecture is true for all residually finite groups (where he used $\det_{FKL}$ to compute the $L^2$-Betti numbers of residually finite groups in terms of the ordinary Betti numbers in \cite{luck}).  This was extended in \cite{schick} to residually amenable groups.  More recently it was shown in \cite{es} that all sofic groups have L{\"u}ck's determinant property (see also the operator algebraic approach in \cite{bs}).  

\begin{remark}  As far as I know, there are no known examples of non-sofic discrete groups.  Amenable, residually finite, and residually amenable discrete countable groups are all known to be sofic.
\end{remark}

\begin{remark} Sofic groups are closed under a number of natural group operations including inverse and direct limits, subgroups, free products, amenable extensions, and direct products (\cite{es}).  It is also easy to see that if $\Gamma$ is sofic, then so is $\Gamma^{op}$, the opposite group of $\Gamma$.  It follows that $\Gamma$ is sofic iff $\Gamma \times \Gamma^{op}$ is sofic.\end{remark}

By Corollary 6.4 if $\Gamma$ has L{\"u}ck's property, then every element of $\mathbb Z \Gamma$ has geometric decay.  Recall that $\mathfrak{A}_n$ denotes the universal unital, complex $*$-algebra on $n$ indeterminates. Applying the unitary calculus developed in Section 3 (Remark 3.23) yields the following:

\begin{lemma} Suppose $\Gamma$ is a countable, discrete sofic group, $F$ is a $p$-tuple of $*$-monomials in $\mathfrak{A}_n$, and $g_1,\ldots, g_n \in \Gamma$.  If $X = \{u_{g_1},\ldots, u_{g_n}\} \subset L(\Gamma)$, $F(X)=I$, and $\alpha = \text{Nullity}(D^uF(X))$, then $X$ is $\alpha$-bounded.
\end{lemma}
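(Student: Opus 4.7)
The plan is to reduce to Corollary 6.12: we need only verify that $D^uF(X)$ has geometric decay, and then Corollary 6.12 gives that $X$ is $\alpha$-bounded with $\alpha=\operatorname{Nullity}(D^uF(X))$. (Note that the hypothesis $F(X)=I$ means each entry of the $p$-tuple $F(X)$ is the identity, which is precisely what Corollary 6.12 requires.)

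To verify geometric decay, I first inspect where the entries of $D^uF(X)$ live. By the explicit combinatorial description in Remark 3.24, each entry of $D^uF(X)$ is a finite sum, with $\pm 1$ coefficients, of elementary tensors of the form $w_1 \otimes w_2$ with $w_1,w_2$ unitaries in $\{u_{g_1}^{\pm 1},\ldots,u_{g_n}^{\pm 1}\}^*$. In particular, every entry lies in the integral ring $\mathbb Z[\Gamma]\odot \mathbb Z[\Gamma]^{op}$, which under the canonical identification $L(\Gamma)\otimes L(\Gamma)^{op}\simeq L(\Gamma\times \Gamma^{op})$ sits inside $\mathbb Z[\Gamma\times\Gamma^{op}]$. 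Hence $D^uF(X)\in M_{p\times n}(\mathbb Z[\Gamma\times \Gamma^{op}])$, and consequently the square positive matrix $D^uF(X)^*D^uF(X)$ lies in $M_n(\mathbb Z[\Gamma\times \Gamma^{op}])$.

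Next I invoke soficity. Since $\Gamma$ is sofic, so is $\Gamma^{op}$, and the product $\Gamma\times \Gamma^{op}$ is sofic by Remark 7.2. Elek and Szab\'o's theorem \cite{es} then tells us that $\Gamma\times \Gamma^{op}$ has L\"uck's determinant property, so
\begin{eqnarray*}
\det{}_{FKL}\!\bigl(D^uF(X)^*D^uF(X)\bigr) & \geq & 1.
\end{eqnarray*}
Using the standard identity $\det_{FKL}(A^*A)=\det_{FKL}(|A|)^2$, this gives $\det_{FKL}(|D^uF(X)|)\geq 1>0$, i.e., $|D^uF(X)|$ is of determinant class. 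By Corollary 6.4, this is equivalent to $D^uF(X)$ having geometric decay.

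The main (purely bookkeeping) obstacle is checking that L\"uck's property, stated in the excerpt for square matrices over $\mathbb Z\Lambda$, is being applied correctly to a rectangular matrix $D^uF(X)$: this is handled by passing to $D^uF(X)^*D^uF(X)$, which is square, has entries in the integral group ring of the sofic group $\Gamma\times\Gamma^{op}$, and whose determinant controls the decay of $|D^uF(X)|$. With geometric decay in hand, Corollary 6.12 applies directly and yields that $X$ is $\alpha$-bounded.
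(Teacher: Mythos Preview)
Your proof is correct and follows essentially the same line as the paper's: reduce to Corollary 6.12 by showing $D^uF(X)$ has geometric decay, observe via Remark 3.24 that $D^uF(X)^*D^uF(X)\in M_n(\mathbb Z[\Gamma\times\Gamma^{op}])$, use soficity of $\Gamma\times\Gamma^{op}$ and Elek--Szab\'o to get $\det_{FKL}\geq 1$, and conclude geometric decay via Corollary 6.4. The only cosmetic difference is that the paper phrases the last step as ``$|D^uF(X)|^2$ has geometric decay $\Rightarrow$ $|D^uF(X)|$ does (Lemma 6.3)'' rather than invoking the determinant identity $\det_{FKL}(A^*A)=\det_{FKL}(|A|)^2$ as you do.
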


\begin{proof}  By Corollary 6.12 it suffices to show that $D^uF(X)$ has geometric decay.  By Definition 3.22 and Remark 3.24, 
\begin{eqnarray*} |D^uF(X)|^2 & = & D^uF(X)^*D^uF(X) \\
                                                & \in & M_n(\mathbb Z(\Gamma \times \Gamma^{op})) \\
                                                & \subset & M_n(L(\Gamma) \otimes L(\Gamma)^{op})\\
                                                & = & M_n(L(\Gamma \times \Gamma^{op})).\\
\end{eqnarray*}                                
\noindent By the remark above $\Gamma \otimes \Gamma^{op}$ is sofic since $\Gamma$ is.  Thus, $0 < 1 \leq \det_{FKL}(|D^uF(X)|^2)$.  $|D^uF(X)|^2$ has geometric decay and this implies that $|D^uF(X)|$ does as well (Lemma 6.3), whence by definition, $D^uF(X)$ does.
\end{proof}

\subsection{Linnell's $L^2$-Property}

Linnell posed the following in \cite{l}:

\begin{linnell} If $\Gamma$ is a torsion-free discrete group, then any nonzero $x \in \mathbb C\Gamma$ is injective on $\ell^2(\Gamma)$.
\end{linnell}

I will say that a countable, discrete group $\Gamma$ has Linnell's $L^2$-property if it satisfies the conclusion of the conjecture above.  

Linnell's conjecture is related to other open problems/conjectures.  An older and closely related statement is Kaplansky's conjecture:

\begin{kaplansky} If $\Gamma$ is a torsion-free discrete group, then for any nonzero $x,y \in \mathbb C\Gamma$, $xy \neq 0$.
\end{kaplansky}

Both conjectures are naturally connected to Atiyah's conjecture.  The interested reader can read more about this conjecture as well as progress on the two above conjectures in a variety of places, e.g., \cite{luckbook}.

Linnell showed in \cite{l} that a left orderable group always possess Linnell's $L^2$-property.  A group $\Gamma$ is said to be orderable if there exists a strict linear ordering $<$ on $\Gamma$ which preserves left and right multiplication, i.e., for any $a, b, c \in \Gamma$ if $a<b$, then $ac < bc$ and $ca < cb$.  One can relax the conclusion in the definition so that one only assumes $ac<bc$ in which case $\Gamma$ is right ordered, or $ca < cb$ in which case $\Gamma$ is said to be left-ordered.  A group is right-ordered iff it is left-ordered.  Orderability assumes one ordering can fulfill both rolls.  Clearly any ordered group is both left and right ordered.  Also note that any left or right ordered group is necessarily torsion-free.

Here are some examples of left orderable groups, which will by the discussion above, satisfy Linnell's $L^2$-property:

\begin{example} Residually torsion-free nilpotent groups are left orderable.  Notice that this subclass is contained in the class of residually amenable groups and thus they are all sofic.    Examples of residually torsion-free nilpotent groups include elementary, torsion-free amenable groups \cite{l0}, free groups, the braid groups, and more recently, Hydra groups (\cite{dr}) and some of their generalizations (\cite{bm}).
 \end{example}
 
\begin{example}  By a result independently obtained by Brodski{\u i} and Howie (\cite{b},\cite{h}), all torsion free, one-relator groups are locally indicable.  By \cite{bh} locally indicable groups are left orderable (\cite{bh}).  Thus, all torsion free one-relator groups are left orderable.
\end{example}

\begin{example} Left orderable groups are closed under subgroups, the opposite operation, free products, direct products, and extensions. 
\end{example}

\begin{remark} If $\Gamma$ satisfies Linnell's $L^2$-property, then $\Gamma$ must be torsion-free.  In particular if $g \in \Gamma$ is not the group identity and $u_g \in L(\Gamma)$ is the canonical unitary associated to $g$, then $u_g$ is Haar (i.e., has the same spectral distribution as Haar measure on the unit circle of $\mathbb C$).  It is easy to see from this observation and the entropy formula for a single selfadjoint in \cite{v1} that the real part of $u_g$ has finite free entropy.   From this and Lemma 6.14 it follows that if $X$ is a finite tuple of unitaries associated to group elements and $X$ is $1$-bounded, then $X$ is strongly $1$-bounded.

\end{remark}

\begin{proposition} Suppose $\Gamma$ is a group which satisfies Linnell's $L^2$-property and $X=\{u_{g_1},\ldots, u_{g_n}\}$ with $g_1,\ldots, g_n \in \Gamma$.  If there exists a nonempty, reduced word $f$ on $n$ letters such $f(g_1,\ldots, g_n)=e_{\Gamma}$ and the $i$th indeterminant appears in $f$, then there exists a $*$-monomial $w \in \mathfrak A_n$ such that $w(X)=I$ and $\partial^u_iw(X)$ is injective.  
\end{proposition}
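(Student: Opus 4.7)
The plan is to take $w=f$ itself (or a small modification) and reduce the injectivity assertion to a nonvanishing assertion for an element of $\mathbb{Z}[\Gamma\times\Gamma^{op}]$, which is then handled by applying Linnell's $L^{2}$-property to the direct product group. First I would use Remark~3.24 together with the hypothesis $f(X)=I$: the latter forces the suffix $w_{2,k}(X)$ following each occurrence of $X_{i}^{\pm 1}$ in $f$ to equal the inverse of the corresponding prefix evaluated in $\Gamma$, so that the formula in Remark~3.24 collapses to
\[ \partial_{i}^{u}f(X)\;=\;\sum_{k}\epsilon_{k}\,u_{h_{k}}\otimes u_{h_{k}^{-1}}^{op}\;\in\;\mathbb{Z}[\Gamma\times\Gamma^{op}]\subset L(\Gamma)\otimes L(\Gamma)^{op}, \]
where the sum ranges over positions of $X_{i}$ and $X_{i}^{*}$ in $f$, each $\epsilon_{k}\in\{\pm 1\}$ records which of the two it is, and $h_{k}$ is an explicit prefix element of $\Gamma$.

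Second I would pass from injectivity to nonvanishing via Linnell's property. Left orderability (which by \cite{l} implies Linnell's $L^{2}$-property) is preserved under direct products and the opposite group construction (Example~7.3), so $\Gamma\times\Gamma^{op}$ also satisfies Linnell's $L^{2}$-property. Hence any nonzero element of $\mathbb{C}[\Gamma\times\Gamma^{op}]$ is injective as an operator on $\ell^{2}(\Gamma\times\Gamma^{op})$, which is exactly $L^{2}$ of the ambient tracial von Neumann algebra $L(\Gamma)\otimes L(\Gamma)^{op}$. It is therefore enough to exhibit a $*$-monomial $w$ with $w(X)=I$ and $\partial_{i}^{u}w(X)\neq 0$.

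Third I would connect the nonvanishing of $\partial_{i}^{u}w(X)$ to the classical Fox derivative. Torsion-freeness of $\Gamma$ makes the diagonal map $h\mapsto(h,h^{-1})$ from $\Gamma$ into $\Gamma\times\Gamma^{op}$ injective, so the elements $u_{h}\otimes u_{h^{-1}}^{op}$ attached to distinct prefixes are linearly independent in $\mathbb{Z}[\Gamma\times\Gamma^{op}]$. Projecting to the first tensor coordinate and applying the identities $\bar p_{k}=\bar p_{k-1}g_{i}$ (at an $X_{i}$) and $\bar p_{k-1}=\bar p_{k}g_{i}$ (at an $X_{i}^{*}$), a direct comparison of prefix sums yields
\[ \sigma(\partial_{i}^{u}w(X))\;=\;\tilde\pi(\partial w/\partial X_{i})\cdot u_{g_{i}}, \]
where $\tilde\pi:\mathbb{Z}F_{n}\to\mathbb{Z}\Gamma$ extends the homomorphism $X_{j}\mapsto g_{j}$ and $\partial/\partial X_{i}$ is the Fox derivative. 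Since $u_{g_{i}}$ is invertible, the problem reduces to producing $w\in\ker(F_{n}\to\Gamma)$ with $\tilde\pi(\partial w/\partial X_{i})\neq 0$ in $\mathbb{Z}\Gamma$.

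The hard part is this last step. The naive choice $w=f$ succeeds in the principal case of interest --- when $\Gamma$ is presented by the single relator $f$ (not a proper power), Lyndon's identity theorem identifies the relation module as the free cyclic $\mathbb{Z}\Gamma$-module generated by the class of $f$, and this forces $\tilde\pi(\partial f/\partial X_{i})\neq 0$ whenever $X_{i}$ appears in $f$. In degenerate situations where prefix collisions cause $\tilde\pi(\partial f/\partial X_{i})$ to vanish, one would need to replace $f$ by a word obtained through conjugations, powers, or products of $f$ with additional consequences of $f$ in $F_{n}$; engineering such a replacement while keeping $X_{i}$ in the resulting $w$ and producing a nonzero Fox derivative image is the main combinatorial obstacle, and it is here that the torsion-freeness of $\Gamma$ implied by Linnell's property would be used.
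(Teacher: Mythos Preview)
Your proposal has two genuine gaps, and the paper's proof handles both with ideas you are missing.

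First, your reduction step needs Linnell's $L^2$-property for $\Gamma\times\Gamma^{op}$, and you justify this by passing through left orderability. But the proposition only assumes Linnell's property for $\Gamma$, and Linnell's property is not known to be closed under direct products, so you are silently strengthening the hypothesis. The paper avoids this entirely: because each summand in $\partial_i^u w(X)$ has the special form $a^*\otimes a^{op}$ (your own formula shows $u_{h_k}\otimes u_{h_k^{-1}}^{op}$), the operator has the same $*$-moments as the element $T=\sum_k \epsilon_k\, u_{h_k^{-1}}^{op}\in\mathbb C\Gamma^{op}$. Injectivity of $T$ in $L(\Gamma^{op})$ therefore transfers to injectivity of $\partial_i^u w(X)$ in $L(\Gamma)\otimes L(\Gamma)^{op}$, and one only needs Linnell for $\Gamma^{op}$, which is equivalent to Linnell for $\Gamma$. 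You had the antidiagonal structure in hand but did not exploit it.

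Second, and more seriously, you explicitly leave the nonvanishing step unfinished: you invoke Lyndon's identity theorem only for genuine one-relator groups and concede that in general ``prefix collisions'' may force $\tilde\pi(\partial f/\partial X_i)=0$, with only a vague plan to modify $f$. The paper's key move is to \emph{not} take $w=f$: instead, choose $w$ corresponding to a word of \emph{minimal length} among all nontrivial elements of $\ker(\mathbb F_n\to\Gamma)$. Then if two of the suffix group elements $h_r,h_s$ (your $h_k$'s) coincided with matching signs, cancelling them would produce a strictly shorter nontrivial relation, contradicting minimality. Hence the summands of $T$ are supported on distinct group elements, so $T\neq 0$, and Linnell for $\Gamma^{op}$ gives injectivity. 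This minimality trick is the missing combinatorial idea; your Fox-derivative route neither recovers it nor replaces it in the stated generality.
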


\begin{proof}   Denote by $\ell$ the length function on $\mathbb F_n$, i.e., for $g \in \mathbb F_n$ $\ell(g)$ is the length of $g$ represented as a reduced word.  Set
\begin{eqnarray*}
m = \min\{\ell(w): w(g_1,\ldots,g_n)=e_{\Gamma}, w \neq e_{\mathbb F_n}, w \in \mathbb F_n\}.
\end{eqnarray*}
$m \in \mathbb N$ is well-defined by the existence of $f$.  Pick a $v \in \mathbb F_n$ such that $\ell(v)=m$, $v(g_1,\ldots,g_n)= e_{\Gamma}$, $v \neq e_{\mathbb F_n}$. There exists a $*$-monomial $w \in \mathfrak{A}_n$ such that $\ell_0(w)=m$, for some $i$, $X_i$ or $X_i^*$ appears in $w$, and $w(X)=w(u_{g_1},\ldots, u_{g_n}) =I$ where $\ell_0$ is the length function defined on $*$-monomials in $\mathfrak{A}_n$ (equivalently $\ell_0$ is the degree of the $*$-monomial).  This is obtained by taking $v$  (which is a sequence in $n$ indeterminates and labelled inverses), replacing the ith indeterminate with $X_i$, the inverses with adjoints, and concatenating the terms in the sequence to produce a product in $\mathfrak{A}_n$ (recall that $X_1,\ldots, X_n$ are the canonical generators of $\mathfrak{A}_n$).  By permuting the indices I can assume without loss of generality that $i=1$.  To complete the proof I have to show that $\partial^u_1w(X)$ is injectivity.

Set $B=\{X_2,\ldots, X_n\}$ and write $w = w_1X_1^{j_1} \cdots w_p X_1^{j_p} w_{p+1}$ where $p \in \mathbb N$ (since $X_1$ appears in $w$), $w_1,\ldots,w_{p+1}$ are (possibly empty) words in the $*$-semigroup generated by $B$, $j_1,\ldots, j_p \in \{*,1\}$, and for any $1 \leq k \leq p-1$, $j_k \neq j_{k+1}$ only if $w_k \neq I$.  This expression of $w$ is possible since $w$ is derived from $v$ and $v$'s length.  Written in this way, it follows that $1 \leq m = \ell_0(w) = p +\sum_{k=1}^{p+1} \ell_0(w_k)$.  From the unitary calculus formula in Lemma 3.20 and Remark 3.24 $(\partial_1^uw)(X)$ equals
\begin{eqnarray*} \sum_{k=1}^p (-1)^{\delta_{j_k, *}}((u_{g_1}^{\delta_{*, j_k}})^* w_{k+1}(X)  \cdots w_p(X)  u_{g_1}^{j_p} w_{p+1}(X))^* \otimes  ((u_{g_1}^{\delta_{*, j_k}})^* w_{k+1}(X)  \cdots w_p(X)  u_{g_1}^{j_p} w_{p+1}(X))^{op}.\\ 
\end{eqnarray*}
Consider the operator $T$ obtained from the right hand side of the elementary tensors above:
\begin{eqnarray*} T & = & \sum_{k=1}^p  (-1)^{\delta_{j_k, *}} \cdot ((u_{g_1}^{\delta_{j_k, *}})^* w_{k+1}(X)  \cdots w_p(X)  u_{g_1}^{j_p} w_{p+1}(X))^{op} \\
                                & \in & \mathbb Z(\Gamma^{op}) \\
                                & \subset & \mathbb C(\Gamma^{op}). \\
\end{eqnarray*}
It is straightforward to check that $T$ has the same noncommutative $*$-moments as $(\partial_1^uw)(X)$.
Moreover, if two operators $a, b$ in respective tracial von Neumann algebras have the same noncommutative $*$-moments and $a$ is injective, then so is $b$.  Applying this observation for $a=T$ and $b=\partial^u_1w(X)$, if I can show that $T$ is injective, then this will show the injectivity of $\partial^u_1w(X)$ and complete the proof.

Towards this end, notice that the terms in the expansion of $T$ above are pairwise orthogonal (as elements in $L(\Gamma^{op}) \subset \ell^2(\Gamma^{op})$).  Indeed, if this is not the case then for some $1 \leq r < s \leq p$, $j_r = j_s$ and
\begin{eqnarray*}
&((u_{g_1}^{\delta_{j_r, *}})^* w_{r+1}(X)  \cdots w_p(X)  u_{g_1}^{j_p} w_{p+1}(X)) = ((u_{g_1}^{\delta_{j_s, *}})^* w_{s+1}(X)  \cdots w_p(X)  u_{g_1}^{j_p} w_{p+1}(X)).&\\  
\end{eqnarray*}
Thus, 
\begin{eqnarray*}  ((u_{g_1}^{\delta_{j_r, *}})^* w_{r+1}(X)  \cdots w_s(X)  u_{g_1}^{j_s} & = &  (u_{g_1}^{\delta_{j_s, *}})^*.\\
\end{eqnarray*}
Since $j_r =j_s$, this implies 
\begin{eqnarray*}
w_{r+1}(X)  \cdots w_s(X)  u_{g_1}^{j_s} & = & I.\\
\end{eqnarray*}
There is a nontrivial $*$-monomial $g$ of length $1 \leq s-r+ \sum_{k=r+1}^s \ell_0(w_k) < m$ such that $g(X)=I$; $g$ yields a reduced word $w_0 \in \mathbb F_n$ for which $\pi(w_0)=e_{\Gamma}$ and $1\leq \ell(w_0) = s-r+ \sum_{k=r+1}^s \ell_0(w_k) < m$.  This is a contradiction.
It follows that $T$ is nonzero as it is a nonempty sum of orthogonal, nonzero vectors.  $\Gamma$ has Linnell's $L^2$-property iff $\Gamma^{op}$ has Linnell's $L^2$-property.  Thus, $\Gamma^{op}$ has Linnell's $L^2$-property.  Since $T \in \mathbb C(\Gamma^{op})$ is nonzero, it is injective and this finishes the proof.
\end{proof}

\begin{remark} The one place where I used the assumption that $\Gamma$ has Linnell's $L^2$-property was in concluding that nontriviality of $\partial_1^uf(X)$ implies that it's injective.  This seems excessive as it invokes a global group property to just one single operator (obtained through partial differentiation) which is naturally and  concretely expressed in terms of the group relation.  
\end{remark}

\begin{corollary}
Suppose $\Gamma$ is a sofic group which satisfies Linnell's $L^2$-property, has a finite generating tuple $\{g_1,\ldots, g_n\}$, and $X=\{u_{g_1},\ldots, u_{g_n}\} \subset L(\Gamma)$.  $\Gamma \not\simeq \mathbb F_n$ iff $X$ is $(n-1)$-bounded.
\end{corollary}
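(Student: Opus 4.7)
The plan is to reduce the corollary to results already established in the paper: Proposition 7.5, Lemma 6.13, and Lemma 7.3 handle the forward direction, while Voiculescu's free entropy dimension computation handles the reverse. Both directions are relatively short once these tools are in place.

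For the forward direction ($\Gamma \not\simeq \mathbb F_n \Rightarrow X$ is $(n-1)$-bounded), I would proceed as follows. The generating tuple $\{g_1,\ldots,g_n\}$ yields a canonical surjection $\pi: \mathbb F_n \to \Gamma$. If $\Gamma \not\simeq \mathbb F_n$, then $\ker \pi$ is non-trivial (otherwise $\pi$ would be an isomorphism), so there exists a non-empty reduced word $f \in \mathbb F_n$ with $f(g_1,\ldots,g_n) = e_\Gamma$, and at least one indeterminate, say the $i$th, appears in $f$. Invoking Proposition 7.5 (this is the one place Linnell's $L^2$-property is used), I obtain a $*$-monomial $w \in \mathfrak A_n$ with $w(X) = I$ and $\partial^u_i w(X) \in L(\Gamma) \otimes L(\Gamma)^{op}$ injective. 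Set $F = \{w\}$, so that $D^u F(X) = D^u w(X)$ is a row vector in $M_{1 \times n}(L(\Gamma) \otimes L(\Gamma)^{op})$ with the $i$th entry injective. Lemma 6.13 then gives $\text{Nullity}(D^u w(X)) = n-1$. Since $\Gamma$ is sofic and $w(X) = I$, Lemma 7.3 delivers the conclusion: $X$ is $(n-1)$-bounded.

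For the reverse direction ($X$ is $(n-1)$-bounded $\Rightarrow \Gamma \not\simeq \mathbb F_n$), I argue by contrapositive. Suppose $\Gamma \simeq \mathbb F_n$. Since $\{g_1,\ldots,g_n\}$ is a generating set of size $n$ for a group isomorphic to $\mathbb F_n$, the Hopfian property of free groups (applied to the composition $\mathbb F_n \to \Gamma \simeq \mathbb F_n$) forces the $g_i$ to be free generators. Hence $X$ consists of $n$ free Haar unitaries, and by Voiculescu's computation in \cite{v1} (passing through the real-imaginary parts and Proposition 2.5 if one prefers the self-adjoint formulation), $\delta_0(X) = n$. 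But $\alpha$-boundedness of $X$ forces $\delta_0(X) \leq \alpha$ directly from Definition 2.4, so $(n-1)$-boundedness would yield $\delta_0(X) \leq n-1 < n$, a contradiction.

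There is essentially no novel obstacle in this corollary: the substantive work lies entirely in Proposition 7.5 (which encapsulates Linnell's injectivity input and the combinatorial minimal-length argument) and Lemma 7.3 (which encapsulates the sofic/L\"uck input through geometric decay feeding into Theorem 6.9). The only thing worth double-checking is the free-generator deduction in the reverse direction; this is a standard consequence of the Hopfian property of $\mathbb F_n$, and it is essential because the corollary's statement fixes the specific generating tuple $X$ rather than quantifying over all generating tuples.
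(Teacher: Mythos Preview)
Your proof is correct and follows essentially the same approach as the paper's own argument: the forward direction is identical (nontrivial kernel $\Rightarrow$ Proposition 7.5 $\Rightarrow$ Lemma 6.13 $\Rightarrow$ Lemma 7.3), and the reverse direction reaches the same contradiction $\delta_0(X)=n$ versus $\delta_0(X)\leq n-1$. The only minor difference is that for the reverse direction the paper invokes the $*$-algebra invariance of $\delta_0$ (so $\delta_0(X)=\delta_0(\Gamma)=\delta_0(\mathbb F_n)=n$ directly), whereas you use the Hopfian property of $\mathbb F_n$ to first conclude the $g_i$ are free generators and then compute $\delta_0(X)=n$; both routes are valid and equally short.
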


\begin{proof} Suppose $\Gamma \not\simeq \mathbb F_n$.  Denote by $a_1,\ldots, a_n$ the canonical set of generators for $\mathbb F_n$.  By universality there exists a $*$-homomorphism $\pi: \mathbb F_n \rightarrow \Gamma$ such that $\pi(a_i) = g_i$.  Since $\Gamma \not\simeq \mathbb F_n$ there exists a $b \in \ker \pi$ which is not the identity.   $b$ yields a non-empty reduced word $f$ in the $g_i$ such that $f(g_1,\ldots g_n) = e_{\Gamma}$.  Fix some index $1 \leq j \leq n$ such that the $j$th indeterminate appears in $f$.   By Proposition 7.5 there exists a nontrivial $*$-monomial $w \in \mathfrak{A}_n$ such that $w(X) =I$ and $\partial_i^uw(X)$ is injective.   By Lemma 6.13 $\alpha = \text{Nullity}(D^uw)(X) = n-1$.  By Lemma 7.3 $X$ is $n-1$ bounded.

Conversely, suppose $X$ is $(n-1)$-bounded.  Recall that $\delta_0()$ is a $*$-algebra invariant and $\delta_0(G)$ is well-defined for a finitely generated discrete group (Section 2.3).  By the computation of the free entropy dimension for freely independent self-adjoint variables in \cite{v2}, $\delta_0(\mathbb F_n) = n$ for $n \in \mathbb N$.  So if $\Gamma \simeq \mathbb F_n$, then $n-1=\delta_0(X)=\delta_0(\Gamma) = \delta_0(\mathbb F_n) = n$ which is preposterous.  $\Gamma \not\simeq \mathbb F_n$ as desired.
\end{proof}

\begin{corollary} Suppose $\Gamma$ is a sofic group with two generators, satisfies Linnell's $L^2$-property, and $\Gamma \neq \{0\}$. The following are equivalent:
\begin{enumerate}[(1)]
\item $\Gamma \not\simeq \mathbb F_2$. 
\item $L(\Gamma) \not\simeq L(\mathbb F_2)$.
\item $\delta_0(X) = 1$ for any finite set of generators $X$ for $L(\Gamma)$.
\item $L(\Gamma)$ is strongly $1$-bounded.
\end{enumerate}
\end{corollary}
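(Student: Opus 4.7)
The plan is a cyclic chain $(1) \Rightarrow (4) \Rightarrow (3) \Rightarrow (2) \Rightarrow (1)$, with the entire weight of the equivalence carried by Corollary 7.7.

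The implication $(2) \Rightarrow (1)$ is immediate, since any group isomorphism $\Gamma \simeq \mathbb F_2$ extends by universality of the left regular representation to a trace-preserving $*$-isomorphism of group von Neumann algebras. The heart of the argument is $(1) \Rightarrow (4)$, which I will handle by applying Corollary 7.7 to the canonical unitary tuple $X = \{u_{g_1}, u_{g_2}\}$ associated with a two-generator presentation of $\Gamma$: with $n = 2$, sofic plus Linnell's $L^2$-property plus the hypothesis $\Gamma \not\simeq \mathbb F_2$ gives $1$-boundedness of $X$. Remark 7.4 then upgrades this to strong $1$-boundedness (it is precisely the packaging that combines torsion-freeness from Linnell, finite free entropy of the real part of a Haar unitary via \cite{v1}, and Lemma 6.14), so $L(\Gamma) = vN(X)$ is strongly $1$-bounded.

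For $(4) \Rightarrow (3)$ I will split into two inequalities. The upper bound $\delta_0(Y) \leq 1$ for every finite generating tuple $Y$ of $L(\Gamma)$ is built into the definition of strong $1$-boundedness reviewed in Section 2.3 and is therefore immediate. For the matching lower bound $\delta_0(Y) \geq 1$ I will use that a nontrivial torsion-free $\Gamma$ is necessarily infinite, so $L(\Gamma)$ is diffuse, and cite the known positivity of $\delta_0$ on finite generating tuples of diffuse tracial von Neumann algebras. Finally, for $(3) \Rightarrow (2)$ I will argue by contradiction: an isomorphism $L(\Gamma) \simeq L(\mathbb F_2)$ would transport the canonical free unitary generators of $L(\mathbb F_2)$ to a finite generating tuple of $L(\Gamma)$ whose free entropy dimension is $\delta_0(\mathbb F_2) = 2$ by \cite{v1}, contradicting (3).

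The only real obstacle is the invocation of Corollary 7.7, which is where all of the substantive analytic work lives (the unitary calculus of Section 3, the iterative spectral splits and geometric decay formalism of Section 6, the Linnell nullity computation of Proposition 7.5, and the L\"uck-determinant input packaged in Lemma 7.3); once that corollary is in hand, each remaining step of the cycle is either definitional or a short one-line appeal. The mild secondary subtlety is the lower bound $\delta_0(Y) \geq 1$ for arbitrary generating tuples in $(4) \Rightarrow (3)$, which must be quoted from prior work on diffuse tuples rather than extracted from the machinery built in this paper.
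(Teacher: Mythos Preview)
Your cycle $(1)\Rightarrow(4)\Rightarrow(3)\Rightarrow(2)\Rightarrow(1)$ matches the paper's proof exactly, and the invocation of Corollary 7.7 plus Remark 7.4 for $(1)\Rightarrow(4)$ is precisely what the paper does. The step $(3)\Rightarrow(2)$ is also fine, though the paper is slightly more careful: it notes that $L(\mathbb F_2)$ has a unique tracial state, so any von Neumann algebra isomorphism is automatically trace-preserving, which is needed since $\delta_0$ is computed relative to a fixed trace. (The paper happens to use a semicircular pair rather than the free Haar unitaries, but either witness works.)

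There is one genuine gap in your $(4)\Rightarrow(3)$. The lower bound $\delta_0(Y)\geq 1$ is \emph{not} a consequence of diffuseness alone: if $L(\Gamma)$ fails to embed in an ultraproduct of the hyperfinite $\mathrm{II}_1$-factor, the microstate spaces are eventually empty and $\delta_0(Y)=-\infty$. The result you need from \cite{j0} has embeddability as a hypothesis. The paper closes this by using soficity a second time: sofic $\Gamma$ implies $L(\Gamma)$ embeds into the ultraproduct (via \cite{gs}), and then diffuseness plus embeddability plus \cite{j0} (and Proposition 2.5 to pass between general and self-adjoint tuples) gives $\delta_0(Y)\geq 1$. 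You flagged this step as the ``mild secondary subtlety,'' but your phrasing ``known positivity of $\delta_0$ on finite generating tuples of diffuse tracial von Neumann algebras'' states a theorem that is false as written; you must insert the embeddability hypothesis and explain where it comes from.
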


\begin{proof} $ $

(1) $\Rightarrow$ (4): If (1), then by Corollary 7.7, $\Gamma$ is $1$-bounded.  By Remark 7.4 $L(\Gamma)$ is strongly $1$-bounded.

(4) $\Rightarrow$ (3): Suppose $X$ is any finite set of generators for $L(\Gamma)$.  By \cite{j3} and Proposition 2.5 $X$ is $1$-bounded.  It follows (Section 2) that $\delta_0(X) \leq 1$.  Since $\Gamma$ is sofic, by \cite{gs} $L(\Gamma)$ is embeddable into an ultraproduct of the hyperfinite $\mathrm{II}_1$-factor.  Moreover, $\Gamma$ being torsion free and not equal to the trivial identity group, $L(\Gamma)$ is diffuse.  So by \cite{j0} and Proposition 2.5, $\delta_0(X) \geq 1$.  Thus, $\delta_0(X)=1$.

(3) $\Rightarrow$ (2): By contradiction if $L(\Gamma) \simeq L(\mathbb F_2)$, then the assumed isomorphism identifies the canonical traces  (since $L(\mathbb F_2)$ has a unique tracial state).  Hence, there exists a 2-tuple $X$ of freely independent semicirculars (w.r.t. the canonical trace on $L(\Gamma)$) which generates $L(\Gamma)$.  By \cite{v1} $\delta_0(X) = \delta_0^{sa}(X)=2 \neq 1$ which violates (3). 

(2) $\Rightarrow$ (1): If $\Gamma \simeq \mathbb F_2$, then $L(\Gamma) \simeq L(\mathbb F_2)$.  Take the contrapositive. 
\end{proof}

 Recall that any nontrivial element $w$ (i.e., non-identity element) in a free group can be written as $w=v^m$ for some maximal $m \in \mathbb N$ and that subject to this maximality condition, $v$ is unique. If $m >1$ then $w$ is said to be a proper power.   Recall also that a positive element in a free group on $n$ generators $g_1,\ldots, g_n$, is an element of the form $g_{j_1} \cdots g_{j_d}$ where $d \in \mathbb N$, $1 \leq j_1, \ldots, j_d \leq n$.  
A positive $k$-relator group is simply a group with a presentation on finitely many generators and $k$ positive relators.
 
 \begin{corollary} If $\Gamma$ is a sofic, one-relator group on $n$ generators whose relator is a nontrivial, non-proper power, then $\Gamma$ is $(n-1)$-bounded.   If $\Gamma$ is a one-relator group on $n$ generators whose relator word is nontrivial, positive and not a proper power, then $\Gamma$ is $(n-1)$-bounded.
 \end{corollary}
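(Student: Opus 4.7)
The plan is to recognize that both statements fit exactly the hypotheses of Corollary 7.7, so the task reduces to verifying three conditions: soficity of $\Gamma$, Linnell's $L^2$-property, and $\Gamma \not\simeq \mathbb{F}_n$. The last of these is immediate: the existence of a nontrivial relator produces a nontrivial element in the kernel of the canonical surjection $\mathbb{F}_n \twoheadrightarrow \Gamma$ that sends the free generators to the generators of $\Gamma$, so $\Gamma$ cannot be free of rank $n$. Thus the entire argument consists of establishing soficity and Linnell's $L^2$-property under each set of hypotheses.

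For the first statement, soficity is assumed, so only Linnell's $L^2$-property remains. Here I would invoke the classical theorem (due to B.~B.~Newman / Karrass--Magnus--Solitar) that a one-relator group is torsion-free precisely when its relator is not a proper power; since the relator in the hypothesis is not a proper power, $\Gamma$ is torsion-free. Combined with the Brodski{\u i}--Howie theorem, recalled in Example 7.2, that torsion-free one-relator groups are locally indicable, and with the Burns--Hale result \cite{bh} that locally indicable groups are left-orderable, one concludes that $\Gamma$ is left-orderable. Linnell's result in \cite{l} then supplies Linnell's $L^2$-property. Corollary 7.7 now applies and yields $(n-1)$-boundedness.

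For the second statement, soficity is not assumed and must be deduced from positivity of the relator. Here I would cite Baumslag's theorem \cite{baum} that positive one-relator groups are residually solvable; since solvable groups are amenable, such a $\Gamma$ is residually amenable, and by the closure properties of the sofic class recalled in Remark 7.1, $\Gamma$ is sofic. The relator is still nontrivial and not a proper power, so the derivation of torsion-freeness, local indicability, left-orderability, and Linnell's $L^2$-property proceeds exactly as in the first case, and Corollary 7.7 once more concludes the argument.

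The substantive work has already been done in Corollary 7.7 and, underneath it, in Theorem 6.9 and Proposition 7.5. The only real obstacle here is assembling the correct chain of classical results from combinatorial group theory--the Newman torsion-free criterion, Brodski{\u i}--Howie local indicability, Burns--Hale left-orderability, Linnell's $L^2$-theorem, and Baumslag's residual solvability--each nontrivial in its own right but none of whose internal proofs enter into the argument.
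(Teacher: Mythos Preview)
Your argument follows the paper's proof almost exactly: torsion-freeness via \cite{kms}, left-orderability via Brodski{\u i}--Howie and Burns--Hale (packaged in Example 7.2), Linnell's $L^2$-property via \cite{l}, and for the positive case, soficity via Baumslag's residual solvability \cite{baum}. The one point where the paper is more careful is the step $\Gamma \not\simeq \mathbb{F}_n$: the existence of a nontrivial element in the kernel of the canonical surjection $\mathbb{F}_n \twoheadrightarrow \Gamma$ only shows that \emph{this particular} map fails to be an isomorphism, not that $\Gamma$ is not abstractly isomorphic to $\mathbb{F}_n$. The paper closes this gap by invoking the Hopfian property of $\mathbb{F}_n$ (every surjective endomorphism of $\mathbb{F}_n$ is injective), which you should mention explicitly.
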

 
 \begin{proof} For the first claim, by \cite{kms} the relator word is not a proper power iff $\Gamma$ is torsion free.  $\Gamma$ is a torsion-free one relator group so by Example 7.2 $\Gamma$ is left orderable and thus by \cite{l} satisfies Linnell's $L^2$-property.  $\Gamma$ is a sofic group which satisfies Linnell's $L^2$-property.   Since $\mathbb F_n$ is Hopfian, $\Gamma \not\simeq \mathbb F_n$.  By Corollary 7.7, $\Gamma$ is $(n-1)$-bounded.  
 
To establish the second claim by the first claim it is enough to show that the group is sofic.  By \cite{baum} a one-relator group whose relator is a positive word is residually solvable, whence, residually amenable, and thus sofic by \cite{es}.
 \end{proof}
 
 \begin{corollary} If $\Gamma$ is a sofic, one-relator group on 2 generators whose relator is nontrivial and not a proper power, then $L(\Gamma)$ is strongly 1-bounded.  In particular if $\Gamma$ is a positive one-relator group on 2 generators whose relator is nontrivial and not a proper power, then $L(\Gamma)$ is strongly $1$-bounded.
 \end{corollary}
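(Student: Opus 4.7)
The plan is to deduce the first claim directly from Corollary 7.9 (with $n=2$) combined with the upgrade from $1$-bounded to strongly $1$-bounded supplied by Lemma 6.14, and then to reduce the positive-relator case to the sofic case via Baumslag's residual solvability theorem. So there is essentially nothing new to prove; the work has all been done, and the task is to assemble the pieces.

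First, I would apply Corollary 7.9 to the tuple $X = \{u_{g_1},u_{g_2}\} \subset L(\Gamma)$ of canonical unitaries associated to the two generators. The hypotheses of Corollary 7.9 in the case $n=2$ are exactly the standing hypotheses of the present corollary (sofic one-relator group on two generators, nontrivial relator, not a proper power), so that corollary gives immediately that $X$ is $1$-bounded.

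Next I would upgrade $1$-boundedness to strong $1$-boundedness. Since the relator is not a proper power, $\Gamma$ is torsion-free by \cite{kms}; by the Brodski\u{i}--Howie theorem (\cite{b}, \cite{h}) torsion-free one-relator groups are locally indicable, hence left-orderable, and hence satisfy Linnell's $L^2$-property by \cite{l}. In particular $\Gamma$ is torsion-free and nontrivial (the 2-generator presentation with nontrivial relator precludes the trivial group), so at least one of the generators, say $g_1$, has nontrivial image in $\Gamma$. As recalled in Remark 7.4, this forces $u_{g_1}$ to be a Haar unitary, and the Voiculescu entropy formula \cite{v1} gives $\chi^{sa}(y) > -\infty$ for the self-adjoint element $y = (u_{g_1}+u_{g_1}^*)/2$ lying in the $*$-algebra generated by $X$. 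Lemma 6.14 then converts $1$-boundedness of $X$ into strong $1$-boundedness of $vN(X) = L(\Gamma)$, proving the first assertion. (If one wanted to avoid separating the case where a generator becomes trivial in $\Gamma$, it suffices to observe that in that degenerate case $\Gamma$ is torsion-free cyclic and the statement reduces to a single Haar unitary, where the same Remark 7.4 / \cite{v1} input applies.)

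For the ``in particular'' assertion, I would simply verify that a one-relator group on two generators with nontrivial positive relator (not a proper power) is automatically sofic, whereupon the first part of the corollary applies. By \cite{baum}, such a group is residually solvable; in particular it is residually amenable, and residually amenable groups are sofic by \cite{es}. The main obstacle in this proof proposal is therefore nonexistent: all the machinery has been built in Sections 3--6 and deployed earlier in Section 7, and this corollary is just the concrete packaging of Corollary 7.9 plus the Haar-unitary upgrade, with the positive case absorbed by Baumslag--Elek--Szab\'o.
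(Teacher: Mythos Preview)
Your proposal is correct and follows essentially the same route as the paper: invoke Corollary 7.9 with $n=2$ to get $1$-boundedness, then use the Haar-unitary/Linnell argument packaged in Remark 7.4 (which you unpack via Lemma 6.14) to upgrade to strong $1$-boundedness, and handle the positive case by Baumslag's residual solvability plus \cite{es}. The only difference is cosmetic: the paper cites Remark 7.4 directly (relying on Linnell's $L^2$-property having already been established inside the proof of Corollary 7.9), whereas you re-derive that property explicitly.
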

 
\begin{proof}  For the first claim, by Corollary 7.9 $\Gamma$ is $1$-bounded. It is easy to show that $\Gamma \neq \{0\}$ and by Remark 7.4 this implies $L(\Gamma)$ is strongly $1$-bounded.   The second claim follows from the first provided soficity can be established.  As in Corollary 7.9 this follows from the residual solvability of positive relator groups established in \cite{baum} and \cite{es}.
 \end{proof}

\begin{remark}   By \cite{murasugi} it turns out that the center of a one relator group $\Gamma$ on $2$ generators has the following dichotomy: it is either trivial (e.g., the Baumslag-Solitar Groups BS(n,m) when $|n| \neq |m|$ were shown in \cite{ys} to be i.c.c.), or a copy of $\mathbb Z$.  The result also shows that when the number of generators is greater than $2$, then the center is always trivial.
\end{remark}

\begin{lemma} Suppose $\Gamma$ is a group with generators $a_1,\ldots, a_n$ and $f_1,\ldots, f_{n-1}$ are reduced, nonempty words where the $i$th indeterminate appears in $f_i$ and the first $i-1$ indeterminates do not appear in $f_i$.  If $\Gamma$ is sofic and has Linnell's $L^2$-property, then $L(\Gamma)$ is strongly $1$-bounded.    
\end{lemma}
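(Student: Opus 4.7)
The plan is to reduce the lemma to Corollary 6.12 followed by Remark 7.4. Set $X = \{u_{a_1}, \ldots, u_{a_n}\}$, a generating tuple for $L(\Gamma)$. I will build a new $(n-1)$-tuple $G = (w_1, \ldots, w_{n-1})$ of $*$-monomials in $\mathfrak{A}_n$ respecting the same staggered structure, satisfying $w_i(X) = I$ for all $i$, and such that $D^uG(X) \in M_{(n-1) \times n}(L(\Gamma) \otimes L(\Gamma)^{op})$ is upper triangular with injective diagonal entries. The single-relation invariance result of Proposition 7.5 is the essential input.

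To construct the $w_i$, for each $i \in \{1, \ldots, n-1\}$ consider the subgroup $\Gamma_i = \langle a_i, a_{i+1}, \ldots, a_n \rangle$. Both soficity and Linnell's $L^2$-property are inherited by subgroups, so $\Gamma_i$ possesses both. The word $f_i$, by the staggered hypothesis, is a nonempty reduced word in the letters $a_i, \ldots, a_n$ equal to $e_{\Gamma_i}$, and $a_i$ (the first letter in the restricted tuple) appears in it. Apply Proposition 7.5 to $\Gamma_i$ with tuple $(a_i, \ldots, a_n)$, relator $f_i$, and index $1$: this produces a $*$-monomial $w_i$ in the variables $X_i, \ldots, X_n \subset \mathfrak{A}_n$ with $w_i(X) = I$ such that the partial $u$-derivative of $w_i$ with respect to its first variable (namely $X_i$) at $X$ is injective as an element of $L(\Gamma) \otimes L(\Gamma)^{op}$. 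Because $w_i$ involves only $X_i, \ldots, X_n$, Remark 3.24 gives $\partial_j^u w_i(X) = 0$ for $j < i$, so $G$ inherits the staggered form.

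Now $D^uG(X)$ is upper triangular in $M_{(n-1) \times n}(L(\Gamma) \otimes L(\Gamma)^{op}) = M_{(n-1) \times n}(L(\Gamma \times \Gamma^{op}))$, with injective diagonal entries $\partial_i^u w_i(X)$ lying in $\mathbb{Z}(\Gamma \times \Gamma^{op})$ by Remark 3.24. Because $\Gamma$ is sofic, so is $\Gamma \times \Gamma^{op}$ by Remark 7.1, and hence $\Gamma \times \Gamma^{op}$ has L\"uck's determinant property by \cite{es}. Therefore every nonzero element of $\mathbb{Z}(\Gamma \times \Gamma^{op})$ has $\det_{FKL} \geq 1$, is of determinant class, and has geometric decay (Corollary 6.4). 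Applying Proposition 2.12 and the argument of Lemma 6.16 to $D^uG(X)$ (exactly as in Example 6.4, but on the unitary side), the upper triangular form together with injectivity and geometric decay of the diagonal entries yield $\mathrm{Nullity}(D^uG(X)) = n - (n-1) = 1$ and geometric decay of $D^uG(X)$ itself. Corollary 6.12 now implies $X$ is $1$-bounded, and Remark 7.4 upgrades this to the conclusion that $L(\Gamma) = vN(X)$ is strongly $1$-bounded.

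The main obstacle is the second step: arranging that Proposition 7.5, which a priori merely trades a given relation for some (possibly long and unrelated) $*$-monomial whose derivative is injective, can be applied one variable at a time so that the output $w_i$ preserves the staggered form. Restricting attention to the tail subgroups $\Gamma_i$ is what makes this possible, and also what forces the soficity/Linnell hypotheses to be invoked for each $\Gamma_i$ rather than just $\Gamma$. Once the $w_i$ are in hand the rest is bookkeeping: the upper triangularity plus L\"uck's determinant property deliver the nullity and geometric decay hypotheses of Corollary 6.12 simultaneously.
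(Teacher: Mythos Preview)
Your proof is correct and follows essentially the same route as the paper: construct staggered $*$-monomials $w_i$ via Proposition 7.5 so that $D^uG(X)$ is upper triangular with injective diagonal, invoke Lemma 6.16 for the nullity, use soficity to get geometric decay, and finish with Corollary 6.12 and Remark 7.4. The only cosmetic difference is that you make the passage to the tail subgroups $\Gamma_i$ explicit, whereas the paper applies Proposition 7.5 to $\Gamma$ and appeals to ``the proof of Proposition 7.5'' to see that $w_i$ avoids $X_1,\ldots,X_{i-1}$; your formulation is arguably cleaner on this point. (Minor: the closure of sofic groups under opposites and products is Remark 7.2, not 7.1.)
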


\begin{proof}  Denote by $X$ the $n$-tuple of canonical group unitaries of $L(\Gamma)$ associated to $a_1,\ldots, a_n$.  By Proposition 7.5 and the fact that $\Gamma$ satisfies Linnell's $L^2$-property for each $i$ I can associate a $*$-monomial $w_i \in \mathfrak A_n$ such that $w_i(X)=I$ and $\partial_i^u w(X)$ is injective.  Moreover, from the proof of Proposition 7.5, it's easy to see that the indeterminates $X_1,\ldots, X_{i-1}$ do not appears in the expansion of $w_i$.  Thus,
\begin{eqnarray*}
D^uF(X) & = & \begin{bmatrix}
\partial^u_1w_1(X) & \cdots & \cdots & \cdots & \partial^u_n w_1(X) \\ 
0 & \ddots & & & \vdots \\
\vdots & \ddots & \ddots & & \vdots \\
0 & \cdots & 0 & \partial^u_{n-1}w_{n-1}(X) &\partial^u_n w_{n-1}(X) \\
\end{bmatrix} \\
& \in & M_{n-1,n}(L(\Gamma) \otimes L(\Gamma^{op})).\\
\end{eqnarray*}
By Lemma 6.16 $\text{Nullity}(D^uF(X)) = n-(n-1)=1$.  Because $\Gamma$ is sofic, by Lemma 7.3 $X$ is $1$-bounded.  By Remark 7.4 $X$ is strongly $1$-bounded, whence $L(\Gamma)$ is strongly $1$-bounded.
\end{proof}

\appendix
\section{St. Raymond's Estimates for the Schatten Norm Balls}

In this technical section I want to review St. Raymond's asymptotic estimates \cite{sr} for the unit balls of the $k \times k$ matrices w.r.t. the Schatten $p$-norms and quasi-norms.  I will then apply this to some computations on products of $L^2$ norm elements with proper rank.

In order to keep the notation somewhat consistent with St. Raymond's work, I will make use of the unnormalized as well as the normalized trace.  Thus, for any $0 < p < \infty$ and $x \in M_k(\mathbb C)$ define the $p$ (quasi) norm on $M_k(\mathbb C)$ by
\begin{eqnarray*}
\|x\|_{Tr,p,k} = (Tr(|x|^p))^{1/p}
\end{eqnarray*}
where $Tr$ is the unnormalized trace on $M_k(\mathbb C)$ so that $Tr$ of the identity in $M_k(\mathbb C)$ is $k$.  The normalized $p$ (quasi) norm on $M_k(\mathbb C)$ is denoted by
\begin{eqnarray*}
\|x\|_{tr,p,k} = (tr(|x|^p))^{1/p}
\end{eqnarray*}
where $tr$ is the normalized trace on $M_k(\mathbb C)$.  Notice that in this section I'm no longer using the $tr_k$ notation set forth in section 2; this is to reduce eye strain on multi-indices in the following estimates.  The dependence on the size of the matrices will be made clear as another trailing subscript.  For any $0< p, r < \infty$, $k \in \mathbb N$, denote by $B_{Tr,p,r,k}$ the $r$-ball of $M_k(\mathbb C)$ w.r.t. $\|\cdot \|_{Tr,p,k}$ centered at the origin and by $B_{tr,p,r,k}$ the $r$-ball of $M_k(\mathbb C)$ w.r.t. $\|\cdot \|_{tr,p,k}$ centered at the origin.  For any $x \in M_k(\mathbb C)$, $\|x\|_{Tr,p,k} = k^{1/p} \cdot \|x\|_{tr,p,k}$.  It follows that $B_{Tr,p,k, k^{1/p}r } = B_{tr,p,k,r}$.

$M_k(\mathbb C)$ becomes a real Hilbert space of dimension $2k^2$ w.r.t. the inner product $\langle x, y \rangle = \text{Re Tr}(y^*x)$.  Throughout this section denote by $\text{vol}$ Lebesgue measure w.r.t. this identification of $M_k(\mathbb C)$ with $\mathbb R^{2k^2}$ (this is the unnormalized scaling of Lebesgue measure, and not the normalized one used in the definition of free entropy in Subsection 2.3).  For any $k$ denote by $v_k$ the volume of the unit ball in $\mathbb R^k$, i.e.,
\begin{eqnarray*}
v_k = \frac{\pi^{k/2}}{\Gamma(\frac{k}{2}+1)}.
\end{eqnarray*}

Denote by $D^+_{k,\leq}$ the set of diagonal matrices $x$ where $0 \leq x_{11} \leq \cdots \leq x_{kk}$ and by $U_k$ the $k\times k$ unitaries.   For an element $w \in M_k(\mathbb C)$, denote by $s(w) \in \mathbb R^k$ the unique sequence consisting of $|w|$'s eigenvalues, listed in nondecreasing order, i.e., the singular values of $w$.   Consider as in \cite{sr} the map $\Psi_k : U_k \times U_k \times D^+_{k,\leq} \rightarrow M_k(\mathbb C)$ defined by $\Psi_k(u,v,d) = udv$.  For any $S \subset D^+_{k, \leq}$ denote by $\theta(S)$ the set of all matrices of the form $udv$ where $d \in S$, i.e., $\theta(S)$ is the set of matrices whose absolute value or singular value decomposition agree with some diagonal element of $S$.   St. Raymond showed that for any subset $E \subset \mathbb D^+_{k, \leq}$,
\begin{eqnarray*}
\text{vol}(\theta(E)) & = & \text{vol}(\{ udv : u,v\in U_k, d \in E\}) \\
                      & = & c_k \cdot \int_{ s(E) }  \varphi_k(\lambda_1,\ldots, \lambda_k) \cdot \lambda_1 \cdots \lambda_k \, d\lambda_1 \cdots d\lambda_k\\
\end{eqnarray*}
where $c_k = (2\pi)^{-k} \cdot (\Pi_{j=1}^k 2 j v_{2j})^2$ and $\varphi_k(\lambda_1, \ldots, \lambda_k) = \left (\Pi_{1 \leq i < j \leq k} (\lambda_i^2 - \lambda_j^2)^2 \right)$.  There are similar change of variables formulae in random matrix theory that one can also invoke here (e.g., \cite{m}).

He used this formula to then bound $\text{vol}(B_{Tr,k,p,1})$, $1 \leq p \leq \infty$ in terms of a constant $C_p>0$ dependent only on $p$.  Indeed in Corollary 8 of \cite{sr} he showed 
\begin{eqnarray*}
\text{vol}\left (B_{Tr,k,p,1} \right)^{1/2k^2} \sim \frac{C_p}{k^{(1/2+1/p)}}.
\end{eqnarray*}
\noindent as $k \rightarrow \infty$.  While his initial statement only dealt with values $p \in [1,\infty)$, the constant $C_p$ is expressed in terms of a function $\Delta(p)$ for which he proves several key properties for all $0 <p <\infty$.  While I haven't checked the details, I believe the above asymptotic equivalence is true for all $p \in (0, \infty]$.

In what follows below however, I will only need the asymptotic upper bound, i.e., for any $p \in (0,\infty]$ the existence of a constant $C_p > 0$ such that for all $k$,
\begin{eqnarray*}
\text{vol}\left (B_{Tr,k,p,1} \right)^{1/2k^2} \leq \frac{C_p}{k^{(1/2+1/p)}}.
\end{eqnarray*}
\noindent Here I will indicate the relevant parts of \cite{sr} that can be used to derive this.  Define as in \cite{sr}, for any $q \in (0, \infty)$,
\begin{eqnarray*}
\Delta_k(q) = \sup_{0 \leq \lambda_1 \leq \cdots \leq \lambda_k} k^{1/q} \cdot \frac{\left(\Pi_{1 \leq i < j \leq k} |\lambda_j - \lambda_i| \right)^{\frac{2}{k(k-1)}} } {\left(\sum_{i=1}^k \lambda_i^q \right)^{1/q} }
\end{eqnarray*}
By Lemma 5 of \cite{sr} $\langle \Delta_k(q) \rangle_{k=1}^{\infty}$ is a monotonically decreasing sequence with $\Delta(q) = \lim_{k \rightarrow \infty} \Delta_k(q) >0$.  If $\Omega_p = \{(\lambda_1, \ldots, \lambda_k): 0 < \lambda_1 < \cdots < \lambda_k, \sum_{j=1}^k \lambda_j^p <1\}$, then using the fact that $\Omega_p \subset \Omega_{\infty}$ for all $p \in (0,\infty)$
\begin{eqnarray*}
\int_{\Omega_p} \varphi_k(\lambda) \lambda_1 \cdots \lambda_k \, d\lambda & \leq & \int_{\Omega_p} \varphi_k(\lambda) \, d\lambda \\ & \leq & \text{vol}_{\mathbb R^k}(\Omega_p) \cdot k^{-2k(k-1)/p}(\Delta_k(p/2))^{k(k-1)} \\ & \leq & \frac{1}{k!} \cdot k^{-2k(k-1)/p}(\Delta_k(p/2))^{k(k-1)}. \\
\end{eqnarray*} 
\noindent Combining this with the change of variables volume formula above and the asymptotic expression in Theorem 2 of \cite{sr}, it follows that 
\begin{eqnarray*}
\text{vol}(B_{Tr,k,p,1})^{1/2k^2} & \leq & c_k^{1/2k^2} \cdot \left( \frac{1}{k!} \cdot k^{-2k(k-1)/p}(\Delta_k(p/2))^{k(k-1)}  \right)^{1/2k^2} \\ & \sim & \sqrt{\frac{\pi e^{3/2}}{k}} \cdot k^{-1/p} \cdot \sqrt{\Delta(p/2)} \\ 
& \leq & \frac{\sqrt{\pi e^{3/2}}}{k^{1/2+ 1/p}} \cdot \sqrt{\Delta(p/2)} \\ 
\\ 
& \leq & \frac{\sqrt{\pi e^{3/2}}}{k^{1/2+ 1/p}} \cdot 4^{1/p} \\ 
\end{eqnarray*}
as claimed.

Rephrasing this in terms of the normalized $L^p$-norms gives the following:

\begin{corollary} For any $p \in (0, \infty)$ there exists a constant $C_{p,2}>1$ such that
\begin{eqnarray*}
\text{vol}(B_{tr,k,p,1}) \leq (C_{p,2})^{2k^2} \cdot \text{vol}(B_{tr,k,2,1}).\\
\end{eqnarray*}
\end{corollary}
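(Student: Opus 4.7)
The plan is to reduce the corollary directly to the upper bound on $\operatorname{vol}(B_{Tr,k,p,1})^{1/2k^2}$ that was just established in the text, using (i) the homogeneity of volume under scaling and (ii) Stirling to pin down the asymptotics of $\operatorname{vol}(B_{tr,k,2,1})$.

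First I would record the scaling identity. Since $\|x\|_{Tr,p,k} = k^{1/p}\,\|x\|_{tr,p,k}$, we have $B_{tr,k,p,1} = B_{Tr,k,p,k^{1/p}}$, and because $M_k(\mathbb C)$ is a real vector space of dimension $2k^2$ with $\operatorname{vol}$ the associated Lebesgue measure,
\[
\operatorname{vol}(B_{tr,k,p,1}) = k^{2k^2/p}\,\operatorname{vol}(B_{Tr,k,p,1}).
\]
Plugging in the inequality $\operatorname{vol}(B_{Tr,k,p,1})^{1/2k^2} \le C_p\,k^{-(1/2+1/p)}$ derived in the paragraphs above yields
\[
\operatorname{vol}(B_{tr,k,p,1}) \le (C_p)^{2k^2}\,k^{-k^2}.
\]

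Next I would obtain a matching lower bound for the $p=2$ case by exact evaluation. The norm $\|\cdot\|_{Tr,2,k}$ is the real Hilbert space norm on $M_k(\mathbb C) \simeq \mathbb R^{2k^2}$, so $\operatorname{vol}(B_{Tr,k,2,1}) = v_{2k^2}$ and hence
\[
\operatorname{vol}(B_{tr,k,2,1}) = k^{k^2}\,v_{2k^2} = \frac{k^{k^2}\,\pi^{k^2}}{\Gamma(k^2+1)}.
\]
By Stirling, $\Gamma(k^2+1)^{1/2k^2}$ is asymptotic to $k/\sqrt{e}$, so $\operatorname{vol}(B_{tr,k,2,1})^{1/2k^2}$ is asymptotic to $\sqrt{\pi e}/\sqrt{k}$. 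In particular there exists $c>0$ and $k_0$ such that for $k \ge k_0$,
\[
\operatorname{vol}(B_{tr,k,2,1}) \ge c^{2k^2}\,(\pi e)^{k^2}\,k^{-k^2}.
\]

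Taking the ratio,
\[
\frac{\operatorname{vol}(B_{tr,k,p,1})}{\operatorname{vol}(B_{tr,k,2,1})} \le \frac{(C_p)^{2k^2}\,k^{-k^2}}{c^{2k^2}\,(\pi e)^{k^2}\,k^{-k^2}} = \bigl(C_p/(c\sqrt{\pi e})\bigr)^{2k^2},
\]
so setting $C_{p,2}$ to be the maximum of $C_p/(c\sqrt{\pi e})$ and a constant absorbing the finitely many exceptional $k < k_0$ (increase $C_{p,2}$ so the resulting inequality holds for every $k$, noting both volumes are positive and the ratio is finite for each fixed $k$) gives the stated bound. There is no real obstacle here; the only care needed is to make sure the constant $C_{p,2}$ is chosen $>1$ and uniform in $k$, which is a routine matter of taking a max with $1$ after handling the finitely many small $k$ by hand.
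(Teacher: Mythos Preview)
Your proposal is correct and follows essentially the same approach as the paper: both arguments use the scaling identity $\operatorname{vol}(B_{tr,k,p,1}) = k^{2k^2/p}\operatorname{vol}(B_{Tr,k,p,1})$, apply the upper bound $\operatorname{vol}(B_{Tr,k,p,1})^{1/2k^2} \le C_p\,k^{-(1/2+1/p)}$, compute the exact asymptotic $\operatorname{vol}(B_{tr,k,2,1})^{1/2k^2} \sim \sqrt{\pi e}/\sqrt{k}$ via Stirling, and divide. Your treatment is slightly more explicit than the paper's in absorbing the finitely many small $k$ and in enforcing $C_{p,2}>1$, but the substance is the same.
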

\begin{proof} From the above for $k$ sufficiently large,
\begin{eqnarray*}
 \text{vol}(B_{tr,k,p,1})^{1/2k^2} & = & \text{vol}(B_{Tr,k,p,k^{1/p}})^{1/2k^2} \\ 
  & = & k^{1/p} \cdot \text{vol}(B_{Tr,k,p,1})^{1/2k^2} \\ 
 & \leq & 2\cdot k^{1/p} \cdot \frac{\sqrt{\pi e^{3/2}}}{k^{1/2 + 1/p}} \cdot 4^{1/p} \\
 & = & 2 \cdot \frac{\sqrt{\pi e^{3/2}}}{k^{1/2}} \cdot 4^{1/p}. \\
\end{eqnarray*}
\noindent On the other hand, it follows from a direct computation in $\mathbb R^{2k^2}$ or Corollary 8 of \cite{sr}, that $\text{vol}(B_{tr,k,2,1})^{1/2k^2} \sim \frac{\sqrt{\pi e}}{k^{1/2}}$ as $k \rightarrow \infty$.  The existence of $C_{p,2}$ follows.
\end{proof}

Recall from Subsection 5.4 that $E(r,p,k,d) \subset M_k(\mathbb C)$ consists of all $k \times k$ complex matrices $x$ of rank no larger than $d$ such that $\|x\|_{tr, k, p} \leq r$.  $\ell^p(k)$ will denote the space of sequences of length $k$ with values in $\mathbb C$, endowed with the usual $\ell^p$-(quasi)norm.  Denote by $D(r,p,k,d,\epsilon)$ the set of all diagonal matrices $x \in M_k(\mathbb C)$ whose diagonals are nondecreasing sequences of nonnegative numbers such that $\| s(x) \cdot 1_{\{1,\ldots, k-d\}} \|_{\ell^2(k)} \leq \epsilon$ and $\|s(x)\|_{\ell^p(k)} \leq r$.  

In Lemma A.2 below $\mathcal N_{\epsilon}(\cdot)$ is taken w.r.t. the normalized $\|\cdot \|_{tr,2,k}$ norm on $M_k(\mathbb C)$ (or in the notation of the other parts of this paper, the $\|\cdot\|_2$ norm on $M_k(\mathbb C)$).  Recall that for any $S \subset D^+_{k, \leq}$, $\theta(S)$ denotes the set of all complex $k \times k$ matrices whose singular value decomposition agree with some diagonal element of $S$

\begin{lemma} If $p \in (0,1)$, then there exists a constant $Q_p>1$ dependent only on $p$ such that for any $\epsilon >0$, 
\begin{eqnarray*}
\mathcal N_{\epsilon}(E(r,p,k,d)) & \subset & \theta(D(Q_p(r+\epsilon)k^{1/p},p,k,d,\epsilon k^{1/2})).\\
\end{eqnarray*}
\end{lemma}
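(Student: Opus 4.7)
The plan is to unpack the normalizations, translate the hypothesis $y\in\mathcal N_\epsilon(E(r,p,k,d))$ into a decomposition $y=x+z$ with $x\in E(r,p,k,d)$ and $\|z\|_{tr,2,k}\le\epsilon$, and then show that the singular value vector $s(y)\in\mathbb R^k$ lies in $D(Q_p(r+\epsilon)k^{1/p},p,k,d,\epsilon k^{1/2})$; since every element of $M_k(\mathbb C)$ is $\theta$-equivalent to the diagonal matrix of its singular values, this gives the inclusion. There are two inequalities to verify: an $\ell^2$-control on the smallest $k-d$ singular values of $y$, and an $\ell^p$-control on the full singular value vector.

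First I will handle the $\ell^2$ part using Mirsky's inequality $\|s(y)-s(x)\|_{\ell^2(k)}\le\|y-x\|_{HS}$. Since $\|y-x\|_{HS}^2 = Tr(|z|^2) = k\|z\|_{tr,2,k}^2 \le k\epsilon^2$, we get $\|s(y)-s(x)\|_{\ell^2(k)}\le k^{1/2}\epsilon$. Because $x$ has rank $\le d$ and its singular values are listed in nondecreasing order, $s_i(x)=0$ for $1\le i\le k-d$, so the restriction of $s(y)$ to the indices $\{1,\dots,k-d\}$ is pointwise equal to $s(y)-s(x)$ on those indices and hence has $\ell^2$-norm at most $k^{1/2}\epsilon$, which is exactly the second condition in the definition of $D(\cdot)$.

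Next comes the $\ell^p$ bound, which is where the hypothesis $p<1$ is essential. I will use the $p$-subadditivity of the Schatten $p$-quasi-norm for $0<p\le 1$: $\|y\|_{Tr,p,k}^p \le \|x\|_{Tr,p,k}^p + \|z\|_{Tr,p,k}^p$ (a standard Rotfel'd/McCarthy-type inequality). Converting back, this yields $\|y\|_{Tr,p,k}\le 2^{1/p-1}(\|x\|_{Tr,p,k}+\|z\|_{Tr,p,k})$ via $(a^p+b^p)^{1/p}\le 2^{1/p-1}(a+b)$. Since $\|x\|_{Tr,p,k}=k^{1/p}\|x\|_{tr,p,k}\le rk^{1/p}$, it remains to bound $\|z\|_{Tr,p,k}$. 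Hölder's inequality on $\{1,\dots,k\}$ with exponents $2/p$ and $2/(2-p)$ gives $\|z\|_{Tr,p,k}\le k^{1/p-1/2}\|z\|_{Tr,2,k}=k^{1/p-1/2}\cdot k^{1/2}\|z\|_{tr,2,k}\le k^{1/p}\epsilon$. Combining, $\|s(y)\|_{\ell^p(k)}=\|y\|_{Tr,p,k}\le 2^{1/p-1}(r+\epsilon)k^{1/p}$, and I set $Q_p=2^{1/p-1}$.

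The two bounds together show $s(y)\in D(Q_p(r+\epsilon)k^{1/p},p,k,d,\epsilon k^{1/2})$, and since the diagonal matrix with diagonal $s(y)$ lies in this set while $y\in\theta(\{\mathrm{diag}(s(y))\})$ via its singular value decomposition, we obtain $y\in\theta(D(Q_p(r+\epsilon)k^{1/p},p,k,d,\epsilon k^{1/2}))$. The only genuinely non-routine ingredient is the $p$-subadditivity $\|A+B\|_{Tr,p,k}^p\le\|A\|_{Tr,p,k}^p+\|B\|_{Tr,p,k}^p$ for $0<p\le 1$; this is the main technical step I would cite (it is classical for matrices, following e.g. from majorization of singular values of $A+B$ by those of the direct sum and concavity of $t\mapsto t^p$). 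Everything else is pushing normalizations and Hölder carefully.
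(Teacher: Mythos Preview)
Your proof is correct and follows essentially the same route as the paper's: decompose the perturbed matrix, use the Wielandt--Hoffman/Mirsky inequality to control the smallest $k-d$ singular values in $\ell^2$, and use the quasi-triangle inequality for the Schatten $p$-quasinorm (with $Q_p=2^{1/p-1}$) together with the comparison $\|\cdot\|_{tr,p,k}\le\|\cdot\|_{tr,2,k}$ (your H\"older step is exactly this comparison in unnormalized form) to get the $\ell^p$ bound. The only differences are cosmetic: you swap the names of the perturbation and the perturbed element relative to the paper, and you phrase the quasi-norm step via $p$-subadditivity rather than citing the quasi-norm constant directly, but these are the same fact.
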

\begin{proof} Suppose $z \in \mathcal N_{\epsilon}(E(r,p,k,d))$.  There exist $x \in E(r,p,k,d)$ and $y \in M_k(\mathbb C)$ such that $\|y\|_{tr, k,2} < \epsilon$ and $z=x+y$.  By the Weilandt-Hoffman Inequality,
\begin{eqnarray*}
\| s(z) - s(x) \|_{\ell^2(k)} & \leq & \| s(z-x) \|_{\ell^2(k)} \\
                                                             & = & \| s(y) \|_{\ell^2(k)} \\
                                                             & < & \epsilon k^{1/2}. \\   
\end{eqnarray*}
Because $x$ has rank no larger than $d$, $s(x)_i =0$ for all $1 \leq i \leq k-d$.  Define $a \in \ell^2(k)$ by $a(i) = s(z)_i$  for $1 \leq i \leq k-d$ and $a(i) = 0$ for $k-d <i \leq k$.
Obviously for any $p$, $\|a\|_{\ell^p(k)} \leq \| s(z) \|_{\ell^p(k)}$.  From the above,
\begin{eqnarray*}
\|a\|_{\ell^2(k)} & = & \|a \cdot 1_{\{1,\ldots, k-d\}} \|_{\ell^2(k)} \\
                        & = & \| (s(z) - s(x)) \cdot 1_{\{1,\ldots, k-d\}}\|_{\ell^2(k)} \\
                        & \leq & \| s(z) - s(x) \|_{\ell^2(k)} \\
                        & < & \epsilon k^{1/2}.\\
\end{eqnarray*}
Thus, $\|s(z) \cdot 1_{\{1,\ldots, k-d\}} \|_{\ell^2(k)} = \|a\|_{\ell^2(k)} < \epsilon k^{1/2}$.  

Using the fact that $\|\cdot \|_{tr,p,k}$ are $Q_p$-quasi-norms (in fact $Q_p = 2^{(1/p-1)}$) with $Q_p$ dependent only on $p$,
\begin{eqnarray*}
\|z\|_{tr,p,k} & = & \|x+y\|_{tr,p,k} \\
                   & \leq & Q_p (\|x\|_{tr,p,k} + \|y\|_{tr,p,k}) \\
                   & \leq & Q_p (r + \|y\|_{tr,2,k}) \\
                   & \leq & Q_p (r+\epsilon) \\
\end{eqnarray*}
\noindent Thus, $\|s(z) \|_{\ell^p(k)} \leq Q_p (r+\epsilon)k^{1/p}$.  The first paragraph shows that the first $k-d$ singular values of $z$ have $\ell^2$ norm no greater than $\epsilon k^{1/2}$.  By definition  $z \in \theta(D(Q_p(r+\epsilon)k^{1/p},p,k,d,\epsilon k^{1/2}))$. 
\end{proof}

It remains to find a suitable upper bound on  the volume of $\theta(D(rk^{1/p},p,k,m,\epsilon k^{1/2}))$ where $p <1$.   I'll need an asymptotic computation and an inequality on the density of $\varphi_k$ over off-block elements.

\begin{lemma} $\lim_{k \rightarrow \infty} k^{-2} \cdot \log\left(\frac{v_{2k^2} \cdot k^{k^2}}{ c_k}\right) = -\frac{1}{2}$.
\end{lemma}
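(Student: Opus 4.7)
The plan is to reduce everything to Stirling's formula, keeping careful track of the $k^2\log k$ terms (which individually diverge but cancel in the end), and then confirm that the surviving constant contribution is $-1/2$.

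First I would unpack the numerator. Since $v_{2k^2}=\pi^{k^2}/(k^2)!$, Stirling gives
\[
\log v_{2k^2} \;=\; k^2\log\pi \;-\; 2k^2\log k \;+\; k^2 \;+\; O(\log k),
\]
so
\[
k^{-2}\log\bigl(v_{2k^2}\cdot k^{k^2}\bigr) \;=\; \log\pi \;-\; \log k \;+\; 1 \;+\; o(1).
\]
This is the easy half.

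Next I would analyze $c_k=(2\pi)^{-k}\prod_{j=1}^k(2j\,v_{2j})^2$. Using $v_{2j}=\pi^j/j!$, we get $2jv_{2j}=2\pi^j/(j-1)!$, so
\[
\log c_k \;=\; -k\log(2\pi) \;+\; 2k\log 2 \;+\; k^2\log\pi \;+\; O(k) \;-\; 2\sum_{m=0}^{k-1}\log(m!).
\]
The main step is to compute $\sum_{m=1}^{k-1}\log(m!)$ asymptotically. I would switch the order of summation, writing $\sum_{m=1}^{N}\log(m!)=(N+1)\log(N!)-\sum_{j=1}^{N}j\log j$, then apply Stirling to $\log(N!)$ and the integral estimate $\sum_{j=1}^{N}j\log j=\tfrac12N^2\log N-\tfrac14N^2+O(N\log N)$. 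This yields
\[
\sum_{m=1}^{k-1}\log(m!) \;=\; \tfrac{1}{2}k^2\log k \;-\; \tfrac{3}{4}k^2 \;+\; O(k\log k),
\]
and hence
\[
k^{-2}\log c_k \;=\; \log\pi \;-\; \log k \;+\; \tfrac{3}{2} \;+\; o(1).
\]

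Subtracting the two expressions, the divergent terms $\log\pi-\log k$ cancel and one is left with $1-\tfrac{3}{2}=-\tfrac{1}{2}$, giving the stated limit. The only place requiring any care is the asymptotic evaluation of $\sum_{m=1}^{k-1}\log(m!)$; the rest is bookkeeping of Stirling's formula. No combinatorial obstacle is expected since all quantities are explicit.
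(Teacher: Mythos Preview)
Your proposal is correct and follows essentially the same approach as the paper: compute the asymptotics of $k^{-2}\log(v_{2k^2}k^{k^2})$ and $k^{-2}\log c_k$ separately via Stirling's formula, then subtract. The only difference is that the paper imports the asymptotic $k^{-2}\log\bigl(\prod_{j=1}^k 2jv_{2j}\bigr)=\tfrac12\log(2\pi e^{3/2})-\tfrac12\log(2k)+O(k^{-1}\log k)$ from St.~Raymond's Theorem~4, whereas you derive the equivalent statement $k^{-2}\log c_k=\log\pi-\log k+\tfrac32+o(1)$ directly by evaluating $\sum_{m=1}^{k-1}\log(m!)$ through the switch-of-summation identity and the integral estimate for $\sum j\log j$; this makes your argument self-contained but is not a different method.
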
 

\begin{proof}
By Theorem 4 of \cite{sr}, if $\tilde{P_k} = (\Pi_{j=1}^k 2j v_{2j})$, then an application of Stirling's formula shows as $k \rightarrow \infty$,
\begin{eqnarray*}
k^{-2} \cdot \log(\tilde{P_k}) & = & 2 \cdot \log(\tilde{P_k}^{1/2k^2}) \\
                                     & = & \frac{1}{2} \log(2 \pi e^{3/2}) -  \frac{\log(2k)}{2} + O\left(\frac{\log k}{k} \right). \\                                   
\end{eqnarray*}
Thus,
\begin{eqnarray*}
\lim_{k \rightarrow \infty} k^{-2} \log c_k - (\log(\pi e^{3/2}) - \log k) & = & \lim_{k \rightarrow \infty}  k^{-2} \log \left((2\pi)^{-k} \cdot \tilde{P_k}^2 \right) - \log(\pi e^{3/2}) +  \log(k)\\ & = & 
 \lim_{k \rightarrow \infty}  2 \cdot k^{-2} \log \left(\tilde{P_k}\right) - \log(\pi e^{3/2}) +  \log(k)\\
 & = & \lim_{k \rightarrow \infty} \log(2 \pi e^{3/2}) - \log(2k) - \log(\pi e^{3/2}) +  \log(k) \\
 & = & 0.\\
\end{eqnarray*}
Using Stirling's formula again
\begin{eqnarray*}
k^{-2} \cdot \log v_{2k^2} & = & k^{-2} \cdot \log \left(\frac{\pi^{k^2}}{\Gamma(k^2+1)}\right) \\ & \sim & \log(\pi) - k^{-2} \cdot \log \left[ \left(\frac{k^2}{e} \right)^{k^2} \cdot \sqrt{2\pi k} \right] \\ & = &  \log(\pi e) - 2 \log k - k^{-2} \cdot \log(\sqrt{2\pi k}),\\
\end{eqnarray*}
whence $\lim_{k \rightarrow \infty} k^{-2} \cdot \log v_{2k^2} - (\log \left( \pi e \right)  - 2 \log k) = 0$.
Putting this together,
\begin{eqnarray*}
\lim_{k \rightarrow \infty} k^{-2} \cdot \log \left(\frac{v_{2k^2} k^{k^2}}{ c_k} \right) &= & \lim_{k \rightarrow \infty} \left ( k^{-2} \cdot \log v_{2k^2} + \log k - k^{-2} \cdot \log c_k \right) \\ & = & \lim_{k \rightarrow \infty} \log(\pi e) - 2 \log k + \log k -(\log(\pi e^{3/2}) - \log k) \\ & = &- \frac{1}{2}.\\
\end{eqnarray*}
\end{proof}

\begin{lemma} For any $0 < \lambda_1 < \cdots < \lambda_k$ with $\sum_{l=1}^k \lambda_l^p < r^p k$ and $1 < d < k$
\begin{eqnarray*}
\left (\Pi_{1 \leq i < j \leq k} (\lambda_i^2 - \lambda_j^2)^2 \right)\lambda_1 \cdots \lambda_k & \leq &  \left (\frac{r^p \cdot k}{d} \right)^{d(k-d) \cdot \frac{4}{p}} \cdot \\ & & \left (\Pi_{1 \leq i < j \leq k-d} (\lambda_i^2 - \lambda_j^2)^2 \right)\lambda_1 \cdots \lambda_d \cdot \\ & &  \left (\Pi_{k-d +1 \leq i < j \leq k} (\lambda_i^2 - \lambda_j^2)^2 \right)\lambda_{d+1} \cdots \lambda_k.\\
\end{eqnarray*}
\end{lemma}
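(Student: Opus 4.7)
The plan is to observe first that the factor $\lambda_1\cdots \lambda_k$ appears identically on both sides (it is literally $(\lambda_1\cdots\lambda_d)\cdot(\lambda_{d+1}\cdots\lambda_k)$), so those factors cancel and the inequality reduces to showing
\begin{eqnarray*}
\prod_{\substack{1\leq i\leq k-d\\ k-d+1\leq j\leq k}}(\lambda_j^2-\lambda_i^2)^2 & \leq & \left(\frac{r^p k}{d}\right)^{d(k-d)\cdot\frac{4}{p}}.
\end{eqnarray*}
That is, I would split the Vandermonde-like product $\prod_{1\leq i<j\leq k}(\lambda_i^2-\lambda_j^2)^2$ into the two diagonal blocks (indices both in $\{1,\dots,k-d\}$ and both in $\{k-d+1,\dots,k\}$, which appear intact on the right) plus the off-diagonal ``cross'' block, and bound only the cross block.

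Next, using the monotonicity hypothesis $0<\lambda_i<\lambda_j$ whenever $i\leq k-d<j$, I would estimate each cross factor crudely by
\begin{eqnarray*}
(\lambda_j^2-\lambda_i^2)^2 & \leq & \lambda_j^4.
\end{eqnarray*}
For each fixed $j\in\{k-d+1,\dots,k\}$ there are exactly $k-d$ such factors, so multiplying over $i$ and then over $j$ yields
\begin{eqnarray*}
\prod_{\substack{i\leq k-d\\ j\geq k-d+1}}(\lambda_j^2-\lambda_i^2)^2 & \leq & \prod_{j=k-d+1}^{k}\lambda_j^{4(k-d)}.
\end{eqnarray*}

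The final step is to control $\prod_{j=k-d+1}^{k}\lambda_j$ using the $p$-norm constraint. The key point is that the constraint $\sum_{l=1}^k\lambda_l^p<r^p k$ applies a fortiori to the top $d$ values, so by AM--GM on the nonnegative quantities $\lambda_{k-d+1}^p,\dots,\lambda_k^p$,
\begin{eqnarray*}
\left(\prod_{j=k-d+1}^k \lambda_j^p\right)^{1/d} & \leq & \frac{1}{d}\sum_{j=k-d+1}^k\lambda_j^p \;\leq\; \frac{1}{d}\sum_{l=1}^k\lambda_l^p \;<\;\frac{r^p k}{d}.
\end{eqnarray*}
Raising to the $1/p$ power gives $\prod_{j=k-d+1}^k\lambda_j\leq (r^p k/d)^{d/p}$, and then raising to the $4(k-d)$ power yields precisely $(r^pk/d)^{4d(k-d)/p}$, closing the estimate.

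There is no real obstacle here: the only place to be slightly careful is justifying that the cross factors $(\lambda_j^2-\lambda_i^2)^2\leq \lambda_j^4$ is tight enough, but since we lose only constant-exponent information and the eventual exponent $4d(k-d)/p$ matches exactly what the AM--GM step produces, nothing is wasted. The argument is purely an AM--GM/rearrangement computation combined with the trivial $\lambda_i<\lambda_j$ bound, and no spectral or measure-theoretic input is required.
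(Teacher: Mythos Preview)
Your proposal is correct and follows essentially the same approach as the paper: reduce to bounding the cross-block product $\prod_{i\leq k-d<j}(\lambda_j^2-\lambda_i^2)^2$, dominate each factor by $\lambda_j^4$, and then apply AM--GM to the $\lambda_j^p$'s together with the constraint $\sum_l\lambda_l^p<r^pk$. The only cosmetic difference is that the paper applies AM--GM inside the product over $i$ while you first collapse that product and then apply AM--GM; the arithmetic is identical.
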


\begin{proof}  It suffices to show that 
\begin{eqnarray*}
\Pi_{1 \leq i \leq k-d < j \leq k} (\lambda_i^2 - \lambda_j^2)^2 & \leq & \left (\frac{r^p \cdot k}{d} \right)^{d(k-d) \cdot \frac{4}{p}}. \\
\end{eqnarray*}
But as in \cite{sr}, the am-gm inequality yields
\begin{eqnarray*}
\Pi_{1 \leq i \leq k-d < j \leq k} (\lambda_i^2 - \lambda_j^2)^2 
& \leq & \Pi_{i=1}^{k-d} (\Pi_{k-d < j \leq k} \lambda_j^p)^{\frac{4}{p}} \\ & \leq & \Pi_{i=1}^{k-d} \left (\frac{1}{d} \sum_{j = k-d+1}^k \lambda_j^p\right)^{d \cdot \frac{4}{p}}  \\
& \leq & \Pi_{i=1}^{k-d} \left (\frac{r^p \cdot k}{d} \right)^{d \cdot \frac{4}{p}} \\
& \leq &  \left (\frac{r^p \cdot k}{d} \right)^{d(k-d) \cdot \frac{4}{p}}. \\
\end{eqnarray*}
\end{proof}

\begin{proposition}  If $\delta \in (0,1/2)$, $\epsilon \in (0,1)$, then there exists a $D_p>0$ dependent on $p$ such that for $k$ sufficiently large
\begin{eqnarray*}
K_{\epsilon}(E(r,p,k,\delta k)) & \leq & \left ( \frac{D_p(r+1)^2}{\epsilon} \right)^{8 \delta^{1/2} k^2}.
\end{eqnarray*}
\end{proposition}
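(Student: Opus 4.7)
The plan is to bound the covering number via the volume estimate
\begin{eqnarray*}
K_{\epsilon}(E(r,p,k,\delta k)) & \leq & \frac{\mathrm{vol}(\mathcal N_{\epsilon/2}(E(r,p,k,\delta k)))}{v_{2k^2}(\epsilon/2)^{2k^2}}
\end{eqnarray*}
(Proposition 2.1(vi)) and then to estimate the numerator using St.\ Raymond's change-of-variables formula together with the block-separation inequality of Lemma A.4.

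First I would apply Lemma A.2 to enclose the neighborhood in a set of the form $\theta(D(R,p,k,\delta k, \epsilon k^{1/2}/2))$ with $R = Q_p(r+\epsilon/2)k^{1/p} \leq 2Q_p(r+1)k^{1/p}$. Unpacking the definition of $D(\cdot)$, I am looking at nondecreasing sequences $0 \leq \lambda_1 \leq \cdots \leq \lambda_k$ subject to two constraints: the bottom $k-\delta k$ entries lie in an $\ell^2$ ball of radius $\epsilon k^{1/2}/2$, and the full vector lies in an $\ell^p$ ball of radius $R$. Dropping the ordering constraint (paying a factor of $k!$) and writing $d = \delta k$, the volume formula reads
\begin{eqnarray*}
\mathrm{vol}(\theta(D)) & \leq & \frac{c_k}{k!} \int_{\Omega} \varphi_k(\lambda) \lambda_1 \cdots \lambda_k \, d\lambda_1 \cdots d\lambda_k,
\end{eqnarray*}
where $\Omega$ is the product of the two constraint regions (on the bottom $k-d$ and top $d$ coordinates).

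The second step, where the main work lies, is to factor the integrand using Lemma A.4: over $\Omega$,
\begin{eqnarray*}
\varphi_k(\lambda) \lambda_1 \cdots \lambda_k & \leq & \left(\frac{R^p}{d}\right)^{4d(k-d)/p} \cdot \varphi_{k-d}(\lambda') \lambda_1\cdots \lambda_{k-d} \cdot \varphi_d(\lambda'') \lambda_{k-d+1}\cdots \lambda_k,
\end{eqnarray*}
where $\lambda' = (\lambda_1,\ldots,\lambda_{k-d})$ and $\lambda'' = (\lambda_{k-d+1},\ldots,\lambda_k)$. The integral separates. The first factor, reintroducing the $(k-d)!$, becomes $c_{k-d}^{-1} \mathrm{vol}(B_{tr,k-d,2,\epsilon/2 \cdot (k/(k-d))^{1/2}})$ which is the Euclidean $L^2$-ball volume; the second factor similarly becomes $c_d^{-1}\mathrm{vol}(B_{tr,d,p,R/d^{1/p}})$, which by Corollary A.1 (St.\ Raymond's $L^p$-versus-$L^2$ comparison) is at most $(C_{p,2})^{2d^2}$ times the volume of an $L^2$-ball of radius $R/d^{1/p}$.

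The third step is pure bookkeeping: assemble the three pieces together with the denominator $v_{2k^2}(\epsilon/2)^{2k^2}$, collect powers of $k$, $\epsilon$, and $(r+1)$, and apply the asymptotic $k^{-2}\log(v_{2k^2}k^{k^2}/c_k) \to -1/2$ from Lemma A.3 (plus the analogous asymptotics for $c_d/v_{2d^2}$ and $c_{k-d}/v_{2(k-d)^2}$). Each of the sub-Euclidean ball ratios contributes at most $(\mathrm{const}/\epsilon)^{2d^2}$ or $(\mathrm{const}/\epsilon)^{2(k-d)d}$; the cross-term contributes $(R^p/d)^{4d(k-d)/p} \leq ((r+1)k^{1-\delta}/\delta)^{4\delta(1-\delta)k^2}$; and the $(k-d)$-block $L^2$-ball (which has the full Euclidean dimension $2(k-d)^2$) cancels against the denominator up to polynomial and constant-exponential factors in $d^2 = \delta^2 k^2$. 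The dominant exponent is therefore of order $d \cdot k = \delta k^2$, and after bounding $\delta \leq \delta^{1/2}$ (since $\delta \in (0,1/2)$) and absorbing all universal constants into $D_p$, the stated bound $\left(D_p(r+1)^2/\epsilon\right)^{8\delta^{1/2}k^2}$ emerges for $k$ large enough.

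The main obstacle I expect is the third step: making sure that every exponent accumulated along the way is genuinely controlled by $\delta^{1/2}k^2$ rather than $k^2$ itself. In particular, the $L^2$-ball denominator has exponent $2k^2$, while the matching $L^2$-piece on the bottom block has exponent $2(k-d)^2 = 2k^2 - 2(2k-d)d$; the deficit $2(2k-d)d \approx 4dk$ must exactly match the leading behavior coming from the cross-term's $(k/d)^{4d(k-d)/p}$ and the $\epsilon^{-2k^2}$ versus $\epsilon^{-2(k-d)^2}$ discrepancy, so that the final exponent in $\epsilon^{-1}$ and $(r+1)$ scales as $\delta k^2$. Keeping careful track of these competing polynomial-in-$k$ prefactors against exponential-in-$k^2$ terms, and verifying that the prefactors become negligible once one divides by $k^2$ and exponentiates, is the delicate part.
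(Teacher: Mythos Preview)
Your proposal is correct and follows essentially the same route as the paper: volume comparison via Proposition 2.1(vi), enclosure of the neighborhood by Lemma A.2, factorization of the integrand via Lemma A.4, the $L^p$-to-$L^2$ volume comparison from Corollary A.1, and the asymptotic from Lemma A.3, followed by the bookkeeping you describe. The paper carries out exactly this computation, and the delicate point you flag---that the $\delta|\log\delta|$ contribution from the cross-term must be absorbed into $\delta^{1/2}$ via $\delta^{1/2}|\log\delta|\leq C$---is precisely where the exponent $8\delta^{1/2}$ (rather than $C\delta$) arises.
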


\begin{proof}  Straightforward manipulations of the definitions shows that for any $m \in \mathbb N$ and $r >0$, $\text{vol}(B_{tr,m,2,r})  = v_{2m^2} \cdot (r^2m)^{m^2}$.  Combining Proposition A.4 with St. Raymond's change of variables formula and this formula gives,
\begin{eqnarray*}
& & \text{vol}(\theta(D(rk^{1/p},p,k,d,\epsilon k^{1/2})) \\ & = & c_k \int_{s(D(r k^{1/p},p,k,d,\epsilon k^{1/2}))}  \left (\Pi_{1 \leq i < j \leq k} (\lambda_i^2 - \lambda_j^2)^2 \right)\lambda_1 \cdots \lambda_k \, d\lambda_1 \cdots d\lambda_k\\
                                                    & = & c_k \int_{\substack {0 \leq \lambda_1 < \cdots < \lambda_k\\ \sum_{j=1}^{k-d} \lambda_j^2 < \epsilon^2 k \\ \sum_{j=1}^k \lambda_j^p \leq r^p k}}  \left (\Pi_{1 \leq i < j \leq k} (\lambda_i^2 - \lambda_j^2)^2 \right)\lambda_1 \cdots \lambda_k \, d\lambda_1 \cdots d\lambda_k\\
                                                    & \leq & c_k \cdot  \left (\frac{r^p \cdot k}{d} \right)^{d(k-d) \cdot \frac{4}{p}} \cdot \\ 
                                                     & & \int_{\substack {0 \leq \lambda_1 < \cdots < \lambda_{k-d} \\ \sum_{j=1}^{k-d} \lambda_j^2 < \epsilon^2 k}}  \left (\Pi_{1 \leq i < j \leq k-d} (\lambda_i^2 - \lambda_j^2)^2 \right)\lambda_1 \cdots \lambda_{k-d} \, d\lambda_1 \cdots d\lambda_{k-d}  \cdot \\
                                                    & &  \int_{\substack {0 \leq \lambda_{k-d+1} < \cdots < \lambda_{k} \\ \sum_{j=k-d+1}^{k} \lambda_j^p < r^p k}}  \left (\Pi_{k-d < i < j \leq k} (\lambda_i^2 - \lambda_j^2)^2 \right)\lambda_{k-d+1} \cdots \lambda_{k} \, d\lambda_{k-d+1} \cdots d\lambda_{k} \\
                                                    & = & c_k \cdot \left (\frac{r^p \cdot k}{d} \right)^{d(k-d) \cdot \frac{4}{p}} \cdot \frac{\text{vol}(B_{tr, k-d, 2, \epsilon(k/(k-d))^{1/2} })}{c_{k-d}} \cdot \frac{\text{vol}(B_{tr,d,p,r\cdot (k/d)^{1/p}})}{c_d} \\
                                                     & \leq & c_k \cdot  \left (\frac{r^p \cdot k}{d} \right)^{d(k-d) \cdot \frac{4}{p}} \cdot \frac{v_{2(k-d)^2}}{c_{k-d}} \cdot (\epsilon^2 k)^{(k-d)^2}  \cdot \frac{(C_{2,p})^{d^2} \text{vol}(B_{tr,d,2,1})}{c_d} \cdot r^{2d^2} \cdot \left(\frac{k}{d}\right)^{\frac{2d^2}{p}} \\
                                                     & \leq & (C_{2,p})^{d^2} \left (\frac{r^p \cdot k}{d} \right)^{d(k-d) \cdot \frac{4}{p}} \left(\frac{c_k}{c_{k-d} c_d}\right) (v_{2(k-d)^2} v_{2d^2}) \epsilon^{2(k-d)^2} r^{2d^2} k^{(k-d)^2} d^{d^2} \left(\frac{k}{d}\right)^{\frac{2d^2}{p}}. \end{eqnarray*}
                                                     
It follows from Proposition 2.1, Proposition A.2, and the above computations that for fixed $\delta \in (0,1/2)$ and $1 \leq d \leq \delta k$,
\begin{eqnarray*}
& & K_{\epsilon}(E(r,p,k,d)) \\ & \leq & \text{vol}(\mathcal N_{\epsilon/2}(E(r,p,k,d))) \cdot  \text{vol}(B_{tr,k,2,\epsilon/2})^{-1}\\
& \leq & \text{vol}(\theta(D(Q_p(r+\epsilon/2)k^{1/p},p,k,d,(\epsilon k^{1/2})/2)) \cdot \text{vol}(B_{tr,k,2,\epsilon/2})^{-1}\\
                                                & \leq & (C_{2,p})^{d^2} \left (\frac{Q^p_p(r+1)^p \cdot k}{d} \right)^{4d(k-d)/p} \left(\frac{c_k}{c_{k-d} c_d}\right) (v_{2(k-d)^2} v_{2d^2}) \left(\frac{\epsilon}{2}\right)^{2(k-d)^2}  (Q_p(r+1))^{2d^2} \cdot \\ & & k^{(k-d)^2} \cdot d^{d^2} \cdot \left(\frac{k}{d}\right)^{\frac{2d^2}{p}} \cdot ((\epsilon/2)^{2k^2}v_{2k^2} k^{k^2})^{-1} \\
                                               & \leq &(Q_p(r+1))^{8dk} \cdot (C_{2,p})^{d^2} \cdot 2^{4dk} \cdot\epsilon^{-4kd} \cdot \left(\frac{c_k}{v_{2k^2} \cdot k^{k^2}}\right) \cdot \left(\frac{v_{2(k-d)^2} \cdot (k-d)^{(k-d)^2}}{c_{k-d}} \right) \cdot \left(\frac{v_{2d^2} \cdot d^{d^2}}{c_{d}} \right) \cdot \\
                                               & & \left (\frac{k}{d}\right)^{\frac{4dk}{p}}\\
                                                & \leq & \left(\frac{2Q_p^2(r+1)^2C_{2,p}}{\epsilon} \right)^{4 d k} \cdot \left(\frac{c_k}{v_{2k^2} \cdot k^{k^2}}\right) \cdot \left(\frac{v_{2(k-d)^2} \cdot (k-d)^{(k-d)^2}}{c_{k-d}} \right) \cdot \left(\frac{v_{2d^2} \cdot d^{d^2}}{c_{d}} \right) \cdot \left(\frac{k}{k-d} \right)^{(k-d)^2}\cdot \\ & & \left (\frac{k}{d}\right)^{\frac{4dk}{p}}. \\
\end{eqnarray*}
 
\noindent Now substitute $d = \delta k$ with $\delta \in (0,1/2)$ in the above, apply $k^{-2} \log$, a $\limsup_{k \rightarrow \infty}$, and Lemma A.3 to arrive at:
\begin{eqnarray*}
\limsup_{k \rightarrow \infty} k^{-2} \cdot \log \left( K_{\epsilon}(E(r,p,k,\delta k)) \right) & \leq & 4\delta \cdot \left (\log(2Q_p^2(r+1)^2C_{2,p}) + |\log \epsilon| \right) + \\ & & \frac{1}{2} - \frac{(1 - \delta)^2}{2} - \frac{\delta^2}{2} + (1-\delta)^2 |\log(1-\delta)| + 4 \delta | \log \delta| \\ 
& \leq &  4\delta \cdot \left (\log(2Q_p^2(r+1)^2C_{2,p}) + |\log \epsilon| \right ) + 3\delta + 4 \delta |\log \delta| \\
& = &  4\delta \cdot \left(\log(2Q_p^2(r+1)^2C_{2,p}e) + |\log \epsilon| \right) + 4 \delta |\log \delta| \\
& \leq &  4\delta^{1/2} \cdot \delta^{1/2} \left (\log(2Q_p^2(r+1)^2C_{2,p}e) + |\log \epsilon| \right) + 4 \delta^{1/2} \cdot \delta^{1/2} |\log \delta| \\
& \leq &  4\delta^{1/2} \cdot \left (\log(2Q_p^2(r+1)^2C_{2,p}e) + |\log \epsilon| \right) + 4 \delta^{1/2}  \\
& \leq &  4\delta^{1/2} \cdot \left (\log(2Q_p^2C_{2,p}e^2(r+1)^2) + |\log \epsilon| \right). \\
\end{eqnarray*}
\noindent Thus, for $\delta \in (0,1/2)$, $1> \epsilon >0$, and $k$ sufficiently large,
\begin{eqnarray*}
K_{\epsilon}(E(r,p,k,\delta k)) & \leq & \left ( \frac{4Q_p^2C_{2,p}e^{2}(r+1)^2}{\epsilon} \right)^{8 \delta^{1/2} k^2}.
\end{eqnarray*}
Set $D_p = 4Q_p^2C_{2,p}e^2$.
\end{proof}

\end{document}